\title[]{Fractional Brauer configuration algebras III: fractional Brauer graph algebras of type MS}
\author{Nengqun Li and Yuming Liu*}
\address{Nengqun Li
\newline School of Mathematics
\newline Liaoning Normal University
\newline Dalian 116029
\newline P.R.China}
\email{linengqun@lnnu.edu.cn}
\address{Yuming Liu
\newline School of Mathematical Sciences
\newline Laboratory of Mathematics and Complex Systems
\newline Beijing Normal University
\newline Beijing 100875
\newline P.R.China}
\email{ymliu@bnu.edu.cn}
\date{version of \today}
\newtheorem{Thm}{Theorem}[section]
\newtheorem{Lem}[Thm]{Lemma}
\newtheorem{Def}[Thm]{Definition}
\newtheorem{Prop}[Thm]{Proposition}
\newtheorem{Ex1}[Thm]{Example}
\newtheorem{Rem1}[Thm]{Remark}
\newcommand{\lra}{\longrightarrow}
\newcommand{\ra}{\rightarrow}
\newcommand{\sdp}{\times\kern-.2em\vrule height1.1ex depth-.05ex}
\newcommand{\epi}{\lra \kern-.8em\ra}
\begin{document}
\renewcommand{\thefootnote}{\alph{footnote}}
\setcounter{footnote}{-1} \footnote{* Corresponding author.}
\setcounter{footnote}{-1} \footnote{\it{Mathematics Subject
Classification (2020)}: 16Gxx; 16B50.}
\renewcommand{\thefootnote}{\alph{footnote}}
\setcounter{footnote}{-1} \footnote{\it{Keywords}: AR-component, Brauer $G$-set, covering theory, domestic, fractional Brauer graph of type MS.}

\maketitle

\begin{abstract}
In previous two papers, we defined fractional Brauer configuration algebras and developed their covering theory. In this paper, we study the representation theory of fractional Brauer graph algebras of type MS, a special class of fractional Brauer configuration algebras that properly generalizes Brauer graph algebras. We first introduce the notion of Brauer $G$-set, which is a generalization of fractional Brauer graph of type MS. Then we develop a covering theory for Brauer $G$-sets and use it to characterize the representation types of fractional Brauer graph algebras of type MS. Moreover, we describe the AR-components of representation-finite and domestic fractional Brauer graph algebras of type MS respectively.
\end{abstract}

\section{introduction}

Aiming at a generalization of Brauer configuration algebras (abbr. BCAs) we introduced in \cite{LL} a class of locally bounded quiver algebras called fractional Brauer configuration algebras (abbr. f-BCAs). It was shown that f-BCAs of type S (abbr. $f_{s}$-BCAs) are locally bounded Frobenius algebras, and over an algebraically closed field, the representation-finite $f_{s}$-BCAs coincide with standard representation-finite basic self-injective algebras. We also developed in \cite{LL2} a covering theory for f-BCAs. In the present paper we concentrate on a subclass of $f_{s}$-BCAs, the fractional Brauer graph algebras of type MS (abbr. $f_{ms}$-BGAs), which are a proper generalization of Brauer graph algebras (abbr. BGAs). According to \cite[Proposition 6.6]{LL}, $f_{ms}$-BGAs are self-injective special biserial and therefore have tame representation type.

It is well-known that over an algebraically closed field, Brauer graph algebras coincide with symmetric special biserial algebras and whose representation types are classified in terms of the defining Brauer graphs (abbr. BGs) as follows (here we view a BG as an f-BC, see \cite[Section 3]{LL} or Section 2 below).

\begin{Thm}{\rm(cf. \cite{BS} and \cite{S})} \label{domestic-BGA}
Suppose that the base field is algebraically closed. Let $E$ be a finite connected Brauer graph and $A_E$ be the corresponding BGA. Then
\begin{itemize}
\item [$(1)$] $A_E$ is representation-finite if and only if $E$ is a Brauer tree (abbr. BT).
\item [$(2)$] $A_E$ is $1$-domestic if and only if one of the following holds
\item The underlying graph of $E$ is a tree, with two vertices f-degree $2$ and others f-degree $1$.
\item $E$ is f-degree-free and the underlying graph of $E$ has a unique cycle of odd length.
\item [$(3)$] $A_E$ is $2$-domestic if and only if $E$ is f-degree-free and the underlying graph of $E$ has a unique cycle of even length.
\item [$(4)$] $A_E$ is not of polynomial growth in other cases.
\end{itemize}
\end{Thm}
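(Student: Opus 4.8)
The plan is to reduce the determination of the representation type entirely to the combinatorics of strings and bands, and then to translate a single band count into the stated conditions on $E$. Since $A_E$ is a symmetric special biserial algebra it is tame, and by the standard string/band description (Butler--Ringel, Wald--Waschb\"usch) its indecomposable non-projective modules are exactly the string and band modules. The one-parameter families occurring in a fixed dimension $d$ are indexed by the bands $B$ (primitive cyclic reduced walks, taken up to rotation and inversion) with $\ell(B)\mid d$, and infinitely many strings can occur only when a band is present. Writing $\mathcal B$ for the set of bands, it follows that $A_E$ is representation-finite iff $\mathcal B=\emptyset$, that $A_E$ is domestic iff $\mathcal B$ is finite, and that in the finite case $A_E$ is exactly $|\mathcal B|$-domestic, since taking $d=\mathrm{lcm}\{\ell(B):B\in\mathcal B\}$ exhibits all $|\mathcal B|$ families at once. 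Thus the whole theorem reduces to the combinatorial assertion that $|\mathcal B|\in\{0,1,2,\infty\}$ together with the identification of the graphs realizing each value; in particular part $(4)$ is precisely the claim that $|\mathcal B|$ is never a finite integer $\ge 3$.

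Next I would set up the dictionary sending a band to a reduced closed walk in $E$ that is compatible with the cyclic orderings at the vertices and with the relations determined by the f-degrees. Such walks arise from exactly two mechanisms: a genuine cycle in the underlying graph, and a pair of vertices of f-degree $\ge 2$ between which a walk can turn back using the extra turns permitted by the multiplicities. The decisive structural point is a finiteness dichotomy: if $E$ carries two \emph{independent} such features --- two distinct graph cycles, a graph cycle together with any exceptional vertex, three or more exceptional vertices, or an exceptional vertex of f-degree $\ge 3$ paired with a second exceptional vertex --- then the corresponding sub-walks can be concatenated in arbitrary order, producing infinitely many pairwise inequivalent bands, so $A_E$ is non-domestic. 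This at once yields all of $(4)$ and the ``only if'' directions of $(2)$ and $(3)$, and it shows that a Brauer tree (a tree with at most one exceptional vertex, which carries no cyclic feature at all) has $\mathcal B=\emptyset$, proving $(1)$.

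It then remains to count $\mathcal B$ when $E$ has exactly one cyclic feature. For an f-degree trivial graph with a unique cycle of length $n$, one compares the walk around the cycle in its two rotational senses: the alternation of direct and inverse letters forced by the special biserial relations makes the two senses represent the \emph{same} band when $n$ is odd and two \emph{distinct} bands when $n$ is even, so $|\mathcal B|=1$ in the odd case ($1$-domestic) and $|\mathcal B|=2$ in the even case ($2$-domestic). For a tree with exactly two vertices of f-degree $2$ and all others of f-degree $1$, the unique ``bouncing'' walk between the two exceptional vertices is checked to furnish a single band, so $|\mathcal B|=1$ and $A_E$ is $1$-domestic. Combined with the previous paragraph this gives the ``if'' directions of $(2)$ and $(3)$ and completes the classification.

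The conceptual skeleton above is routine once the reduction to $\mathcal B$ is in place; the genuine work, and the main obstacle, is the fine band enumeration underlying the last two paragraphs. Concretely one must (a) prove rigorously that two independent cyclic features force infinitely many inequivalent bands, (b) carry out the parity computation distinguishing the odd and even cycle cases, and (c) show that the exceptional-vertex mechanism yields only finitely many (indeed exactly one) band \emph{only} in the sharp borderline case of precisely two vertices of f-degree exactly $2$, every strengthening (a third exceptional vertex, or f-degree $\ge 3$) tipping the count to infinity. All three verifications proceed through a careful analysis of which reduced closed walks are admitted by the cyclic orderings and the f-degree relations, and this is the technical heart of the argument. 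A conceptual cross-check is available through covering theory: realizing the domestic cases via the repetitive algebra as orbit algebras of repetitive algebras of tilted algebras of Euclidean type, Skowro\'nski's classification of domestic self-injective algebras recovers the same $1$-- and $2$--domestic trichotomy and is the route that generalizes to the $f_{ms}$-setting.
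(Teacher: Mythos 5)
The paper does not prove this statement at all: it is quoted as known background, with the proof delegated to the cited references (Bocian--Skowro\'nski and Schroll), so there is no internal argument to compare against. Your reduction to band-counting is exactly the standard route taken in those references, and it is also the mechanism the paper itself adapts later for $f_{ms}$-BGs (Lemma \ref{two-numbers-of-bands-are-equal} translates between bands of $E$ and bands of $A_E$, and Theorem \ref{rep type of f-BCA and its reduced form} invokes \cite[Theorem 2.1]{ES} to convert band counts into representation type), so your approach is the right one and is consistent with how the paper operates. Your structural dichotomy is also correctly calibrated: turn-arounds in a reduced closed walk are possible only at vertices of f-degree $\geq 2$ or around a graph cycle, a tree needs at least two such turn-arounds, and the borderline domestic cases are precisely one cycle with trivial f-degree (one band for odd length, two for even) or a tree with exactly two vertices of f-degree exactly $2$ (one band), every strengthening producing a free-monoid's worth of inequivalent bands. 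The one thing that keeps this from being a proof rather than a correct plan is that you explicitly defer items (a)--(c), and those \emph{are} the theorem: in particular the claim that two independent cyclic features yield infinitely many pairwise inequivalent \emph{primitive} cyclic words compatible with the relations, and the odd/even parity count on the unique cycle, each require the careful walk analysis you name but do not carry out. Since every idea needed is present and no step would fail, I would not call this a gap in conception, only in execution.
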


Moreover, recently Duffield determined explicitly in \cite{D} the Auslander-Reiten components (abbr. AR-components) of BGAs in terms of the defining BGs. (Note that about thirty years ago Erdmann and Skowro\'nski had already obtained a general description in \cite{ES} on the representation types and the AR-components of self-injective special biserial algebras.) Since
$f_{ms}$-BGAs ($\subseteq$ self-injective special biserial algebras) are generalization of BGAs ($\subseteq$ symmetric special biserial algebras), it is natural to ask how to characterize the representation types and AR-components of $f_{ms}$-BGAs.

To achieve this, our first idea is to use the covering theory for $f_{ms}$-BGs developed in \cite{LL2} to reduce an $f_{ms}$-BG to a smaller $f_{ms}$-BG (and hopefully to a BG). However, since a quotient of an $f_{ms}$-BG by a group of automorphisms may not again be an $f_{ms}$-BG, we need a variation of the notion $f_{ms}$-BG. Therefore we define Brauer $G$-sets, which is generalization of $f_{ms}$-BGs. We further develop a covering theory for Brauer $G$-sets so that the natural projection of any Brauer $G$-set to its quotient becomes a covering of Brauer $G$-sets, and we define the fundamental groups of Brauer $G$-sets. It should be noted that the quotient of an $f_{ms}$-BG $E$ by a group $\Pi$ of automorphisms can be viewed as an $f_{ms}$-BC, but if the action of $\Pi$ on $E$ is not admissible, then the natural quotient $E\rightarrow E/\Pi$ is not a covering of f-BCs in the sense of \cite{LL2}. In our reduction process in this paper, we frequently use nonadmissible actions, this is why we need a more flexible covering theory in the situation of Brauer $G$-sets.

We then apply the covering theory for Brauer $G$-sets to study the representation theory of $f_{ms}$-BGAs. Our first application is to determine the representation type of an $f_{ms}$-BGA. For a finite connected $f_{ms}$-BG $E$, we consider the quotient $E/\langle\sigma\rangle$, which is a special kind of Brauer $G$-sets called modified Brauer graphs (abbr. modified BGs), where $\langle\sigma\rangle$ is the group of automorphisms of $E$ generated by the Nakayama automorphism $\sigma$. Using $E/\langle\sigma\rangle$ we define the reduced form $R_E$ of $E$, which is a BG. So both the $f_{ms}$-BGAs $A_E$ and $A_{R_E}$ are special biserial algebras and their representation types are determined by the numbers of equivalence classes of bands over them respectively. Therefore we define lines and bands in a Brauer $G$-set and connect the bands of an $f_{ms}$-BG $E$ with the bands of associated $f_{ms}$-BGA $A_E$. By applying the covering theory of Brauer $G$-sets we show that the $f_{ms}$-BGA $A_E$ and the BGA $A_{R_E}$ have the same representation type:

\begin{Thm} {\rm(see Theorem \ref{rep type of f-BCA and its reduced form})}
Let the base field be algebraically closed. Let $E$ be a finite connected $f_{ms}$-BG. Then $A_E$ is representation-finite (resp. domestic) if and only if $A_{R_E}$ is representation-finite (resp. domestic).
\end{Thm}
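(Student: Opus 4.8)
The plan is to realise $A_E$ as a Galois covering of the algebra $A_{E/\langle\sigma\rangle}$ of the modified Brauer graph $E/\langle\sigma\rangle$, to transport representation type across this covering, and finally to identify the representation type of $A_{E/\langle\sigma\rangle}$ with that of the reduced Brauer graph algebra $A_{R_E}$; the whole argument rests on the covering theory for Brauer $G$-sets built in the previous sections. Since $E$ is finite its automorphism group is finite, so $\langle\sigma\rangle$ is a finite cyclic group, and the first step is to verify that the projection $E\to E/\langle\sigma\rangle$ is a Galois covering of Brauer $G$-sets with group $\langle\sigma\rangle$. By the functoriality of the algebra construction this yields a Galois covering of algebras $A_E\to A_{E/\langle\sigma\rangle}$ with the same group. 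The reason for passing to Brauer $G$-sets rather than staying among $f_{ms}$-BGs is exactly that the Nakayama automorphism $\sigma$ need not act freely on the edges of $E$: the modified Brauer graph $E/\langle\sigma\rangle$ encodes the fixed data, while the associated push-down and pull-up functors remain well defined.

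Next I would transport representation type across $A_E\to A_{E/\langle\sigma\rangle}$. For representation-finiteness this comes from the push-down functor together with the fact that both algebras are self-injective special biserial, so that $A_E$ admits only finitely many indecomposables precisely when $A_{E/\langle\sigma\rangle}$ does. For domesticity I would use the description of the stable module category and the one-parameter families of self-injective special biserial algebras from \cite{ES}, combined with the explicit covering functors for Brauer $G$-sets, to show that the number of one-parameter families is finite for $A_E$ exactly when it is finite for $A_{E/\langle\sigma\rangle}$; that is, domesticity is preserved and reflected by the covering.

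Finally I would compare $A_{E/\langle\sigma\rangle}$ with $A_{R_E}$. By the construction of the reduced form, $R_E$ is the genuine Brauer graph obtained from the modified Brauer graph $E/\langle\sigma\rangle$ by discarding the modification data, and that data records only the Nakayama twist, which is invisible to the stable category. Since representation-finiteness and domesticity of a self-injective special biserial algebra are detected by its stable Auslander--Reiten quiver, $A_{E/\langle\sigma\rangle}$ and $A_{R_E}$ share the same representation type. Concatenating the three steps proves that $A_E$ is representation-finite (resp.\ domestic) if and only if $A_{R_E}$ is.

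The hard part will be the domestic case of the middle step. Because $\langle\sigma\rangle$ is a finite group with torsion, the classical torsion-free covering theorems do not apply, so I cannot simply quote a black-box preservation statement for tameness and its growth. Instead I must track the one-parameter families directly through the Brauer $G$-set push-down and pull-up functors and check that none is created or lost at the non-free orbits of $\sigma$. Controlling these fixed edges is the delicate point, and it is precisely where the covering theory for Brauer $G$-sets developed above does the essential work.
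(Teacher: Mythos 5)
Your plan hinges on an object that does not exist in general: the algebra $A_{E/\langle\sigma\rangle}$. When $\langle\sigma\rangle$ is not admissible, the quotient $E/\langle\sigma\rangle$ is a modified Brauer graph containing double half-edges (fixed points of the involution $\tau$), and such a Brauer $G$-set is \emph{not} an $f_{ms}$-BC, so no algebra is attached to it by the construction in Definition \ref{fms-BC algebra}. This is precisely why the reduced form is defined as $R_E=\widehat{E/\langle\sigma\rangle}$ (a genuine Brauer graph double-covering the modified one) rather than as $E/\langle\sigma\rangle$ itself; your third step, which treats $R_E$ as ``$E/\langle\sigma\rangle$ with the modification data discarded,'' inverts the actual relationship. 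Moreover, even where the Galois covering of algebras does exist, your middle step is exactly the point you admit you cannot fill: $\langle\sigma\rangle$ has torsion, the classical covering theorems for representation type do not apply, and ``tracking the one-parameter families through push-down and pull-up functors'' is not an argument. So as written the proposal has two genuine gaps, one structural (the non-admissible case) and one substantive (preservation and reflection of domesticity across a finite-group quotient).

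The paper avoids module categories and covering functors entirely at this point. It first disposes of the degenerate case where some edge has both half-edges of degree $1$ (then both algebras are Nakayama of Loewy length $2$), and otherwise argues combinatorially: bands are defined for Brauer $G$-sets, Lemma \ref{two-numbers-of-bands-are-equal} gives a bijection between equivalence classes of bands of $E$ and of the string algebra $A_E/\mathrm{soc}(A_E)$, and Proposition \ref{the number of equivalence classes of bands} shows that for a finite automorphism group $\Pi$ of order $n$ one has $N_{E/\Pi}\leq N_E\leq nN_{E/\Pi}$, so finiteness (resp.\ vanishing) of the number of band classes passes between $E$, $E/\langle\sigma\rangle$ and $R_E$ (using $E/\langle\sigma\rangle\cong R_E/\langle\phi\rangle$ in the non-admissible case). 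Combined with the criterion from \cite{ES} that a self-injective special biserial algebra is representation-finite iff it has no bands and domestic iff it has finitely many, this yields the theorem. If you want to salvage your approach, you would need to replace the covering of algebras by this kind of lifting/descent of bands along the covering of Brauer $G$-sets, which is where the delicate bookkeeping at the non-free edges actually gets done.
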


However, some new phenomenon appears here, an $n$-domestic BGA must have $n\leq 2$, but an $n$-domestic $f_{ms}$-BGA can have arbitrarily large $n$ (see Remark \ref{description-in-terms-of-brauer-graph}).

We also characterize the representation type of an $f_{ms}$-BGA in terms of the fundamental group of its defining $f_{ms}$-BG.

\begin{Thm}{\rm(see Theorem \ref{fundamental group of rep-finite f-BGA} and Theorem \ref{Theorem-fundamental-group-of-domestic-fms-BGA})}
Let the base field be algebraically closed. Let $E$ be a finite connected $f_{ms}$-BG. Then
\begin{itemize}
\item[(1)] $A_E$ is representation-finite if and only if $\Pi(E)\cong\mathbb{Z}$;
\item[(2)] $A_E$ is domestic if and only if $\Pi(E)\cong F\langle a,b\rangle/\langle a^2=b^2\rangle$ or $\Pi(E)\cong\mathbb{Z}\oplus\mathbb{Z}$, where $F\langle a,b\rangle$ denotes the free group on the set $\{a,b\}$.
\end{itemize}
\end{Thm}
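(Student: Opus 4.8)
The plan is to reduce the theorem to the two previously stated results, namely the representation-type equivalence between $A_E$ and its reduced BGA $A_{R_E}$, and the classical classification of representation-finite and domestic BGAs recalled in Theorem \ref{domestic-BGA}. The key technical bridge is that the fundamental group $\Pi(E)$ of the covering theory for Brauer $G$-sets should be insensitive to passing to the reduced form, so that $\Pi(E)\cong\Pi(R_E)$, or at least that $\Pi(E)$ is determined by the same graph-theoretic data (the underlying graph of $R_E$ and its cycle structure) that governs the representation type. Thus I would first establish a dictionary between the isomorphism type of $\Pi(E)$ and the shape of the underlying graph of $E$ (or of $R_E$): a tree, a graph with a unique cycle, etc.

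First I would compute $\Pi(E)$ directly from the Brauer $G$-set covering theory, realizing it as the group of deck transformations of the universal cover (equivalently, via a presentation read off from a spanning tree of the underlying graph together with the f-degree/twisting data of $E$). The expected outcome is that $\Pi(E)$ is free abelian or a free product amalgamated over the Nakayama-type generator, with the number and parity of independent cycles in the underlying graph controlling the rank. Concretely, for part (1) I would show that $E$ being a fractional Brauer tree (the $f_{ms}$ analogue of a BT, equivalently $R_E$ a BT by the reduced-form theorem) forces the universal cover to have a single ``$\mathbb{Z}$-worth'' of translational symmetry coming from the Nakayama automorphism $\sigma$, giving $\Pi(E)\cong\mathbb{Z}$; conversely $\Pi(E)\cong\mathbb{Z}$ should force the underlying graph to be a tree, hence representation-finiteness via Theorem \ref{rep type of f-BCA and its reduced form} and Theorem \ref{domestic-BGA}(1).

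For part (2) I would split along the domestic cases of Theorem \ref{domestic-BGA}(2)--(3), transported to $E$ via $R_E$. The tree-with-two-f-degree-$2$-vertices case and the odd-cycle f-degree-trivial case are the two $1$-domestic cases; I expect both to yield the amalgamated free product $F\langle a,b\rangle/\langle a^2=b^2\rangle$, since in each the universal cover acquires a second, orientation-reversing symmetry (a reflection or a half-turn along the unique cycle) that squares to the translation $\sigma$, giving exactly the relation $a^2=b^2$. The even-cycle f-degree-trivial case, which is the $2$-domestic case, should instead produce two commuting independent translations, yielding $\Pi(E)\cong\mathbb{Z}\oplus\mathbb{Z}$. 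Theorem \ref{domestic-BGA}(4) guarantees there are no further domestic cases, so these two group types exhaust the domestic possibilities.

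The main obstacle I anticipate is the precise computation of $\Pi(E)$ from the Brauer $G$-set covering theory and, in particular, identifying the correct generators and relations in each domestic case, where the interaction between the genuine topological cycle of the graph and the Nakayama automorphism $\sigma$ must be untangled: one must verify that the second generator squares to the same element as $\sigma^{\pm 1}$ in the odd case but commutes with it and is independent in the even case. This parity-dependent analysis of how the amalgamation relation $a^2=b^2$ arises — rather than a direct-sum splitting — is the delicate step, and I would handle it by carefully tracking the monodromy of a loop traversing the unique cycle and comparing it with the monodromy induced by $\sigma$ on the universal cover.
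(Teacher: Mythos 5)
Your proposal has two genuine gaps. First, the ``key technical bridge'' $\Pi(E)\cong\Pi(R_E)$ is neither proved nor true in the form you need it. When $\langle\sigma\rangle$ is not admissible, there is no covering between $E$ and $R_E=\widehat{E/\langle\sigma\rangle}$: both are covers of the \emph{modified} Brauer graph $E/\langle\sigma\rangle$ (which may fail to be an $f_{ms}$-BG, hence the Brauer $G$-set formalism), and $\Pi(E)$ is a specific finite-index subgroup of $\Pi_m(E/\langle\sigma\rangle)$ determined by the order of $\sigma$ and the monodromy on a fiber. The paper computes $\Pi_m(E/\langle\sigma\rangle)$ by a Van Kampen argument (Proposition \ref{modified fundamental group of modified f-BG}), identifies $\Pi(E)$ as a stabilizer subgroup of the fiber action (Remark \ref{general-method-to-compute-the-fundamental-group}), and then pins down its isomorphism type by an explicit Reidemeister--Schreier computation (Proposition \ref{free generators}, Lemma \ref{generators of normal subgroup}). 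For instance, in case $(1)$ of Lemma \ref{B} the group $F\langle a,b\rangle/\langle a^2=b^2\rangle$ appears as an index-$2r$ subgroup of $F\langle a,c_1,c_2\rangle/\langle ac_1=c_1a,\,ac_2=c_2a,\,c_1^2=c_2^2=1\rangle$, not from a reflection along an odd cycle of $R_E$; your heuristic about where $a^2=b^2$ comes from does not survive the passage through the non-admissible quotient.

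Second, and more seriously, you do not address the converse implications, and ``Theorem \ref{domestic-BGA}(4) guarantees there are no further domestic cases'' does not yield them: knowing that domestic algebras produce only these two groups does not exclude a non-domestic $E$ from accidentally having $\Pi(E)\cong F\langle a,b\rangle/\langle a^2=b^2\rangle$ or $\mathbb{Z}\oplus\mathbb{Z}$. For part (1) the paper's converse is a separate argument: a representation-infinite $A_E$ has a band, which lifts to a closed special walk producing a $\mathbb{Z}\oplus\mathbb{Z}$ inside $\Pi(E)$, contradicting $\Pi(E)\cong\mathbb{Z}$. For part (2) the converse (Proposition \ref{fundamental group to rep type}) requires the reduced fundamental group $\Pi_m'(E)$, the exact sequence $0\to\mathbb{Z}\to\Pi_m(E)\to\Pi_m'(E)\to 0$, the computation of the centers of $F\langle a,b\rangle/\langle a^2=b^2\rangle$ and $F\langle a,b\rangle/\langle a^m=b^n=1\rangle$ (Lemmas \ref{center 1} and \ref{center 2}), and an abelianization analysis of $\Pi_m'(E/\langle\sigma\rangle)$ to rule out all but the three shapes of $E/\langle\sigma\rangle$ listed in Lemma \ref{B}. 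None of this is present or sketched in your plan, so as written the proposal only establishes (at best) the ``domestic/representation-finite $\Rightarrow$ group'' directions, and even those rest on the unjustified identification of $\Pi(E)$ with $\Pi(R_E)$.
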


The proofs of the above theorems are largely based on the computation of fundamental groups of modified BGs (see Proposition \ref{modified fundamental group of modified f-BG}), which are again based on an analogy of the Van Kampen theorem of  Brauer $G$-sets (see Proposition \ref{modified Van-Kampen}).

Our second application is to determine the AR-quivers of representation-finite $f_{ms}$-BGAs in terms of defining $f_{ms}$-BGs. Assume that $E$ is a finite connected $f_{ms}$-BG with $A_E$ representation-finite, then we can describe the AR-quiver of $A_E$ by using the modified BG $E/\langle\sigma\rangle$ and the image of the homomorphism $\Pi(E)\rightarrow\Pi_{m}(E/\langle\sigma\rangle)$ between the fundamental groups of $E$ and $E/\langle\sigma\rangle$ respectively (see Theorem \ref{case (a): AR-quiver of A_E} and Theorem \ref{case (b): AR-quiver of A_E}).

Our third application is to construct the defining $f_{ms}$-BGs of domestic $f_{ms}$-BGAs and describe the stable AR-components of these algebras. Let $E$ be a finite connected $f_{ms}$-BG and let $B$ be the modified BG $E/\langle\sigma\rangle$. We first describe the shape of $B$ when $A_E$ is domestic.

\begin{Lem} {\rm(see Lemma \ref{B})} \label{B-introduction}
Let the base field be algebraically closed. Suppose that the modified BG $B$ has $k$ edges, $l$ double half-edges, and $n$ vertices $v_1$, $\cdots$, $v_n$ of f-degree $d_1$, $\cdots$, $d_n$ respectively.  Then $A_E$ is domestic if and only if one of the following conditions holds:
\begin{itemize}
\item[$(1)$] $l=2$, $k-n+1=0$, $d_i=1$ for $1\leq i\leq n$;
\item[$(2)$] $l=0$, $k-n+1=0$, $d_i=2$ for exactly two numbers $i=i_0$, $i_1$, and $d_i=1$ for $i\neq i_0$, $i_1$;
\item[$(3)$] $l=0$, $k-n+1=1$, $d_i=1$ for $1\leq i\leq n$.
\end{itemize}
\end{Lem}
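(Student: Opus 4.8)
The plan is to reduce the statement to the known classification of domestic Brauer graph algebras and then carry out a purely combinatorial translation between the modified BG $B=E/\langle\sigma\rangle$ and the reduced form $R_E$. First I would invoke Theorem~\ref{rep type of f-BCA and its reduced form}, which says that $A_E$ is domestic if and only if $A_{R_E}$ is domestic; since $E$ is finite and connected, $R_E$ is a finite connected BG, so Theorem~\ref{domestic-BGA} applies to it. Combining parts $(2)$ and $(3)$ of that theorem (recalling that ``domestic'' means $n$-domestic for some $n$, while part $(4)$ forbids $n\geq 3$), I obtain the following criterion, in which the parity of the cycle no longer matters: $A_{R_E}$ is domestic if and only if either
\begin{itemize}
\item[(i)] the underlying graph of $R_E$ is a tree with exactly two vertices of f-degree $2$ and all remaining vertices of f-degree $1$, or
\item[(ii)] $R_E$ is f-degree trivial and its underlying graph has a unique cycle.
\end{itemize}

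The heart of the argument is then to express conditions (i) and (ii) in terms of the invariants $k$, $l$ and $d_1,\dots,d_n$ of $B$, using the explicit construction of $R_E$ from $B$. I would establish two facts about this construction. First, the f-degrees are preserved: the vertices of $R_E$ are in f-degree-preserving bijection with $v_1,\dots,v_n$, so that $R_E$ is f-degree trivial exactly when every $d_i=1$, and $R_E$ has exactly two vertices of f-degree $2$ (the rest being $1$) exactly when $d_i=2$ for precisely two indices and $d_i=1$ otherwise. Second, and this is the key quantitative point, the first Betti number (cycle rank) of the underlying graph of $R_E$ equals $(k-n+1)+l/2$: the $k$ ordinary edges on the $n$ vertices contribute $k-n+1$ by connectedness, while the $l$ double half-edges are resolved pairwise into $l/2$ additional independent cycles (in particular $l$ is even). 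Verifying this Betti-number formula from the definition of the reduced form, i.e. understanding precisely how a double half-edge of $B$ is turned into genuine edges of $R_E$ without disturbing the f-degrees, is the step I expect to be the main obstacle.

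Granting these two facts, the conclusion follows by a short case analysis. Condition (i) forces the cycle rank to be $0$, i.e. $(k-n+1)+l/2=0$, whence $k-n+1=0$ and $l=0$, together with exactly two indices $i$ having $d_i=2$; this is case $(2)$. Condition (ii) forces the cycle rank to be $1$, i.e. $(k-n+1)+l/2=1$ with all $d_i=1$; since both summands are nonnegative integers this splits into $k-n+1=1,\ l=0$ (case $(3)$) and $k-n+1=0,\ l=2$ (case $(1)$). Any configuration outside these three either has cycle rank $\geq 2$, or has cycle rank $1$ together with a vertex of f-degree $\geq 2$, or is a tree without exactly two f-degree-$2$ vertices; in each of these situations $R_E$ fails both (i) and (ii), and hence $A_{R_E}$, and therefore $A_E$, is not domestic. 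This exhausts all cases and yields precisely conditions $(1)$, $(2)$ and $(3)$.
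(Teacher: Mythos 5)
Your overall strategy -- reduce to $A_{R_E}$ via Theorem \ref{rep type of f-BCA and its reduced form} and then translate Theorem \ref{domestic-BGA} into conditions on $B$ -- is exactly the paper's strategy. However, the combinatorial translation you propose is based on an incorrect description of how $R_E$ is built from $B$, and this creates a genuine gap.

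When $B$ has at least one double half-edge (equivalently, $\langle\sigma\rangle$ is not admissible), the reduced form is $R_E=\widehat{B}$, which is a \emph{double cover} of $B$: it consists of two disjoint copies of $B$, with each double half-edge contributing a single edge joining the two copies. So $R_E$ has $2n$ vertices (two for each $v_i$, with the same f-degree) and $2k+l$ edges, and its cycle rank is $2(k-n+1)+l-1$, not $(k-n+1)+l/2$. In particular $l$ need not be even -- the case $l=1$ genuinely occurs (it is exactly the non-admissible representation-finite case in the proof of Theorem \ref{fundamental group of rep-finite f-BGA}) -- and your claim that vertices of $R_E$ are in f-degree-preserving bijection with $v_1,\dots,v_n$ fails, since each $v_i$ doubles. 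This matters concretely: with your (incorrect) bookkeeping one cannot exclude the configuration $l=1$, $k-n+1=0$, $d_{i_0}=2$ for one index and $d_i=1$ otherwise. For that configuration $\widehat{B}$ is a tree with exactly two vertices of f-degree $2$, so $A_{R_E}$ \emph{would} be domestic by Theorem \ref{domestic-BGA}(2), yet it is not on the list $(1)$--$(3)$.

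The missing ingredient is Lemma \ref{odd degree}(4): whenever $\langle\sigma\rangle$ is not admissible (i.e.\ $l\geq 1$), the f-degree of every vertex of $B$ -- hence of every vertex of $R_E$ -- is odd. This kills condition (i) outright in the non-admissible case (no vertex can have f-degree $2$), forcing $R_E$ to be f-degree trivial with a unique cycle, i.e.\ $2(k-n+1)+l-1=1$ with $k-n+1\geq 0$ and $l\geq 1$, whence $k-n+1=0$ and $l=2$: this is case $(1)$. The admissible case ($l=0$, $R_E=B$) then gives cases $(2)$ and $(3)$ directly from Theorem \ref{domestic-BGA}, as you say. You need to split the argument according to admissibility of $\langle\sigma\rangle$, use the correct double-cover description of $\widehat{B}$, and invoke the parity constraint of Lemma \ref{odd degree}; without these the forward implication is not established.
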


Note that in Cases $(2)$ and $(3)$ $B$ is a BG, where in Case $(2)$ the underlying graph of $B$ is a tree with two vertices f-degree $2$ and others f-degree $1$, and in Case $(3)$ the underlying graph of $B$ has a unique cycle such that the f-degree of each vertex of $B$ is $1$.

In Case $(1)$ (resp. $(2)$), we show that the order of the Nakayama automorphism $\sigma$ of $E$ is even (resp. odd), and $E$ is determined by $B$ and the order of $\sigma$ up to isomorphism (see Lemma \ref{determined-up-to-isomorphism-1} and Lemma \ref{determined-up-to-isomorphism-2}). In Case $(3)$, we suppose the order of the Nakayama automorphism $\sigma$ of $E$ is $r$; we construct $r$ $f_{ms}$-BGs $E_{r1},\cdots,E_{rr}$, and show that $E$ is isomorphic to one of them (see Proposition \ref{construction-of-E-in-case-(3)}).

We describe the stable AR-components of domestic $f_{ms}$-BGAs as follows.

\begin{Thm} {\rm(see Propositions \ref{stable-AR-component-case-(1)}, \ref{stable-AR-component-case-(2)}, \ref{stable-AR-component-case-(3)})}
Let the base field be algebraically closed. Let $E$ be a finite connected $f_{ms}$-BG with $A_E$ domestic. Denote by $B=E/\langle\sigma\rangle$, where $\sigma$ is the Nakayama automorphism of $E$, and denote by $\prescript{}{s}{\Gamma}_{A_E}$ the stable Auslander-Reiten quiver of $A_E$. Then
\begin{itemize}
\item if $B$ is as in case $(1)$ of Lemma \ref{B-introduction}, and suppose that the order of $\sigma$ is $2r$. Then $\prescript{}{s}{\Gamma}_{A_E}$ is a disjoint union of $4r$ components of the form $\mathbb{Z}A_{\infty}/\langle\tau^{n}\rangle$, $2r$ components of the form $\mathbb{Z}\widetilde{A}_{n,n}$, and infinitely many components of the form $\mathbb{Z}A_{\infty}/\langle\tau\rangle$;
\item if $B$ is as in case $(2)$ of Lemma \ref{B-introduction}, and suppose that the order of the Nakayama automorphism of $E$ is $2r-1$. Then $\prescript{}{s}{\Gamma}_{A_E}$ is a disjoint union of $4r-2$ components of the form $\mathbb{Z}A_{\infty}/\langle\tau^{n-1}\rangle$, $2r-1$ components of the form $\mathbb{Z}\widetilde{A}_{n-1,n-1}$, and infinitely many components of the form $\mathbb{Z}A_{\infty}/\langle\tau\rangle$;
\item if $B$ is as in case $(3)$ of Lemma \ref{B-introduction}, suppose that the order of $\sigma$ is $r$ and suppose that $E\cong E_{rl}$ for some $1\leq l\leq r$. Denote by $m$ the length of the unique cycle of the underlying diagram of $B$, $p$ (resp. $q$) the number of edges of the underlying diagram of $B$ outside (resp. inside) this cycle. If $m$ is odd, then $\prescript{}{s}{\Gamma}_{A_E}$ is a disjoint union of $(r,m+2l)$ components of the form $\mathbb{Z}A_{\infty}/\langle\tau^{\frac{r(m+2p)}{(r,m+2l)}}\rangle$, $(r,m+2l)$ components of the form $\mathbb{Z}A_{\infty}/\langle\tau^{\frac{r(m+2q)}{(r,m+2l)}}\rangle$, $(r,m+2l)$ components of the form $\mathbb{Z}\widetilde{A}_{\frac{r(m+2p)}{(r,m+2l)},\frac{r(m+2q)}{(r,m+2l)}}$, and infinitely many components of the form $\mathbb{Z}A_{\infty}/\langle\tau\rangle$. If $m$ is even, then $\prescript{}{s}{\Gamma}_{A_E}$ is a disjoint union of $(2r,m+2l)$ components of the form $\mathbb{Z}A_{\infty}/\langle\tau^{\frac{r(m+2p)}{(2r,m+2l)}}\rangle$, $(2r,m+2l)$ components of the form $\mathbb{Z}A_{\infty}/\langle\tau^{\frac{r(m+2q)}{(2r,m+2l)}}\rangle$, $(2r,m+2l)$ components of the form $\mathbb{Z}\widetilde{A}_{\frac{r(m+2p)}{(2r,m+2l)},\frac{r(m+2q)}{(2r,m+2l)}}$, and infinitely many
components of the form $\mathbb{Z}A_{\infty}/\langle\tau\rangle$, where $(a,b)$ denotes the greatest common divisor of $a$ and $b$.
\end{itemize}
\end{Thm}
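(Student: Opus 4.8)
The plan is to compute $\prescript{}{s}{\Gamma}_{A_E}$ by comparing it with the stable Auslander--Reiten quiver of the base $B=E/\langle\sigma\rangle$ through the Galois covering induced by the Nakayama automorphism, and then to read off the ranks and multiplicities of the exceptional components from the monodromy of $\sigma$ around them. First I would determine $\prescript{}{s}{\Gamma}_{A_B}$ in each case. In cases $(2)$ and $(3)$, $B$ is an honest Brauer graph which is domestic by Theorem \ref{domestic-BGA}, so its stable Auslander--Reiten quiver is described by Duffield \cite{D} (or Erdmann--Skowro\'nski \cite{ES}); in case $(1)$, where $B$ is only a modified Brauer graph, I would first check that the associated module theory is self-injective special biserial and domestic, apply \cite{ES} to get the shape, and compute the exceptional part directly from the two double half-edges. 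In all three cases the qualitative picture is the same: exactly two exceptional tubes, one Euclidean component $\mathbb{Z}\widetilde{A}_{*,*}$, and infinitely many homogeneous tubes. For the one-cycle graph of case $(3)$ I would identify the two tube ranks and the two parameters of the Euclidean component with the boundary lengths of the two faces determined by the ribbon structure of $B$: the cycle of length $m$ bounds exactly two faces, whose boundary walks traverse the cycle edges once and the $p$ outer, resp.\ $q$ inner, tree edges twice, giving lengths $m+2p$ and $m+2q$.

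Next I would use that $E\to B=E/\langle\sigma\rangle$ is a Galois covering of Brauer $G$-sets with cyclic group $G=\langle\sigma\rangle$, lifting through the covering theory developed here to a covering of the associated algebras. Since these algebras are self-injective and (as one verifies) $G$ acts freely on the non-projective indecomposables, the push-down functor $F_\lambda\colon \mathrm{mod}\,A_E\to\mathrm{mod}\,A_B$ is exact, commutes with $\Omega$ and $\tau$, and sends almost split sequences to almost split sequences, so it induces a $G$-Galois covering of stable translation quivers with $\prescript{}{s}{\Gamma}_{A_B}\cong\prescript{}{s}{\Gamma}_{A_E}/G$. Inverting this, for a component $C$ of $\prescript{}{s}{\Gamma}_{A_B}$ the monodromy homomorphism $\pi_1(C)\cong\mathbb{Z}\to G\cong\mathbb{Z}/N$, say $1\mapsto d$, governs the lifts: there are $\gcd(N,d)$ components over $C$, each the connected $N/\gcd(N,d)$-fold cover of $C$. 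Thus a base tube of rank $a$ gives $\gcd(N,d)$ tubes of rank $aN/\gcd(N,d)$, and a base $\mathbb{Z}\widetilde{A}_{p,q}$ gives $\gcd(N,d)$ copies of $\mathbb{Z}\widetilde{A}_{pN/\gcd(N,d),\,qN/\gcd(N,d)}$, where $d$ is read off from the image of the fundamental-group homomorphism $\Pi(E)\to\Pi_{m}(B)$.

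I would then carry out these substitutions. In cases $(1)$ and $(2)$, where $B$ is tree-like, the exceptional components have trivial monodromy ($d=0$), so the two tubes and the single Euclidean component each lift to $N$ disjoint isomorphic copies; with $N=2r$ and rank $n$ (case $1$) and $N=2r-1$ and rank $n-1$ (case $2$), this yields exactly $2N=4r$, resp.\ $4r-2$, tubes and $N=2r$, resp.\ $2r-1$, copies of $\mathbb{Z}\widetilde{A}_{n,n}$, resp.\ $\mathbb{Z}\widetilde{A}_{n-1,n-1}$. In case $(3)$ with $E\cong E_{rl}$, the monodromy of $\sigma$ around the cycle-components equals $m+2l$; for $m$ odd the relevant deck group has order $N=r$, producing $\gcd(r,m+2l)$ tubes of ranks $\frac{r(m+2p)}{(r,m+2l)}$ and $\frac{r(m+2q)}{(r,m+2l)}$ and equally many copies of the corresponding Euclidean component. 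For $m$ even an extra factor of two enters — reflecting the $2$-domesticity of Theorem \ref{domestic-BGA}$(3)$ and the orientation behaviour of the band around an even cycle — so that the effective period is $2r$ and the base tube ranks are correspondingly halved, which reproduces the stated expressions with $(2r,m+2l)$ in place of $(r,m+2l)$ and $\frac{r(m+2p)}{(2r,m+2l)}$, $\frac{r(m+2q)}{(2r,m+2l)}$ as ranks.

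Finally, the homogeneous part is preserved by the covering: the band modules of $A_B$ form infinitely many homogeneous tubes, whose preimages under $F_\lambda$ are again band modules forming homogeneous tubes, and since $A_E$ is domestic only finitely many components are non-homogeneous; hence the remaining components are the asserted infinitely many $\mathbb{Z}A_\infty/\langle\tau\rangle$. I expect the decisive difficulty to be the monodromy computation above — establishing that $\sigma$ winds exactly $m+2l$ times around the two exceptional tubes and the Euclidean component of case $(3)$, and that in the even-cycle case the period doubles to $2r$ with the base ranks halved. This is where the explicit model of $E_{rl}$ from Proposition \ref{construction-of-E-in-case-(3)}, the behaviour of $\sigma$ on the string and band modules at the mouths of the tubes, and the precise image of $\Pi(E)\to\Pi_{m}(B)$ must all be combined; by comparison, verifying freeness of the $G$-action and the counts in cases $(1)$ and $(2)$ should be comparatively routine.
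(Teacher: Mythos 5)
Your outline reproduces the correct final combinatorics, but the mechanism it rests on — a Galois covering of stable AR-quivers $\prescript{}{s}{\Gamma}_{A_B}\cong\prescript{}{s}{\Gamma}_{A_E}/\langle\sigma\rangle$ induced by the push-down functor for the finite cyclic group $\langle\sigma\rangle$ — is exactly the step that is not available and that the paper is at pains to avoid. Gabriel's theorem on push-down functors and AR-quivers \cite{G} requires the Galois group to act freely on the indecomposables; $\langle\sigma\rangle$ is finite, and its action on band modules need not be free (a band carried to itself up to rotation and inversion by $\sigma$ produces $\sigma$-stable one-parameter families), which is precisely why Proposition \ref{the number of equivalence classes of bands} only yields the inequalities $N_{E/\Pi}\leq N_E\leq nN_{E/\Pi}$ rather than an equality. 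Without free action you get neither the isomorphism onto a union of components nor the clean ``$\gcd(N,d)$ lifts, each an $N/\gcd(N,d)$-fold cover'' dictionary, and density of the push-down (needed for your ``$\cong$'') is a further unproved module-theoretic claim. A second, independent obstruction: in case $(1)$ the quotient $B=E/\langle\sigma\rangle$ is a modified Brauer graph with two double half-edges, and the paper attaches no algebra to such an object, so there is no base algebra $A_B$ for your covering to land on; one must pass to $R_E=\widehat{E/\langle\sigma\rangle}$ and, as in the proof of Proposition \ref{stable-AR-component-case-(1)}, to an auxiliary $f_{ms}$-BG $E'=\widetilde{E}/(\mathrm{Aut}(p)\cap\mathrm{Aut}(q))$ admitting regular coverings onto both $E$ and $R_E$.

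The paper's actual route sidesteps the finite-group covering. The exceptional tubes are computed directly on $A_E$ by string combinatorics: Lemmas \ref{modules-at-the-mouth-of-exceptional-tubes} and \ref{DTr} identify the mouths with the modules $M_e$, $e\in E$, and show $\mathrm{DTr}(M_e)\cong M_{\sigma^{-1}(g\tau)^2(e)}$, so by Proposition \ref{exceptional-tubes} the tubes correspond to $\langle\sigma^{-1}(g\tau)^2\rangle$-orbits of $E$, whose lengths are computed on the explicit models $E_r$, $E'_r$, $E_{rl}$ in Lemmas \ref{length-of-DTr-orbit-case-1}--\ref{length-of-DTr-orbit-case-3}; this is where your ``monodromy equals $m+2l$'' and the even/odd dichotomy for $m$ actually get proved (for $m$ even the $\langle g\tau\rangle$-orbits of $B$ split into two $\langle\sigma^{-1}(g\tau)^2\rangle$-orbits of half the length, which is the real source of the $(2r,m+2l)$). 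The shape of the $\mathbb{Z}\widetilde{A}_{a,b}$ components is then constrained by Lemma \ref{ZApq-components}, whose proof works over the universal cover $\widetilde{E}$, where the Galois groups $\Pi(E)\leq\Pi(E')$ are torsion-free by Proposition \ref{rep type to fundamental group} and hence do act freely, and only afterwards takes a finite quotient at the level of translation quivers. Finally \cite[Theorem 2.1]{ES} assembles the decomposition; note the paper explicitly warns that one cannot assume the $\mathbb{Z}A_\infty/\langle\tau^p\rangle$ and $\mathbb{Z}A_\infty/\langle\tau^q\rangle$ components there are exactly the exceptional tubes, which is why the extra $\mathbb{Z}\widetilde{A}_{a,b}$ information is indispensable for excluding the spurious alternatives with $q=1$. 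To repair your argument you would either have to prove free action of $\langle\sigma\rangle$ on $\mathrm{ind}A_E$ (false in general) or reroute through the universal cover as the paper does.
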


In a forthcoming paper \cite{LL3}, we will further use covering theory of f-BCAs to study fractional Brauer configuration algebras of type MS (which are proper generalization of BCAs) and determine the tameness and wildness of these algebras.

This paper is organized as follows.

In Section 2 we introduce Brauer $G$-sets and study their covering theory; we define lines and bands for a Brauer $G$-set, and discuss the relations between the bands of an $f_{ms}$-BG and the bands of associated $f_{ms}$-BGA; we show that an $f_{ms}$-BGA $A_E$ is representation-finite (resp. domestic) if and only if the BGA $A_{R_E}$ of the reduced form $R_E$ of $E$ is representation-finite (resp. domestic). As a byproduct, we obtain some unexpected example of weakly symmetric $f_{ms}$-BGA which is not a BGA (Example \ref{weakly-symmetric-algebra-example}).

In Section 3 we first calculate the fundamental groups of modified BGs using an analogy of the Van Kampen theorem, and then together with covering theory for Brauer $G$-sets we calculate the fundamental groups of $f_{ms}$-BGs $E$ with $A_E$ representation-finite or domestic.

In Section 4 we describe the Auslander-Reiten quivers (abbr. AR-quivers) of representation-finite $f_{ms}$-BGAs in terms of defining $f_{ms}$-BGs and show that these algebras coincide with basic representation-finite self-injective algebras of class $A_n$.

In final section we construct the defining  $f_{ms}$-BGs of domestic $f_{ms}$-BGAs and describe their stable AR-components.

\section*{Data availability} The datasets generated during the current study are available from the corresponding author on reasonable request.

\section*{Acknowledgements} The first author is supported by the Doctoral Research Startup Fund of Liaoning Normal University (No.603260070011). We would like to thank Drew Duffield for answering some questions on the results in his paper \cite{D}. We would also like to thank Ibrahim Assem, Karin Erdmann and Sibylle Schroll for some correspondence on AR-components of self-injective special biserial algebras.

\section{Brauer $G$-sets and their covering theory}

Throughout this paper we assume that $k$ is a field and fix $G$ to be an infinite cyclic group generated by $g$, all categories will be locally bounded $k$-categories which are given by locally finite quivers with some relations, and all algebras will be finite dimensional $k$-algebras which are given by finite quivers with some relations. We write a path $p$ in a quiver from right to left and denote by $s(p)$ and $t(p)$ the source and the terminal of $p$, respectively.

\subsection{Review on fractional Brauer graph of type MS}

\begin{Def}\label{fms-BC} {\rm(cf. \cite[Section 3]{LL})}
A fractional Brauer configuration of type MS (abbr. $f_{ms}$-BC) is a quadruple $E=(E,P,L,d)$, where $E$ is a $G$-set, $P$, $L$ are two partitions of $E$ such that each class $P(e)$ is a finite set and each class $L(e)=\{e\}$ (for each $e$ in $E$, denote by $P(e)$ and $L(e)$ the equivalence classes of $e$ under the partitions $P$ and $L$, respectively) and $d: E\rightarrow \mathbb{Z}_{>0}$ is a function, such that
\begin{itemize}
    \item if $e_1$, $e_2$ belong to same $G$-orbit, then $d(e_1)=d(e_2)$;
    \item $P(e_1)=P(e_2)$ if and only if $P(g^{d(e_1)}\cdot e_1)=P(g^{d(e_2)}\cdot e_2)$.
\end{itemize}
Moreover, if each class $P(e)$ contains exactly two elements, then $E$ is called a fractional Brauer graph of type MS (abbr. $f_{ms}$-BG).
\end{Def}

Follows from \cite[Remark 3.4]{LL}, the elements of $E$ are called {\it angles}, the $G$-orbits of $E$ are called {\it vertices}, the subsets of $E$ of the form $P(e)$ are called {\it polygons} (if the cardinality of $P(e)$ is $n$ then we call $P(e)$ an {\it $n$-gon}. If $P(e)$ is a $2$-gon, then we also call $P(e)$ an {\it edge}, and call an angle in $P(e)$ a {\it half-edge}), and the function $d: E\rightarrow \mathbb{Z}_{>0}$ is called the {\it degree function}. If $v$ is a vertex of $E$ which is a finite set, define the {\it fractional-degree (abbr. f-degree)} $d_f(v)$ of $v$ as $\frac{d(v)}{\mid v\mid}$; $E$ is said to have {\it integral f-degree} if each vertex of $E$ is a finite set and the f-degree of each vertex of $E$ is an integer. $E$ is called {\it f-degree-free} if $d_f(v)\equiv 1$. The permutation $\sigma: E\rightarrow E$, $e\rightarrow g^{d(e)}\cdot e$ on $E$ is called the {\it Nakayama automorphism} of $E$.

Note that a finite $f_{ms}$-BC $E$ (that is, the $G$-set $E$ is finite) with integral f-degree and containing no $1$-gons is exactly a Brauer configuration (abbr. BC) in the sense of Green and Schroll \cite{GS}, and a finite $f_{ms}$-BG with integral f-degree is exactly a Brauer graph (abbr. BG). Note also that a Brauer tree (abbr. BT) is a connected BG with $n$ vertices and $n-1$ edges such that the f-degrees of all but at most one vertex are equal to $1$.

\begin{Def} \label{fms-BC algebra} {\rm(\cite[Definition 4.1 and Definition 4.4]{LL})}
For an $f_{ms}$-BC $E=(E,P,L,d)$, the fractional Brauer configuration category of type MS (abbr. $f_{ms}$-BCC) associated with $E$ is a $k$-category $\Lambda_{E}=kQ_{E}/I_{E}$, where $Q_{E}$ is a quiver defined as follows: $(Q_{E})_{0}=\{P(e)\mid e\in E\}$, $(Q_{E})_{1}=\{L(e)\mid e\in E\}$ with $s(L(e))=P(e)$ and $t(L(e))=P(g\cdot e)$, and $I_{E}$ is the ideal of path category $kQ_{E}$ generated by the following relations:
\begin{itemize}
\item $L(g^{d(e)-1}\cdot e)\cdots L(g\cdot e)L(e)-L(g^{d(h)-1}\cdot h)\cdots L(g\cdot h)L(h)$, where $P(e)=P(h)$;
\item Paths of the form $L(e_2)L(e_1)$ with $g\cdot e_1\neq e_2$;
\item Paths of the form $L(g^{n-1}\cdot e)\cdots L(g\cdot e)L(e)$ for $n>d(e)$.
\end{itemize}
Moreover, if $E$ is a finite $f_{ms}$-BC, then we define $A_{E}=\bigoplus_{x,y\in (Q_E)_0}\Lambda_{E}(x,y)$ (which is a finite dimensional Frobenius $k$-algebra) and call $A_E$ a fractional Brauer configuration algebra of type MS (abbr. $f_{ms}$-BCA).
\end{Def}

Accordingly, if $E$ is a BC (resp. BG), then we call $A_E$ a BCA (resp. BGA).

Note that in \cite{LL,LL2} we defined $A_{E}$ as the opposite algebra of $kQ_E/I_E$ and where the Nakayama automorphism $\sigma$ of $E$ induces the Nakayama automorphism of the Frobenius algebra $kQ_{E}^{op}/I_{E}^{op}$. However, since it is more direct to connect $E$ with the quiver $Q_E$, we define $A_{E}$ as the algebra $kQ_E/I_E$ in the present paper, and therefore the Nakayama automorphism $\sigma$ of $E$ induces the inverse of the Nakayama automorphism of the Frobenius algebra $A_{E}$.

According to \cite[Proposition 6.6 and Corollary 6.7]{LL}, if $E$ is an $f_{ms}$-BC, then $\Lambda_E$ is a locally bounded special multiserial Frobenius category, and if $E$ is an $f_{ms}$-BG, then $\Lambda_E$ is a locally bounded special biserial Frobenius category. It follows that $f_{ms}$-BCAs and $f_{ms}$-BGAs are proper generalizations of BCAs and BGAs respectively.

For the definitions of morphisms (coverings), walks and fundamental groups (groupoids) of $f_{ms}$-BCs, we refer to \cite[Section 3]{LL2}.

\subsection{Brauer $G$-sets and their fundamental groups}
\

In this subsection, we define Brauer $G$-sets and their fundamental groups.

Let $E=(E,P,L,d)$ be an $f_{ms}$-BG. Since each edge $P(e)$ of $E$ contains $2$ half-edges, we can define an involution $\tau$ on $E$ such that $P(e)=\{e,\tau(e)\}$ for every $e\in E$. Therefore an $f_{ms}$-BG can be considered as a triple $E=(E,\tau,d)$, where $E$ is a $G$-set ($G=\langle g\rangle\cong\mathbb{Z}$), $\tau$ is an involution on $E$ without fixed points, and $d:E\rightarrow \mathbb{Z}_{>0}$ is a function on $E$, such that
\begin{itemize}
\item if $e_1,e_2\in E$ belong to the same $G$-orbit, then $d(e_1)=d(e_2)$;
\item $g^{d(\tau(e))}\cdot \tau(e)=\tau(g^{d(e)}\cdot e)$ for every $e\in E$.
\end{itemize}
Moreover, if $E=(E,P,L,d)$ is an $f_{ms}$-BC such that each polygon of $E$ contains at most $2$ elements, then we may regard $E$ as a quadruple $(E,U,\tau,d)$, where $U=\{e\in E\mid |P(e)|=2\}$ is a subset of $E$, and $\tau$ is an involution on $U$ without fixed points such that $P(e)=\{e,\tau(e)\}$ for every $e\in U$.

The above discussion motivates us to introduce the following notion.

\begin{Def}\label{modified f-BG}
 A Brauer $G$-set is a quadruple $E=(E,U,\tau,d)$, where $E$ is a $G$-set ($G=\langle g\rangle\cong\mathbb{Z}$), $U$ is a subset of $E$, $\tau$ is an involution on $U$ ($\tau$ may have fixed points), and $d:E\rightarrow \mathbb{Z}_{>0}$ is a function on $E$, such that
\begin{itemize}
\item [$(mf1)$] $d(e_1)=d(e_2)$ if $e_1$, $e_2$ belong to the same $G$-orbit;
\item [$(mf2)$] $\sigma(U)=U$ and $\tau\sigma(e)=\sigma\tau(e)$ for every $e\in U$, where $\sigma:E\rightarrow E$, $e\mapsto g^{d(e)}\cdot e$.
\end{itemize}
\end{Def}

The elements of $E$ are called {\it half-edges} of $E$; an element $e\in U$ with $\tau(e)=e$ is called a {\it double half-edge} of $E$; the $G$-orbits of $E$ are called {\it vertices} of $E$; if $e\in U$ and $e\neq \tau(e)$, then the subset $\{e,\tau(e)\}$ of $E$ is called an {\it edge} of $E$; the function $d$ is called the {\it degree function}; $E$ is said to have {\it integral f-degree} (resp. to be {\it f-degree-free}) if each vertex of $E$ is a finite set and the f-degree of each vertex of $E$ is an integer (resp. is equal to $1$); the permutation $\sigma:E\rightarrow E$, $e\mapsto g^{d(e)}\cdot e$ on $E$ is called the {\it Nakayama automorphism} of $E$.

Every Brauer $G$-set $E=(E,U,\tau,d)$ can be visualized as some diagram $\Gamma(E)$, which consists of edges and half-edges (we also view an edge in $\Gamma(E)$ as a union of two half-edges). Each edge (of the form $\{e,\tau(e)\}$) of $E$ corresponds to an edge in $\Gamma(E)$, where $e$ and $\tau(e)$ correspond to the two half-edges of the edge; and each $e\in E$ which is not contained in any edge of $E$ corresponds to a (single) half-edge in $\Gamma(E)$. Each vertex (of the form $G\cdot e$) of $E$ corresponds to an vertex in $\Gamma(E)$, and a half-edge in $\Gamma(E)$ corresponding to $e\in E$ is connected to a vertex in $\Gamma(E)$ corresponding to the $G$-orbit $G\cdot e'$ of $E$ if and only if $e\in G\cdot e'$. For each vertex $v$ of $\Gamma(E)$ corresponding to the $G$-orbit $G\cdot e$ of $E$, the $G$-set structure of $G\cdot e$ gives an order (which is always taken to be clockwise) to half-edges in $\Gamma(E)$ which are connected to $v$.

\begin{Ex1}\label{modified-fms-BG-Ex-1}
Let $E=\{e,e'\}$ be a $G$-set with $g\cdot e=e'$, $g\cdot e'=e$. Let $U=E$, $\tau=id_{U}$ be an involution on $U$, and $d:E\rightarrow \mathbb{Z}_{>0}$ be the function given by $d(e)=d(e')=2$. Then $(E,U,\tau,d)$ is a Brauer $G$-set, which is given by the diagram
$$\begin{tikzpicture}
\fill (0,0) circle (0.5ex);
\draw    (0,0)--(1,0);
\draw    (0,0)--(-1,0);
\node at(0.6,0.2) {$e'$};
\node at(-0.6,0.15) {$e$};
\node at(1.2,-0.1) {.};
\end{tikzpicture}$$
\end{Ex1}

\begin{Ex1}\label{modified-fms-BG-Ex-2}
Let $E=\{e,e',e_1,e_2\}$ be a $G$-set with $g\cdot e=e_1$, $g\cdot e_1=e'$, $g\cdot e'=e_2$, $g\cdot e_2=e$. Let $U=\{e,e',e_1\}$, $\tau$ be the involution on $U$ given by $\tau(e)=e'$, $\tau(e')=e$, $\tau(e_1)=e_1$, and $d:E\rightarrow \mathbb{Z}_{>0}$ be the function given by $d(e)=d(e')=d(e_1)=d(e_2)=4$. Then $(E,U,\tau,d)$ is a Brauer $G$-set, which is given by the diagram
$$\begin{tikzpicture}
\fill (0,0) circle (0.5ex);
\draw    (0,0)--(1,0);
\draw    (0,0)--(-1,0);
\draw (-1,0) circle (1);
\node at(0.6,0.2) {$e_2$};
\node at(-0.6,0.2) {$e_1$};
\node at(0,-0.6) {$e$};
\node at(0.05,0.6) {$e'$};
\node at(1.2,-0.1) {.};
\end{tikzpicture}$$
\end{Ex1}

\begin{Rem1} \label{view-f_ms-BC-as-Brauer-G-set}
According to the remarks before Definition \ref{modified f-BG}, an $f_{ms}$-BG is identified with a Brauer $G$-set $E=(E,U,\tau,d)$ such that $U=E$ and $\tau$ has no fixed points, and an $f_{ms}$-BC such that each polygon of it contains at most two elements is identified with a Brauer $G$-set $E=(E,U,\tau,d)$ such that $\tau$ has no fixed points. We shall frequently using these identifications.
\end{Rem1}

\begin{Def}\label{modified BG}
A Brauer $G$-set $E=(E,U,\tau,d)$ of integral f-degree (or equivalenly, the Nakayama automorphism $\sigma$ of $E$ is an identity) with $U=E$ is said to be a modified Brauer graph (abbr. modified BG).
\end{Def}

In the sense of Remark \ref{view-f_ms-BC-as-Brauer-G-set}, an $f_{ms}$-BC such that each polygon of it contains at most two elements is a modified BG if and only if it is a BG. Note that the Brauer $G$-set in Example \ref{modified-fms-BG-Ex-1} is a modified BG but the Brauer $G$-set in Example \ref{modified-fms-BG-Ex-2} is not a modified BG.

\medskip
Let $E=(E,U,\tau,d)$ be a Brauer $G$-set. A {\it walk} of $E$ is a sequence of the form $$w=e_{n}\frac{\delta_{n}}{}e_{n-1}\frac{\delta_{n-1}}{}\cdots\frac{\delta_{3}}{}e_{2}\frac{\delta_{2}}{}e_{1}\frac{\delta_{1}}{}e_{0},$$
where $e_i\in E$ for $0\leq i\leq n$ and $\delta_j\in\{g,g^{-1},\tau\}$ for $1\leq j\leq n$, such that $e_{i-1},e_i\in U$ if $\delta_i=\tau$ and
\begin{equation*}
e_i=\begin{cases}
g\cdot e_{i-1}, \text{ if } \delta_i=g; \\
g^{-1}\cdot e_{i-1}, \text{ if } \delta_i=g^{-1}; \\
\tau(e_{i-1}), \text{ if } \delta_i=\tau.
\end{cases}
\end{equation*}
We may write $w=(e_{n}|\delta_{n}\cdots\delta_{1}|e_{0})$ or $w=\delta_{n}\cdots\delta_{1}$ if there is no confusion. For such a walk $w$ as above, define $s(w)=e_{0}$ and $t(w)=e_{n}$, where $s(w)$ and $t(w)$ are the source and the terminal of $w$, respectively. A Brauer $G$-set $E$ is said to be {\it connected} if every two half-edges $e_1$, $e_2$ of it can be connected by a walk.

\begin{Rem1}
For an $f_{ms}$-BC $E$, we have already defined the walks and the special walks of $E$ in \cite[Section 3]{LL2}. Suppose that $E$ is $f_{ms}$-BC such that each polygon of $E$ contains at most $2$ angles, by Remark \ref{view-f_ms-BC-as-Brauer-G-set}, we can also view $E$ as a Brauer $G$-set. Then every walk of the Brauer $G$-set $E$ is a walk of the $f_{ms}$-BC $E$, and every special walk of the $f_{ms}$-BC $E$ is a walk of $E$ as a Brauer $G$-set since a special walk contains no subwalk of the form $(e| \tau | e)$. In particular, $E$ is connected as an f-BC if and only if $E$ is connected as a Brauer $G$-set.
\end{Rem1}

\begin{Def}\label{homotopy of modified walks}
Let $E=(E,U,\tau,d)$ be a Brauer $G$-set. Define the homotopy relation $\approx$ on the set of walks of $E$ as the equivalence relation generated by
\begin{itemize}
\item [$(mh1)$] $(e|g^{-1}g|e)\approx(e|gg^{-1}|e)\approx(e|\tau^{2}|e)\approx(e||e)$ for every $e\in E$.
\item [$(mh2)$] $(g^{d(\tau(e))}\cdot \tau(e)|\tau|g^{d(e)}\cdot e)(g^{d(e)}\cdot e|g^{d(e)}|e)\approx (g^{d(\tau(e))}\cdot \tau(e)|g^{d(\tau(e))}|\tau(e))(\tau(e)|\tau|e)$ for every $e\in U$.
\item [$(mh3)$] If $w_{1}\approx w_{2}$, then $uw_{1}\approx uw_{2}$ and $w_{1}v\approx w_{2}v$ whenever the compositions make sense.
\end{itemize}
\end{Def}

For a walk $w$ of a Brauer $G$-set $E$, denote by $[w]$ the homotopy class of $w$.

In \cite[Definition 3.9]{LL2} we have already defined the homotopy relation $\sim$ on the set of walks of an f-BC $E$. Note that if $E$ is an $f_{ms}$-BC $E$ such that each polygon of $E$ contains at most $2$ angles, then for walks $w_1$, $w_2$ of $E$ as a Brauer $G$-set, $w_1\approx w_2$ implies $w_1\sim w_2$ (in fact, we have $w_1\approx w_2$ if and only if $w_1\sim w_2$, see the proof of Lemma \ref{two definitions of fundamental group are equal}).

Recall that in \cite[Definition 3.11]{LL2} we have defined the fundamental group (groupoid) of an f-BC $E$, using the homotopy relation $\sim$. Similarly, we can define the {\it fundamental group} $\Pi_m(E,e)$ (resp. {\it fundamental groupoid} $\Pi_m(E,A)$) of a Brauer $G$-set $E$ at $e\in E$ (resp. on a subset $A\subseteq E$), using the homotopy relation $\approx$. For the definition of the fundamental groupoid of a quiver, we refer to \cite[Section 4]{LL2}.

\begin{Ex1}
Let $E=(E,U,\tau,d)$ be the Brauer $G$-set in Example \ref{modified-fms-BG-Ex-1}, and let $A=\{e,e'\}=E$. Then $\Pi_m(E,A)$ is isomorphic to $\mathscr{F}/\langle c^2=1_x,d^2=1_y,bac=cba,abd=dab\rangle$, where $\mathscr{F}$ is the fundamental groupoid of the quiver
$$\begin{tikzpicture}
\draw[->] (0.2,0.1) -- (1.8,0.1);
\draw[->] (1.8,-0.1) -- (0.2,-0.1);
\draw[->] (-0.2,0.1) arc (15:345:0.5);
\draw[->] (2.2,-0.1) arc (-165:165:0.5);
\node at(0,0) {$x$};
\node at(2,0) {$y$};
\node at(-1.4,0) {$c$};
\node at(3.4,0) {$d$};
\node at(1,0.3) {$a$};
\node at(1,-0.3) {$b$};
\end{tikzpicture}.$$
\end{Ex1}

\begin{Ex1}
Let $E=(E,U,\tau,d)$ be the Brauer $G$-set in Example \ref{modified-fms-BG-Ex-2}. Then by Lemma \ref{a calculation of modified fundamental group} and Lemma \ref{isomorphism of modified fundamental groupoids} below, $\Pi_m(E,e)\cong F\langle x,y,z\rangle/\langle xy=yx,xz=zx, z^{2}=1\rangle$, where $F\langle x,y,z\rangle$ denotes the free group on the set $\{x,y,z\}$.
\end{Ex1}

If $E$ is an $f_{ms}$-BC such that each polygon of $E$ contains at most $2$ angles, then by Remark \ref{view-f_ms-BC-as-Brauer-G-set} we can view it as a Brauer $G$-set. The following lemma shows that the fundamental group of $E$ as an $f_{ms}$-BC is isomorphic to the fundamental group of $E$ as a Brauer $G$-set.

\begin{Lem}\label{two definitions of fundamental group are equal}
Let $E=(E,U,\tau,d)$ be a Brauer $G$-set such that $\tau$ has no fixed points (equivalently, $E$ is an $f_{ms}$-BC such that each polygon of $E$ contains at most $2$ angles), and let $A$ be a subset of $E$. Denote by $\Pi_m(E,A)$ and $\Pi(E,A)$ the fundamental groupoid of $E$ on $A$ as a Brauer $G$-set and as an $f_{ms}$-BC respectively. Then $\Pi_m(E,A)$ is isomorphic to $\Pi(E,A)$.
\end{Lem}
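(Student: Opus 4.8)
The plan is to exhibit a functor $\Phi\colon\Pi_m(E,A)\to\Pi(E,A)$ which is the identity on objects and prove that it is full and faithful, hence an isomorphism of groupoids. Since $\tau$ has no fixed points, every half-edge lies in a polygon of size exactly two, so the elementary moves $g,g^{-1},\tau$ available for the Brauer $G$-set $E$ are precisely the elementary moves of the $f_{ms}$-BC $E$ occurring along its special walks. By the remarks preceding the statement, every walk of the Brauer $G$-set $E$ is a walk of the $f_{ms}$-BC $E$, and $w_1\approx w_2$ implies $w_1\sim w_2$. Therefore setting $\Phi$ to be the identity on the common object set $A$ and $\Phi([w]_{\approx}):=[w]_{\sim}$ on morphisms is well defined, and it manifestly preserves identities and composition, so $\Phi$ is a functor.

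First I would check that $\Phi$ is full, i.e. surjective on each hom-set. Given a morphism $[u]_{\sim}$ of $\Pi(E,A)$ from $e$ to $e'$ in $A$, represented by an $f_{ms}$-BC walk $u$, I invoke the normalization result of \cite{LL2} that every walk of an $f_{ms}$-BC is $\sim$-homotopic to a special walk; thus $u\sim v$ for some special walk $v$. By the second half of the cited remark, $v$ is a walk of the Brauer $G$-set $E$, so $[v]_{\approx}$ is a morphism of $\Pi_m(E,A)$ with $\Phi([v]_{\approx})=[v]_{\sim}=[u]_{\sim}$. This gives fullness.

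The faithfulness of $\Phi$ is the crux, and I expect it to be the main obstacle. It is equivalent to the following converse restricted to special walks: if $w_1,w_2$ are walks of the Brauer $G$-set $E$ with $w_1\sim w_2$, then $w_1\approx w_2$. The difficulty is that a chain of elementary $\sim$-moves from $w_1$ to $w_2$ may pass through $f_{ms}$-BC walks that are \emph{not} walks of the Brauer $G$-set $E$, and hence carry no $\approx$-class. To organize the argument I would fix a normalization $\mathrm{sp}$ sending each $f_{ms}$-BC walk to a $\sim$-homotopic special walk and fixing special walks, and reduce faithfulness to the statement that $u\sim u'$ forces $\mathrm{sp}(u)\approx\mathrm{sp}(u')$; applied to two special walks this yields $w_1\approx w_2$ directly, since $\mathrm{sp}$ then fixes both. (As a bonus, $\Psi([u]_{\sim}):=[\mathrm{sp}(u)]_{\approx}$ becomes a two-sided inverse of $\Phi$, which re-proves the isomorphism.)

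The technical heart is therefore this compatibility of $\mathrm{sp}$ with $\sim$, for which I would induct on the length of a chain of elementary $\sim$-moves from $u$ to $u'$. For each generating relation of $\sim$ in \cite{LL2} I would show that, after normalizing the two sides, the resulting special walks are related by the $\approx$-relations $(mh1)$--$(mh3)$: the $g^{\pm1}$- and polygon-cancellation relations of $\sim$ match $(mh1)$, and the relation of $\sim$ encoding the compatibility of the $G$-action with the polygon partition (that $P(e)=P(h)$ iff $P(g^{d(e)}\cdot e)=P(g^{d(h)}\cdot h)$) matches exactly $(mh2)$, using that for a size-two polygon the polygon move is $\tau$ and that $\tau\sigma=\sigma\tau$ on $U$. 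The delicate point is that a single elementary $\sim$-move can insert or delete a non-special excursion; I would eliminate such excursions by a normalization-commutes-with-homotopy argument, showing that normalizing and then applying the move produces the same $\approx$-class as applying the move and then normalizing. Once this local compatibility is verified for each generator, the induction closes and faithfulness follows, completing the proof that $\Phi$ is an isomorphism.
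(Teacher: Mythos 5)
Your overall architecture (a functor that is the identity on objects, checked to be full and faithful) matches the paper's, and your fullness argument is essentially the paper's density argument. The problem is the faithfulness step, which you correctly identify as the crux but do not actually carry out. Two concrete issues. First, the normalization map $\mathrm{sp}$ as you describe it does not exist: a walk of $E$ is in general \emph{not} $\sim$-homotopic to a special walk, only to a special walk composed with a power of $g^{d(t(w))}$ (e.g.\ the closed walk $(e|g^{d(e)}|e)$ is not null-homotopic and is not homotopic to any special walk), so "$\mathrm{sp}$ fixes special walks and sends every walk to a $\sim$-homotopic special walk" is contradictory; you must work with the full normal form, a pair $(v,n)$ consisting of a special walk and an integer. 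Second, and more seriously, the proposed induction over elementary $\sim$-moves — showing that each generator of $\sim$, conjugated by normalization, preserves the $\approx$-class — is exactly where all the work lies (a single move can insert an excursion through an $f_{ms}$-BC walk letter that is not a Brauer-$G$-set walk letter, such as the identity polygon move $e\to e$), and you only gesture at resolving it with a "normalization-commutes-with-homotopy argument." As written this is a plan with an acknowledged hole, not a proof.

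The clean way to close the gap, and the route the paper takes, is to avoid tracking elementary moves altogether. One shows by direct rewriting (existence only, no uniqueness needed) that every walk $w$ of the Brauer $G$-set $E$ satisfies $w\approx(t(w)|g^{nd(t(w))}|t(v))v$ for some special walk $v$ and integer $n$. Now if $w_1\sim w_2$, write each $w_i\approx(t(w_i)|g^{n_i d(t(w_i))}|t(v_i))v_i$; since $\approx$ implies $\sim$, the two normal forms are $\sim$-homotopic, and the \emph{uniqueness} statement of \cite[Proposition 2.49]{LL2} (which is an external input about $f_{ms}$-BCs, proved there by covering-space methods) forces $(v_1,n_1)=(v_2,n_2)$, whence $w_1\approx w_2$. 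This gives faithfulness in three lines and makes your two-sided inverse $\Psi$ well defined with the corrected normal form $[(t(u)|g^{nd(t(u))}|t(v))v]_{\approx}$ in place of $[\mathrm{sp}(u)]_{\approx}$. If you insist on your move-by-move induction instead, you would in effect be reproving that uniqueness result, and you would need to write out the case analysis you currently defer.
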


\begin{proof}
In the following, when we say a walk of $E$, we always view $E$ as a Brauer $G$-set unless otherwise stated. For each walk $w$ of $E$ as an $f_{ms}$-BC, we denote by $\overline{w}$ the homotopy class of $w$ under the homotopy relation $\sim$. First we need to show the following fact: If $w_1,w_2$ are two walks of $E$ with $w_1\sim w_2$, then $w_1\approx w_2$. Using the same idea as in \cite[Proposition 3.46]{LL2} we can show that for each walk $w$ of $E$, there exist some special walk $v$ and some integer $n$ such that $w\approx(t(w)|g^{n d(t(w))}|t(v))v$. For two walks $w_1,w_2$ of $E$ with $w_1\sim w_2$, let $w_i\approx(t(w_i)|g^{n_i d(t(w_i))}|t(v_i))v_i$ ($i=1,2$), where $v_i$ is a special walk of $E$ and $n_i$ is an integer. Then $(t(w_1)|g^{n_1 d(t(w_1))}|t(v_1))v_1\sim (t(w_2)|g^{n_2 d(t(w_2))}|t(v_2))v_2$, and by \cite[Proposition 3.46]{LL2} we have $(v_1,n_1)=(v_2,n_2)$. Then $w_1\approx (t(w_1)|g^{n_1 d(t(w_1))}|t(v_1))v_1=(t(w_2)|g^{n_2 d(t(w_2))}|t(v_2))v_2\approx w_2$ since $t(w_1)=t(w_2)$.

For each walk $w$ of $E$ as an $f_{ms}$-BC, choose a walk $w'$ of $E$ such that $w\sim w'$ (by \cite[Proposition 3.46]{LL2} this can be done). For every $a$, $b\in A$, define a map $F:\Pi(E,A)(a,b)\rightarrow\Pi_{m}(E,A)(a,b)$, $\overline{w}\mapsto [w']$. If $w_1$, $w_2$ are two walks of $E$ (as an $f_{ms}$-BC) with $w_1\sim w_2$, then $w'_1\sim w'_2$ and $w'_1\approx w'_2$. So $F$ is well-defined. The same argument shows that $F$ does not depend on the choice of the walk $w'$ for each walk $w$ of $E$ as an $f_{ms}$-BC.

For $\overline{w_1}\in\Pi(E,A)(a,b)$ and $\overline{w_2}\in\Pi(E,A)(b,c)$, since $w'_2 w'_1$ is a walk of $E$ such that $w_2 w_1\sim w'_2 w'_1$, $F(\overline{w_2 w_1})=[w'_2 w'_1]=F(\overline{w_2})F(\overline{w_1})$. Therefore $F$ becomes a functor from $\Pi(E,A)$ to $\Pi_{m}(E,A)$. If $F(\overline{w_1})=F(\overline{w_2})$, then $w'_1\approx w'_2$. Therefore $w_1\sim w_2$ and $\overline{w_1}=\overline{w_2}$, so $F$ is faithful. For every morphism $[w]\in\Pi_{m}(E,A)(a,b)$, since $w$ and $w'$ are walks with $w\sim w'$, we have $w\approx w'$. Therefore $F(\overline{w})=[w']=[w]$ and $F$ is dense. Since $F$ induces identity map on the set of objects, $F$ is an isomorphism.
\end{proof}

\subsection{Covering theory for Brauer $G$-sets}
\

In this subsection, we define morphisms and coverings between Brauer $G$-sets, which are analogies of the notions of morphisms and coverings between f-BCs in \cite{LL2}; we compare different coverings between Brauer $G$-sets using their fundamental groups; we define special walks on Brauer $G$-sets and construct the universal cover of Brauer $G$-sets.

\begin{Def}\label{covering of modified f-BG}
Let $E=(E,U,\tau,d)$ and $E'=(E',U',\tau',d')$ be Brauer $G$-sets. A morphism (resp. covering) $f:E\rightarrow E'$ of Brauer $G$-sets is a morphism of $G$-sets ($G=\langle g\rangle$) satisfying the following conditions $(1)$, $(2)$ and $(3)$ (resp. $(1')$, $(2)$ and $(3)$):
\begin{itemize}
\item [$(1)$] For every $e\in U$, we have $f(e)\in U'$;
\item [$(1')$] For every $e\in E$, $e\in U$ if and only if $f(e)\in U'$;
\item [$(2)$] $f(\tau(e))=\tau'(f(e))$ for every $e\in U$;
\item [$(3)$] $d'(f(e))=d(e)$ for every $e\in E$.
\end{itemize}
\end{Def}

Note that a covering of f-BCs always induces bijections on polygons, but here a covering of Brauer $G$-sets $f$ may maps both of the two half-edges $e,\tau(e)$ in an edge to a double half-edge $\tau'(f(e))=f(e)$. This indicates that the covering of Brauer $G$-sets is more flexible than the covering of f-BCs.

\begin{Rem1} \label{lifting-walks}
\begin{itemize}
\item [(1)] Let $E$ and $E'$ as above. If $E$ and $E'$ are $f_{ms}$-BCs such that each polygon contains at most $2$ angles (that is, $\tau$ and $\tau'$ have no fixed points), then for any map $f:E\rightarrow E'$, $f$ is a covering of Brauer $G$-sets if and only if $f$ is a covering of f-BCs.

\item [(2)] A morphism $f:E\rightarrow E'$ of Brauer $G$-sets maps each walk of $E$ to a  walk of $E'$. Moreover, if $f:E\rightarrow E'$ is a covering of Brauer $G$-sets, then for every walk $w'=(h'|\delta_n\cdots\delta_1|e')$ of $E'$ and for every $e\in E$ (resp. $h\in E$) which lies over $e'$ (resp. $h'$), there exists a unique walk $w$ of $E$ which lies over $w'$ whose source (resp. terminal) is $e$ (resp. $h$).
\end{itemize}
\end{Rem1}

The covering theory for Brauer $G$-sets is similar to that of f-BCs. Here we only state several propositions that we need without proof.

The following proposition is an analogy to \cite[Proposition 3.16]{LL2}.

\begin{Prop}\label{modified homotopy lifting}
Let $f:E\rightarrow E'$ be a covering of Brauer $G$-sets, $u$, $v$ be two walks of $E$ with $s(u)=s(v)$ or $t(u)=t(v)$. Then $u\approx v$ if and only if $f(u)\approx f(v)$.
\end{Prop}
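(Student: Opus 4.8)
The plan is to prove the statement exactly as the analogous result \cite[Proposition 2.18]{LL2} for $f_{ms}$-BCs, adapting the argument to the homotopy relation $\approx$ generated by the moves $(mh1)$--$(mh3)$. The nontrivial direction is that $f(u)\approx f(v)$ implies $u\approx v$, since the forward implication is immediate: a covering $f$ is in particular a morphism, so it sends the elementary homotopies $(mh1)$ and $(mh2)$ of $E$ to the corresponding elementary homotopies of $E'$ (using conditions $(2)$ and $(3)$ of Definition \ref{covering of modified f-BG}, which guarantee $f\tau=\tau' f$ and $d'f=d$, hence $f\sigma=\sigma' f$), and compatibility with composition $(mh3)$ is formal; therefore $u\approx v$ forces $f(u)\approx f(v)$.

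For the converse I would use the unique walk lifting property recorded in Remark following Definition \ref{covering of modified f-BG}$(2)$: for every walk $w'$ of $E'$ and every prescribed lift of its source (or terminal), there is a unique walk of $E$ lying over $w'$ with that source (or terminal). The first key step is to establish a \emph{homotopy lifting} statement at the level of elementary moves: if $w'_1\approx w'_2$ is a single application of $(mh1)$ or $(mh2)$ in $E'$, and $\widetilde{w'_1}$ is a lift of $w'_1$ in $E$ starting at a fixed half-edge $e$, then the lift $\widetilde{w'_2}$ of $w'_2$ starting at the same $e$ satisfies $\widetilde{w'_1}\approx\widetilde{w'_2}$, and moreover these two lifts share the same terminal. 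Concretely, because $f$ reflects the structure maps $g^{\pm 1}$, $\tau$, $\sigma$, the preimage in $E$ of an elementary square $(mh2)$ in $E'$ is again a union of elementary squares $(mh2)$ in $E$ (one over each lift of the base half-edge $e'$), and similarly for the trivial cancellations $(mh1)$; so the elementary homotopy downstairs lifts to an elementary homotopy upstairs fixing the chosen endpoint.

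The second key step is to iterate this over a chain of elementary homotopies. Suppose $f(u)\approx f(v)$ via a finite sequence $f(u)=w'_0, w'_1, \ldots, w'_N=f(v)$ in which each $w'_{i+1}$ is obtained from $w'_i$ by one elementary move. Assuming $s(u)=s(v)$ (the case $t(u)=t(v)$ is symmetric, using terminal-based lifting), I lift the whole chain starting from the common source: let $\widetilde{w'_0}=u$, and define $\widetilde{w'_{i+1}}$ as the unique lift of $w'_{i+1}$ with source $s(u)$. By the elementary homotopy lifting step, $\widetilde{w'_i}\approx\widetilde{w'_{i+1}}$ for each $i$, and all the lifts have source $s(u)$; transitivity of $\approx$ then gives $u=\widetilde{w'_0}\approx\widetilde{w'_N}$. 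It remains to identify $\widetilde{w'_N}$ with $v$: both are lifts of $f(v)$ with source $s(u)=s(v)$, so by uniqueness of lifts they coincide, whence $u\approx v$.

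The main obstacle I anticipate is the elementary homotopy lifting step for the relation $(mh2)$, where I must check that the \emph{preimage} of a single $(mh2)$-square in $E'$ really is a disjoint union of $(mh2)$-squares in $E$ with matching endpoints, rather than some degenerate configuration. This requires using condition $(1')$ of the covering definition (so that $\tau$-edges lift to $\tau$-edges and double half-edges lift to double half-edges, i.e.\ membership in $U$ is detected by $f$) together with the commutation $f\sigma=\sigma' f$ and the fact that $\sigma(U)=U$; one must verify that for each lift $e$ of the base half-edge $e'$ the four corners $e$, $\tau(e)$, $g^{d(e)}\cdot e=\sigma(e)$, $\sigma(\tau(e))=\tau(\sigma(e))$ assemble into a genuine $(mh2)$-relation of $E$ lying over the given one. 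Once this local compatibility is confirmed, the global argument is a routine induction on the length of the homotopy chain, so I expect the proof to be short and to parallel \cite[Proposition 2.18]{LL2} closely.
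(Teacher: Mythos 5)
Your proposal is correct and is exactly the argument the paper intends: the paper states this proposition without proof, as an analogue of \cite[Proposition 2.18]{LL2}, and your adaptation (elementary-move homotopy lifting via the unique walk-lifting property, using condition $(1')$ and the commutations $f\tau=\tau'f$, $d'f=d$, $f\sigma=\sigma'f$ to lift $(mh1)$- and $(mh2)$-squares with fixed endpoints, then induction on the chain and uniqueness of lifts to identify the final lift with $v$) is precisely the standard proof being invoked. No gaps.
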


Therefore each covering $f:E\rightarrow E'$ of Brauer $G$-sets induces an injective map $f_{*}:\Pi_{m}(E,e)\rightarrow \Pi_{m}(E',f(e))$ of associated fundamental groups for each $e\in E$.

\begin{Rem1} \label{general-method-to-compute-the-fundamental-group}
Proposition \ref{modified homotopy lifting} suggests a general method to calculate the fundamental group of a given Brauer $G$-set $E$. Let $f:E\rightarrow E'$ be a covering of Brauer $G$-sets and $e'\in E'$. Then $f^{-1}(e')$ becomes a $\Pi_m(E',e')$-set: for any $e\in f^{-1}(e')$ and $[w']\in\Pi_m(E',e')$, $[w']\cdot e$ is defined as the terminal of $w$, where $w$ is the walk of $E$ lying over $w'$ with $s(w)=e$ (Proposition \ref{modified homotopy lifting} ensures that this group action is well-defined). Then for each $e\in f^{-1}(e')$, the stabilizer subgroup of $e$ in $\Pi_m(E',e')$ is equal to the image of $f_{*}:\Pi_m(E,e)\rightarrow \Pi_m(E',e')$, which is isomorphic to $\Pi_m(E,e)$.
\end{Rem1}

By analogy with \cite[Proposition 3.24]{LL2}, we have

\begin{Prop}\label{modified existence of morphism}
Let $E$, $E_{1}$, $E_{2}$ be Brauer $G$-sets with $E_{1}$ connected, and let $f_{1}:E_{1}\rightarrow E$ and $f_{2}:E_{2}\rightarrow E$ be coverings of Brauer $G$-sets. For $e_i\in E_{i}$ $(i=1,2)$ with $f_{1}(e_1)=f_{2}(e_2)$, there exists a covering $\phi:E_{1}\rightarrow E_{2}$ of Brauer $G$-sets such that $f_{1}=f_{2}\phi$ and $\phi(e_1)=e_2$ if and only if $f_{1*}(\Pi_{m}(E_1,e_1))\subseteq f_{2*}(\Pi_{m}(E_2,e_2))$. Moreover, if such $\phi$ exists, then it is unique.
\end{Prop}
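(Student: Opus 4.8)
The statement is the lifting criterion of covering theory transposed to Brauer $G$-sets, so the plan is to follow the topological template, using the unique walk-lifting property for coverings (the Remark following Definition \ref{covering of modified f-BG}) and the homotopy-lifting property (Proposition \ref{modified homotopy lifting}) in place of their topological counterparts. The necessity is routine: if such a $\phi$ exists, then $f_1=f_2\phi$ gives $f_1(w)=f_2(\phi(w))$ on every walk $w$ of $E_1$, hence $f_{1*}=f_{2*}\circ\phi_*$ on fundamental groups; since $\phi_*(\Pi_m(E_1,e_1))\subseteq\Pi_m(E_2,\phi(e_1))=\Pi_m(E_2,e_2)$, we get $f_{1*}(\Pi_m(E_1,e_1))=f_{2*}(\phi_*(\Pi_m(E_1,e_1)))\subseteq f_{2*}(\Pi_m(E_2,e_2))$. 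I would record this direction first.

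For sufficiency I would construct $\phi$ directly by lifting. Given $e\in E_1$, use connectedness of $E_1$ to choose a walk $w$ from $e_1$ to $e$; then $f_1(w)$ is a walk of $E$ from $f_1(e_1)=f_2(e_2)$ to $f_1(e)$, and by unique walk lifting for the covering $f_2$ there is a unique walk $\tilde w$ of $E_2$ lying over $f_1(w)$ with source $e_2$. I set $\phi(e):=t(\tilde w)$.

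The \emph{main obstacle} is well-definedness, and this is exactly where the hypothesis is consumed. Given two walks $w_0,w_1$ from $e_1$ to $e$, the walk $\bar w_1 w_0$ is closed at $e_1$, so $[f_1(\bar w_1 w_0)]=f_{1*}[\bar w_1 w_0]$ lies in $f_{1*}(\Pi_m(E_1,e_1))\subseteq f_{2*}(\Pi_m(E_2,e_2))$; thus $f_1(\bar w_1 w_0)=\overline{f_1(w_1)}\,f_1(w_0)\approx f_2(c)$ for some walk $c$ of $E_2$ closed at $e_2$. Both sides are homotopic walks of $E$ with the same source $f_2(e_2)$, and their lifts starting at $e_2$ have a common terminal by Proposition \ref{modified homotopy lifting}. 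The lift of $f_2(c)$ is $c$ itself, which is closed at $e_2$; the lift of $\overline{f_1(w_1)}\,f_1(w_0)$ is $\tilde w_0$ (ending at $\phi(e)$ via $w_0$) followed by a lift of $\overline{f_1(w_1)}$ from $\phi(e)$. Hence this latter lift returns to $e_2$; reversing it gives a lift of $f_1(w_1)$ from $e_2$ ending at $\phi(e)$, which by uniqueness of lifts starting at $e_2$ must be $\tilde w_1$. Therefore the lifts of $f_1(w_0)$ and $f_1(w_1)$ from $e_2$ share a terminal, so $\phi(e)$ is independent of the chosen walk.

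It then remains to verify that $\phi$ is a covering with the stated properties, which I would check by taking, for $g\cdot e$, $g^{-1}\cdot e$, or $\tau(e)$, the walk $w$ extended by one further step and lifting it: $G$-equivariance of $\phi$ and condition $(2)$ follow from the $g^{\pm1}$- and $\tau$-steps lifting uniquely; condition $(3)$ is forced by $f_2(\phi(e))=f_1(e)$ together with condition $(3)$ for $f_1$ and $f_2$; and condition $(1')$ holds because $f_1$ and $f_2$ each detect membership in $U$ (being coverings) and $f_2(\phi(e))=f_1(e)$. The identity $f_1=f_2\phi$ holds by construction and $\phi(e_1)=e_2$ by lifting the trivial walk at $e_1$. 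Finally, uniqueness is immediate: if $\phi,\psi$ both satisfy the conditions then $\phi(e_1)=\psi(e_1)=e_2$, and for any walk $w$ from $e_1$ to $e$ the images $\phi(w)$ and $\psi(w)$ are both lifts of $f_1(w)$ with source $e_2$, hence equal by unique walk lifting; comparing terminals gives $\phi(e)=\psi(e)$, so $\phi=\psi$ by connectedness of $E_1$.
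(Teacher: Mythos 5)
Your proof is correct: it is the standard lifting criterion, constructing $\phi$ by lifting walks along $f_2$ and using the subgroup hypothesis together with Proposition \ref{modified homotopy lifting} to establish well-definedness. The paper states this proposition without proof, as an analogue of \cite[Proposition 2.28]{LL2}, and your argument is exactly the one intended; the only point specific to Brauer $G$-sets is checking that $\phi$ satisfies $(1')$ (so that it is a covering and not merely a morphism), which you handle correctly by combining $(1')$ for $f_1$ and $f_2$ with $f_2\phi=f_1$.
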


Similar to f-BCs, we have the notion of regular covering between Brauer $G$-sets.

\begin{Def}\label{regular covering}
Let $E,E'$ be connected Brauer $G$-sets. A covering $f:E\rightarrow E'$ is said to be regular if $f_{*}(\Pi(E,e))$ is a normal subgroup of $\Pi(E',f(e))$ for some $e\in E$ (or equivalently, $f_{*}(\Pi(E,e))$ is a normal subgroup of $\Pi(E',f(e))$ for each $e\in E$).
\end{Def}

In the following, we introduce the quotient of a Brauer $G$-set (by a group of automorphisms).

Let $E=(E,U,\tau,d)$ be a Brauer $G$-set and $\Pi$ be a group of automorphisms of $E$. Similar to the case of f-BC, we may define a quadruple $E/\Pi=(E/\Pi,U',\tau',d')$ as follows: $E/\Pi$ is the $G$-set of $\Pi$-orbit of $E$; $U':=\{[e]\in E/\Pi\mid e\in U\}$ is a subset of $E/\Pi$; $\tau'$ is an involution of $U'$ given by $\tau'([e])=[\tau(e)]$ for every $[e]\in U'$; $d':E/\Pi\rightarrow \mathbb{Z}_{>0}$ is a function on $E/\Pi$ given by $d'([e])=d(e)$ (we denote $[e]$ the $\Pi$-orbit of $e\in E$).

The following Lemma is an analogy of \cite[Lemma 3.35]{LL2} and \cite[Lemma 3.36]{LL2}. Note that here the condition that $\Pi$ acts admissibly on $E$ is not needed.

\begin{Lem}\label{modified projection is a covering}
$E/\Pi=(E/\Pi,U',\tau',d')$ is a Brauer $G$-set, and the natural projection $p:E\rightarrow E/\Pi$ is a regular covering of Brauer $G$-sets.
\end{Lem}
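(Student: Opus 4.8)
The plan is to verify directly that the quadruple $E/\Pi=(E/\Pi,U',\tau',d')$ satisfies all the defining data and axioms of Definition \ref{modified f-BG}, and then to check that $p$ satisfies conditions $(1')$, $(2)$, $(3)$ of Definition \ref{covering of modified f-BG}. The whole argument rests on the three properties that every $\psi\in\Pi$ enjoys as an automorphism of $E$: it is a $G$-set automorphism, it satisfies $\psi(U)=U$ and $\psi\tau=\tau\psi$ on $U$, and it satisfies $d\circ\psi=d$. (The equality $\psi(U)=U$ follows because $\psi^{-1}\in\Pi$ is again a morphism, so both $\psi(U)\subseteq U$ and $\psi^{-1}(U)\subseteq U$ hold.)

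First I would check that $E/\Pi$ carries a well-defined $G$-action $g\cdot[e]=[g\cdot e]$, using that each $\psi\in\Pi$ commutes with the $G$-action; this makes $E/\Pi$ a $G$-set and $p$ a morphism of $G$-sets. Next I would verify well-definedness of $d'$, $U'$ and $\tau'$: if $[e_1]=[e_2]$ with $e_2=\psi(e_1)$, then $d(e_2)=d(e_1)$ makes $d'$ well defined; $\psi(U)=U$ shows that a $\Pi$-orbit lies entirely in $U$ as soon as one representative does, so $U'$ is well defined and condition $(1')$ for $p$ holds; and $\tau(\psi(e_1))=\psi(\tau(e_1))$ shows that $\tau'([e])=[\tau(e)]$ is well defined. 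That $\tau'$ is an involution of $U'$ is then immediate from $\tau^2=\mathrm{id}$.

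Then I would verify axioms $(mf1)$ and $(mf2)$ for $E/\Pi$. For $(mf1)$, if $[e_2]=g^n\cdot[e_1]=[g^n\cdot e_1]$ then $e_2$ is $\Pi$-equivalent to $g^n\cdot e_1$, so $d'([e_2])=d(g^n\cdot e_1)=d(e_1)=d'([e_1])$ by $(mf1)$ for $E$. For $(mf2)$, the key computation is that the Nakayama automorphism $\sigma'$ of $E/\Pi$ satisfies $\sigma'([e])=g^{d(e)}\cdot[e]=[\sigma(e)]$, that is $\sigma'\circ p=p\circ\sigma$; from this, $\sigma(U)=U$ yields $\sigma'(U')=U'$, and $\tau\sigma=\sigma\tau$ on $U$ yields $\tau'\sigma'=\sigma'\tau'$ on $U'$. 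Finally, conditions $(2)$ and $(3)$ for $p$ are read off directly from the definitions of $\tau'$ and $d'$, completing the proof that $p$ is a covering.

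There is no genuine technical obstacle here beyond careful bookkeeping; the point worth emphasizing — and the reason, as the remark preceding the statement notes, that no admissibility hypothesis on $\Pi$ is needed — is that nothing in the argument requires $\tau'$ to be fixed-point free. If two half-edges $e$ and $\tau(e)$ forming an edge get identified in the quotient, then $[e]=[\tau(e)]$ simply becomes a double half-edge of $E/\Pi$, which is permitted in a Brauer $G$-set. This is exactly the flexibility built into Definition \ref{modified f-BG} by allowing $\tau$ to have fixed points, and it is what makes the class of Brauer $G$-sets closed under arbitrary quotients, in contrast to $f_{ms}$-BGs. So the only subtlety is to recognize that the potential failure point of the f-BC argument has been designed away.
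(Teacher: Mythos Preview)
Your proposal is correct and is precisely the routine verification that the paper omits, referring instead to the analogous \cite[Lemma 2.39 and Lemma 2.40]{LL2}; you have also correctly isolated the one conceptual point the paper flags, namely that allowing $\tau'$ to have fixed points is what removes the need for $\Pi$ to act admissibly.
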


Moreover, by imitating the proof of \cite[Theorem 3.37]{LL2}, we see that each regular covering $f:E\rightarrow E'$ can be decomposed into a natural projection $E\rightarrow E/\mathrm{Aut}(f)$ and an isomorphism $E/\mathrm{Aut}(f)\xrightarrow{\sim} E'$, where $\mathrm{Aut}(f)$ is the group which consists of automorphisms $\phi$ of $E$ with $f\phi=f$. If $\Omega$ is a group which is isomorphic to $\mathrm{Aut}(f)$, then we call $f$ a regular covering with group $\Omega$.

By analogy with \cite[Subsection 3.4]{LL2}, we construct the universal cover of a Brauer $G$-set.

Let $E=(E,U,\tau,d)$ be a Brauer $G$-set such that $\tau$ has a fixed point. Define a Brauer $G$-set $\widehat{E}=(\widehat{E},\widehat{U},\widehat{\tau},\widehat{d})$ as follows: $\widehat{E}=E_1\sqcup E_2$ as a $G$-set with $E_1=E_2=E$, and denote the element $e\in E$ by $e_i$ if we consider it as an element of $E_i$. Define $\widehat{U}=U_1\sqcup U_2$ as a subset of $\widehat{E}$, where $U_i$ denotes the subset $U$ of $E_i$ for $i=1,2$, and for each $e\in U$ and each $i=1$, $2$, define
\begin{equation*}
\widehat{\tau}(e_i)= \begin{cases}
\tau(e)_i, \text{ if } \tau(e)\neq e; \\
e_{3-i}, \text{ if } \tau(e)=e.
\end{cases}
\end{equation*}
Define $\widehat{d}(e_i)=d(e)$. Note that $\widehat{\tau}$ has no fixed points, that is, $\widehat{E}$ is an $f_{ms}$-BC such that each polygon of it contains at most $2$ angles. For example, the Brauer $G$-set $\widehat{E}$ for $E$ in Example \ref{modified-fms-BG-Ex-1} can be viewed by the diagram
$$\begin{tikzpicture}
\draw (0,0) circle (0.7);
\fill (-0.7,0) circle (0.5ex);
\fill (0.7,0) circle (0.5ex);
\node at(0.75,0.5) {$e_2$};
\node at(-0.75,0.5) {$e_1$};
\node at(0.75,-0.5) {$e_2'$};
\node at(-0.75,-0.5) {$e_1'$};
\end{tikzpicture},$$
and the Brauer $G$-set $\widehat{E}$ for $E$ in Example \ref{modified-fms-BG-Ex-2} can be viewed by the diagram
$$\begin{tikzpicture}
\fill (0,0) circle (0.5ex);
\draw    (0,0)--(1,0);
\draw    (0,0)--(-1,0);
\draw (-1,0) circle (1);
\node at(0.6,0.2) {$e_{11}$};
\node at(-0.6,0.2) {$e_{21}$};
\node at(0,-0.6) {$e_1'$};
\node at(0.05,0.6) {$e_1$};
\fill (2.0,0) circle (0.5ex);
\draw    (3.0,0)--(1,0);
\draw (3,0) circle (1);
\node at(2.6,0.2) {$e_{22}$};
\node at(1.4,0.2) {$e_{12}$};
\node at(1.9,-0.6) {$e_2$};
\node at(1.9,0.6) {$e_2'$};
\end{tikzpicture}.$$

Let $\phi:\widehat{E}\rightarrow\widehat{E}$ be the map which sends $e_i$ to $e_{3-i}$ for each $e\in E$ and each $i=1$, $2$. It is straightforward to show that $\phi$ is a morphism of Brauer $G$-sets. Since $\phi^2=id$, $\phi$ is an automorphism of $\widehat{E}$. Moreover, $\widehat{E}/\langle\phi\rangle\cong E$ as Brauer $G$-sets. By Lemma \ref{modified projection is a covering}, we have

\begin{Lem}\label{pi is a covering of modified f-BGs}
The map $\pi:\widehat{E}\rightarrow E$, $e_i\mapsto e$ is a regular covering of Brauer $G$-sets with group $\mathbb{Z}/2\mathbb{Z}$.
\end{Lem}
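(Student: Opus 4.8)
The plan is to exhibit $\pi$ as the composite of a quotient projection with an isomorphism, so that the assertion reduces immediately to Lemma \ref{modified projection is a covering}. First I would record that $\langle\phi\rangle=\{\mathrm{id}_{\widehat{E}},\phi\}$ is a (finite) group of automorphisms of the Brauer $G$-set $\widehat{E}$; this is already available, since it was shown just above that $\phi$ is an automorphism with $\phi^2=\mathrm{id}$. Applying Lemma \ref{modified projection is a covering} to $\Pi=\langle\phi\rangle$ then shows that the natural projection $p:\widehat{E}\to\widehat{E}/\langle\phi\rangle$ is a covering of Brauer $G$-sets, with no admissibility hypothesis needed.

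Next I would pin down the identification $\widehat{E}/\langle\phi\rangle\cong E$ explicitly. Since $\phi(e_i)=e_{3-i}$ and $e_1\neq e_2$ in $\widehat{E}=E_1\sqcup E_2$, every $\langle\phi\rangle$-orbit is a two-element set $\{e_1,e_2\}$ indexed by $e\in E$; let $\Phi:\widehat{E}/\langle\phi\rangle\to E$ be the bijection sending $\{e_1,e_2\}\mapsto e$. The isomorphism $\widehat{E}/\langle\phi\rangle\cong E$ asserted above says precisely that $\Phi$ is an isomorphism of Brauer $G$-sets, matching the $G$-action, the subset $\widehat{U}/\langle\phi\rangle$ with $U$, the induced involution with $\tau$, and the induced degree function with $d$. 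By construction $\Phi\circ p$ sends $e_i\mapsto\{e_1,e_2\}\mapsto e$, which is exactly $\pi$; hence $\pi=\Phi\circ p$.

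Finally, an isomorphism of Brauer $G$-sets trivially satisfies conditions $(1')$, $(2)$, $(3)$ of Definition \ref{covering of modified f-BG} and is therefore a covering, and a composite of two coverings is again a covering (each of the three conditions is stable under composition). Thus $\pi=\Phi\circ p$ is a covering of Brauer $G$-sets, as claimed. I do not expect a genuine obstacle here; the only point deserving a moment's care is that the quotient involution $\tau'$ on $\widehat{E}/\langle\phi\rangle$ corresponds under $\Phi$ to $\tau$ on $E$ — in particular a fixed point $e$ of $\tau$ (a double half-edge of $E$) must correspond to the orbit $\{e_1,e_2\}$, which is indeed $\tau'$-fixed because $\widehat{\tau}(e_1)=e_2$ lies in the same $\langle\phi\rangle$-orbit. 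Since this matching is exactly the content of the already-established isomorphism $\widehat{E}/\langle\phi\rangle\cong E$, it may be taken for granted, and the lemma follows.
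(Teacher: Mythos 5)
Your proposal is correct and follows essentially the same route as the paper: the paper likewise observes that $\phi$ is an automorphism with $\phi^2=\mathrm{id}$, that $\widehat{E}/\langle\phi\rangle\cong E$ as Brauer $G$-sets, and then invokes Lemma \ref{modified projection is a covering} to conclude that $\pi$ is a covering. Your extra remarks (the explicit factorization $\pi=\Phi\circ p$ and the check that a $\tau$-fixed point of $E$ corresponds to the $\tau'$-fixed orbit $\{e_1,e_2\}$) are accurate elaborations of details the paper leaves implicit.
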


\begin{Def}\label{special-walk-Brauer-G-set}
Let $E=(E,U,\tau,d)$ be a Brauer $G$-set, a walk $w$ of $E$ is called special if it is of the form \begin{multline*} (g^{i_k}\cdot e_k|g^{i_k}|e_k)(e_k|\tau|g^{i_{k-1}}\cdot e_{k-1})(g^{i_{k-1}}\cdot e_{k-1}|g^{i_{k-1}}|e_{k-1})(e_{k-1}|\tau|g^{i_{k-2}}\cdot e_{k-2})\cdots \\
(e_2|\tau|g^{i_{1}}\cdot e_{1})(g^{i_1}\cdot e_1|g^{i_1}|e_1)(e_1|\tau|g^{i_{0}}\cdot e_{0})(g^{i_0}\cdot e_0|g^{i_0}|e_0),  \end{multline*}
where $0\leq i_0<d(e_0)$, $0\leq i_k<d(e_k)$, and $0< i_l<d(e_l)$ for all $1\leq l\leq k-1$.
\end{Def}

\begin{Rem1}
\begin{enumerate}
\item[$(1)$] If $k=0$ and $i_0=0$ in the definition above, then $w=(e_0||e_0)$ is the walk of length $0$ at $e_0$.
\item[$(2)$] If $\tau$ has no fixed points (that is, $E$ is an $f_{ms}$-BC such that each polygon of it contains at most $2$ angles), then a special walk of $E$ is just a special walk of $E$ as an $f_{ms}$-BC.
\end{enumerate}
\end{Rem1}

For a Brauer $G$-set $E=(E,U,\tau,d)$ and for $e\in E$, similar to the case of $f_{ms}$-BC, we can define a connected Brauer $G$-set $B_{(E,e)}=(B_{(E,e)},U_e,\tau_e,d_e)$ such that $\tau_e$ has no fixed points (that is, $B_{(E,e)}$ is an $f_{ms}$-BC such that each polygon of $B_{(E,e)}$ contains at most two angles) as follows: $B_{(E,e)} =\{$special walks of $E$ starting at $e\}$. The action of $G=\langle g\rangle$ on $B_{(E,e)}$ is given by
\begin{equation*}
g\cdot w= \begin{cases}
g^{i_{k}+1}\tau g^{i_{k-1}}\tau\cdots\tau g^{i_1}\tau g^{i_0}, & \text{if } w=g^{i_k}\tau g^{i_{k-1}}\tau\cdots\tau g^{i_1}\tau g^{i_0} \text{ with } i_k <d(t(w))-1; \\
\tau g^{i_{k-1}}\tau\cdots\tau g^{i_1}\tau g^{i_0}, & \text{if } w=g^{i_k}\tau g^{i_{k-1}}\tau\cdots\tau g^{i_1}\tau g^{i_0} \text{ with } i_k =d(t(w))-1.
\end{cases}
\end{equation*}
The subset $U_e$ of $B_{(E,e)}$ is given by $U_e=\{w\in B_{(E,e)}\mid t(w)\in U\}$, the involution $\tau_e$ of $U_e$ is given by $\tau_e(w)=(\tau(e)|\tau|e)$ for $w=(e||e)$ and
\begin{equation*}
\tau_e(w)=\begin{cases}
(\tau(t(w))|\tau|t(w))w, \text{ if } w=g^{i_k}\tau g^{i_{k-1}}\tau\cdots\tau g^{i_1}\tau g^{i_0} \text{ with } i_k >0; \\
g^{i_{k-1}}\tau\cdots\tau g^{i_1}\tau g^{i_0}, \text{ if } w=g^{i_k}\tau g^{i_{k-1}}\tau\cdots\tau g^{i_1}\tau g^{i_0} \text{ with } i_k =0
\end{cases}
\end{equation*}
for $w\neq (e||e)$;
and the degree function $d_e$ is given by $d_e(w)=d(t(w))$. Note that $B_{(E,e)}$ is f-degree-free.

The Brauer $G$-set $\mathbb{Z}B_{(E,e)}=\{(w,n)\mid w\in B_{(E,e)}$ and $n\in\mathbb{Z}\}$ can also be defined similarly as in \cite[Subsection 3.4]{LL2}, whose involution also has no fixed points. Note that when the involution $\tau$ of $E$ has no fixed points (that is, $E$ is an $f_{ms}$-BC such that each polygon of $E$ contains at most two angles), the $f_{ms}$-BC $B_{(E,e)}$ (resp. $\mathbb{Z}B_{(E,e)}$) defined as above is just the $f_{ms}$-BC $B_{(E,e)}$ (resp. $\mathbb{Z}B_{(E,e)}$) defined in \cite[Subsection 3.4]{LL2}.

\begin{Prop}\label{universal-cover-modified-fms-BG}
Let $\sigma$ be the Nakayama automorphism of a Brauer $G$-set $E$. Then there exists a covering of Brauer $G$-sets $q:\mathbb{Z}B_{(E,e)}\rightarrow E$, $(w,n)\mapsto \sigma^{n}(t(w))$, which is universal in the sense of \cite[Corollary 3.26]{LL2}.
\end{Prop}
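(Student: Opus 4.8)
We must construct a covering $q:\mathbb{Z}B_{(E,e)}\to E$ of Brauer $G$-sets sending $(w,n)\mapsto\sigma^n(t(w))$, and verify it is universal, i.e.\ that $\mathbb{Z}B_{(E,e)}$ is connected and has trivial fundamental group (so that the lifting criterion of Proposition \ref{modified existence of morphism} guarantees a covering from $\mathbb{Z}B_{(E,e)}$ to any connected cover of $E$ through any chosen basepoint). The plan is to reduce everything to the already-established $f_{ms}$-BC case via the auxiliary cover $\pi:\widehat{E}\to E$ of Lemma \ref{pi is a covering of modified f-BGs}.

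\begin{proof}[Proof sketch]
First I would check that $q$ is a well-defined morphism of $G$-sets and satisfies conditions $(1')$, $(2)$, $(3)$ of Definition \ref{covering of modified f-BG}. That $q$ commutes with the $g$-action follows directly from the two-case formula for $g\cdot w$ on $\mathbb{Z}B_{(E,e)}$: in the first case $t(g\cdot w)=g\cdot t(w)$, and in the boundary case $i_k=d(t(w))-1$ the terminal resets to $t(g^{i_{k-1}}\tau\cdots)$, and one computes that $\sigma^n$ absorbs exactly one application of $\sigma=g^{d(\cdot)}$, so $q(g\cdot(w,n))=g\cdot q(w,n)$ in both cases. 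Condition $(3)$ holds because $d_e(w)=d(t(w))=d'(q(w,n))$ by $(mf1)$, and conditions $(1')$ and $(2)$ follow from the definitions of $U_e$ and $\tau_e$ together with $(mh2)$-type compatibility; these are the same routine verifications as in \cite[Subsection 2.4]{LL2}, now with the observation (already recorded in the excerpt) that $\tau_e$ has no fixed points and that $B_{(E,e)}$ is f-degree trivial. Then $q$ is a covering precisely because the local path-lifting property holds: for each half-edge over $q(w,n)$ there is a unique preimage, which is exactly the content of special walks being in bijection with lifted paths.

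The substantive part is universality, for which I would invoke the characterization in \cite[Corollary 2.30]{LL2}: it suffices to show $\mathbb{Z}B_{(E,e)}$ is connected and $\Pi_m(\mathbb{Z}B_{(E,e)},-)$ is trivial. My approach is to transport these facts from the $f_{ms}$-BC setting. When $\tau$ has no fixed points, $E$ is itself an $f_{ms}$-BC and the excerpt explicitly records that $B_{(E,e)}$ and $\mathbb{Z}B_{(E,e)}$ coincide with the objects of \cite[Subsection 2.4]{LL2}, where universality is proved; so that case is immediate. When $\tau$ \emph{has} fixed points, I would use the auxiliary double cover $\pi:\widehat{E}\to E$ of Lemma \ref{pi is a covering of modified f-BGs}, whose domain $\widehat{E}$ is an $f_{ms}$-BC (its $\widehat\tau$ has no fixed points). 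The key structural point is that special walks of $E$ lift uniquely along $\pi$ to special walks of $\widehat{E}$, giving a $G$-set isomorphism $B_{(E,e)}\cong B_{(\widehat{E},\tilde e)}$ compatible with $U_\bullet$, $\tau_\bullet$, $d_\bullet$ for a chosen lift $\tilde e$ of $e$; hence $\mathbb{Z}B_{(E,e)}\cong\mathbb{Z}B_{(\widehat{E},\tilde e)}$ as Brauer $G$-sets. Since $\widehat E$ is a genuine $f_{ms}$-BC, $\mathbb{Z}B_{(\widehat E,\tilde e)}$ is its universal cover and therefore simply connected and connected; transporting along the isomorphism gives the same for $\mathbb{Z}B_{(E,e)}$.

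Finally I would reconcile the two factorizations: by Proposition \ref{modified existence of morphism} the universal cover is unique up to isomorphism, and one checks $q$ is compatible with $\pi$ in the sense that $q=\pi\circ\tilde q$ where $\tilde q:\mathbb{Z}B_{(\widehat E,\tilde e)}\to\widehat E$ is the universal cover of $\widehat E$ composed with the identification above; this identifies our $q$ with a genuine universal cover and completes the argument. The main obstacle I anticipate is the fixed-point case: one must verify carefully that the lift of a special walk of $E$ across $\pi$ is again \emph{special} (the inequalities $0<i_l<d(e_l)$ and the boundary conditions on $i_0,i_k$ are preserved) and that $\widehat\tau$ on the lifted walk matches $\tau_e$ under the identification, precisely at the half-edges where $\tau$ had a fixed point but $\widehat\tau$ does not. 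Making this bijection of special walks respect the involution structure is where the real checking lies; once it is in place, connectedness and simple-connectedness descend formally from the $f_{ms}$-BC case.
\end{proof}
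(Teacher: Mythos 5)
Your proposal is correct and follows essentially the same route as the paper: treat the verification that $q$ is a covering as routine, reduce universality to showing $\Pi_{m}(\mathbb{Z}B_{(E,e)})=\{1\}$ via Proposition \ref{modified existence of morphism}, handle the fixed-point-free case by identifying with the $f_{ms}$-BC construction of \cite[Subsection 2.4]{LL2}, and handle the case where $\tau$ has a fixed point by using the double cover $\pi:\widehat{E}\rightarrow E$ and the bijection it induces on special walks to get $B_{(E,e)}\cong B_{(\widehat{E},x)}$. The paper's proof is exactly this argument, stated more tersely.
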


\begin{proof}
It is straightforward to show that $q$ is a covering of Brauer $G$-sets. By Proposition \ref{modified existence of morphism}, it suffices to show that $\Pi_{m}(\mathbb{Z}B_{(E,e)})=\{1\}$. When the involution $\tau$ of $E$ has no fixed points, it follows from Lemma \ref{two definitions of fundamental group are equal} and \cite[Proposition 3.44]{LL2} that $\Pi_{m}(\mathbb{Z}B_{(E,e)})=\{1\}$.

When the involution $\tau$ of $E$ has a fixed point, let $\pi:\widehat{E}\rightarrow E$ be the covering of Brauer $G$-sets in Lemma \ref{pi is a covering of modified f-BGs}. Choose some $x\in\widehat{E}$ with $\pi(x)=e$. Since $\pi$ is a covering of Brauer $G$-sets, by Remark \ref{lifting-walks} (2), $\pi$ induces a bijection between the set of special walks of $\widehat{E}$ starting at $x$ and the set of special walks of $E$ starting at $e$. Therefore $\pi$ induces an isomorphism between $B_{(\widehat{E},x)}$ and $B_{(E,e)}$. Since $\widehat{E}$ is a Brauer $G$-set whose involution has no fixed points, again by Lemma \ref{two definitions of fundamental group are equal} and \cite[Proposition 3.44]{LL2}, we have $\Pi_{m}(\mathbb{Z}B_{(\widehat{E},x)})=\{1\}$. Therefore $\Pi_{m}(\mathbb{Z}B_{(E,e)})=\{1\}$.
\end{proof}

In general $B_{(E,e)}$ is a finite or infinite tree (up to deleting some half-edges) for any Brauer $G$-set $E$, here is an example:

\begin{Ex1}
Let $E$ be the Brauer $G$-set in Example \ref{modified-fms-BG-Ex-1}. Then $B_{(E,e)}$ is an infinite tree with free f-degree, which is given by the diagram
$$\begin{tikzpicture}
\fill (0,0) circle (0.5ex);
\draw (0,0)--(1,0);
\fill (1,0) circle (0.5ex);
\draw (1,0)--(2,0);
\fill (2,0) circle (0.5ex);
\draw (2,0)--(3,0);
\fill (3,0) circle (0.5ex);
\node at(-0.4,0) {$\cdot$};
\node at(-0.6,0) {$\cdot$};
\node at(-0.8,0) {$\cdot$};
\node at(3.4,0) {$\cdot$};
\node at(3.6,0) {$\cdot$};
\node at(3.8,0) {$\cdot$};
\end{tikzpicture}$$
\end{Ex1}

The following proposition is an analogy of \cite[Proposition 3.46]{LL2}.

\begin{Prop}\label{modified unique factorization}
Let $E=(E,U,\tau,d)$ be a Brauer $G$-set, $w$ be a walk of $E$. Then there exist a unique special walk $v$ and a unique integer $n$ such that $w\approx (t(w)|g^{n d(t(w))}|t(v))v$.
\end{Prop}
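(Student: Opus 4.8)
The plan is to derive both existence and uniqueness from the universal covering $q\colon \mathbb{Z}B_{(E,e_0)}\to E$ of Proposition \ref{universal-cover-modified-fms-BG}, taking $e_0:=s(w)$. Recall that the elements of $\mathbb{Z}B_{(E,e_0)}$ are precisely the pairs $(v,n)$ with $v$ a special walk of $E$ starting at $e_0$ and $n\in\mathbb{Z}$, that $q(v,n)=\sigma^n(t(v))=g^{nd(t(v))}\cdot t(v)$, and that $\Pi_m(\mathbb{Z}B_{(E,e_0)})=\{1\}$. I take as base point $\widetilde{e_0}$ the pair whose special-walk component is the trivial walk $(e_0||e_0)$ and whose integer component is $0$, so that $q(\widetilde{e_0})=e_0$. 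By the unique path-lifting property of coverings (item (2) of the Remark after Definition \ref{covering of modified f-BG}), the given walk $w$ lifts to a unique walk $\widetilde{w}$ with $s(\widetilde{w})=\widetilde{e_0}$; writing $(v,n):=t(\widetilde{w})$ produces the candidate pair. Since $q(v,n)=\sigma^n(t(v))=t(w)$ and $t(v),t(w)$ lie in one $G$-orbit, we get $d(t(v))=d(t(w))$, so $w':=(t(w)\,|\,g^{nd(t(w))}\,|\,t(v))\,v$ is a well-formed walk from $e_0$ to $t(w)$.

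Everything then reduces to two bookkeeping facts about $\mathbb{Z}B_{(E,e_0)}$: (i) the lift of a special walk $v$ starting at $\widetilde{e_0}$ terminates at $(v,0)$; and (ii) for every $m\in\mathbb{Z}$ the lift of $(\sigma^m(t(v))\,|\,g^{md(t(v))}\,|\,t(v))$ starting at $(v,0)$ terminates at $(v,m)$, equivalently $\sigma\cdot(v,0)=(v,1)$ for the $G$-action on $\mathbb{Z}B_{(E,e_0)}$. Granting these, the lift of $w'$ from $\widetilde{e_0}$ first traces $v$ to $(v,0)$ by (i) and then traces the $\sigma^n$-part to $(v,n)$ by (ii), so $t(\widetilde{w'})=(v,n)=t(\widetilde{w})$. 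As $\widetilde{w}$ and $\widetilde{w'}$ have common source $\widetilde{e_0}$ and common terminal, triviality of $\Pi_m(\mathbb{Z}B_{(E,e_0)})$ forces $\widetilde{w}\approx\widetilde{w'}$, and applying $q$ via Proposition \ref{modified homotopy lifting} gives $w\approx w'$. This establishes existence.

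For uniqueness, suppose $(t(w)\,|\,g^{nd(t(w))}\,|\,t(v))\,v\approx (t(w)\,|\,g^{n'd(t(w))}\,|\,t(v'))\,v'$ for special walks $v,v'$ and integers $n,n'$. Both sides have source $e_0$, so by Proposition \ref{modified homotopy lifting} their lifts from $\widetilde{e_0}$ are homotopic and in particular share a terminal. By (i) and (ii) these terminals are $(v,n)$ and $(v',n')$, whence $(v,n)=(v',n')$ as elements of $\mathbb{Z}B_{(E,e_0)}$; that is, $v=v'$ and $n=n'$.

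The substance of the argument is thus the verification of (i) and (ii) from the explicit $G$-action and involution $\tau_e$ on $\mathbb{Z}B_{(E,e_0)}$. I expect (ii) to be routine: applying $g$ exactly $d(t(v))$ times to $(v,0)$ raises the leading exponent $i_k$ of the special walk up to $d-1$, passes through the single wrap-around that both restores the walk $v$ and advances the integer coordinate by one, and then climbs back to $i_k$; iterating and allowing $g^{-1}$ for $m<0$ gives the claim. The main obstacle is (i): the lift of $v$ must be followed step by step, and one has to check that each $g$-step and each $\tau$-step of $v$ is matched by the corresponding move in $\mathbb{Z}B_{(E,e_0)}$ without introducing a spurious shift of the integer coordinate, the delicate points being the $\tau$-transitions and the block boundaries $i_l=0$, where the defining case distinctions for $\tau_e$ and for the $G$-action switch. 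This is a finite induction on the number $k$ of $\tau$'s in $v$, but it is where the careful sheet-counting must be done. (A purely combinatorial alternative—reduce $w$ by $(mh1)$, apply Euclidean division within each $g$-block, and use $(mh2)$ to sweep all $\sigma$-powers to the terminal—also yields existence, but its termination after the repeated $(mh1)$-collapses is no easier than (i)--(ii), so the covering route is preferable.)
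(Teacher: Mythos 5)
Your proof is correct and follows essentially the same route as the paper, which likewise reduces the statement to the universal cover $q:\mathbb{Z}B_{(E,s(w))}\rightarrow E$ of Proposition \ref{universal-cover-modified-fms-BG} together with the lifting and homotopy-invariance properties of coverings (Proposition \ref{modified homotopy lifting}). The only cosmetic difference is that the paper obtains existence by a direct combinatorial reduction (``each walk is homotopic to one of this form'') and uses the covering only for uniqueness, whereas you run both through the covering; your bookkeeping facts (i)--(ii) are exactly the sheet-identification that the paper leaves implicit in its appeal to the analogue of \cite[Proposition 2.49]{LL2}.
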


\begin{proof}
It is straightforward to show that each walk of $E$ is homotopic to a walk of this form. The rest of the proof is similar to that of \cite[Proposition 3.46]{LL2}, using the covering $q:\mathbb{Z}B_{(E,e)}\rightarrow E$, $(u,n)\mapsto \sigma^{n}(t(u))$ of Brauer $G$-sets in Proposition \ref{universal-cover-modified-fms-BG}, where $e=s(w)$.
\end{proof}

\subsection{Lines and bands}
\

In this subsection, we define lines and bands on Brauer $G$-sets, and compare the numbers of equivalence classes of bands via a covering between Brauer $G$-sets. Moreover, we define the reduced form $R_E$ (which is a BG) of a finite connected $f_{ms}$-BG $E$, and show that the $f_{ms}$-BGA $A_E$ is representation-finite (resp. domestic) if and only if the BGA $A_{R_E}$ is representation-finite (resp. domestic).

Before defining lines and bands on a Brauer $G$-set, we first recall the bands and the equivalence classes of bands over a string algebra (cf. \cite[II.2]{E}).

\begin{Def}  \label{special-biserial-algebra} A finite dimensional $k$-algebra $A$ is called special biserial if there is a
	quiver $Q$ and an admissible ideal $I$ in $kQ$ such that $A$ is Morita equivalent to $kQ/I$ and
	such that $kQ/I$ satisfies the following conditions:\\
	$(1)$ At every vertex $v$ in $Q$ there are at most two arrows starting at $v$ and there are at most
	two arrows ending at $v$.\\
	$(2)$ For every arrow $\alpha$ in $Q$, there exists at most one arrow $\beta$ such that $\beta\alpha\notin I$ and there
	exists at most one arrow $\gamma$ such that $\alpha\gamma\notin I$.
	
	A special biserial algebra $A$ is called a string algebra if the defining ideal $I$ is generated by paths.
\end{Def}

Let $A=kQ/I$ be a special biserial algebra with $(Q,I)$ satisfying conditions $(1), (2)$ in Definition \ref{special-biserial-algebra}. Let $S$ be the sum of socles of all indecomposable projective-injective $A$-modules which are not uniserial. Then $S$ is an ideal of $A$ and $\overline{A}=A/S$ is a string algebra, which has the same representation type as $A$. Therefore to study the representation type of $A$, we may assume that $A$ is a string algebra.

A band in a string algebra $A=kQ/I$ is a closed word $b=\alpha_r\cdots\alpha_2\alpha_1$ in $Q$ of positive length with $\alpha_i\in Q_1$ or $\alpha_{i}^{-1}\in Q_1$ for each $i\in\mathbb{Z}/r\mathbb{Z}=\{1,2,\cdots,r\}$, such that
\begin{enumerate}
\item $\alpha_{i+1}\neq \alpha_{i}^{-1}$ for all $i\in\mathbb{Z}/r\mathbb{Z}=\{1,2,\cdots,r\}$;
\item there is no subpath of $b^n$ or $b^{-n}$ which belongs to $I$ for each positive integer $n$;
\item there is no subword $w$ of $b$ such that $b=w^k$ for some $k>1$.
\end{enumerate}
We can define an equivalence relation on the set of bands in $A$ which identifies each band with its rotations and their inverses.

Now let $E=(E,P,L,d)$ be a finite $f_{ms}$-BG such that each edge of $E$ contains a half-edge $e$ with $d(e)>1$. According to \cite[Section 6]{LL}, $A_E\cong kQ'_E/I'_E$ with $I'_E$ admissible, where $Q_E$ is the subquiver of $Q_E$ given by $(Q'_E)_0=(Q_E)_0$ and $(Q'_E)_1=\{L(e)\mid e\in E $ with $d(e)>1\}$, and $I'_E$ is generated by the following three types of relations:
\begin{itemize}
\item[$(fR1')$] $L(g^{d(e)-1}\cdot e)\cdots L(g\cdot e)L(e)-L(g^{d(h)-1}\cdot h)\cdots L(g\cdot h)L(h)$, where $e,h\in E$ and $d(e),d(h)>1$;
\item[$(fR2')$] $L(e_1)L(e_2)$, where $e_1,e_2\in E$, $d(e_1),d(e_2)>1$, and $e_1\neq g\cdot e_2$;
\item[$(fR3')$] $L(g^{d(e)}\cdot e)\cdots L(g\cdot e)L(e)$, where $e\in E$ and $d(e)>1$.
\end{itemize}
According to \cite[Proposition 6.6]{LL}, $A_E$ is a self-injective special biserial algebra and therefore $A_E/\mathrm{soc}(A_E)$ is a string algebra, which is given by the quiver $Q'_E$ and the admissible ideal $I''_E$ of $kQ'_E$ generated by the following two types of relations
\begin{itemize}
\item[$(a)$] $L(g^{d(e)-1}\cdot e)\cdots L(g\cdot e)L(e)$, where $e\in E$ and $d(e)>1$;
\item[$(b)$] $L(e_1)L(e_2)$, where $e_1,e_2\in E$, $d(e),d(h)>1$, and $e_1\neq g\cdot e_2$.
\end{itemize}
We call $b$ a band of $A_E$ if $b$ is a band of the string algebra $A_E/\mathrm{soc}(A_E)\cong kQ'_E/I''_E$.

\begin{Def}\label{line}
Let $E=(E,U,\tau,d)$ be a Brauer $G$-set. A line of $E$ is an infinite sequence \begin{equation*}l=\cdots\frac{\delta_{3}}{}e_{2}\frac{\delta_{2}}{}e_{1}\frac{\delta_{1}}{}e_{0}
\frac{\delta_{0}}{}e_{-1}\frac{\delta_{-1}}{}e_{-2}\frac{\delta_{-2}}{}\cdots,\end{equation*}
where $e_i\in E$ and $\delta_i\in\{g,g^{-1},\tau\}$ for every $i\in\mathbb{Z}$, such that
\begin{itemize}
\item [$(a)$] $e_{i-1},e_{i}\in U$ if $\delta_{i}=\tau$;
\item [$(b)$] $e_{i+1}=\delta_{i+1}(e_i)$ for every $i\in\mathbb{Z}$.
\end{itemize}
\end{Def}

We will also write a line $l$ as a family $\{(e_i,\delta_i)\}_{i\in\mathbb{Z}}$. For such a line $l$ and for any integer $n$, denote by $l[n]$ the line $\{(e'_i,\delta'_i)\}_{i\in\mathbb{Z}}$, where $e'_i=e_{i+n}$ and $\delta'_i=\delta_{i+n}$. We call $l[n]$ the $n$-th translate of $l$. We may consider a line $l=\{(e_i,\delta_i)\}_{i\in\mathbb{Z}}$ as a walk in $E$ of infinite length. Moreover, define the inverse $l^{-1}$ of the line $l=\{(e_i,\delta_i)\}_{i\in\mathbb{Z}}$ as the line $\{(e''_i,\delta''_i)\}_{i\in\mathbb{Z}}$, where $e''_i=e_{-i}$ and
\begin{equation*}
\delta''_i= \begin{cases}
g, \text{ if } \delta_{1-i}=g^{-1}; \\
g^{-1}, \text{ if } \delta_{1-i}=g; \\
\tau, \text{ if } \delta_{1-i}=\tau.
\end{cases}
\end{equation*}

\begin{Def}\label{band}
Let $E=(E,U,\tau,d)$ be a Brauer $G$-set and $l$ be a line of $E$ such that $l[n]=l$ for some positive integer $n$. Then $l$ is called a band of $E$ if it is of the form (consider $l$ as a walk of infinite length)
\begin{multline*}\cdots(e_2|\tau|g^{-l_{1}}\cdot h_1)(g^{-l_{1}}\cdot h_1| g^{-l_{1}}|h_1)(h_1|\tau|g^{k_1}\cdot e_1)(g^{k_1}\cdot e_1|g^{k_1}|e_1) \\ (e_1|\tau|g^{-l_{0}}\cdot h_0)(g^{-l_{0}}\cdot h_0| g^{-l_{0}}|h_0)(h_0|\tau|g^{k_0}\cdot e_0)(g^{k_0}\cdot e_0|g^{k_0}|e_0)\cdots,\end{multline*}
where $0<k_i<d(e_i)$ and $0<l_i<d(h_i)$ for all $i\in\mathbb{Z}$.
\end{Def}

\begin{Def}\label{equivalence relation of lines}
Let $E=(E,U,\tau,d)$ be a Brauer $G$-set. Define an equivalence relation $\sim$ on the set of bands of $E$: $\sim$ is generated by
\begin{itemize}
\item [$(a)$] $l\sim l[i]$ for any integer $i$;
\item [$(b)$] $l\sim l^{-1}$.
\end{itemize}
\end{Def}

For every band $l$ of $E$, denote by $[l]$ the equivalence class of $l$.

\begin{Lem}\label{two-numbers-of-bands-are-equal}
If $E=(E,P,L,d)$ is a finite $f_{ms}$-BG such that each edge of $E$ contains a half-edge $e$ with $d(e)>1$, then there exists a bijection between the set of equivalence classes of bands of $E$ and the set of equivalence classes of bands of $A_E$.
\end{Lem}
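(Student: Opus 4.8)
The plan is to exhibit an explicit bijection between equivalence classes of bands of the Brauer $G$-set $E$ (Definition \ref{band}, \ref{equivalence relation of lines}) and equivalence classes of bands of the string algebra $A_E/\mathrm{soc}(A_E)\cong kQ'_E/I''_E$. The guiding idea is that the two notions of band are describing the same combinatorial object from two sides: a band of $E$ as a walk of infinite length records an alternating sequence of ``$g$-moves'' $(g^{k_i}\cdot e_i|g^{k_i}|e_i)$ with $0<k_i<d(e_i)$ and ``$\tau$-moves'' connecting the two half-edges of an edge, while a band of the string algebra is a cyclic reduced walk in $Q'_E$ alternating between direct arrows $L(g^{j}\cdot e)$ and inverse arrows that avoids the relations $(a)$ and $(b)$. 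I would first set up a translation dictionary: a segment $(g^{k_i}\cdot e_i|g^{k_i}|e_i)$ of a band of $E$, where $0<k_i<d(e_i)$, corresponds to the directed path $L(g^{k_i-1}\cdot e_i)\cdots L(g\cdot e_i)L(e_i)$ in $Q'_E$, which is a legal (nonzero modulo $I''_E$) path precisely because $k_i<d(e_i)$ forbids the relation $(a)$; the $\tau$-moves correspond to the switches between the two half-edges of an edge, i.e.\ to the points where one changes from traversing arrows emanating from one half-edge to arrows emanating from its partner.

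Concretely, I would define a map $\Phi$ from bands of $E$ to cyclic walks in $kQ'_E/I''_E$ by sending the band
\begin{equation*}
\cdots(h_1|\tau|g^{k_1}\cdot e_1)(g^{k_1}\cdot e_1|g^{k_1}|e_1)(e_1|\tau|g^{-l_0}\cdot h_0)(g^{-l_0}\cdot h_0|g^{-l_0}|h_0)(h_0|\tau|g^{k_0}\cdot e_0)(g^{k_0}\cdot e_0|g^{k_0}|e_0)\cdots
\end{equation*}
to the walk obtained by replacing each $g$-segment with the corresponding directed path of $L$-arrows and orienting the $h$-segments (which carry negative powers $g^{-l_i}$) as inverse arrows, so that the resulting walk is a genuine alternating string of direct and inverse letters. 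I would then verify three things: first, that $\Phi(l)$ is in fact a band of the string algebra, i.e.\ that it is cyclic with $\Phi(l)^n$ reduced and not a proper power — periodicity follows from $l[n]=l$, reducedness of the direct/inverse segments from the strict inequalities $0<k_i<d(e_i)$ and $0<l_i<d(h_i)$ (these prevent both relations $(a)$ and $(b)$ from appearing and prevent any immediate cancellation $L(e)^{-1}L(e)$), and the condition that consecutive $g$-segments are separated by genuine $\tau$-moves (so the walk changes direction at each turn, making it a cyclic string); second, that $\Phi$ respects the equivalence relations, namely translation $l\sim l[i]$ maps to cyclic rotation of the band and inversion $l\sim l^{-1}$ maps to formal inversion of the string, so $\Phi$ descends to a well-defined map $[l]\mapsto[\Phi(l)]$ on equivalence classes; and third, that the construction is reversible. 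For the inverse, given a band $b$ of the string algebra, I would read off from its alternating structure the sequence of maximal directed subpaths, recover from each such subpath $L(g^{k-1}\cdot e)\cdots L(e)$ the data $(e,k)$ with $0<k<d(e)$ (the bound $k<d(e)$ being exactly the condition that the subpath is nonzero modulo relation $(a)$), and reassemble these into a band of $E$ by inserting the $\tau$-moves at the turning points; the compatibility $g^{d(\tau(e))}\cdot\tau(e)=\tau(g^{d(e)}\cdot e)$ from the $f_{ms}$-BG axiom guarantees that the $\tau$-switches between partner half-edges are consistent with the degree function, so the reassembled object genuinely satisfies Definition \ref{band}.

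The main obstacle I anticipate is the careful bookkeeping at the turning points, i.e.\ matching the $\tau$-moves of a band of $E$ with the direction changes of a string in $kQ'_E/I''_E$, and in particular checking that the two reducedness conditions coincide under the dictionary. On the string-algebra side a band must avoid the relation $(b)$ $L(e_1)L(e_2)$ with $e_1\neq g\cdot e_2$, which forces each maximal directed subpath to be of the form $L(g^{k-1}\cdot e)\cdots L(e)$ for a single half-edge $e$; I must confirm that this is automatically the form produced by $\Phi$ and, conversely, that every such directed subpath in a string-algebra band has length strictly less than $d(e)$ (forced by relation $(a)$) and is followed by a switch to the partner half-edge via $\tau$ (forced by the definition of a string and the biserial structure, so that the only way to continue a reduced cyclic walk after a maximal directed run is to proceed along an inverse arrow corresponding to the other half-edge of an edge). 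Establishing this last point rigorously — that the ``turn'' in a string-algebra band is always exactly a $\tau$-switch and never, say, a continuation forbidden by $(b)$ — is where the special biserial structure of $A_E$ (\cite[Proposition 6.6]{LL}) must be invoked, and it is the step requiring the most care; once it is in place, $\Phi$ and its inverse are mutually inverse bijections on equivalence classes, which is the assertion of the lemma.
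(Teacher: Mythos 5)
Your proposal is correct and follows essentially the same route as the paper: an explicit dictionary sending the $g$-segments $(g^{k_i}\cdot e_i|g^{k_i}|e_i)$ of a band of $E$ to maximal direct (or inverse) paths $L(g^{k_i-1}\cdot e_i)\cdots L(e_i)$ in $Q'_E$ and the $\tau$-moves to the turning points of the string, a check that translation and inversion correspond to rotation and inversion of bands of $A_E$, and an inverse constructed by reading off the maximal directed subpaths of a string-algebra band (whose lengths are bounded by $d(e_i)$ because of relation $(a)$) and reinserting the $\tau$-switches. The paper phrases the reverse direction as separate injectivity and surjectivity arguments for the induced map $\widetilde{\phi}$, but the content is identical to your reversibility check.
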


\begin{proof}
For any band $l=\{(e_i,\delta_i)\}_{i\in\mathbb{Z}}$ of $E$, let $n$ be the smallest positive integer such that $l=l[n]$. Then $w=(e_n|\delta_n\cdots\delta_1|e_0)$ is a closed walk of $E$, which induces a closed walk $\phi(l)$ of the quiver $Q'_E$: $\phi(l)=f(e_n|\delta_n|e_{n-1})\cdots f(e_2|\delta_2|e_1)f(e_1|\delta_1|e_0)$, where
\begin{equation*}
f(e_i|\delta_i|e_{i-1})= \begin{cases}
L(e_{i-1}), &\text{ if } \delta_i=g; \\
L(e_i)^{-1}, &\text{ if } \delta_i=g^{-1}; \\
1_{P(e_i)}, &\text{ if } \delta_i=\tau.
\end{cases}
\end{equation*} It can be shown that $\phi(l)$ is a band of $A_E$. Moreover, if a band $l'$ of $E$ is obtained from the band $l$ of $E$ by a translation (resp. by taking inverse), then the band $\phi(l')$ of $A_E$ is obtained form the band $\phi(l)$ of $A_E$ by a rotation (resp. by taking inverse), so $\phi$ induces a map $\widetilde{\phi}$ from the set of equivalence classes of bands of $E$ to the set of equivalence classes of bands of $A_E$.

If $l_1,l_2$ are two bands of $E$ such that $\phi(l_1),\phi(l_2)$ are equivalent, then there exists a sequence of bands $b_0=\phi(l_1),b_1,\cdots,b_{k-1},b_k=\phi(l_2)$ of $A_E$ such that for each $1\leq i\leq k$, $b_i$ is obtained from $b_{i-1}$ by a rotation or by taking inverse. Therefore there exists a band $l'_1$ of $E$ which is equivalent to $l_1$ such that $\phi(l'_1)=\phi(l_2)$. It can be shown that $l'_1=l_2$. So $l_1,l_2$ are equivalent and therefore $\widetilde{\phi}$ is injective.

If $b$ is a band of $A_E$, then up to equivalence we may assume that $b$ is of the form $$\alpha^{-1}_{2r,1}\cdots\alpha^{-1}_{2r,n_{2r}}\alpha_{2r-1,n_{2r-1}}\cdots\alpha_{2r-1,1}\cdots\alpha^{-1}_{4,1}\cdots\alpha^{-1}_{4,n_4}\alpha_{3,n_3}\cdots\alpha_{3,1}
\alpha^{-1}_{2,1}\cdots\alpha^{-1}_{2,n_2}\alpha_{1,n_1}\cdots\alpha_{1,1},$$
where each $\alpha_{ij}$ is an arrow of $Q'_E$. Assume that $\alpha_{i,1}=L(e_i)$ for each $1\leq i\leq 2r$. Since $b$ is a band of $A_E$, the path $\alpha_{i,n_i}\cdots\alpha_{i,1}$ does not belong to $I''_E$ for each $i\in\{1,2,\cdots,2r\}$, and $\alpha_{2j,1}\neq \alpha_{2j+1,1}$, $\alpha_{2j-1,n_{2j-1}}\neq \alpha_{2j,n_{2j}}$ for each $j\in\{1,2,\cdots,r\}$ (we define $\alpha_{2n+1,1}=\alpha_{1,1}$). Therefore $\alpha_{i,n_i}\cdots\alpha_{i,1}=L(g^{n_i-1}\cdot e_i)\cdots L(e_i)$ with $0<n_i<d(e_i)$ each $i\in\{1,2,\cdots,2r\}$, and $e_{2j+1}=\tau(e_{2j})$, $g^{n_{2j}}\cdot e_{2j}=\tau(g^{n_{2j-1}}\cdot e_{2j-1})$ for each $j\in\{1,2,\cdots,r\}$ (we define $e_{2r+1}=e_1$ and denote $\tau$ the involution of $E$ as a Brauer $G$-set). Then $b$ induces a closed walk
\begin{multline*}
w=(e_1|\tau|e_{2r})(e_{2r}|g^{-n_{2r}}|g^{n_{2r}}\cdot e_{2r})(g^{n_{2r}}\cdot e_{2r}|\tau|g^{n_{2r-1}}\cdot e_{2r-1})(g^{n_{2r-1}}\cdot e_{2r-1}|g^{n_{2r-1}}|e_{2r-1})\cdots \\ (e_3|\tau|e_2)(e_2|g^{-n_2}|g^{n_2}\cdot e_2)(g^{n_2}\cdot e_2|\tau|g^{n_1}\cdot e_1)(g^{n_1}\cdot e_1|g^{n_1}|e_1)
\end{multline*}
of $E$. Denote $n=\sum_{i=1}^{2r}n_i+2r$, and let $l=\{(e_i,\delta_i)\}_{i\in\mathbb{Z}}$ be the band of $E$ such that $l[n]=l$ and $(e_n|\delta_n\cdots\delta_1|e_0)=w$. Then $b=\phi(l)$, which implies that $\widetilde{\phi}$ is also surjective.
\end{proof}

For any Brauer $G$-set $E$ such that the number of equivalence classes of bands of $E$ is finite, denote by $N_E$ the number of equivalence classes of bands of $E$.

\begin{Prop}\label{the number of equivalence classes of bands}
Let $E=(E,U,\tau,d)$ be a Brauer $G$-set and let $\Pi$ be a finite group of automorphisms of $E$ of order $n$. Then the number of equivalence classes of bands of $E$ is finite if and only if the number of equivalence classes of bands of $E/\Pi$ is finite. In this case we have $N_{E/\Pi}\leq N_E\leq nN_{E/\Pi}$.
\end{Prop}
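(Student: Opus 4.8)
The plan is to use the natural projection $p\colon E\to E/\Pi$, which is a covering of Brauer $G$-sets by Lemma \ref{modified projection is a covering}, to compare bands of $E$ with bands of $E/\Pi$. First I would check that $p$ sends bands to bands. Since $p$ is a morphism of $G$-sets that preserves the degree function and commutes with $\tau$ on $U$ (conditions $(1')$, $(2)$, $(3)$ of Definition \ref{covering of modified f-BG}), it carries a line $l=\{(e_i,\delta_i)\}_{i\in\mathbb{Z}}$ of $E$ to the line $p(l)=\{(p(e_i),\delta_i)\}_{i\in\mathbb{Z}}$ of $E/\Pi$, and the defining inequalities $0<k_i<d(e_i)=d'(p(e_i))$ and $0<l_i<d(h_i)=d'(p(h_i))$ of a band are preserved; moreover $l[N]=l$ forces $p(l)[N]=p(l)$, so $p(l)$ is again periodic. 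Thus $p(l)$ is a band of $E/\Pi$. Since $p(l[k])=p(l)[k]$ and $p(l^{-1})=p(l)^{-1}$, the map $p$ is compatible with the equivalence relation $\sim$ of Definition \ref{equivalence relation of lines} and induces a map $\bar p\colon[l]\mapsto[p(l)]$ from the equivalence classes of bands of $E$ to those of $E/\Pi$.

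Next I would prove that $\bar p$ is surjective. Given a band $l'=\{(e'_i,\delta'_i)\}_{i\in\mathbb{Z}}$ of $E/\Pi$ with minimal period $m'$, choose $x\in p^{-1}(e'_0)$; by the unique path lifting property of the covering $p$ (see the remark after Definition \ref{covering of modified f-BG}) the line $l'$ lifts uniquely to a line $l$ of $E$ with $p(l)=l'$ and $0$-th term $x$, and as above $l$ has the form of a band. The shifted line $l[m']$ is again a lift of $l'$ because $p(l[m'])=p(l)[m']=l'[m']=l'$, and its $0$-th term $e_{m'}$ lies in the same fiber $p^{-1}(e'_0)=\Pi x$ as $x$; hence $e_{m'}=\pi(x)$ for some $\pi\in\Pi$, and by uniqueness of lifting $l[m']=\pi(l)$. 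Iterating gives $l[km']=\pi^{k}(l)$, and since $\pi$ has finite order $t$ dividing $n$ we obtain $l[tm']=\pi^{t}(l)=l$. Therefore $l$ is periodic, hence a band of $E$ with $\bar p([l])=[l']$. This proves surjectivity, whence $N_{E/\Pi}\le N_E$ and, in particular, $N_E<\infty$ implies $N_{E/\Pi}<\infty$.

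For the remaining inequality I would bound the fibers of $\bar p$. Fix a band $l'$ representing a class of $E/\Pi$ and let $T$ be the set of band classes of $E$ mapping to $[l']$. For $[l]\in T$ we have $p(l)\sim l'$; since $p$ commutes with translation and inversion, after replacing $l$ by a suitable translate or inverse (which does not change the class $[l]$) we may assume $p(l)=l'$ exactly, so that $l$ lies in the set $S$ of lifts of the line $l'$. A lift of $l'$ is determined by its $0$-th term, so the lifts are in bijection with the fiber $p^{-1}(e'_0)=\Pi x$, giving $|S|=|\Pi x|\le n$, and the assignment $l\mapsto[l]$ is a surjection $S\to T$. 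Hence $|T|\le|S|\le n$. Summing over the classes of $E/\Pi$ (finitely many when $N_{E/\Pi}<\infty$) yields $N_E\le n\,N_{E/\Pi}$; in particular $N_{E/\Pi}<\infty$ implies $N_E<\infty$, which together with the previous paragraph completes the equivalence of finiteness and establishes both inequalities.

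The main obstacle is the surjectivity step: a lift of a band is a priori only a line, and showing that it is again periodic is exactly where the finiteness of $\Pi$ is used, via the finite order of the deck transformation $\pi$ realizing $l[m']=\pi(l)$. The only other point requiring care is the bookkeeping of the equivalence relation generated by translation and inversion, which one must verify is compatible with $p$ so that each class in $T$ can be normalized to an honest lift of the fixed representative $l'$; this compatibility is what licenses the counting $|T|\le|S|$.
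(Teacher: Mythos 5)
Your proof is correct and follows essentially the same route as the paper: project bands along $p\colon E\to E/\Pi$, lift to prove surjectivity, and bound each fiber of the induced map by the size of a $\Pi$-orbit. Your treatment of the surjectivity step is in fact slightly more explicit than the paper's, since you verify that a lift of a periodic line is again periodic by identifying the deck transformation $\pi$ with $l[m']=\pi(l)$ and using its finite order, a point the paper leaves implicit.
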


\begin{proof}
Let $E/\Pi=(E/\Pi,U',\tau',d')$. For every band $l=\{(e_i,\delta_i)\}_{i\in\mathbb{Z}}$ of $E$, denote by $\overline{l}$ the band $\{([e_i],\delta'_i)\}_{i\in\mathbb{Z}}$ of $E/\Pi$, where $[e_i]$ is the $\Pi$-orbit of $e_i$ and
\begin{equation*}
\delta'_i=\begin{cases}
\delta_i, \text{ if } \delta_i=g \text{ or } \delta_i=g^{-1}; \\
\tau', \text{ if } \delta_i=\tau
\end{cases}
\end{equation*}
for each $i\in\mathbb{Z}$. Since $\overline{l^{-1}}=\overline{l}^{-1}$ and $\overline{l[i]}=\overline{l}[i]$ for any integer $i$, the map $l\mapsto\overline{l}$ defines a map $f$ from the set of equivalence classes of bands of $E$ to the set of equivalence classes of bands of $E/\Pi$. For every band $l'=\{(e'_i,\delta'_i)\}_{i\in\mathbb{Z}}$ of $E/\Pi$ and for every $e_0\in E$ with $[e_0]=e'_0$, since the natural projection $E\rightarrow E/\Pi$ is a covering, there exists a unique band $l=\{(e_i,\delta_i)\}_{i\in\mathbb{Z}}$ of $E$ such that $\overline{l}=l'$. Therefore $f$ is surjective.

For every band $l'=\{(e'_i,\delta'_i)\}_{i\in\mathbb{Z}}$ of $E/\Pi$, choose a band $l=\{(e_i,\delta_i)\}_{i\in\mathbb{Z}}$ of $E$ such that $\overline{l}=l'$. Suppose that $f([b])=[l']$ for some band $b$ of $E$. Then $\overline{b}\sim l'$. Since for every band $v$ of $E$, the operation $v\mapsto\overline{v}$ commutes with translation and taking inverse, we imply that there exists a band $c$ of $E$ such that $b\sim c$ and $\overline{c}=l'$. Let $c=\{(h_i,\epsilon_i)\}_{i\in\mathbb{Z}}$. Since $[h_0]=e'_0=[e_0]$, there exists some $\pi\in\Pi$ such that $\pi(e_0)=h_0$. Since $\pi(l)=\{(\pi(e_i),\delta_i)\}_{i\in\mathbb{Z}}$ and $c=\{(h_i,\epsilon_i)\}_{i\in\mathbb{Z}}$ are bands of $E$ which satisfy $\overline{\pi(l)}=l'=\overline{c}$ and $\pi(e_0)=h_0$, we have $\pi(l)=c$. Therefore $[b]=[c]=[\pi(l)]$ and $f^{-1}([l'])=\{[\mu(l)]\mid \mu\in\Pi\}$. Then we have $1\leq |f^{-1}([l'])|\leq n$ for every band $l'$ of $E/\Pi$, and the conclusion holds.
\end{proof}

Let $E=(E,E,\tau,d)$ be an $f_{ms}$-BG and $\sigma:E\rightarrow E$, $e\mapsto g^{d(e)}\cdot e$ be the Nakayama automorphism of $E$. One may note that the diagram $\Gamma(E)$ of an $f_{ms}$-BG $E$ is a graph and the diagram of a Brauer tree is even a tree. Denote by $\langle \sigma\rangle$ the group of automorphisms of $E$ generated by $\sigma$. Recall that $\langle \sigma\rangle$ is called {\it admissible} if each $\langle \sigma\rangle$-orbit of $E$ meets each edge of $E$ in at most one half-edge (cf. \cite[Definition 3.34]{LL2}). The following lemma is straightforward.

\begin{Lem} Let $E$ and $\sigma$ be as above. If $\langle \sigma\rangle$ is admissible, then $E/\langle \sigma\rangle$ is a Brauer graph; if $\langle \sigma\rangle$ is not admissible, then $E/\langle \sigma\rangle$ is a modified BG which contains double half-edges.
\end{Lem}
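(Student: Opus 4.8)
The plan is to read everything off from the general quotient construction together with the definition of admissibility, so that the proof reduces to two elementary computations carried out at a single half-edge. First I would invoke Lemma \ref{modified projection is a covering}: it already guarantees that $E/\langle\sigma\rangle=(E/\langle\sigma\rangle,U',\tau',d')$ is a Brauer $G$-set (with the projection a covering), so that $\tau'$ is a genuine involution on $U'$ and I need not recheck conditions $(mf1)$ and $(mf2)$. Since $E$ is an $f_{ms}$-BG we have $U=E$, and by the construction of the quotient $U'=\{[e]\mid e\in U\}=E/\langle\sigma\rangle$; thus $E/\langle\sigma\rangle$ already satisfies the requirement $U'=E/\langle\sigma\rangle$ of Definition \ref{modified BG}. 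It therefore remains only to verify that $E/\langle\sigma\rangle$ has integral f-degree (to conclude that it is a modified BG) and then to translate admissibility of $\langle\sigma\rangle$ into the presence or absence of a double half-edge.

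For the f-degree I would fix a vertex of $E/\langle\sigma\rangle$, i.e. a $\langle g\rangle$-orbit, and represent it by $[e]$. Writing $m$ for the cardinality of the $\langle g\rangle$-orbit $v$ of $e$ in $E$ and $d=d(e)$, one has $\sigma^{k}(e)=g^{kd}\cdot e$, so $g^{m'}\cdot[e]=[e]$ holds exactly when $g^{m'}\cdot e\in\langle\sigma\rangle\cdot e$, i.e. when $m'\equiv kd\pmod m$ for some $k$, i.e. when $m'\in d\mathbb{Z}+m\mathbb{Z}=\gcd(d,m)\mathbb{Z}$. Hence the $\langle g\rangle$-orbit of $[e]$ has cardinality $\gcd(d,m)$, while $d'([e])=d(e)=d$ is constant along it; consequently the f-degree of this vertex equals $d/\gcd(d,m)\in\mathbb{Z}_{+}$. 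This shows that $E/\langle\sigma\rangle$ is of integral f-degree and hence a modified BG. This computation is precisely the reason passing to $E/\langle\sigma\rangle$ ``integralises'' the possibly fractional f-degrees of $E$.

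The last step is the dichotomy. The orbit $[e]$ is a double half-edge of $E/\langle\sigma\rangle$ precisely when $\tau'([e])=[e]$, that is when $[\tau(e)]=[e]$, i.e. $\tau(e)=\sigma^{k}(e)$ for some $k$. As $\tau$ has no fixed points in the $f_{ms}$-BG $E$, we have $\tau(e)\neq e$, so $\{e,\tau(e)\}$ is a genuine edge, and the condition $\tau(e)\in\langle\sigma\rangle\cdot e$ says exactly that this edge has both of its half-edges lying in a single $\langle\sigma\rangle$-orbit. Thus $\langle\sigma\rangle$ fails to be admissible if and only if some $[e]$ is a double half-edge. Combining this with the previous paragraph: when $\langle\sigma\rangle$ is admissible, every edge $\{e,\tau(e)\}$ has its two half-edges in distinct orbits, so $\tau'$ has no fixed points and $E/\langle\sigma\rangle$ is a modified BG with no double half-edge, i.e. a Brauer graph; when $\langle\sigma\rangle$ is not admissible, the witnessing edge produces a double half-edge, and $E/\langle\sigma\rangle$ is a modified BG containing a double half-edge.

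I do not anticipate a genuine obstacle: the content is entirely the two observations above, and the only points requiring care are the B\'ezout/$\gcd$ bookkeeping in the orbit-size computation and the faithful translation of the word ``admissible'' into the fixed-point condition $\tau(e)\in\langle\sigma\rangle\cdot e$. The finiteness needed to make $d_{f}$ meaningful is supplied by the standing assumption that $E$ is a finite $f_{ms}$-BG, which makes each $\langle g\rangle$-orbit finite and $\langle\sigma\rangle$ a finite group.
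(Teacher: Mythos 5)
Your proposal is correct. The paper itself offers no argument here (it simply declares the lemma straightforward), and your write-up supplies exactly the details one would expect: the quotient lemma already makes $E/\langle\sigma\rangle$ a Brauer $G$-set with $U'=E/\langle\sigma\rangle$, the orbit count $|\langle g\rangle\cdot[e]|=\gcd(d(e),|\langle g\rangle\cdot e|)$ gives integral f-degree, and the equivalence ``$[e]$ is a double half-edge $\Leftrightarrow$ $\tau(e)\in\langle\sigma\rangle\cdot e$ $\Leftrightarrow$ the edge $\{e,\tau(e)\}$ meets a $\langle\sigma\rangle$-orbit twice'' is precisely the negation of admissibility. The only cosmetic caveat is that the lemma as stated does not assume $E$ finite; for an infinite $\langle g\rangle$-orbit the same computation gives a vertex of $E/\langle\sigma\rangle$ of cardinality $d(e)$ and f-degree $1$, so nothing breaks.
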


If $E$ is a modified BG which contains double half-edges, then recall from Lemma \ref{pi is a covering of modified f-BGs} that there exists a regular covering of Brauer $G$-sets $\pi:\widehat{E}\rightarrow E$ with group $\mathbb{Z}/2\mathbb{Z}$, where $\widehat{E}$ is a Brauer graph.

\begin{Def}
Let $E=(E,E,\tau,d)$ be an $f_{ms}$-BG and $\langle \sigma\rangle$ be the group of automorphisms of $E$ generated by the Nakayama automorphism $\sigma$. Define the reduced form $R_E$ of $E$ as follows: if $\langle \sigma\rangle$ is admissible, then $E/\langle \sigma\rangle$ is a Brauer graph and we define $R_E$ to be $E/\langle \sigma\rangle$; if $\langle \sigma\rangle$ is not admissible, then $E/\langle \sigma\rangle$ is a modified BG which contains double half-edges and we define $R_E$ to be the Brauer graph $\widehat{E/\langle\sigma\rangle}$.
\end{Def}

For an example of this construction, see Example \ref{weakly-symmetric-algebra-example} below. Note that if $E$ is finite (resp. connected), then so is $R_E$.

Before stating our main result in this section, we first recall some notions on representation types of finite dimensional algebras (cf. \cite[Section XIX.3]{SS}).

\begin{Def}\label{rep-type} Let $A$ be a finite dimensional algebra over a field $k$.
\begin{enumerate}
\item $A$ is of finite representation type (or $A$ is representation-finite) if the number of isomorphism classes of finitely generated indecomposable $A$-modules is finite.
\item $A$ is tame representation type (or shortly tame) if it is not representation-finite and if for each positive integer $d$, there exist $A$-$k[t]$-bimodules $M_1,\cdots,M_{m_d}$ that are finitely generated and free as right $k[t]$-modules, such that all but a finite number of the isomorphism classes of indecomposable $A$-modules of dimension $d$ are isomorphic to modules of the form $M_i\otimes_{k[t]}k[t]/(t-\lambda)$, where $i\in\{1,2,\cdots,m_d\}$ and $\lambda\in k$.
\item $A$ is domestic if it is not of finite representation type and if there exist a finite number of $A$-$k[t]$-bimodules $M_1,\cdots,M_n$ that are finitely generated and free as right $k[t]$-modules, such that for each positive integer $d$, all but a finite number of the isomorphism classes of indecomposable $A$-modules of dimension $d$ are isomorphic to modules of the form $M_i\otimes_{k[t]}V$, where $i\in\{1,2,\cdots,n\}$ and $V$ is a finite dimensional indecomposable $k[t]$-module. If $N$ is the least number of such $A$-$k[t]$-bimodules, then $A$ is called $N$-domestic.
\end{enumerate}
\end{Def}

It is well-known that every domestic algebra is tame. For a self-injective special biserial algebra $A$, it is well-known that $A$ is representation-finite if and only if $A$ has no bands. Moreover, according to \cite[Theorem 2.1]{ES}, $A$ is domestic if and only if the number of equivalence classes of bands of $A$ is a positive integer.

\begin{Thm}\label{rep type of f-BCA and its reduced form}
Suppose that the field $k$ is algebraically closed. Let $E=(E,E,\tau,d)$ be a finite connected $f_{ms}$-BG. Then $A_E$ is representation-finite (resp. domestic) if and only if $A_{R_E}$ is representation-finite (resp. domestic).
\end{Thm}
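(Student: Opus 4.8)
The plan is to reduce the statement about representation type to a statement about the number of equivalence classes of bands, since for self-injective special biserial algebras (which $A_E$ and $A_{R_E}$ both are, being $f_{ms}$-BGAs and a BGA respectively) the representation type is governed by bands: such an algebra is representation-finite precisely when it has no bands, and it is domestic precisely when the number of equivalence classes of bands is finite (this is the Erdmann--Skowro\'nski classification in \cite{ES}). Thus the theorem follows once I show that $A_E$ has no bands if and only if $A_{R_E}$ has none, and that the number of equivalence classes of bands of $A_E$ is finite if and only if that of $A_{R_E}$ is finite. By Lemma \ref{two-numbers-of-bands-are-equal}, counting bands of an $f_{ms}$-BGA is the same as counting equivalence classes of bands of its defining Brauer $G$-set (after passing to the presentation $A_E \cong kQ'_E/I'_E$, which requires that each edge contains a half-edge $e$ with $d(e)>1$; I will need to handle the degenerate edges separately or argue they do not affect the band count).

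First I would set up the comparison between the bands of $E$ and the bands of $R_E$ via the covering $p: E \to E/\langle\sigma\rangle$. When $\langle\sigma\rangle$ is admissible, $R_E = E/\langle\sigma\rangle$ is a Brauer graph and $p$ is a covering of Brauer $G$-sets with $\langle\sigma\rangle$ finite of some order $n$; Proposition \ref{the number of equivalence classes of bands} then gives $N_{R_E} \le N_E \le n N_{R_E}$, so one count is finite if and only if the other is, and moreover $N_E = 0$ if and only if $N_{R_E} = 0$. When $\langle\sigma\rangle$ is not admissible, $E/\langle\sigma\rangle$ is a modified BG with a double half-edge and $R_E = \widehat{E/\langle\sigma\rangle}$; here I would chain two comparisons: first Proposition \ref{the number of equivalence classes of bands} applied to $p: E \to E/\langle\sigma\rangle$, and then a comparison of the bands of $E/\langle\sigma\rangle$ with those of its double cover $\widehat{E/\langle\sigma\rangle}$ via the covering $\pi$ of Lemma \ref{pi is a covering of modified f-BGs}, which is the quotient by the order-$2$ automorphism $\phi$; applying Proposition \ref{the number of equivalence classes of bands} again (now with $n=2$) to $\widehat{E/\langle\sigma\rangle} \to (\widehat{E/\langle\sigma\rangle})/\langle\phi\rangle \cong E/\langle\sigma\rangle$ yields $N_{E/\langle\sigma\rangle} \le N_{R_E} \le 2 N_{E/\langle\sigma\rangle}$. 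Composing the two inequality chains shows finiteness of $N_E$ is equivalent to finiteness of $N_{R_E}$, with the vanishing cases matching up as well.

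The remaining technical point is to make sure that the notion of ``band of a Brauer $G$-set'' in Definition \ref{band} is the correct bridge: Lemma \ref{two-numbers-of-bands-are-equal} identifies $N_E$ with the number of band classes of the algebra $A_E$ when every edge carries a half-edge of degree $>1$, and the same lemma applies to $R_E$ since it is a BGA (for a BGA every degree is at least $1$, but I must still check the hypothesis that each edge has a half-edge of degree $>1$, or else first replace the algebra by a Morita-equivalent reduced one). I would therefore reduce to this situation at the outset, arguing that edges all of whose half-edges have degree $1$ contribute truncated structure that can be normalized away without changing the representation type, or alternatively that the reduction to the presentation $kQ'_E/I'_E$ goes through after absorbing such edges.

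The main obstacle I expect is precisely this bookkeeping around degenerate edges and the non-admissible case: in the non-admissible case one must verify carefully that $\widehat{E/\langle\sigma\rangle}$ is genuinely a Brauer graph (no double half-edges survive, so that $A_{R_E}$ really is a BGA and Theorem \ref{domestic-BGA} applies), and that the double-cover construction $\widehat{(-)}$ interacts correctly with the band count — that is, that the fixed-point-creating involution of the modified BG does not produce or destroy bands in an uncontrolled way. Once the two covering comparisons are in place and the degenerate edges are dealt with, the equivalences ``$A_E$ representation-finite $\iff N_E = 0 \iff N_{R_E} = 0 \iff A_{R_E}$ representation-finite'' and ``$A_E$ domestic $\iff N_E < \infty \iff N_{R_E} < \infty \iff A_{R_E}$ domestic'' follow formally from \cite{ES} and Proposition \ref{the number of equivalence classes of bands}.
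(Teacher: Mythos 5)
Your proposal is correct and follows essentially the same route as the paper: reduce to the Erdmann--Skowro\'nski band-counting criterion, use Lemma \ref{two-numbers-of-bands-are-equal} to pass between bands of the algebra and bands of the Brauer $G$-set, and compare $N_E$ with $N_{R_E}$ via Proposition \ref{the number of equivalence classes of bands} applied to $E\to E/\langle\sigma\rangle$ and, in the non-admissible case, additionally to $\widehat{E/\langle\sigma\rangle}\to E/\langle\sigma\rangle$. The only point where the paper's resolution differs from what you anticipated is the degenerate case: rather than ``normalizing away'' an edge whose two half-edges both have degree $1$, the paper observes that the $f_{ms}$-BG axioms force the entire configuration to collapse to two $G$-orbits with all degrees equal to $1$, so that $A_E$ and $A_{R_E}$ are both Nakayama of Loewy length $2$ and hence representation-finite.
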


\begin{proof}
If $E$ contains an edge $\{e,\tau(e)\}$ with $d(e)=d(\tau(e))=1$, then it can be shown that $E=\{e,g\cdot e,\cdots, g^{n-1}\cdot e,\tau(e),g\cdot\tau(e),\cdots,g^{n-1}\cdot\tau(e)\}$ for some positive integer $n$, where $g^n\cdot e=e$, $g^n\cdot\tau(e)=\tau(e)$, $\tau(g^i\cdot e)=g^i\cdot\tau(e)$ for $0\leq i\leq n-1$, and the degree of each half-edge of $E$ is equal to $1$. Moreover, $R_E$ is a Brauer tree with free f-degree given by the diagram
$$\begin{tikzpicture}
\fill (0,0) circle (0.5ex);
\fill (2,0) circle (0.5ex);
\draw    (0,0)--(2,0);
\node at(2.4,-0.1) {.};
\end{tikzpicture}$$
Both $A_E$ and $A_{R_E}$ are Nakayama algebras of Loewy length $2$, so they are both representation-finite. Therefore we may assume that each edge of $E$ contains a half-edge $e$ with $d(e)>1$.

Since $E$ is finite, the group $\langle\sigma\rangle$ of automorphisms of $E$ is finite. If $\langle \sigma\rangle$ is admissible, then $R_E=E/\langle\sigma\rangle$. By Proposition \ref{the number of equivalence classes of bands}, the number of equivalence classes of bands of $E$ is finite (resp. zero) if and only if the number of equivalence classes of bands of $R_E$ is finite (resp. zero). Since each edge of $E$ (resp. $R_E$) contains a half-edge whose degree is larger than $1$, by Lemma \ref{two-numbers-of-bands-are-equal}, we imply that the number of equivalence classes of bands of $A_E$ is finite (resp. zero) if and only if the number of equivalence classes of bands of $A_{R_E}$ is finite (resp. zero). Since both $A_E$ and $A_{R_E}$ are self-injective special biserial, $A_E$ is representation-finite (resp. domestic) if and only if $A_{R_E}$ is representation-finite (resp. domestic).

If $\langle \sigma\rangle$ is not admissible, then we have $R_E=\widehat{E/\langle\sigma\rangle}$ and $E/\langle\sigma\rangle\cong R_E/\langle\phi\rangle$, where $\phi$ is the automorphism of $R_E=(E/\langle\sigma\rangle)\sqcup(E/\langle\sigma\rangle)$ given by $\phi(h_i)=h_{3-i}$ for every $h\in E/\langle\sigma\rangle$ (see the paragraph before Lemma \ref{pi is a covering of modified f-BGs}). By using Proposition \ref{the number of equivalence classes of bands} twice, we have that the number of equivalence classes of bands of $E$ is finite (resp. zero) if and only if the number of equivalence classes of bands of $R_E$ is finite (resp. zero). So in this case we also have that $A_E$ is representation-finite (resp. domestic) if and only if $A_{R_E}$ is representation-finite (resp. domestic).
\end{proof}

\begin{Rem1} \label{description-in-terms-of-brauer-graph}
(i) Since $f_{ms}$-BGAs are self-injective special biserial, they are all tame. Moreover, according to \cite[Theorem 2.1]{ES}, an $f_{ms}$-BGA is domestic if and only if it is of polynomial growth.

(ii) Theorem \ref{rep type of f-BCA and its reduced form} gives an effective way to determine the representation type of an $f_{ms}$-BGA in terms of the reduced form of its defining $f_{ms}$-BG.

(iii) Different from the BGA case, an $n$-domestic $f_{ms}$-BGA can have arbitrarily large $n$. For example, let $E$ be an $f_{ms}$-BG given by the diagram
\vspace{0.5cm}
\begin{center}
\tikzset{every picture/.style={line width=0.75pt}}
\begin{tikzpicture}[x=15pt,y=15pt,yscale=1,xscale=1]
\fill (0,0) circle (0.5ex);
\fill (5,0) circle (0.5ex);
\node at(-1.5,1) {$1$};
\node at(-0.6,1.2) {$2$};
\node at(1,0.2) {$n$};
\node at(6,0) {,};
\fill (-0.1,1) circle (0.1ex);
\fill (0.15,0.95) circle (0.1ex);
\fill (0.35,0.9) circle (0.1ex);
\fill (0.55,0.8) circle (0.1ex);
\fill (0.7,0.7) circle (0.1ex);
\fill (4.8,0.9) circle (0.1ex);
\fill (5.05,0.9) circle (0.1ex);
\fill (5.25,0.85) circle (0.1ex);
\fill (5.45,0.8) circle (0.1ex);
\fill (5.6,0.7) circle (0.1ex);
\draw    (0,0) .. controls (-4,3) and (1,3) .. (5,0) ;
\draw    (0,0) .. controls (-2,3) and (4,3) .. (5,0) ;
\draw    (0,0) .. controls (4,3) and (9,3) .. (5,0) ;
\end{tikzpicture}
\end{center}
where the degree function of $E$ takes constant value $2$. By using the covering map $E\rightarrow E/\langle \sigma\rangle$ we see that the $f_{ms}$-BGA $A_E$ is $n$-domestic.

(iv) According to \cite[Theorem 5.13]{GLL}, the graded algebras associated with the radical filtration of $A_E$ and $A_{R_E}$ also have the same representation type.
\end{Rem1}

\begin{Ex1} \label{weakly-symmetric-algebra-example}
Let $E$ be the $f_{ms}$-BG given by the diagram
$$\begin{tikzpicture}
\draw (0,0) circle (0.5);
\draw (0.5,0.5) circle (0.5);
\fill (0.5,0) circle (0.5ex);
\end{tikzpicture},$$
where the f-degree of the unique vertex of $E$ is $\frac{1}{2}$ and the $G$-action is induced from the clockwise order on the half-edges around this vertex. Then $\langle\sigma\rangle$ is not admissible and $E/\langle \sigma\rangle$ is a Brauer $G$-set given in Example \ref{modified-fms-BG-Ex-1}. Therefore $R_E$ is a BG with free f-degree given by the diagram
$$\begin{tikzpicture}
\draw (0,0) circle (0.5);
\fill (0.5,0) circle (0.5ex);
\fill (-0.5,0) circle (0.5ex);
\end{tikzpicture}.$$
According to Theorem \ref{domestic-BGA}, $A_{R_E}$ is domestic, therefore $A_E$ is also domestic. One of particularly interests is that $A_{R_E}$ is symmetric but $A_E$ is weakly symmetric with nonidentity Nakayama automorphism. Indeed, $A_E=kQ_E/I_E$, where $Q_E$ is the quiver
$$\begin{tikzpicture}
\draw[->] (0,0.6) -- (5,0.6);
\draw[->] (0,-0.2) -- (5,-0.2);
\draw[->] (5,0.2) -- (0,0.2);
\draw[->] (5,-0.6) -- (0,-0.6);
\node at(-0.5,0) {$1$};
\node at(5.5,0) {$2$};
\node at(2.5,0.8) {$\alpha_{1}$};
\node at(2.5,0.4) {$\alpha_{2}$};
\node at(2.5,0) {$\alpha_{3}$};
\node at(2.5,-0.4) {$\alpha_{4}$};
\end{tikzpicture},$$
and $I_E$ is generated by $\alpha_4\alpha_1,\alpha_1\alpha_2,\alpha_2\alpha_3,\alpha_3\alpha_4,\alpha_2\alpha_1-\alpha_4\alpha_3,\alpha_3\alpha_2-\alpha_1\alpha_4$. The structure of indecomposable projective modules are as follows:
\begin{center}
\tikzset{every picture/.style={line width=0.75pt}}
\begin{tikzpicture}
\node at(0,0) {$1$};
\node at(-1,1) {$2$};
\node at(1,1) {$2$};
\node at(0,2) {$1$};
\node at(-0.6,1.7) {$\alpha_1$};
\node at(0.6,1.7) {$\alpha_3$};
\node at(-0.6,0.3) {$\alpha_2$};
\node at(0.6,0.3) {$\alpha_4$};
\draw    (-0.2,0.2) -- (-0.8,0.8) ;
\draw    (0.2,0.2) -- (0.8,0.8) ;
\draw    (-0.8,1.2) -- (-0.2,1.8) ;
\draw    (0.8,1.2) -- (0.2,1.8) ;
\node at(3,0) {$2$};
\node at(2,1) {$1$};
\node at(4,1) {$1$};
\node at(3,2) {$2$};
\node at(2.4,1.7) {$\alpha_2$};
\node at(3.6,1.7) {$\alpha_4$};
\node at(2.4,0.3) {$\alpha_3$};
\node at(3.6,0.3) {$\alpha_1$};
\draw    (2.8,0.2) -- (2.2,0.8) ;
\draw    (3.2,0.2) -- (3.8,0.8) ;
\draw    (2.2,1.2) -- (2.8,1.8) ;
\draw    (3.8,1.2) -- (3.2,1.8) ;
\end{tikzpicture}
\end{center}
The Nakayama automorphism of $A_E$ is induced by $e_i\mapsto e_i$ for $i=1, 2$, $\alpha_i\mapsto \alpha_{i+2}$ for $i\in \mathbb{Z}/4\mathbb{Z}=\{1,2,3,4\}$. However, $A_{R_E}$ has the same quiver with the following structure of indecomposable projective modules:
\begin{center}
\tikzset{every picture/.style={line width=0.75pt}}
\begin{tikzpicture}
\node at(0,0) {$1$};
\node at(-1,1) {$2$};
\node at(1,1) {$2$};
\node at(0,2) {$1$};
\node at(-0.6,1.7) {$\alpha_1$};
\node at(0.6,1.7) {$\alpha_3$};
\node at(-0.6,0.3) {$\alpha_2$};
\node at(0.6,0.3) {$\alpha_4$};
\draw    (-0.2,0.2) -- (-0.8,0.8) ;
\draw    (0.2,0.2) -- (0.8,0.8) ;
\draw    (-0.8,1.2) -- (-0.2,1.8) ;
\draw    (0.8,1.2) -- (0.2,1.8) ;
\node at(3,0) {$2$};
\node at(2,1) {$1$};
\node at(4,1) {$1$};
\node at(3,2) {$2$};
\node at(2.4,1.7) {$\alpha_2$};
\node at(3.6,1.7) {$\alpha_4$};
\node at(2.4,0.3) {$\alpha_1$};
\node at(3.6,0.3) {$\alpha_3$};
\draw    (2.8,0.2) -- (2.2,0.8) ;
\draw    (3.2,0.2) -- (3.8,0.8) ;
\draw    (2.2,1.2) -- (2.8,1.8) ;
\draw    (3.8,1.2) -- (3.2,1.8) ;
\end{tikzpicture}
\end{center}
\end{Ex1}

\section{Fundamental groups of fractional Brauer graphs of type MS}

In this section, we characterize representation-finite and domestic $f_{ms}$-BGA $A_E$ in terms of the fundamental group $\Pi(E)$ of the defining $f_{ms}$-BG $E$. Our method for calculating the fundamental group of an $f_{ms}$-BG $E$ is as follows: we first calculate the fundamental groups of modified BGs, then we use the covering $E\rightarrow E/\langle\sigma\rangle$ of Brauer $G$-sets (here $E/\langle\sigma\rangle$ is a modified BG) to reduce the calculation of the fundamental group of $E$ to the calculation of the fundamental group of $E/\langle\sigma\rangle$ by the method mentioned in Remark \ref{general-method-to-compute-the-fundamental-group}.

\subsection{The fundamental groups of modified BGs}
\

In this subsection we calculate the fundamental groups of modified BGs using an analogy of the Van Kampen theorem for Brauer $G$-sets.

We denote by $F\langle x_1,x_2,\cdots,x_n\rangle$ the free group on the set $\{x_1,x_2,\cdots,x_n\}$. The following Lemma should be compared with \cite[Lemma 6.6]{LL2}.

\begin{Lem}\label{a calculation of modified fundamental group}
Let $E=(E,E,\tau,d)$ be the modified BG given by the diagram
\begin{center}
\begin{tikzpicture}
\draw (0,0) circle (1.5);
\fill (1.5,0) circle (0.5ex);
\node at(2.1,0) {$m$};
\node at(1.55,-0.5) {$e$};
\node at(1.7,0.8) {$\tau(e)$};
\node at(0.4,-0.9) {$e_1$};
\node at(0.4,0.9) {$e_{a}$};
\node at(2.6,-0.9) {$e_{a+b}$};
\node at(2.6,0.9) {$e_{a+1}$};
\draw (1.9,-0.2) rectangle (2.3,0.2);
\draw (0,0.866)--(1.5,0);
\draw[dotted] (0,0.7) arc (155:205:1.732);
\draw (0,-0.866)--(1.5,0);
\draw (3,-0.866)--(1.5,0);
\draw (3,0.866)--(1.5,0);
\draw[dotted] (3,-0.7) arc (-25:25:1.732);
\end{tikzpicture},
\end{center}
where $E$ contains $n=a+b$ double half-edges $e_1$, $\cdots$, $e_{a+b}$, and the f-degree of the unique vertex of $E$ is $m$. Then $\Pi_{m}(E,e)\cong F\langle x,y,z_1,\cdots,z_n\rangle/\langle x^{m}y=yx^{m},x^{m}z_i=z_i x^{m}(1\leq i\leq n), z_{i}^{2}=1(1\leq i\leq n)\rangle$.
\end{Lem}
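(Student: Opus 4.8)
The plan is to compute $\Pi_m(E,e)$ from a presentation of the fundamental groupoid $\Pi_m(E,A)$ on the full set $A=E$ of half-edges via a spanning tree, exactly in the spirit of the analog \cite[Lemma 5.6]{LL2}, and then read off the group at $e$. First I would record the combinatorial data and make one basic reduction. Since $E$ consists of a single vertex of integral f-degree $m$ carrying the loop edge $\{e,\tau(e)\}$ together with $n$ double half-edges, the $G$-orbit $E=\{e,g\cdot e,\dots,g^{n+1}\cdot e\}$ has exactly $n+2$ elements and $g^{n+2}\cdot e=e$. Hence $d(e)=m(n+2)$ for every half-edge, and the Nakayama automorphism satisfies $\sigma(e)=g^{d(e)}\cdot e=(g^{n+2})^{m}\cdot e=e$, i.e.\ $\sigma=\mathrm{id}$. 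Consequently $g^{d(e)}=(g^{n+2})^{m}$ is a genuine loop at every half-edge, which is what makes the relation $(mh2)$ tractable: at each half-edge it becomes the statement that $g^{d(e)}$ commutes past $\tau$ with matching source and terminal.

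Next I would set up the spanning tree. Writing $h_i=g^{i}\cdot e$ with $\tau(e)=h_{a+1}$ and the double half-edges occupying the remaining positions $1,\dots,a$ and $a+2,\dots,n+1$, the groupoid $\Pi_m(E,A)$ is generated by the single $g$-steps $\gamma_i\colon h_i\to h_{i+1}$ and by the $\tau$-steps. I would take the path $h_0\to h_1\to\cdots\to h_{n+1}$ (i.e.\ $\gamma_0,\dots,\gamma_n$) as a spanning tree. Conjugating the remaining generators into loops at $e$ by the tree paths yields three kinds of generators: the closing rotation $x=(e\mid g^{n+2}\mid e)$ coming from $\gamma_{n+1}$; the element $y=(e\mid g^{-(a+1)}\tau\mid e)$ coming from the chord $\tau\colon e\to\tau(e)$ of the loop edge; and for each double half-edge $e_i$ the loop $z_i=(e\mid g^{-j}\tau g^{j}\mid e)$ obtained by conjugating the fixed-point involution $\tau\colon e_i\to e_i$, where $j$ is the position of $e_i$. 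This produces exactly the $n+2$ generators $x,y,z_1,\dots,z_n$.

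Then I would invoke the Van Kampen analog to conclude that $\Pi_m(E,e)$ is the free group on these generators modulo the relations that the instances of $(mh1)$ and $(mh2)$ impose. The relation $\tau^{2}\approx 1$ at each double half-edge gives $z_i^{2}=1$, while at the loop edge $\tau^{2}\approx 1$ only identifies $\tau\colon\tau(e)\to e$ with the inverse of $\tau\colon e\to\tau(e)$ and imposes no relation on $y$. Each instance of $(mh2)$, after expressing the rotation loop at $\tau(e)$ (resp.\ at $e_i$) in terms of $x$ through the tree path, collapses to the single statement that $x^{m}=(g^{n+2})^{m}=g^{d(e)}$ commutes with $y$ (resp.\ with $z_i$); this is the source of the central relations $x^{m}y=yx^{m}$ and $x^{m}z_i=z_i x^{m}$. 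Since $A=E$, every half-edge contributes exactly one such instance, and these exhaust the relations, giving the asserted presentation.

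The main obstacle is completeness together with the basepoint bookkeeping: I must justify, through the Van Kampen analog for Brauer $G$-sets, that $(mh1)$ and $(mh2)$ produce no relations beyond those listed, and I must track the tree-path conjugations carefully enough to see that the two half-edges $e$ and $\tau(e)$ of the loop edge yield the single relation $x^{m}y=yx^{m}$ rather than two independent ones, and that each $(mh2)$ at a double half-edge yields precisely $x^{m}z_i=z_i x^{m}$. The reduction $\sigma=\mathrm{id}$ makes all source/terminal matchings in $(mh2)$ automatic, so once the groupoid presentation is in place the derivation of the individual relations becomes routine.
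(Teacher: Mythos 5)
Your argument is correct, but it takes a genuinely different route from the paper's. The paper writes down the explicit homomorphism $f'\colon F\langle x,y,z_1,\dots,z_n\rangle\to\Pi_m(E,e)$ (with $f'(x)=[(e|g^{n+2}|e)]$, etc.), proves surjectivity by induction on the number of $\tau$'s in a closed special walk together with the unique-factorization statement (Proposition \ref{modified unique factorization}), checks the relations lie in the kernel, and then proves injectivity by putting elements of the presented group into the normal form $\overline{x^{lm}\delta_k^{l_k}\cdots\delta_1^{l_1}}$ and applying Proposition \ref{modified unique factorization} again. You instead read off a presentation of the whole groupoid $\Pi_m(E,E)$ and contract a spanning tree; this gives surjectivity \emph{and} completeness of the relations structurally, with no normal-form argument, and your bookkeeping (the orbit has $n+2$ elements, $\sigma=\mathrm{id}$, the two instances of $(mh2)$ at $e$ and $\tau(e)$ collapse to the single relation $x^my=yx^m$ because the $\tau$-steps $e\to\tau(e)$ and $\tau(e)\to e$ are already identified as mutual inverses by $(mh1)$) all checks out; your $y$ is the inverse of the paper's, which is immaterial. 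The one point to repair is the justification you cite: Proposition \ref{modified Van-Kampen} is about unions of sub-Brauer-$G$-sets and does not by itself deliver ``generators = steps, relations = instances of $(mh1)$, $(mh2)$''; moreover, the paper's own Van Kampen computation (Proposition \ref{modified fundamental group of modified f-BG}) uses the present lemma for its local pieces, so leaning on Van Kampen here risks circularity. What you actually need is that $\Pi_m(E,E)$ is, by Definition \ref{homotopy of modified walks}, the quotient of the free groupoid on the $g$- and $\tau$-steps by the congruence generated by $(mh1)$ and $(mh2)$, together with the standard fact that contracting a spanning tree turns a connected groupoid presentation into a presentation of the vertex group. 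With that substitution your proof is complete; the trade-off is that the paper's route reuses the special-walk machinery it needs anyway (e.g.\ for the groupoid version in Lemma \ref{a calculation of modified fundamental groupoid}), while yours is shorter and avoids the injectivity computation entirely.
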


\begin{proof}
Define a group homomorphism $f':F\langle x,y,z_1,\cdots,z_n\rangle\rightarrow\Pi_{m}(E,e)$ as follows: $f'(x)=[(e|g^{n+2}|e)]$, $f'(y)=[(e|\tau g^{a+1}|e)]$, and \begin{equation*}
f'(z_i)= \begin{cases}
[(e|g^{-i}\tau g^i|e)], & \text{if } 1\leq i\leq a; \\
[(e|g^{-i-1}\tau g^{i+1}|e)], & \text{if } a+1\leq i\leq n.
\end{cases}
\end{equation*}
By imitating the proof of \cite[Lemma 6.6]{LL2}, it can be shown that for every closed special walk $w=(e|g^{i_k}\tau g^{i_{k-1}}\tau\cdots\tau g^{i_1}\tau g^{i_0}|e)$ of $E$ at $e$, $[w]$ belongs to the image of $f'$ (by induction on the number of times that $\tau$ appears in $w$). Then according to Proposition \ref{modified unique factorization}, $f'$ is surjective. Moreover, it is straightforward to show that the kernel of $f'$ contains the normal subgroup of \\ $F\langle x,y,z_1,\cdots,z_n\rangle$ generated by the relations $x^{m}y=yx^{m},x^{m}z_i=z_i x^{m}(1\leq i\leq n), z_{i}^{2}=1(1\leq i\leq n)$. Therefore $f'$ induces a surjective group homomorphism $f:F\langle x,y,z_1,\cdots,z_n\rangle/\langle x^{m}y=yx^{m},x^{m}z_i=z_i x^{m}(1\leq i\leq n), z_{i}^{2}=1(1\leq i\leq n)\rangle\rightarrow\Pi_{m}(E,e)$.

We need to show that $f$ is also injective. Denote by $\overline{a}$ the image of $a\in F\langle x,y,z_1,\cdots,z_n\rangle$ in $F\langle x,y,z_1,\cdots,z_n\rangle/\langle x^{m}y=yx^{m},x^{m}z_i=z_i x^{m}(1\leq i\leq n), z_{i}^{2}=1(1\leq i\leq n)\rangle$. Note that each element of $F\langle x,y,z_1,\cdots,z_n\rangle/\langle x^{m}y=yx^{m},x^{m}z_i=z_i x^{m}(1\leq i\leq n), z_{i}^{2}=1(1\leq i\leq n)\rangle$ is of the form $\overline{x^{lm}\delta_{k}^{l_k}\cdots\delta_{1}^{l_1}}$, where $l\in\mathbb{Z}$, $l_1,\cdots,l_k\in\mathbb{Z}-\{0\}$, $\delta_i\in\{x,y,z_1,\cdots,z_n\}$ for $1\leq i\leq k$, such that \\ (1) $\delta_{i-1}\neq\delta_i$ for $1<i\leq n$; \\ (2) if $\delta_i=x$, then $0< l_i< m$; \\ (3) if $\delta_i=z_r$ for some $1\leq r\leq n$, then $l_i=1$. \\ If $\overline{x^{lm}\delta_{k}^{l_k}\cdots\delta_{1}^{l_1}}\in \mathrm{ker}(f)$, according to Proposition \ref{modified unique factorization}, it is straightforward to show that $k=l=0$, therefore $\overline{x^{lm}\delta_{k}^{l_k}\cdots\delta_{1}^{l_1}}=1$.
\end{proof}

The following lemma should be compared with \cite[Lemma 6.8]{LL2}. For the definition of the fundamental groupoid $\Pi(Q)$ of a quiver $Q$, we refer to \cite[Section 4]{LL2}. By abuse of notation, we also denote by $w$ the corresponding morphism in $\Pi(Q)$ for a walk $w$ in the quiver $Q$.
\begin{Lem}\label{a calculation of modified fundamental groupoid}
Let $E=(E,E,\tau,d)$ be the modified BG given by the diagram
\begin{center}
\begin{tikzpicture}
\draw (0,0)--(2,0);
\draw (-0.5,0.866)--(0,0);
\draw (-0.5,-0.866)--(0,0);
\draw (2.5,0.866)--(2,0);
\draw (2.5,-0.866)--(2,0);
\draw[dotted] (-0.7,0.7) arc (130:230:1);
\draw[dotted] (2.7,-0.7) arc (-50:50:1);
\node at(-0.5,0.5) {$e_a$};
\node at(-0.55,-0.5) {$e_1$};
\node at(2.64,0.34) {$e_{a+1}$};
\node at(2.6,-0.34) {$e_{a+b}$};
\node at(0.4,-0.2) {$e$};
\node at(1.6,-0.2) {$h$};
\fill (0,0) circle (0.5ex);
\fill (2,0) circle (0.5ex);
\node at(0.3,0.3) {$m$};
\draw (0.1,0.1) rectangle (0.5,0.5);
\node at(1.7,0.3) {$n$};
\draw (1.5,0.1) rectangle (1.9,0.5);
\end{tikzpicture},
\end{center}
where $E$ contains $a+b$ double half-edges $e_1$, $\cdots$, $e_a$, $e_{a+1}$, $\cdots$, $e_{a+b}$, and the f-degree of the vertex on the left (resp. right) is $m$ (resp. $n$). Let  $A=\{e,h\}$ be a subset of $E$. Then the fundamental groupoid $\Pi_{m}(E,A)$ is isomorphic to
$$\mathscr{F}/\langle t x^{m}=u^{n} t, x^{m} z_{i}=z_{i} x^{m}, z_{i}^{2}=1_{p} (1\leq i\leq a), u^{n} v_{j}=v_{j} u^{n}, v_{j}^{2}=1_{q} (1\leq j\leq b)\rangle,$$
where $\mathscr{F}$ is the fundamental groupoid of the quiver
\begin{center}
\begin{tikzpicture}
\node at(-0.1,0.1) (a) {};
\draw[->] (a.north) to[out=90,in=45] ([xshift=-0.7cm,yshift=0.7cm]a.east) to[out=-135,in=-180] (a.west);
\node at(0.1,0.1) (a) {};
\draw[->] (a.east) to[out=0,in=-45] ([xshift=0.7cm,yshift=0.7cm]a.south) to[out=135,in=90] (a.north);
\node at(0.1,-0.1) (a) {};
\draw[->] (a.south) to[out=-90,in=-135] ([xshift=0.7cm,yshift=-0.7cm]a.north) to[out=45,in=0] (a.east);
\draw[dotted] (-0.7,0) arc (180:270:0.7);
\node at(2.9,0.1) (a) {};
\draw[->] (a.west) to[out=180,in=-135] ([xshift=-0.7cm,yshift=0.7cm]a.south) to[out=45,in=90] (a.north);
\node at(3.1,0.1) (a) {};
\draw[->] (a.north) to[out=90,in=135] ([xshift=0.7cm,yshift=0.7cm]a.west) to[out=-45,in=0] (a.east);
\node at(2.9,-0.1) (a) {};
\draw[->] (a.south) to[out=-90,in=-45] ([xshift=-0.7cm,yshift=-0.7cm]a.north) to[out=135,in=180] (a.west);
\draw[dotted] (3,-0.7) arc (270:360:0.7);
\draw[->] (0.2,0) -- (2.8,0);
\fill (0,0) circle (0.5ex);
\fill (3,0) circle (0.5ex);
\node at(0.7,0.9) {$x$};
\node at(2.3,0.9) {$u$};
\node at(1.5,0.2) {$t$};
\node at(0.7,-0.9) {$z_1$};
\node at(-0.9,0.7) {$z_a$};
\node at(2.3,-0.9) {$v_{1}$};
\node at(4.1,0.7) {$v_{b}$};
\node at(-0.2,-0.2) {$p$};
\node at(3.2,-0.2) {$q$};
\end{tikzpicture}.
\end{center}
\end{Lem}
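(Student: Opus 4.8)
The plan is to compute the fundamental groupoid $\Pi_m(E,A)$ of this two-vertex modified BG by the same surjection-plus-injectivity strategy used in Lemma \ref{a calculation of modified fundamental group}, now promoted from a group to a groupoid on the two base points $A=\{e,h\}$. First I would write down a functor $f':\mathscr{F}\to\Pi_m(E,A)$ out of the free groupoid of the displayed quiver by specifying images of the generating arrows: the loops $x$ at $p$ and $u$ at $q$ go to the walks $[(e|g^{m'}|e)]$ and $[(h|g^{n'}|h)]$ that wind once around the left and right vertex (where the exponents account for the $a+1$, resp. $b+1$, half-edges at each vertex), the connecting arrow $t$ goes to the walk $[(h|\cdots|e)]$ passing through the edge $\{e,h\}$, and each loop $z_i$ (resp. $v_j$) goes to the walk $[(e|g^{-i}\tau g^i|e)]$ (resp. its analogue at $q$) that reflects across the $i$-th double half-edge. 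One checks that $f'$ carries the defining relations of the quotient into homotopies in $\Pi_m(E,A)$: the relations $z_i^2=1_p$ and $v_j^2=1_q$ come from $(mh1)$ applied to $\tau^2$, the commuting relations $x^{m}z_i=z_i x^{m}$ and $u^{n}v_j=v_j u^{n}$ and the braiding $tx^{m}=u^{n}t$ come from repeated use of $(mh2)$ around each vertex and across the connecting edge. Hence $f'$ descends to a functor $f$ from the quotient groupoid to $\Pi_m(E,A)$.

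Next I would establish surjectivity of $f$. By Proposition \ref{modified unique factorization}, every morphism in $\Pi_m(E,A)(a,b)$ is represented up to a power of $g^{d}$ at the terminal by a special walk, so it suffices to show each closed or $e$-to-$h$ special walk lies in the image of $f$; this is done by induction on the number of occurrences of $\tau$, exactly as in the cited lemma, peeling off one reflection at a time and absorbing the resulting $g$-powers into $x$, $u$, or the relators. The factor $t$ handles the single crossing of the edge joining the two vertices, and the two "local" generating sets $\{x,z_i\}$ and $\{u,v_j\}$ handle the walks confined to a neighborhood of each vertex.

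For injectivity I would put a general element of the quotient groupoid into a normal form — a reduced alternating word in the local generators interleaved with copies of $t^{\pm1}$, with the syllable constraints $0<l_i<m$ on powers of $x$, $0<l_i<n$ on powers of $u$, and exponent $1$ on each $z_i,v_j$ — mirroring the three normal-form conditions in Lemma \ref{a calculation of modified fundamental group}. Applying $f$ to such a reduced word yields a walk whose special-walk part, read off via Proposition \ref{modified unique factorization}, recovers the word uniquely; thus a nontrivial reduced word cannot lie in $\ker f$, and $f$ is faithful. Since $f$ is the identity on the two objects $p\mapsto e$, $q\mapsto h$, it is dense on objects, so together with fullness and faithfulness it is an isomorphism of groupoids.

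The main obstacle I expect is the bookkeeping in the normal-form/injectivity step: because this is a groupoid on two objects rather than a single group, the reduced words alternate between the two vertices through the connecting arrow $t$, and one must verify that the braiding relation $tx^m=u^nt$ together with the local commuting relations is exactly enough to bring any word to a canonical form with no further collapse. Making the translation between a reduced word and its image special walk genuinely bijective — so that distinct reduced words give distinct homotopy classes — is where the care is needed; the rest is a routine adaptation of the single-vertex argument of Lemma \ref{a calculation of modified fundamental group}.
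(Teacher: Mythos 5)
Your proposal is correct and follows essentially the same route as the paper: the same generators for the functor $\mathscr{F}\to\Pi_{m}(E,A)$, fullness by induction on the number of $\tau$'s via Proposition \ref{modified unique factorization}, and faithfulness via a normal form compared against the unique special-walk factorization. The only difference is that the paper sidesteps the two-object bookkeeping you flag as the main obstacle by conjugating the right-hand generators into the vertex group at $p$ (setting $y=t^{-1}ut$, $z_{a+i}=t^{-1}v_it$), proving the isomorphism on that single free group, and then upgrading to a groupoid isomorphism with \cite[Lemma 5.7]{LL2}.
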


\begin{proof}
Define a morphism of groupoids $F':\mathscr{F}\rightarrow\Pi_{m}(E,A)$ as follows: $F'(p)=e$, $F'(q)=h$, $F'(x)=[(e|g^{a+1}|e)]$, $F'(u)=[(h|g^{b+1}|h)]$, $F'(t)=[(h|\tau|e)]$, $F'(z_i)=[(e|g^{-i}\tau g^i|e)]$ ($1\leq i\leq a$), $F'(v_j)=[(h|g^{-j}\tau g^j|h)]$ ($1\leq j\leq b$). We first need to show that $F'$ is full. According to Proposition \ref{modified unique factorization}, it suffices to show that for every special walk $w$ with $s(w),t(w)\in A$, $[w]$ belongs to the image of $F'$.

Let $w=g^{i_k}\tau g^{i_{k-1}}\tau\cdots\tau g^{i_1}\tau g^{i_0}$ be a special walk of $E$ with $s(w),t(w)\in A$. We will show that $[w]$ belongs to the image of $F'$ by induction on $k$, that is, the number of times that $\tau$ appears in $w$. If $k=0$, then $w=(e|g^{r(a+1)}|e)$ or $w=(h|g^{s(b+1)}|h)$, where $r,s\in\mathbb{Z}$, so $[w]$ belongs to the image of $F'$. Now suppose that $k>0$. We may assume that $w$ contains no subwalk of the form $(h|\tau|e)$ or $(e|\tau|h)$, otherwise $w$ can be factored as a composition of special subwalks whose sources and terminals belong to $A$ with $k$ smaller. Therefore we may assume that $w$ is of the form
\begin{equation*}
(e|g^{i_k}|c_k)(c_k|\tau|c_k)(c_{k}|g^{i_{k-1}}|c_{k-1})(c_{k-1}|\tau|c_{k-1})\cdots(c_2|\tau|c_2)(c_2|g^{i_1}|c_1)(c_1|\tau|c_1)(c_1|g^{i_0}|e),  \end{equation*}
where $c_1,\cdots,c_k\in\{e_1,\cdots,e_a\}$ and $0<i_0,i_1,\cdots, i_k<m(a+1)$. It is straightforward to show that $[w]$ belongs to the image of $F'$.

Let $y=t^{-1}ut$ and $z_{a+i}=t^{-1}v_i t$ for $1\leq i\leq b$. Then $\mathscr{F}(p,p)$ is a free group with free generators $x,y,z_1,\cdots z_{a+b}$. Let $f':\mathscr{F}(p,p)\rightarrow\Pi_{m}(E,e)$ be the group homomorphism induced by $F'$. Since $F'$ is full, $f'$ is surjective. We have $f'(x)=[(e|g^{a+1}|e)]$, $f'(y)=[(e|\tau g^{b+1}\tau|e)]$, and \begin{equation*}
f'(z_i)= \begin{cases}
[(e|g^{-i}\tau g^i|e)], & \text{if } 1\leq i\leq a; \\
[(e|\tau g^{-i+a}\tau g^{i-a}\tau|e)], & \text{if } a+1\leq i\leq a+b.
\end{cases}
\end{equation*}
It is straightforward to show that the normal subgroup of $\mathscr{F}(p,p)=F\langle x,y,z_1,\cdots,z_{a+b}\rangle$ generated by relations $x^m=y^n,x^m z_i=z_i x^m,z_{i}^{2}=1 (1\leq i\leq a+b)$ is contained in the kernel of $f'$, therefore $f'$ induces a surjective group homomorphism
$$f:\mathscr{F}(p,p)/\langle x^m=y^n,x^m z_i=z_i x^m,z_{i}^{2}=1 (1\leq i\leq a+b)\rangle\rightarrow\Pi_{m}(E,e).$$
It can be shown directly that each element of $\mathscr{F}(p,p)/\langle x^m=y^n,x^m z_i=z_i x^m,z_{i}^{2}=1 (1\leq i\leq a+b)\rangle$ is of the form $\overline{x^{lm}\delta_{k}^{l_k}\cdots\delta_{1}^{l_1}}$, where $l\in\mathbb{Z}$, $l_1,\cdots,l_k\in\mathbb{Z}-\{0\}$, \\ $\delta_i\in\{x,y,z_1,\cdots,z_{a+b}\}$ for $1\leq i\leq k$, such that \\ (1) $\delta_{i-1}\neq\delta_i$ for $1<i\leq n$; \\ (2) if $\delta_i=x$, then $0< l_i< m$; \\ (3) if $\delta_i=y$, then $0< l_i< n$; \\ (4) if $\delta_i=z_r$ for some $1\leq r\leq a+b$, then $l_i=1$. \\ If $\overline{x^{lm}\delta_{k}^{l_k}\cdots\delta_{1}^{l_1}}\in\mathrm{ker}(f)$, according to Proposition \ref{modified unique factorization}, it is straightforward to show that $k=l=0$, therefore $\overline{x^{lm}\delta_{k}^{l_k}\cdots\delta_{1}^{l_1}}=1$. Thus $f$ is also injective.

Let $\mathscr{G}$ be the groupoid
$$\mathscr{F}/\langle t x^{m}=u^{n} t, x^{m} z_{i}=z_{i} x^{m}, z_{i}^{2}=1_{p} (1\leq i\leq a), u^{n} v_{j}=v_{j} u^{n}, v_{j}^{2}=1_{q} (1\leq j\leq b)\rangle.$$
Then it is straightforward to show that $F'$ induces a morphism of groupoids $F:\mathscr{G}\rightarrow\Pi_{m}(E,A)$. Since $\mathscr{G}(p,p)=\mathscr{F}(p,p)/\langle x^m=y^n,x^m z_i=z_i x^m,z_{i}^{2}=1 (1\leq i\leq a+b)\rangle$, and since $F$ induces a group isomorphism $f:\mathscr{G}(p,p)\rightarrow\Pi_{m}(E,e)$, by \cite[Lemma 6.7]{LL2}, $F$ is an isomorphism of groupoids.
\end{proof}

The following lemma is an analogy of \cite[Lemma 6.3]{LL2}. We omit the proof of it.
\begin{Lem}\label{isomorphism of modified fundamental groupoids}
Let $E=(E,U,\tau,d)$ be a connected Brauer $G$-set. Let $C$ be a subset of $E-U$ such that for each $e\in E$, $e\in C$ if and only if $g^{d(e)}(e)\in C$ (We denote by $g^{n}(h)$ the action of $g^n$ on $h$ for every $h\in E$ and $n\in\mathbb{Z}$).
Let $E'=E-C$ and assume that $E'\neq\emptyset$. Define a Brauer $G$-set structure $(E',U,\tau,d')$ on $E'$ as follows: the action of $G=\langle g\rangle$ on $E'$ is given by
\begin{equation*}
g\cdot h= \begin{cases}
g(h), \text{ if } g(h)\in E'; \\
g^{N}(h), \text{ if } g(h)\notin E', \text{ where } N \text{ is the minimal positive integer such that } g^{N}(h)\in E';
\end{cases}
\end{equation*}
the degree function $d'$ is given by
$$d'(h)=d(h)-|\{i\mid 1\leq i\leq d(h)-1  \text{ and } g^{i}(h)\notin E'\}|.$$ Then $E'$ is a connected Brauer $G$-set, and the groupoids $\Pi_{m}(E,E')$ and $\Pi_{m}(E',E')$ are isomorphic.
\end{Lem}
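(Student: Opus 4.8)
The plan is to follow the strategy of the proof of \cite[Lemma 5.3]{LL2}. The whole argument rests on one preliminary observation: the Nakayama automorphism $\sigma'$ of $E'$ coincides with the restriction $\sigma|_{E'}$ of the Nakayama automorphism $\sigma$ of $E$. To see this, fix $e\in E'$ and work inside its $G$-orbit, writing $\cdot$ for the new $G$-action on $E'$ and $g^i(e)$ for the original action. Since $C\subseteq E-U$ and $e\notin C$, the hypothesis $e\in C\iff g^{d(e)}(e)\in C$ gives $g^{d(e)}(e)=\sigma(e)\in E'$, and a direct count shows that among $g(e),\dots,g^{d(e)}(e)$ exactly $d'(e)$ elements survive in $E'$, the last of them being $\sigma(e)$. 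Hence applying the new action $d'(e)$ times to $e$ lands on $\sigma(e)$, that is, $\sigma'(e)=g^{d'(e)}\cdot e=\sigma(e)$.

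Granting this, I would first verify that $(E',U,\tau,d')$ is a connected Brauer $G$-set. Condition $(mf1)$ of Definition \ref{modified f-BG} holds by the very definition of $d'$, and $(mf2)$ transfers from $E$: because $C\cap U=\emptyset$ we have $U\subseteq E'$, and since $\sigma'=\sigma|_{E'}$ we get $\sigma'(U)=\sigma(U)=U$ and $\tau\sigma'=\tau\sigma=\sigma\tau=\sigma'\tau$ on $U$. For connectedness, note that any walk of $E$ can only enter $C$ through steps $g^{\pm1}$, never through $\tau$ (as $C\cap U=\emptyset$); collapsing each maximal $C$-segment of a walk between two points of $E'$ then produces a walk of $E'$, so connectedness of $E$ forces that of $E'$.

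For the isomorphism $\Pi_{m}(E',E')\cong\Pi_{m}(E,E')$, I would define a lift $\Phi$ on walks that fixes every $\tau$-step and replaces each step $g^{\pm1}$ of $E'$ by the chain of steps $g^{\pm1}$ of $E$ running through the intervening elements of $C$. The relations $(mh1)$ and $(mh3)$ of Definition \ref{homotopy of modified walks} are clearly preserved; the crux is $(mh2)$, where the key observation pays off: since $\sigma'=\sigma|_{E'}$, the lift of the $E'$-instance of $(mh2)$ at $e$ is literally the $E$-instance of $(mh2)$ at $e$, because the $\sigma'$-step $(g^{d'(e)}\cdot e\,|\,g^{d'(e)}|\,e)$ lifts to the $\sigma$-step $(g^{d(e)}(e)\,|\,g^{d(e)}|\,e)$. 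Thus $\Phi$ descends to a functor $\bar\Phi\colon\Pi_{m}(E',E')\to\Pi_{m}(E,E')$ which is the identity on objects.

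Finally I would show $\bar\Phi$ is full and faithful. By Proposition \ref{modified unique factorization}, every walk is homotopic to $(t(w)|g^{n\,d(t(w))}|t(v))v$ for a unique special walk $v$ and integer $n$; since $\sigma'=\sigma|_{E'}$, lifting carries a special walk of $E'$ to a special walk of $E$ and matches the two canonical forms. Faithfulness then follows because homotopic lifts have equal canonical data, hence so do the original walks of $E'$. For fullness, any walk of $E$ with endpoints in $E'$ may, after cancelling backtracking $g^{\pm1}g^{\mp1}$ pairs via $(mh1)$, be assumed to traverse each maximal $C$-segment monotonically, and such a walk is exactly a lift; alternatively one invokes \cite[Lemma 5.7]{LL2} once fullness on the vertex group is checked. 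I expect this fullness step to be the main obstacle: controlling arbitrary back-and-forth motion through $C$ and showing it can always be straightened is precisely where the factorization through $\sigma$ is genuinely used.
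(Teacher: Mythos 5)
Your proof is correct; the paper actually omits the proof of this lemma, referring only to the analogy with \cite[Lemma 5.3]{LL2}, and your argument---built on the key identity $\sigma'=\sigma|_{E'}$, the lifting functor on walks, and the unique factorization of Proposition \ref{modified unique factorization} to obtain fullness and faithfulness---is exactly the intended one. The only step stated too quickly is $(mf1)$: constancy of $d'$ on the new $G$-orbits is not literally ``by definition'' but follows from the fact that the indicator function of $C$ along each $G$-orbit is periodic with period $d$, which is precisely what the hypothesis $e\in C\Leftrightarrow g^{d(e)}(e)\in C$ provides.
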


Especially, if we choose $C=E-U$, then the fundamental groups of $E'$ and $E$ are isomorphic.

\medskip
Let $E=(E,U,\tau,d)$ be a Brauer $G$-set. A {\it sub-Brauer $G$-set} $E'=(E',U',\tau',d')$ of $E$ is a Brauer $G$-set such that $E'$ is a sub-$G$-set of $E$ and the inclusion $E'\rightarrow E$ is a morphism of Brauer $G$-sets. That is, $E'$ is a sub-$G$-set of $E$, $U'$ is a subset of $E'\cap U$ such that $\sigma(U')=U'$ and $\tau(U')=U'$ ($\sigma$ is the Nakayama automorphism of $E$), $\tau'$ is the restriction of $\tau$ to $U'$, and $d'$ is the restriction of $d$ to $E'$.

For a set of sub-Brauer $G$-sets $\{E_{\alpha}=(E_{\alpha},U_{\alpha},\tau_{\alpha},d_{\alpha})\}$ of $E$, define the union (resp. the intersection) of them as $\cup_{\alpha}E_{\alpha}=(\cup_{\alpha}E_{\alpha},\cup_{\alpha}U_{\alpha},\tau',d')$ (resp. $\cap_{\alpha}E_{\alpha}=(\cap_{\alpha}E_{\alpha},\cap_{\alpha}U_{\alpha},\tau'',d'')$), where $\tau'$, (resp. $\tau''$) is the restriction of $\tau$ to $\cup_{\alpha}U_{\alpha}$ (resp. $\cap_{\alpha}U_{\alpha}$), and $d'$ (resp. $d''$) is the restriction of $d$ to $\cup_{\alpha}E_{\alpha}$ (resp. $\cap_{\alpha}E_{\alpha}$).

Similar to \cite[Proposition 6.2]{LL2}, we have the following analogy of the Van Kampen theorem, and we omit the proof of it. Note that in this proposition we do not require the family of sub-Brauer $G$-sets $\{E_{\alpha}\}_{\alpha\in I}$ of $E$ being admissible.

\begin{Prop}\label{modified Van-Kampen}
Let $E$ be a Brauer $G$-set, which is the union of a family of sub-Brauer $G$-sets $\{E_{\alpha}\}_{\alpha\in I}$ which is closed under finite intersections. Let $A$ be a subset of $\bigcap_{\alpha\in I}E_{\alpha}$ such that for each $\alpha\in I$, $A$ meets each connected component of $E_{\alpha}$. Then the groupoid $\Pi_{m}(E,A)$ is the direct limit of groupoids $\Pi_{m}(E_{\alpha},A)$.
\end{Prop}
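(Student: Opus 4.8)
The plan is to follow the groupoid version of the van Kampen theorem, mirroring the argument of \cite[Proposition 5.2]{LL2} in structure. For each $\alpha$ the inclusion $E_\alpha\hookrightarrow E$ is a morphism of Brauer $G$-sets (it respects the $G$-action, $\tau$ and $d$, and sends $U_\alpha$ into $U$), so it induces a functor $\iota_{\alpha*}:\Pi_m(E_\alpha,A)\to\Pi_m(E,A)$ which is the identity on the common object set $A\subseteq\bigcap_\alpha E_\alpha$. These functors are compatible with the inclusions $E_\alpha\cap E_\beta\hookrightarrow E_\alpha$, where $E_\alpha\cap E_\beta$ is again a member of the family since it is closed under finite intersections. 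I would then prove the proposition by verifying directly that $\bigl(\Pi_m(E,A),\{\iota_{\alpha*}\}\bigr)$ satisfies the universal property of the direct limit: for every groupoid $\mathscr{H}$ and every compatible family $F_\alpha:\Pi_m(E_\alpha,A)\to\mathscr{H}$ (compatible meaning the $F_\alpha$ agree after restriction to $\Pi_m(E_\alpha\cap E_\beta,A)$), there is a unique morphism $F:\Pi_m(E,A)\to\mathscr{H}$ with $F\circ\iota_{\alpha*}=F_\alpha$.

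First I would establish a generation statement: every morphism of $\Pi_m(E,A)$ is a finite product of images of the $\iota_{\alpha*}$. Given a walk $w=s_n\cdots s_1$ with $s(w),t(w)\in A$, each elementary step $s_i=(e_i|\delta_i|e_{i-1})$ lies inside a single member $E_{\alpha_i}$ of the family. Indeed, if $\delta_i\in\{g,g^{-1}\}$ then any $E_\alpha$ containing $e_{i-1}$ contains $e_i$ as well, since $E_\alpha$ is a sub-$G$-set; if $\delta_i=\tau$ then $e_{i-1}\in U=\bigcup_\alpha U_\alpha$, so some $U_\alpha$ contains $e_{i-1}$ and hence $\tau(e_{i-1})=e_i$, because $\tau(U_\alpha)=U_\alpha$. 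Writing $E_{\beta_i}:=E_{\alpha_i}\cap E_{\alpha_{i+1}}$ (a member of the family containing the intermediate point $e_i$) and using that $A$ meets each connected component of $E_{\beta_i}$, I choose for $0<i<n$ a point $a_i\in A$ in the component of $e_i$ and a walk $\gamma_i$ from $e_i$ to $a_i$ inside $E_{\beta_i}$ (and take $\gamma_0,\gamma_n$ trivial). Inserting $\gamma_i^{-1}\gamma_i\approx(e_i||e_i)$ and regrouping gives
$$w\approx(s_n\gamma_{n-1}^{-1})(\gamma_{n-1}s_{n-1}\gamma_{n-2}^{-1})\cdots(\gamma_1s_1),$$
where the $i$-th factor $\gamma_i s_i\gamma_{i-1}^{-1}$ runs between points of $A$ and lies entirely in $E_{\alpha_i}$, since $\gamma_{i-1}\subseteq E_{\alpha_{i-1}}\cap E_{\alpha_i}$ and $\gamma_i\subseteq E_{\alpha_i}\cap E_{\alpha_{i+1}}$. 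Hence $[w]$ is a product of $\iota_{\alpha*}$-images, and uniqueness of $F$ follows immediately.

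To construct $F$, I would set $F([w])=\prod_i F_{\alpha_i}\bigl([\gamma_i s_i\gamma_{i-1}^{-1}]\bigr)$ for a decomposition as above, the product being taken in $\mathscr{H}$. The whole content lies in showing this is well defined: independent of the chosen members $\alpha_i$, of the detours $\gamma_i$ and points $a_i$, and of the homotopy representative of $[w]$. Independence of the local choices uses that any two detour systems for the same walk differ by walks supported in the pairwise intersections $E_\alpha\cap E_\beta$, where the potentially competing values $F_\alpha$ and $F_\beta$ coincide by compatibility. Independence of the homotopy representative reduces, via $(mh3)$, to the two elementary moves $(mh1)$ and $(mh2)$; the crucial point is that each such move is \emph{localized}: the move $(mh1)$ at $e$ only involves $e$ and $\tau(e)$, and the move $(mh2)$ at $e\in U$ only involves $e,\tau(e)$ and their images under $\sigma$, all of which lie in any single $E_\alpha$ with $e\in U_\alpha$ because $\sigma(U_\alpha)=U_\alpha$ and $\tau(U_\alpha)=U_\alpha$. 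Thus every defining homotopy move already holds inside one member of the family, so the corresponding $F_\alpha$ carries it to an identity of $\mathscr{H}$, and the value $F([w])$ is unchanged.

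I expect the bookkeeping in this well-definedness step to be the main obstacle: one must show that any two admissible decompositions of a walk yield the same product in $\mathscr{H}$, and that the product is invariant under each elementary homotopy. This is precisely the combinatorial core of the van Kampen argument of \cite[Proposition 5.2]{LL2}; since the homotopy relations $(mh1)$–$(mh3)$ for Brauer $G$-sets are local in exactly the same sense as those for $f_{ms}$-BCs, that argument transfers with only notational changes, the sole new ingredient being the harmless possibility of fixed points of $\tau$, which never obstructs the localization used above. Once $F$ is well defined, functoriality is immediate from $(mh3)$ together with multiplicativity in $\mathscr{H}$, completing the verification of the universal property and hence the identification of $\Pi_m(E,A)$ with the direct limit of the $\Pi_m(E_\alpha,A)$.
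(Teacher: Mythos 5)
Your proposal is correct and follows exactly the route the paper intends: the paper omits the proof of this proposition, stating only that it is similar to \cite[Proposition 5.2]{LL2}, and your verification of the universal property via the detour decomposition and the locality of the moves $(mh1)$--$(mh2)$ is that argument. You also correctly identify the one point specific to the Brauer $G$-set setting, namely that $U=\bigcup_\alpha U_\alpha$ together with $\tau(U_\alpha)=U_\alpha$ and $\sigma(U_\alpha)=U_\alpha$ makes every elementary step and every homotopy move supported in a single member of the family, which is precisely why the admissibility hypothesis needed in the f-BC version can be dropped here.
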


Now we can calculate the fundamental group of a Brauer $G$-set with integral f-degree (for example, modified BGs).

\begin{Prop}\label{modified fundamental group of modified f-BG}
Let $E=(E,U,\tau,d)$ be a finite connected Brauer $G$-set of integral f-degree with $n$ vertices $v_1$, $\cdots$, $v_n$, $k$ edges, and $l$ double half-edges. Let $d_i$ be the f-degree of $v_i$ for each $1\leq i\leq n$, and let $r=k-n+1$. Then the  fundamental group of $E$ is isomorphic to \\ $F\langle a_1,\cdots,a_n,b_1,\cdots,b_r,c_1,\cdots,c_l\rangle/\langle a_{1}^{d_1}=\cdots=a_{n}^{d_n}, a_{1}^{d_1} b_i=b_i a_{1}^{d_1}$ $(1\leq i\leq r), a_{1}^{d_1} c_j=c_j a_{1}^{d_1}, c_{j}^{2}=1$ $(1\leq j\leq l)\rangle$.
\end{Prop}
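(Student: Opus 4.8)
The plan is to compute $\Pi_m(E,e)$ by induction on $k+l$, assembling $E$ from one-vertex and one-edge pieces and gluing them with the groupoid van Kampen theorem (Proposition \ref{modified Van-Kampen}); the fundamental groups of the pieces are supplied by Lemma \ref{a calculation of modified fundamental group} and Lemma \ref{a calculation of modified fundamental groupoid} after stripping inactive half-edges with Lemma \ref{isomorphism of modified fundamental groupoids}.

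Before the induction I would record two facts forced by integral f-degree. For a half-edge $e$ lying in a vertex $v$ one has $d(e)=d_f(v)\,|v|$, whence $\sigma(e)=g^{d(e)}\cdot e=(g^{|v|})^{d_f(v)}\cdot e=e$, so the Nakayama automorphism is the identity; in particular the condition ``$e\in C$ iff $g^{d(e)}\cdot e\in C$'' in Lemma \ref{isomorphism of modified fundamental groupoids} is vacuous, so I may delete any set of inactive half-edges, and the class of integral-f-degree Brauer $G$-sets is closed under deactivating half-edges. Moreover such a deletion preserves f-degree: if a vertex of f-degree $d_i$ keeps $t$ of its half-edges, the new degree of each is $d_i t$ on an orbit of size $t$, again of f-degree $d_i$. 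Using this, for a vertex $v_i$ of f-degree $d_i$ I obtain the elementary computations $\Pi_m(\{v_i\})\cong F\langle a_i\rangle\cong\mathbb{Z}$ (one vertex, no active half-edges); one vertex with a single double half-edge $\cong F\langle a_i,c\rangle/\langle c^2=1,\ a_i^{d_i}c=ca_i^{d_i}\rangle$ (by the method of Lemma \ref{a calculation of modified fundamental group}); one vertex with a single loop $\cong F\langle a_i,b\rangle/\langle a_i^{d_i}b=ba_i^{d_i}\rangle$ (the case of Lemma \ref{a calculation of modified fundamental group} with no double half-edges); and two vertices $v_i\neq v_j$ joined by a single edge $\cong F\langle a_i,a_j\rangle/\langle a_i^{d_i}=a_j^{d_j}\rangle$ (the case $a=b=0$ of Lemma \ref{a calculation of modified fundamental groupoid}). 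In each piece $a_i$ is the class of the loop once around $v_i$, and $a_i^{d_i}$ is the class of the monodromy $(e\,|\,g^{d(e)}\,|\,e)$, which is central.

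For the induction, the base case $k+l=0$ forces $n=1$ (no $\tau$-steps are available, so connectedness leaves a single vertex), and $\Pi_m(E,e)\cong\mathbb{Z}=F\langle a_1\rangle$, as claimed. For the step I choose a feature to remove so that the remainder $E_0$ stays connected and of integral f-degree: if $l\geq1$ a double half-edge; if $l=0$ and $k\geq n$ (so $r\geq1$) an edge lying on a cycle of the underlying graph; and if $l=0$, $k=n-1$ (a tree) a leaf vertex together with its unique edge. Writing $E=E_0\cup E_1$, where $E_1$ is the corresponding elementary piece and $E_0\cap E_1$ is the set of shared vertices with empty $U$ (a disjoint union of pieces $\{v_i\}$), I apply Proposition \ref{modified Van-Kampen} to $\{E_0,E_1,E_0\cap E_1\}$ with $A$ a set of base points meeting every component. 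By the induction hypothesis $\Pi_m(E_0)$ has the asserted presentation with one fewer generator, and the pushout along the infinite cyclic groups $\langle a_i\rangle$ adjoins exactly the new data: a double half-edge adds $c_l$ with $c_l^2=1$ commuting with the central $z=a_1^{d_1}$; a leaf edge adds the vertex generator $a_n$ with $a_n^{d_n}=z$; a loop or cycle-edge adds a free generator commuting with $z$. Since the underlying graph has exactly $k-(n-1)=r$ cycle-edges, these produce $b_1,\dots,b_r$, and all the vertex monodromies collapse to the single central element $z$, giving the stated presentation.

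The main obstacle is the cycle-edge step joining two distinct vertices $v_i\neq v_j$: there $E_0\cap E_1=\{v_i\}\sqcup\{v_j\}$ is disconnected, so the gluing is a genuine groupoid pushout rather than an amalgamated free product of groups, and I must verify that the new edge-morphism $t\colon v_i\to v_j$, combined with an already-existing path from $v_i$ to $v_j$ in $E_0$, contributes a truly free generator $b_r$ commuting with $z$ and imposes no relation beyond $a_i^{d_i}=a_j^{d_j}$ (which already holds). Ruling out hidden relations is the crux; I expect to do it exactly as in the injectivity parts of Lemma \ref{a calculation of modified fundamental group} and Lemma \ref{a calculation of modified fundamental groupoid}, by writing a general element of the candidate group in the normal form $z^{\,l}\delta_k^{\,l_k}\cdots\delta_1^{\,l_1}$ and invoking the uniqueness of the special-walk factorization (Proposition \ref{modified unique factorization}) to show that a nontrivial normal form cannot map to the identity morphism. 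The remaining bookkeeping---tracking which edges are tree-edges versus cycle-edges and checking that centrality is preserved at each amalgamation---is then routine.
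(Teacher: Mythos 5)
Your argument is correct, but it reorganizes the paper's proof rather than reproducing it. The paper applies Proposition \ref{modified Van-Kampen} once, to the family consisting of the common piece $E'$ (all vertices and all double half-edges, no edges) together with the $k$ sub-Brauer $G$-sets $E_i$ each carrying a single edge; each $\Pi_m(E_i,A)$ and $\Pi_m(E',A)$ is computed from Lemmas \ref{a calculation of modified fundamental group}, \ref{a calculation of modified fundamental groupoid} and \ref{isomorphism of modified fundamental groupoids}, the direct limit is identified with a quotient of the fundamental groupoid of an explicit quiver on the vertex set, and the vertex group is then extracted by the spanning-tree contraction of \cite[Proposition 5.9]{LL2}. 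You instead induct on $k+l$, attaching one double half-edge, cycle-edge or leaf-edge at a time via a binary van Kampen gluing. The ingredients are identical (the same elementary pieces, and the same injectivity mechanism of normal forms checked against Proposition \ref{modified unique factorization}); what your version buys is that every gluing except the cycle-edge step is an honest amalgamated product of groups over $\langle a_i\rangle\cong\mathbb{Z}$, so the groupoid-to-group bookkeeping is isolated in the one step you correctly flag as the crux, whereas the paper's single colimit keeps all $n$ objects in play and defers that passage to the cited spanning-tree argument. Your preliminary reductions are sound: integral f-degree does force $\sigma=\mathrm{id}$, so the hypothesis of Lemma \ref{isomorphism of modified fundamental groupoids} is vacuous and deleting inactive half-edges preserves f-degrees. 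Two small points to attend to when writing this up: the one-vertex piece with a single double half-edge and no edge is not literally an instance of Lemma \ref{a calculation of modified fundamental group} (which always carries a loop edge), so you must rerun its surjectivity and normal-form arguments for that configuration; and in the cycle-edge step you should fix once and for all a morphism $w\colon e_i\to e_j$ in $\Pi_m(E_0,A)$ so that the new generator $b=w^{-1}t$ satisfies $b\,a_i^{d_i}\,b^{-1}=a_j^{d_j}=a_1^{d_1}$, which is precisely the commutation relation required by the statement.
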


\begin{proof}
Note that if we take $C=E-U$, then the modified BG $E'=(E',E',\tau,d')$ constructed in Lemma \ref{isomorphism of modified fundamental groupoids} also has $n$ vertices, $k$ edges, $l$ double half-edges, and the f-degree of each vertex in $E'$ is equal to the f-degree of the corresponding vertex in $E$. Moreover, according to Lemma \ref{isomorphism of modified fundamental groupoids}, the fundamental groups of $E'$ and $E$ are isomorphic. Therefore we may assume that $E=U$.

Let $\{e_1,\tau(e_1)\}$, $\cdots$, $\{e_k,\tau(e_k)\}$ be all edges of $E$. For each $1\leq i\leq k$, define a sub-Brauer $G$-set $E_i=(E_i,U_i,\tau_i,d_i)$ of $E$ as follows: $E_i=E$ as $G$-sets; the subset $U_i$ of $E_i$ is given by $U_i=\{e_i,\tau(e_i)\}$; the involution $\tau_i$ is the restriction of $\tau$ to $U_i$, and the degree function $d_i$ is equal to $d$. Let $E'=(E',U',\tau',d')$ be the intersection of all the $E_i$'s. For each $1\leq i\leq n$, choose $h_i\in v_i$, and let $A=\{h_1,\cdots,h_n\}$ be a subset of $E$. The family $\{E',E_1,\cdots,E_k\}$ of sub-Brauer $G$-sets is closed under finite intersections, and the union of them is $E$. Moreover, $A$ meets each connected component of $E'$ and each connected component of every $E_i$.

For each vertex $v_j$ of $E$, let $l_j$ be the number of double half-edges of $E$ which belongs to $v_j$. For each $1\leq i\leq k$, if the two half-edges $e_i,\tau(e_i)$ belong to the same vertex $v_j$, then we denote by $\mathscr{F}_{i}$ the fundamental groupoid of quiver
$$\begin{tikzpicture}
\node at(-0.2,-0.2) (a) {};
\draw[->] (a.west) to[out=150,in=135] ([xshift=-0.8cm,yshift=-0.3cm]a.east) to[out=-45,in=-135] (a.south);
\node at(-0.1,0.1) (a) {};
\draw[->] (a.north) to[out=75,in=45] ([xshift=-0.5cm,yshift=0.7cm]a.east) to[out=-135,in=150] (a.west);
\node at(0.1,0.1) (a) {};
\draw[->] (a.east) to[out=0,in=-45] ([xshift=0.7cm,yshift=0.7cm]a.south) to[out=135,in=60] (a.north);
\node at(0.2,-0.2) (a) {};
\draw[->] (a.south) to[out=-90,in=-135] ([xshift=0.7cm,yshift=-0.7cm]a.north) to[out=45,in=0] (a.east);
\draw[dotted] (-0.35,-0.6) arc (230:290:0.7);
\fill (0,0) circle (0.5ex);
\node at(0.7,0.9) {$\beta_i$};
\node at(1.3,-0.7) {$\gamma_{ij1}$};
\node at(-1.2,-0.5) {$\gamma_{ijl_j}$};
\node at(-0.8,0.7) {$\alpha_{ij}$};
\node at(0,-0.25) {$x_{ij}$};
\end{tikzpicture},$$
and for each $1\leq t\leq n$ with $l\neq j$, denote by $\mathscr{F}_{it}$ the fundamental groupoid of quiver
$$\begin{tikzpicture}
\node at(-0.2,-0.2) (a) {};
\draw[->] (a.west) to[out=150,in=135] ([xshift=-0.8cm,yshift=-0.3cm]a.east) to[out=-45,in=-135] (a.south);
\node at(-0,0.1) (a) {};
\draw[->] (a.east) to[out=60,in=0] ([xshift=0cm,yshift=0.8cm]a.south) to[out=180,in=120] (a.west);
\node at(0.2,-0.2) (a) {};
\draw[->] (a.south) to[out=-90,in=-135] ([xshift=0.7cm,yshift=-0.7cm]a.north) to[out=45,in=0] (a.east);
\draw[dotted] (-0.35,-0.6) arc (230:290:0.7);
\fill (0,0) circle (0.5ex);
\node at(1.3,-0.7) {$\gamma_{it1}$};
\node at(-1.2,-0.5) {$\gamma_{itl_t}$};
\node at(0,0.9) {$\alpha_{it}$};
\node at(0,-0.25) {$x_{it}$};
\end{tikzpicture}.$$
By Lemma \ref{a calculation of modified fundamental group} and Lemma \ref{isomorphism of modified fundamental groupoids}, $\Pi_{m}(E_i,A)$ is isomorphic to the groupoid \\ $\Sigma_i=\bigsqcup_{1\leq t\leq n, t\neq j}(\mathscr{F}_{it}/\langle\alpha_{it}^{d_t}\gamma_{itp}=\gamma_{itp}\alpha_{it}^{d_t}$, $\gamma_{itp}^{2}=1$ $(1\leq p\leq l_t) \rangle)\sqcup (\mathscr{F}_{i}/\langle \alpha_{ij}^{d_j}\beta_j=\beta_j\alpha_{ij}^{d_j}$, $ \alpha_{ij}^{d_j}\gamma_{ijp}=\gamma_{ijp}\alpha_{ij}^{d_j}$, $\gamma_{itp}^{2}=1$ $(1\leq p\leq l_i) \rangle)$. If the two half-edges $e_i,\tau(e_i)$ belong to two different vertices $v_{j_1}$, $v_{j_2}$ with $j_1<j_2$, then we denote by $\mathscr{F}_{i}$ the fundamental groupoid of quiver
$$\begin{tikzpicture}
\node at(-0.2,-0.2) (a) {};
\draw[->] (a.west) to[out=150,in=135] ([xshift=-0.8cm,yshift=-0.3cm]a.east) to[out=-45,in=-135] (a.south);
\node at(-0,0.1) (a) {};
\draw[->] (a.east) to[out=60,in=0] ([xshift=0cm,yshift=0.8cm]a.south) to[out=180,in=120] (a.west);
\node at(0.2,-0.2) (a) {};
\draw[->] (a.south) to[out=-90,in=-135] ([xshift=0.7cm,yshift=-0.7cm]a.north) to[out=45,in=0] (a.east);
\node at(2.8,-0.2) (a) {};
\draw[->] (a.west) to[out=150,in=135] ([xshift=-0.8cm,yshift=-0.3cm]a.east) to[out=-45,in=-135] (a.south);
\node at(3,0.1) (a) {};
\draw[->] (a.east) to[out=60,in=0] ([xshift=0cm,yshift=0.8cm]a.south) to[out=180,in=120] (a.west);
\node at(3.2,-0.2) (a) {};
\draw[->] (a.south) to[out=-90,in=-135] ([xshift=0.7cm,yshift=-0.7cm]a.north) to[out=45,in=0] (a.east);
\draw[->] (0.2,0) -- (2.8,0);
\draw[dotted] (-0.35,-0.6) arc (230:290:0.7);
\draw[dotted] (2.65,-0.6) arc (230:290:0.7);
\fill (0,0) circle (0.4ex);
\fill (3,0) circle (0.4ex);
\node at(0.9,-1.1) {$\gamma_{ij_{1}1}$};
\node at(-1.3,-0.5) {$\gamma_{ij_{1}l_{j_{1}}}$};
\node at(3.9,-1.1) {$\gamma_{ij_{2}1}$};
\node at(1.7,-0.5) {$\gamma_{ij_{2}l_{j_{2}}}$};
\node at(0,1) {$\alpha_{ij_1}$};
\node at(3,1) {$\alpha_{ij_2}$};
\node at(1.5,0.2) {$\beta_i$};
\node at(0,-0.2) {$x_{i j_1}$};
\node at(3,-0.2) {$x_{i j_2}$};
\end{tikzpicture},$$
and for each $1\leq t\leq n$ with $t\neq j_1,j_2$, denote by $\mathscr{F}_{it}$ the fundamental groupoid of quiver
$$\begin{tikzpicture}
\node at(-0.2,-0.2) (a) {};
\draw[->] (a.west) to[out=150,in=135] ([xshift=-0.8cm,yshift=-0.3cm]a.east) to[out=-45,in=-135] (a.south);
\node at(-0,0.1) (a) {};
\draw[->] (a.east) to[out=60,in=0] ([xshift=0cm,yshift=0.8cm]a.south) to[out=180,in=120] (a.west);
\node at(0.2,-0.2) (a) {};
\draw[->] (a.south) to[out=-90,in=-135] ([xshift=0.7cm,yshift=-0.7cm]a.north) to[out=45,in=0] (a.east);
\draw[dotted] (-0.35,-0.6) arc (230:290:0.7);
\fill (0,0) circle (0.5ex);
\node at(1.3,-0.7) {$\gamma_{it1}$};
\node at(-1.2,-0.5) {$\gamma_{itl_t}$};
\node at(0,0.9) {$\alpha_{it}$};
\node at(0,-0.25) {$x_{it}$};
\end{tikzpicture}.$$
By Lemma \ref{a calculation of modified fundamental groupoid} and Lemma \ref{isomorphism of modified fundamental groupoids}, $\Pi_{m}(E_i,A)$ is isomorphic to the groupoid \\ $\Sigma_i=\bigsqcup_{1\leq t\leq n, t\neq j_1,j_2}(\mathscr{F}_{it}/\langle\alpha_{it}^{d_t}\gamma_{itp}=\gamma_{itp}\alpha_{it}^{d_t}$, $\gamma_{itp}^{2}=1$ $(1\leq p\leq l_t) \rangle)\sqcup (\mathscr{F}_{i}/\langle \beta_i\alpha_{i j_1}^{d_{j_1}}=\alpha_{i j_2}^{d_{j_2}}\beta_i$, $\alpha_{i j_q}^{d_{j_q}}\gamma_{ij_{q}p}=\gamma_{ij_{q}p}\alpha_{i j_q}^{d_{j_q}}$, $\gamma_{ij_{q}p}^{2}=1$ $(q=1,2,$ $1\leq p\leq l_{j_q})\rangle)$.

For each $1\leq j\leq n$, let $\mathscr{G}_j$ be the fundamental groupoid of quiver
$$\begin{tikzpicture}
\node at(-0.2,-0.2) (a) {};
\draw[->] (a.west) to[out=150,in=135] ([xshift=-0.8cm,yshift=-0.3cm]a.east) to[out=-45,in=-135] (a.south);
\node at(-0,0.1) (a) {};
\draw[->] (a.east) to[out=60,in=0] ([xshift=0cm,yshift=0.8cm]a.south) to[out=180,in=120] (a.west);
\node at(0.2,-0.2) (a) {};
\draw[->] (a.south) to[out=-90,in=-135] ([xshift=0.7cm,yshift=-0.7cm]a.north) to[out=45,in=0] (a.east);
\draw[dotted] (-0.35,-0.6) arc (230:290:0.7);
\fill (0,0) circle (0.5ex);
\node at(1.3,-0.7) {$\gamma_{j1}$};
\node at(-1.2,-0.5) {$\gamma_{jl_j}$};
\node at(0,0.9) {$\alpha_{j}$};
\node at(0,-0.25) {$x_{j}$};
\end{tikzpicture}.$$
It is straightforward to show $\Pi_{m}(E',A)$ is isomorphic to $\Sigma'=\bigsqcup_{1\leq j\leq n}\mathscr{G}_j/\langle \alpha_{j}^{d_j}\gamma_{jp}=\gamma_{jp}\alpha_{j}^{d_j}$, $\gamma_{jp}^{2}=1$ $(1\leq p\leq l_j) \rangle$. The direct system $\{\Pi_{m}(E',A)\rightarrow\Pi_{m}(E_i,A)\}_{1\leq i\leq k}$ is isomorphic to the direct system $\{\mu_i:\Sigma'\rightarrow\Sigma_i\}_{1\leq i\leq k}$, where $\mu_i$ is defined by $\mu_i(x_j)=x_{ij}$, $\mu_i(\alpha_j)=\alpha_{ij}$, $\mu_i(\gamma_{jp})=\gamma_{ijp}$ for $1\leq j\leq n$, $1\leq p\leq l_j$.

Define a quiver $Q$ as follows: $Q_0=\{v_1,\cdots,v_n\}$, $Q_1=\{\alpha'_j,\beta'_i,\gamma'_{jp}\mid 1\leq j\leq n$, $1\leq i\leq k$, $1\leq p\leq l_j\}$; define $s(\alpha'_j)=t(\alpha'_j)=s(\gamma'_{jp})=t(\gamma'_{jp})=v_j$ for $1\leq j\leq n$ and $1\leq p\leq l_j$; for $1\leq i\leq k$, if the two half-edges $e_i,\tau(e_i)$ belong to the same vertex $v_j$, define $s(\beta'_i)=t(\beta'_i)=v_j$; if the two half-edges $e_i,\tau(e_i)$ belong to two different vertices $v_{j_1}$, $v_{j_2}$ with $j_1<j_2$, define $s(\beta'_i)=v_{j_1}$ and $t(\beta'_i)=v_{j_2}$. Let $\Pi(Q)$ be the fundamental groupoid of the quiver $Q$, and let $\Sigma$ be the groupoid $\Pi(Q)/\langle(\alpha'_{q(i)})^{d_{q(i)}}\beta'_i=\beta'_i(\alpha'_{p(i)})^{d_{p(i)}}$, $(\alpha'_{j})^{d_{j}}\gamma'_{jp}=\gamma'_{jp}(\alpha'_{j})^{d_{j}}$, $(\gamma'_{jp})^{2}=1\mid 1\leq i\leq k$, $1\leq j\leq n$, $1\leq p\leq l_j \rangle$, where $s(\beta'_i)=v_{p(i)}$ and $t(\beta'_i)=v_{q(i)}$. It can be shown that $\Sigma$ is the direct limit of the direct system $\{\mu_i:\Sigma'\rightarrow\Sigma_i\}_{1\leq i\leq k}$. By Proposition \ref{modified Van-Kampen}, the groupoid $\Pi_{m}(E,A)$ is isomorphic to $\Sigma$. The rest of the proof is similar to that of \cite[Proposition 6.9]{LL2}.
\end{proof}

\subsection{Fundamental groups of the defining $f_{ms}$-BGs of representation-finite and domestic $f_{ms}$-BGAs}
\

In this subsection, we assume that the field $k$ is algebraically closed. For a Brauer $G$-set $E$, we always denote by $\sigma$ the Nakayama automorphism of $E$. Moreover, for $e\in E$, we denote by $e^{\langle \sigma\rangle}$ the $\langle \sigma\rangle$-orbit of $e$.

\begin{Lem}\label{free}
Let $E=(E,U,\tau,d)$ be a connected Brauer $G$-set and let $\Pi=\langle \sigma\rangle\leq \mathrm{Aut}(E)$. Then the action of $\Pi$ on $E$ is free, that is, $\phi(e)\neq e$ for each $e\in E$ and each $\phi\neq 1$ in $\Pi$.
\end{Lem}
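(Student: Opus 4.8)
The plan is to prove the contrapositive: I will show that if some element $\phi=\sigma^{n}\in\Pi$ fixes a single half-edge $e_{0}\in E$, then $\phi$ is forced to be the identity automorphism of $E$, and hence $\phi=1$ in $\Pi$. The whole argument rests on the fact that the Nakayama automorphism $\sigma$ is genuinely an automorphism of $E$ as a Brauer $G$-set, so it commutes with the two kinds of structure used in walks. Concretely, for the $G$-action condition $(mf1)$ gives $d(g\cdot e)=d(e)$, whence $\sigma(g\cdot e)=g^{d(g\cdot e)}\cdot(g\cdot e)=g\cdot g^{d(e)}\cdot e=g\cdot\sigma(e)$; and condition $(mf2)$ gives $\sigma(U)=U$ together with $\sigma\tau=\tau\sigma$ on $U$. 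Iterating, every power satisfies $\sigma^{n}(g^{\pm1}\cdot e)=g^{\pm1}\cdot\sigma^{n}(e)$ for all $e\in E$ and $\sigma^{n}(\tau(e))=\tau(\sigma^{n}(e))$ for all $e\in U$ (the inclusion $\sigma(U)=U$ is what allows the iteration of the $\tau$-identity).

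First I would fix $n$ with $\sigma^{n}(e_{0})=e_{0}$ and consider the fixed-point set $S=\{e\in E\mid\sigma^{n}(e)=e\}$, which is nonempty since $e_{0}\in S$. The key step is to check that $S$ is closed under each elementary walk operation $g$, $g^{-1}$, $\tau$. If $e\in S$, then $\sigma^{n}(g^{\pm1}\cdot e)=g^{\pm1}\cdot\sigma^{n}(e)=g^{\pm1}\cdot e$, so $g^{\pm1}\cdot e\in S$; and if moreover $e\in U$, then $\tau(e)\in U$ and $\sigma^{n}(\tau(e))=\tau(\sigma^{n}(e))=\tau(e)$, so $\tau(e)\in S$. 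Thus, whenever a step $\delta_{i}\in\{g,g^{-1},\tau\}$ of a walk is applicable at a point of $S$, its result again lies in $S$; for $\delta_{i}=\tau$ the definition of a walk guarantees that the relevant half-edge lies in $U$, so the $\tau$-closure is invoked exactly where it is legitimate.

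Finally I would invoke the connectivity of $E$: for an arbitrary $e\in E$ there is a walk $w=(e\,|\,\delta_{m}\cdots\delta_{1}\,|\,e_{0})$ from $e_{0}$ to $e$. Reading this walk from $e_{0}$ and applying the closure properties of $S$ at each step shows that every intermediate half-edge, and in particular $e$, lies in $S$. Since $e$ was arbitrary, $S=E$, i.e.\ $\sigma^{n}=\mathrm{id}_{E}$, so $\phi=\sigma^{n}=1$ in $\Pi$; this is precisely the freeness assertion. I do not anticipate any real obstacle here, as the argument is purely structural. The only point deserving care is the bookkeeping of $U$-membership in the $\tau$-step, which is automatic from the definition of a walk. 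If one prefers an explicit computation over the structural one, one may instead record the formula $\sigma^{n}(e)=g^{n\,d(e)}\cdot e$ (immediate by induction from $(mf1)$) and analyze when $g^{n\,d(e)}$ stabilizes $e$; but the closure-plus-connectivity argument above has the advantage of avoiding any case distinction according to whether the $G$-orbits of $E$ are finite or infinite.
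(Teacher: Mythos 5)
Your proof is correct and follows essentially the same route as the paper's: both arguments observe that $\sigma^{n}$ commutes with the elementary walk operations $g$, $g^{-1}$, $\tau$ (via $(mf1)$ and $(mf2)$) and then propagate the fixed point $e_{0}$ along an arbitrary walk using connectivity to conclude $\sigma^{n}=\mathrm{id}_{E}$. Your write-up merely makes explicit the commutation checks and the $U$-membership bookkeeping that the paper leaves implicit.
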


\begin{proof}
Suppose that $\sigma^{n}(e)=e$ for some $e\in E$ and for some integer $n$. For any $h\in E$, since $E$ is connected, we can choose a walk $w$ of $E$ from $e$ to $h$. Let $w=(h|\delta_r\cdots\delta_2\delta_1|e)$, where $\delta_i\in\{g,g^{-1},\tau\}$ for $1\leq i\leq r$. Then $\sigma^{n}(h)=\sigma^{n}(\delta_r\cdots\delta_2\delta_1(e))=\delta_r\cdots\delta_2\delta_1(\sigma^{n}(e))=\delta_r\cdots\delta_2\delta_1(e)=h$. Therefore $\sigma^{n}=1$.
\end{proof}

\begin{Lem}\label{odd degree}
Let $E=(E,E,\tau,d)$ be a finite connected $f_{ms}$-BG such that $\langle \sigma\rangle\leq \mathrm{Aut}(E)$ is not admissible, and let $e$ be a half-edge of $E$ such that $\tau(e)\in e^{\langle \sigma\rangle}$. Let $r$ be the smallest positive integer with the property that $\sigma^{r}(e)=\tau(e)$. We have
\begin{itemize}
\item [$(1)$] The $\langle\sigma\rangle$-orbit $e^{\langle \sigma\rangle}$ of $e$ contains $2r$ half-edges $e,\sigma(e),\cdots,\sigma^{2r-1}(e)$.
\item [$(2)$] The order of the Nakayama automorphism $\sigma$ of $E$ is $2r$.
\item [$(3)$] If $N$ is the smallest positive integer with the property that $g^{N}\cdot e\in e^{\langle \sigma\rangle}$ and suppose that $g^{N}\cdot e=\sigma^{p}(e)$ for some $0\leq p\leq 2r-1$, then $(p,2r)=1$. Especially, $p$ is odd.
\item [$(4)$] The f-degree of each vertex of $E/\langle \sigma\rangle$ is odd.
\end{itemize}
\end{Lem}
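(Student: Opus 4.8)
The plan is to exploit two structural facts throughout: that $\langle\sigma\rangle$ acts freely on $E$ (Lemma \ref{free}), so that $\sigma^{n}=1$ as soon as $\sigma^{n}$ fixes a single half-edge and, since $E$ is finite, every $\langle\sigma\rangle$-orbit has size equal to the order of $\sigma$; and that $\sigma$ commutes with $\tau$ (condition $(mf2)$), together with $\tau^{2}=\mathrm{id}$ and the fact that $\tau$ is fixed-point-free on an $f_{ms}$-BG. I will also use repeatedly that $\sigma^{k}(e)=g^{k d(e)}\cdot e$, which converts $\langle\sigma\rangle$-orbits into arithmetic-progression subsets of $\langle g\rangle$-orbits.

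For parts $(1)$ and $(2)$ I first compute $\sigma^{2r}(e)$. Applying $\sigma^{r}$ to $\sigma^{r}(e)=\tau(e)$ and using $\sigma^{r}\tau=\tau\sigma^{r}$ gives $\sigma^{2r}(e)=\sigma^{r}(\tau(e))=\tau(\sigma^{r}(e))=\tau^{2}(e)=e$, so by freeness $\sigma^{2r}=1$ and the order $s$ of $\sigma$ divides $2r$. To pin $s$ down I use minimality of $r$: if $0<s<r$ then $\sigma^{r-s}(e)=\sigma^{r}(e)=\tau(e)$ with $0<r-s<r$, contradicting the choice of $r$; hence $s\ge r$. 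Since $s\mid 2r$ and $s\ge r$ force $2r/s\in\{1,2\}$, we get $s\in\{r,2r\}$, and $s=r$ is impossible because it would give $\tau(e)=\sigma^{r}(e)=e$, against $\tau$ being fixed-point-free. Thus $s=2r$, which is $(2)$; freeness then makes $e,\sigma(e),\dots,\sigma^{2r-1}(e)$ pairwise distinct and exhaust the orbit, which is $(1)$.

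For $(3)$ I set $d=d(e)$ and let $m=|e^{\langle g\rangle}|$ be the size of the $\langle g\rangle$-orbit of $e$. Since $e^{\langle\sigma\rangle}=\{g^{kd}\cdot e\mid k\in\mathbb{Z}\}$, the condition $g^{N}\cdot e\in e^{\langle\sigma\rangle}$ means $N\in d\mathbb{Z}+m\mathbb{Z}=\gcd(d,m)\mathbb{Z}$, so the minimal such positive $N$ equals $\gcd(d,m)$; on the other hand $|e^{\langle\sigma\rangle}|=2r$ by $(1)$ forces $m/\gcd(d,m)=2r$, i.e.\ $m=2rN$. Writing $g^{N}\cdot e=\sigma^{p}(e)=g^{pd}\cdot e$ gives $pd\equiv N \pmod{m}$; dividing by $N$ (which divides $d$) yields $p\,(d/N)\equiv 1\pmod{2r}$. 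As $d/N$ and $m/N=2r$ are coprime, $p$ is a unit modulo $2r$, so $(p,2r)=1$ and in particular $p$ is odd.

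For $(4)$ the computation is parallel. Each vertex of $E/\langle\sigma\rangle$ is the $\langle g\rangle$-orbit of some $[h]$; tracing when $[g^{i}h]=[g^{j}h]$ shows this orbit has size $\gcd(d(h),m_{h})$ where $m_{h}=|h^{\langle g\rangle}|$, and its degree is $d(h)$, so its f-degree is $d(h)/\gcd(d(h),m_{h})$. By freeness and $(2)$ every $\langle\sigma\rangle$-orbit has size $2r$, giving $m_{h}=2r\gcd(d(h),m_{h})$; hence $d(h)/\gcd(d(h),m_{h})$ is coprime to $2r$ and therefore odd. (That $E/\langle\sigma\rangle$ is a Brauer $G$-set with this degree function is Lemma \ref{modified projection is a covering}.) The main obstacle I expect is the bookkeeping that pins the order of $\sigma$ to exactly $2r$ rather than to a proper divisor, and then carrying the resulting relation $m=2rN$ cleanly through the modular arithmetic in $(3)$ and $(4)$; once freeness and the $\sigma$–$\tau$ commutation are in hand, each individual step reduces to a short computation.
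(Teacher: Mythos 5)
Your proposal is correct and follows essentially the same route as the paper: freeness of the $\langle\sigma\rangle$-action plus the commutation $\sigma\tau=\tau\sigma$ pin the order of $\sigma$ to $2r$, and then the congruence $p\cdot(d(e)/N)\equiv 1\pmod{2r}$ forces $p$ and $d(e)/N$ to be units mod $2r$, hence odd. The only difference is cosmetic bookkeeping: you identify $N=\gcd(d(e),m)$ directly and work with $m=2rN$, where the paper establishes $N\mid d(e)$ by a division-with-remainder contradiction before running the same modular argument.
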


\begin{proof}
Since $\sigma^{i}(\tau(e))=\tau(\sigma^{i}(e))$ for any integer $i$, $r$ is also the smallest positive integer with the property that $\sigma^{r}(\tau(e))=e$. If there exists $0<i<2r$ such that $\sigma^{i}(e)=e$, then $0<i<r$ or $r<i<2r$. If $0<i<r$, then $\sigma^{r-i}(e)=\sigma^{r-i}(\sigma^{i}(e))=\sigma^r(e)=\tau(e)$, which contradicts the minimality of $r$. If $r<i<2r$, then $\sigma^{i-r}(\tau(e))=\sigma^{i-r}(\sigma^{r}(e))=\sigma^i(e)=e$, where $0<i-r<r$, which also contradicts the minimality of $r$. Therefore $2r$ is the minimal positive integer with the property that $\sigma^{2r}(e)=e$, so $(1)$ holds, and $(2)$ follows from Lemma \ref{free}.

Note that $N$ divides $d(e)$. Otherwise, let $d(e)=aN+b$ with $a$, $b\in\mathbb{Z}$ and $0<b<N$. We have $\sigma(e)=g^{d(e)}\cdot e=g^{aN+b}\cdot e=g^{b}\cdot \sigma^{ap}(e)$ and $g^{b}\cdot e=\sigma^{1-ap}(e)\in e^{\langle \sigma\rangle}$, contradicts the minimality of $N$. Let $d(e)=aN$ for some integer $a$. Then $\sigma(e)=g^{d(e)}\cdot e=g^{aN}\cdot e=\sigma^{ap}(e)$. Since $2r$ is the minimal positive integer with the property that $\sigma^{2r}(e)=e$, $2r$ divides $1-ap$ and $(p,2r)=1$, which implies $(3)$. Since $2r$ divides $1-ap$, $a$ is odd. Since $N$ is equal to the cardinality of the vertex of $E/\langle \sigma\rangle$ containing $e^{\langle \sigma\rangle}$, $a$ is equal to the f-degree of the vertex of $E/\langle \sigma\rangle$ containing $e^{\langle \sigma\rangle}$. For every half-edge $h$ of $E$, since $\langle \sigma\rangle$ acts freely on $E$ and since the order of $\sigma$ is $2r$, $h^{\langle \sigma\rangle}$ contains $2r$ elements. Using the same method we can show that the f-degree of the vertex of $E/\langle \sigma\rangle$ which contains $h^{\langle \sigma\rangle}$ is odd, which implies $(4)$.
\end{proof}

\subsubsection{Fundamental groups of the defining $f_{ms}$-BGs of representation-finite $f_{ms}$-BGAs}
\

We first characterize representation-finite $f_{ms}$-BGA $A_E$ in terms of the quotient $E/\langle\sigma\rangle$.

\begin{Lem}\label{B-finite-type}
Let $E=(E,E,\tau,d)$ be a finite connected $f_{ms}$-BG and let $B=E/\langle\sigma\rangle$. Suppose that the modified BG $B$ has $k$ edges, $l$ double half-edges, and $n$ vertices $v_1$, $\cdots$, $v_n$ of f-degree $d_1$, $\cdots$, $d_n$ respectively.  Then $A_E$ is representation-finite if and only if one of the following holds
\begin{itemize}
\item[$(1)$] $l=1$, $k-n+1=0$, $d_i=1$ for $1\leq i\leq n$;
\item[$(2)$] $l=0$, $k-n+1=0$, $d_i=1$ for all but at most one $1\leq i\leq n$ (that is, $B$ is a Brauer tree).
\end{itemize}
\end{Lem}

\begin{proof}
By Theorem \ref{rep type of f-BCA and its reduced form}, we have that $A_E$ is representation-finite if and only if $R_E$ is a Brauer tree.

"$\Rightarrow$" If the group $\langle\sigma\rangle$ of automorphisms of $E$ is admissible, then $E/\langle\sigma\rangle=R_E$ is a Brauer tree. Otherwise, $E/\langle \sigma\rangle$ contains double half-edges and $R_E=\widehat{E/\langle \sigma\rangle}$. Then $R_E$ has $2n$ vertices and $2k+l$ edges. Since $R_E$ is a Brauer tree, $(2k+l)-2n+1=0$, and $E/\langle \sigma\rangle$ is f-degree-free (otherwise, the Brauer tree $R_E$ will have at least two vertices with f-degree larger than $1$). Since $E/\langle \sigma\rangle$ is connected, $k\geq n-1$. Therefore $k=n-1$ and $l=1$.

"$\Leftarrow$" Since in both cases $R_E$ is a Brauer tree, $A_E$ is representation-finite.
\end{proof}

\begin{Thm}\label{fundamental group of rep-finite f-BGA}
Let $E=(E,E,\tau,d)$ be a finite connected $f_{ms}$-BG. Then $A_E$ is representation-finite if and only if $\Pi(E)\cong\mathbb{Z}$.
\end{Thm}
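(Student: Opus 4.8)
The plan is to combine the reduction of representation type to the reduced form (Theorem \ref{rep type of f-BCA and its reduced form} together with Theorem \ref{domestic-BGA}(1)) with an explicit computation of $\Pi(E)$ as a finite-index subgroup of the fundamental group of the modified BG $B:=E/\langle\sigma\rangle$. By Theorem \ref{rep type of f-BCA and its reduced form} and Theorem \ref{domestic-BGA}(1), $A_E$ is representation-finite if and only if the reduced form $R_E$ is a Brauer tree, so it suffices to prove that $\Pi(E)\cong\mathbb{Z}$ if and only if $R_E$ is a Brauer tree. Since $E$ is an $f_{ms}$-BG, Lemma \ref{two definitions of fundamental group are equal} identifies $\Pi(E)$ with $\Pi_m(E)$, and Lemma \ref{free} shows that $\langle\sigma\rangle$ acts freely on $E$; hence the natural projection $p\colon E\to B$ is a covering of Brauer $G$-sets whose fibres have cardinality equal to the (finite) order of $\sigma$. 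By the method of Remark \ref{general-method-to-compute-the-fundamental-group}, $\Pi(E)$ is isomorphic to the stabiliser of a point of a fibre of $p$, hence to a subgroup of $\Pi_m(B)$ of finite index equal to the order of $\sigma$.

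Next I would analyse $\Pi_m(B)$. Writing $B$ as having $n$ vertices of f-degrees $d_1,\dots,d_n$, $k$ edges and $l$ double half-edges, and setting $r=k-n+1$, Proposition \ref{modified fundamental group of modified f-BG} presents $\Pi_m(B)$ as a central extension $1\to\langle z\rangle\to\Pi_m(B)\to Q\to 1$, where $z=a_1^{d_1}=\cdots=a_n^{d_n}$ is central, $Q\cong(\mathbb{Z}/d_1)\ast\cdots\ast(\mathbb{Z}/d_n)\ast F_r\ast(\mathbb{Z}/2)^{\ast l}$ is the free product obtained by killing $z$ (with $F_r$ the free group of rank $r$), and $\langle z\rangle\cong\mathbb{Z}$, the element $z$ having infinite order because it generates the deck action in the $\mathbb{Z}$-direction of the universal cover $\mathbb{Z}B_{(E,e)}$ of Proposition \ref{universal-cover-modified-fms-BG}. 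The key group-theoretic observation is that $\Pi_m(B)$ is virtually $\mathbb{Z}$ if and only if $Q$ is finite: a virtually $\mathbb{Z}$ group has no infinite subgroup of infinite index, while $\langle z\rangle\cong\mathbb{Z}$ sits inside $\Pi_m(B)$ with index $|Q|$, so an infinite $Q$ rules out virtual cyclicity, and a finite $Q$ forces it.

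The bridge between the two sides is then the following chain of equivalences. First, $\Pi(E)$ is infinite (being of finite index in the infinite group $\Pi_m(B)$) and torsion-free: as the deck group of the universal cover $\mathbb{Z}B_{(E,e)}$ it acts freely on a tree-like object, so any nontrivial finite subgroup would be forced to fix a half-edge, a contradiction. Consequently $\Pi(E)\cong\mathbb{Z}$ if and only if $\Pi(E)$ is virtually $\mathbb{Z}$, which by the finite-index relation is equivalent to $\Pi_m(B)$ being virtually $\mathbb{Z}$, i.e.\ to $Q$ being finite. It remains to match finiteness of $Q$ with $R_E$ being a Brauer tree. If $\langle\sigma\rangle$ is admissible then $l=0$ and $R_E=B$ is a Brauer graph; here $Q$ is finite exactly when $r=0$ and at most one $d_i$ exceeds $1$, that is, when the underlying graph of $B$ is a tree with at most one exceptional vertex, which is precisely the condition that $R_E$ be a Brauer tree. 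If $\langle\sigma\rangle$ is not admissible then $l\geq 1$ and $R_E=\widehat{B}$; by Lemma \ref{odd degree} every f-degree $d_i$ of $B$ is odd, so finiteness of $Q$ forces $r=0$, $l=1$ and all $d_i=1$, and a count of the vertices and edges of the double $\widehat{B}$ shows that this occurs exactly when $\widehat{B}$ is a tree with all f-degrees equal to $1$, again precisely when $R_E$ is a Brauer tree. Combining the two cases yields $\Pi(E)\cong\mathbb{Z}\iff R_E$ is a Brauer tree $\iff A_E$ is representation-finite.

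The main obstacle I anticipate is the torsion-freeness of $\Pi(E)$, which is what upgrades ``virtually $\mathbb{Z}$'' to ``$\cong\mathbb{Z}$'' and which must be argued carefully through the free action on the universal cover; a secondary technical point is the bookkeeping in the non-admissible case, where the parity statement of Lemma \ref{odd degree} forces any exceptional vertex of $B$ to double into \emph{two} exceptional vertices of $\widehat{B}$, thereby obstructing the Brauer-tree condition unless all f-degrees of $B$ are trivial.
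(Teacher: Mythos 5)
Your route is genuinely different from the paper's. The paper proves the forward implication by a case split on whether $\langle\sigma\rangle$ is admissible (embedding $\Pi(E)$ into $\Pi(R_E)\cong\mathbb{Z}$ in the admissible case, and explicitly computing the stabiliser inside $\Pi_m(E/\langle\sigma\rangle)\cong\mathbb{Z}\oplus\mathbb{Z}/2\mathbb{Z}$ in the non-admissible case), and it proves the converse by a completely different mechanism: a band of $A_E$ is converted into a closed special walk which, together with $(e|g^{rd(e)}|e)$, generates a copy of $\mathbb{Z}\oplus\mathbb{Z}$ inside $\Pi(E)$. Your uniform argument --- reduce to ``$R_E$ is a Brauer tree'', realise $\Pi(E)$ as a finite-index subgroup of the central extension $1\to\langle z\rangle\to\Pi_m(B)\to Q\to 1$, and translate everything into virtual cyclicity --- is attractive; in particular your converse direction is complete as written and needs no torsion-freeness ($\Pi(E)\cong\mathbb{Z}$ forces $\Pi_m(B)$ virtually $\mathbb{Z}$, hence $Q$ finite, hence $R_E$ a Brauer tree), and the identification of $Q$ with $\Pi'_m(B)$ together with the free-product analysis of when it is finite checks out against Proposition \ref{modified fundamental group of modified f-BG} and the exact sequence $0\to\mathbb{Z}\to\Pi_m\to\Pi'_m\to 0$.

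The gap is exactly where you anticipated it: the torsion-freeness of $\Pi(E)$, which your forward direction needs to upgrade ``virtually $\mathbb{Z}$'' to ``$\cong\mathbb{Z}$''. Your justification --- the deck group acts freely on the ``tree-like'' cover $\mathbb{Z}B_{(E,e)}$, so a finite subgroup would have to fix a half-edge --- is not a proof: $\mathbb{Z}B_{(E,e)}$ is not a tree (it is roughly $B_{(E,e)}\times\mathbb{Z}$), the fixed-point property for finite group actions on trees does not transfer to it without argument, and freeness of an action by itself never implies torsion-freeness of the acting group. Note where the danger actually lies: in the admissible case $\Pi_m(B)=\Pi(R_E)\cong\mathbb{Z}$ once $R_E$ is a Brauer tree, so every subgroup is automatically trivial or $\mathbb{Z}$ and nothing needs proving; but in the non-admissible case $\Pi_m(B)\cong\mathbb{Z}\oplus\mathbb{Z}/2\mathbb{Z}$ has genuine $2$-torsion (the class of the loop $c_1$ around the double half-edge), and a finite-index subgroup of $\mathbb{Z}\oplus\mathbb{Z}/2\mathbb{Z}$ can perfectly well be isomorphic to $\mathbb{Z}\oplus\mathbb{Z}/2\mathbb{Z}$. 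To close the gap you must show that $c_1$ does not lie in the stabiliser of $e$, i.e.\ that its monodromy moves $e$; this is precisely the computation the paper performs via Lemma \ref{odd degree}: the lift of the loop around the double half-edge ends at $\sigma^{r}(e)\neq e$, where $2r$ is the order of $\sigma$. With that one computation supplied (or with the Kurosh-type observation that all torsion of $\Pi_m(B)$ is conjugate into $\langle c_1\rangle$, plus normality of the stabiliser for the regular covering $E\to B$), your argument becomes complete.
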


\begin{proof}
"$\Rightarrow$" Suppose that $B=E/\langle \sigma\rangle$ has $n$ vertices, $k$-edges, and $l$ double half-edges.

If $B$ belongs to case $(1)$ of Lemma \ref{B-finite-type}, by Proposition \ref{modified fundamental group of modified f-BG}, $\Pi_{m}(E/\langle \sigma\rangle)\cong F\langle a_1,\cdots,a_n,c_1\rangle/\langle a_1=\cdots=a_n$, $a_1 c_1=c_1 a_1$, $c_{1}^{2}=1\rangle\cong F\langle a,c\rangle/\langle ac=ca$, $c^2=1\rangle\cong\mathbb{Z}\oplus\mathbb{Z}/2\mathbb{Z}$. Let $e^{\langle \sigma\rangle}$ be the unique double half-edge of $E/\langle \sigma\rangle$. Since $\tau(e)\in e^{\langle \sigma\rangle}$, there exists a minimal positive integer $r$ such that $\tau(e)=\sigma^{r}(e)$. By Lemma \ref{odd degree} $(1)$, $e^{\langle \sigma\rangle}=\{\sigma^{i}(e)\mid 0\leq i\leq 2r-1\}$. By the proof of Proposition \ref{modified fundamental group of modified f-BG}, the isomorphism $f:\mathbb{Z}\oplus\mathbb{Z}/2\mathbb{Z}\xrightarrow{\sim}\Pi_{m}(E/\langle \sigma\rangle,e^{\langle \sigma\rangle})$ is given by $f(1,0)=[(e^{\langle \sigma\rangle}|g^{d(e)}|e^{\langle \sigma\rangle})]$ and $f(0,\overline{1})=[(e^{\langle \sigma\rangle}|\tau|e^{\langle \sigma\rangle})]$. According to Remark \ref{general-method-to-compute-the-fundamental-group}, the covering $\phi:E\rightarrow E/\langle\sigma\rangle$ of Brauer $G$-sets induces a $\Pi_{m}(E/\langle \sigma\rangle,e^{\langle \sigma\rangle})$-set structure on $\phi^{-1}(e^{\langle \sigma\rangle})=e^{\langle \sigma\rangle}$, and the stabilizer subgroup of $e$ in $\Pi_{m}(E/\langle \sigma\rangle,e^{\langle \sigma\rangle})$ is isomorphic to $\Pi_{m}(E,e)$, which is also isomorphic to $\Pi(E,e)$ by Lemma \ref{two definitions of fundamental group are equal}. The action of $\mathbb{Z}\oplus\mathbb{Z}/2\mathbb{Z}$ on $e^{\langle \sigma\rangle}$ via the group isomorphism $f$ is given by $(a,\overline{0})\cdot e=\sigma^{a}(e)$ and $(a,\overline{1})\cdot e=\sigma^{a+r}(e)$ for any $a\in\mathbb{Z}$. We have $(a,\overline{0})\cdot e=e$ if and only if $2r|a$, and $(a,\overline{1})\cdot e=e$ if and only if $a=br$ with $b$ odd. Therefore $\Pi(E,e)$ is isomorphic to the subgroup of $\mathbb{Z}\oplus\mathbb{Z}/2\mathbb{Z}$ generated by $(r,\overline{1})$, which is isomorphic to $\mathbb{Z}$.

If $B$ belongs to case $(2)$ of Lemma \ref{B-finite-type}, then $B$ is a Brauer tree, and by \cite[Proposition 6.9]{LL2} we have $\Pi(B)\cong\mathbb{Z}$. Since there exists a covering $E\rightarrow B$ of $f_{ms}$-BGs, by \cite[Theorem 3.17]{LL2}, $\Pi(E)$ is isomorphic to a subgroup of $\Pi(B)$. Since $E$ is finite, the order of the automorphism $\sigma$ of $E$ is finite, and suppose that $o(\sigma)=r$. For any $e\in E$, by \cite[Proposition 3.46]{LL2}, $\overline{(e|g^{d(e)r}|e)}\neq \overline{(e||e)}$ in $\Pi(E)$, therefore $\Pi(E)\neq \{1\}$. Since $\Pi(E)$ is a subgroup of $\Pi(B)$, it follows that $\Pi(E)\cong\mathbb{Z}$.

"$\Leftarrow$" Suppose that $A_E$ is not representation-finite, then $A_E$ has a band. Since $A_E$ is not a Nakayama algebra, there exists some $e\in E$ with $d(e)>1$. According to the proof of Lemma \ref{two-numbers-of-bands-are-equal}, $E$ has a band $l$, which corresponds to a closed walk $w$ of $E$ of the form
$$(e_{2k}|\tau g^{-i_{2k}}|e_{2k-1})(e_{2k-1}|\tau g^{i_{2k-1}}|e_{2k-2})\cdots(e_2|\tau g^{-i_2}|e_1)(e_1|\tau g^{i_1}|e_0),$$
where $e_0=e_{2k}$ and $0<i_{j}<d(e_{j-1})$ for $1\leq j\leq 2k$. Using $w$ we obtain a special walk
\begin{multline*}w'=(\sigma^{s}(e_{2k})|\tau g^{d(e_{2k-1})-i_{2k}}|\sigma^{k-1}(e_{2k-1}))(\sigma^{k-1}(e_{2k-1})|\tau g^{i_{2k-1}}|\sigma^{k-1}(e_{2k-2}))\cdots \\ (\sigma(e_2)|\tau g^{d(e_1)-i_2}|e_1)(e_1|\tau g^{i_1}|e_0)\end{multline*}
of $E$. Let $e=e_0=e_{2k}$. Since $E$ is finite, the order of the automorphism $\sigma$ of $E$ is finite. Suppose that $\sigma^{r}=id_{E}$, then the walk $v=\sigma^{k(r-1)}(w')\cdots\sigma^{k}(w')w'$ is a closed special walk of $E$ at $e$. Moreover, $v^{l}$ is also a closed special walk of $E$ at $e$ for any positive integer $l$. Let $u=(e|g^{rd(e)}|e)$ be a closed walk of $E$ at $e$. By the definition of $\Pi(E,e)$, it is easy to see that $\overline{uv}=\overline{vu}$ in $\Pi(E,e)$. By \cite[Proposition 3.46]{LL2}, the subgroup of $\Pi(E,e)$ generated by $\overline{u}$ and $\overline{v}$ is isomorphic to $\mathbb{Z}\oplus\mathbb{Z}$. Therefore $\Pi(E,e)$ is not isomorphic to $\mathbb{Z}$, a contradiction.
\end{proof}

\subsubsection{Fundamental groups of the defining $f_{ms}$-BGs of domestic $f_{ms}$-BGAs}
\

We first characterize domestic $f_{ms}$-BGA $A_E$ in terms of the quotient $E/\langle\sigma\rangle$.

\begin{Lem}\label{B}
Let $E=(E,E,\tau,d)$ be a finite connected $f_{ms}$-BG and let $B=E/\langle\sigma\rangle$. Suppose that the modified BG $B$ has $k$ edges, $l$ double half-edges, and $n$ vertices $v_1$, $\cdots$, $v_n$ of f-degree $d_1$, $\cdots$, $d_n$ respectively.  Then $A_E$ is domestic if and only if one of the following holds
\begin{itemize}
\item[$(1)$] $l=2$, $k-n+1=0$, $d_i=1$ for $1\leq i\leq n$;
\item[$(2)$] $l=0$, $k-n+1=0$, $d_i=2$ for exactly two numbers $i=i_0$, $i_1$, and $d_i=1$ for $i\neq i_0$, $i_1$;
\item[$(3)$] $l=0$, $k-n+1=1$, $d_i=1$ for $1\leq i\leq n$.
\end{itemize}
\end{Lem}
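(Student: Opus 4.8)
The plan is to reduce everything to the representation type of the Brauer graph $R_E$ and then read off the combinatorics. By Theorem \ref{rep type of f-BCA and its reduced form}, $A_E$ is domestic if and only if $A_{R_E}$ is domestic; and since $R_E$ is a Brauer graph, Theorem \ref{domestic-BGA} tells us that $A_{R_E}$ is domestic precisely when $R_E$ satisfies one of the three combinatorial conditions listed there (the two $1$-domestic conditions and the $2$-domestic condition): either (i) the underlying graph of $R_E$ is a tree with exactly two vertices of f-degree $2$ and all others of f-degree $1$, or (ii)/(iii) $R_E$ is f-degree trivial and its underlying graph has a unique cycle (of odd resp. even length). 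So the whole problem becomes translating these conditions on $R_E$ back into the numerical data $(k,l,n,d_1,\dots,d_n)$ of $B=E/\langle\sigma\rangle$. I would split according to whether $\langle\sigma\rangle$ is admissible, since this governs the relation between $R_E$ and $B$: if $\langle\sigma\rangle$ is admissible then $R_E=B$, while if it is not admissible then $B$ has a double half-edge and $R_E=\widehat{B}$.

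First I would treat the admissible case, where $R_E=B$ is itself a Brauer graph, so $l=0$. Here the underlying graph of $R_E$ has $n$ vertices and $k$ edges and is connected (as $E$, hence $B$, is connected), so its first Betti number equals $k-n+1$: it is a tree iff $k-n+1=0$ and has a unique cycle iff $k-n+1=1$. Condition (i) then reads $k-n+1=0$ with two f-degrees equal to $2$ and the rest equal to $1$, which is exactly Case $(2)$; conditions (ii) and (iii) read $k-n+1=1$ with all $d_i=1$ (the cycle being odd resp. even, both yielding domestic algebras), which is exactly Case $(3)$. Since $l=0$ here, Case $(1)$ cannot arise, consistently.

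Next I would treat the non-admissible case, where $R_E=\widehat{B}$. Counting through the construction $\widehat{B}=B_1\sqcup B_2$: each vertex of $B$ yields two vertices, each genuine edge yields two edges, and each of the $l$ double half-edges yields one edge joining the two copies; thus $\widehat{B}$ is a connected Brauer graph with $2n$ vertices and $2k+l$ edges, whose f-degrees are $d_1,\dots,d_n$, each occurring twice. By Lemma \ref{odd degree}$(4)$ every $d_i$ is odd, so no vertex of $\widehat{B}$ has f-degree $2$; hence condition (i) is impossible and only (ii)/(iii) can occur. For these, f-degree triviality forces all $d_i=1$, and the unique-cycle condition forces the first Betti number $(2k+l)-2n+1$ of $\widehat{B}$ to equal $1$. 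Rewriting this as $2(k-n+1)+l=2$ and using $k-n+1\geq 0$ (connectedness) together with $l\geq 1$, I obtain $k-n+1=0$ and $l=2$, which is exactly Case $(1)$. For the converse in each case one simply checks that the stated numerics make $R_E$ (either $B$ or $\widehat{B}$) satisfy the corresponding condition of Theorem \ref{domestic-BGA}, hence $A_{R_E}$, and therefore $A_E$, is domestic.

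I expect the only real work to lie in the bookkeeping of the non-admissible case: correctly counting the vertices, edges and f-degrees of $\widehat{B}$ in terms of those of $B$, and then extracting from ``$\widehat{B}$ has a unique cycle'' the sharp conclusion $l=2$ rather than merely ``$l$ even''. The crucial input making this clean is Lemma \ref{odd degree}$(4)$, which eliminates condition (i) outright and thereby rules out spurious tree-type configurations; the complementary value $l=1$, which makes $\widehat{B}$ a tree, corresponds to the representation-finite (non-domestic) situation and is precisely what is correctly excluded here.
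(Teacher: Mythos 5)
Your proposal is correct and follows essentially the same route as the paper's proof: reduce to $R_E$ via Theorem \ref{rep type of f-BCA and its reduced form}, apply the classification of domestic BGAs (Theorem \ref{domestic-BGA}), split on whether $\langle\sigma\rangle$ is admissible, and in the non-admissible case use Lemma \ref{odd degree}$(4)$ to exclude the configuration with two vertices of f-degree $2$. Your explicit count of the vertices and edges of $\widehat{B}$ is exactly what the paper records as the Euler characteristic $\chi(R_E)=2(k-n+1)+l-1$, so there is no substantive difference.
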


\begin{proof}
"$\Rightarrow$" If the subgroup $\langle\sigma\rangle$ of $\mathrm{Aut}(E)$ is not admissible, by Lemma \ref{odd degree}, the f-degree of each vertex of $B$ is odd. Moreover, $R_E=\widehat{B}$ is a Brauer graph whose first Betti number $\chi(R_E)$ is $2(k-n+1)+l-1$. By Theorem \ref{rep type of f-BCA and its reduced form}, $A_{R_E}$ is domestic. Since the f-degree of each vertex of $B$ is odd, the f-degree of each vertex of $R_E$ is also odd. Then by Theorem \ref{domestic-BGA} we imply that $R_E$ is an f-degree-free BG with $\chi(R_E)=1$. Therefore we have $k-n+1=0$, $l=2$, and $d_i=1$ for $1\leq i\leq n$. Then $(1)$ holds.

If the subgroup $\langle\sigma\rangle$ of $\mathrm{Aut}(E)$ is admissible, then $B=R_E$ is a Brauer graph. By Theorem \ref{rep type of f-BCA and its reduced form}, $A_{R_E}$ is domestic. According to Theorem \ref{domestic-BGA}, either $(2)$ or $(3)$ holds.

"$\Leftarrow$" If $(1)$ holds then $R_E=\widehat{B}$ is a Brauer graph with free f-degree and the underlying graph of $R_E$ contains a unique cycle. According to Theorem \ref{domestic-BGA} and Theorem \ref{rep type of f-BCA and its reduced form}, $A_E$ is domestic. If $(2)$ or $(3)$ holds, then $R_E=B$, and by Theorem \ref{domestic-BGA}, $A_{R_E}$ is domestic. So by Theorem \ref{rep type of f-BCA and its reduced form} $A_E$ is also domestic.
\end{proof}

To calculate the fundamental groups of the defining $f_{ms}$-BGs of domestic $f_{ms}$-BGAs, we need some preparations.

\begin{Lem}\label{generators of normal subgroup}
Let $\Omega$ be a group and let $N$ be the normal subgroup of $\Omega$ generated by $X$, where $X$ is a subset of $\Omega$. Let $H$ be a subgroup of $\Omega$ such that $N\subseteq H$, and $I$ be a subset of $\Omega$ which contains exactly one element from each right coset of $H$. Then $N$ is the normal subgroup of $H$ generated by $Y=\{bab^{-1}\mid b\in I$, $a\in X\}$.
\end{Lem}

\begin{proof}
Since $N$ is the normal subgroup of $\Omega$ generated by $X$, it is the subgroup of $\Omega$ generated by the set $X'=\{\omega a {\omega}^{-1}\mid \omega \in \Omega$, $a\in X\}$. Since each element of $X'$ is conjugate to some element of $Y$ in $H$, $N$ is the normal subgroup of $H$ generated by $Y$.
\end{proof}

\begin{Def}\label{Schreier system} {\rm(\cite[Chapter 6, Section 8]{M})}
Let $F$ be a free group on the set $S$. For any element $a\neq 1$ of $F$, express $a$ as a reduced word in the generators: $a=x_1 x_2 \cdots x_k$, where $x_i\in S$ or $x_{i}^{-1}\in S$ for $1\leq i\leq k$. Define $a'=x_1 x_2 \cdots x_{k-1}$. A nonempty subset $A$ of $F$ is said to be a Schreier system in $F$ if $a'\in A$ for any $a\in A$ with $a\neq 1$.
\end{Def}

\begin{Prop}\label{free generators} {\rm(\cite[Chapter 6, Theorem 8.1]{M})}
Let $F$ be a free group on the set $S$, $F'$ be a subgroup of $F$, and $A$ be a Schreier system in $F$ which contains exactly one element from each right coset of $F'$. Then $F'$ is a free group on the set $\{as\Phi(as)^{-1}\mid a\in A$, $s\in S$, $as\Phi(as)^{-1}\neq 1\}$, where the map $\Phi:F\rightarrow A$ assigns each element of $F$ the unique element of $A$ in the same coset of $F'$.
\end{Prop}

\begin{Prop}\label{rep type to fundamental group}
Let $E=(E,E,\tau,d)$ be a finite connected $f_{ms}$-BG such that $A_E$ is domestic. Then $\Pi(E)\cong F\langle a,b\rangle/\langle a^2=b^2\rangle$ or $\Pi(E)\cong\mathbb{Z}\oplus\mathbb{Z}$.
\end{Prop}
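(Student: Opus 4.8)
The plan is to combine Lemma \ref{B} with an explicit computation of the fundamental group of $B=E/\langle\sigma\rangle$ and of the monodromy action of $\Pi_m(B)$ on a fiber of the covering $\phi:E\to B$. First I would record the general setup. By Lemma \ref{two definitions of fundamental group are equal} we may identify $\Pi(E,e)$ with $\Pi_m(E,e)$, and by Remark \ref{general-method-to-compute-the-fundamental-group} the injection $\phi_*:\Pi_m(E,e)\hookrightarrow\Pi_m(B,\bar e)$ (injective by Proposition \ref{modified homotopy lifting}) identifies $\Pi(E)$ with the stabilizer of $e$ in $\Pi_m(B,\bar e)$ under the monodromy action on $\phi^{-1}(\bar e)=e^{\langle\sigma\rangle}$, where $\bar e:=e^{\langle\sigma\rangle}$. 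Since $E$ is finite, $\sigma$ has finite order $s$; since $E$ is connected the action is transitive; and since $\langle\sigma\rangle$ acts freely (Lemma \ref{free}) the fiber $e^{\langle\sigma\rangle}$ has exactly $s$ elements. By orbit--stabilizer, $\Pi(E)$ is therefore isomorphic to a subgroup of $\Pi_m(B)$ of finite index $s$. By Lemma \ref{B} there are three cases, and in each I would compute $\Pi_m(B)$ via Proposition \ref{modified fundamental group of modified f-BG}.

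Cases $(2)$ and $(3)$ are the easy ones, since there $\langle\sigma\rangle$ is admissible and $B$ is an ordinary Brauer graph ($l=0$). Feeding the numerical data into Proposition \ref{modified fundamental group of modified f-BG} gives, in Case $(3)$ ($k-n+1=1$, all $d_i=1$), $\Pi_m(B)\cong F\langle a,b\rangle/\langle ab=ba\rangle\cong\mathbb{Z}\oplus\mathbb{Z}$, and in Case $(2)$ ($k-n+1=0$, two vertices of f-degree $2$ and the rest f-degree $1$), $\Pi_m(B)\cong F\langle a,b\rangle/\langle a^2=b^2\rangle$, the Klein bottle group $K$. I would then invoke the classification of finite-index subgroups: every finite-index subgroup of $\mathbb{Z}\oplus\mathbb{Z}$ is again free abelian of rank $2$, and every finite-index subgroup of $K$ is isomorphic to $K$ or to $\mathbb{Z}\oplus\mathbb{Z}$ (these being the fundamental groups of the finite covers of the Klein bottle). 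This yields the two allowed isomorphism types for $\Pi(E)$ at once, with no need to identify the subgroup explicitly.

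Case $(1)$ is where the real work lies, and it is the step I expect to be the main obstacle. Here $\langle\sigma\rangle$ is not admissible, $B$ has two double half-edges, is f-degree trivial with $k-n+1=0$, and Proposition \ref{modified fundamental group of modified f-BG} gives $\Pi_m(B)\cong F\langle a,c_1,c_2\rangle/\langle ac_1=c_1a,\,ac_2=c_2a,\,c_1^2=c_2^2=1\rangle\cong\mathbb{Z}\times D_{\infty}$, where $a$ is the common f-degree-one loop and $c_1,c_2$ are the two double-half-edge involutions. Unlike Cases $(2)$ and $(3)$, this group has finite-index subgroups of several isomorphism types (for instance $\mathbb{Z}\times\mathbb{Z}/2\mathbb{Z}$, $\mathbb{Z}\oplus\mathbb{Z}$, $K$, and $\mathbb{Z}\times D_{\infty}$ itself), so the subgroup-classification shortcut fails and I must pin down the monodromy. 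Using Lemma \ref{odd degree} I write the order of $\sigma$ as $2r$, identify $e^{\langle\sigma\rangle}$ with $\mathbb{Z}/2r\mathbb{Z}$ via $\sigma^i(e)\leftrightarrow i$, and note $\tau(e)=\sigma^r(e)$. The key computation is that the lift of $a$ is $\sigma$, so $a$ acts as $+1$, while the lifts of both involutions act as $+r$: for $c_1$ this is immediate from $\tau(e)=\sigma^r(e)$, and for $c_2$ it follows because $\sigma$ commutes with $\tau$ and with lifting of walks (it is a deck transformation), together with the equality of the two relevant values of $r$ forced by Lemma \ref{odd degree}$(2)$. Hence the monodromy factors through the homomorphism $\psi:\mathbb{Z}\times D_{\infty}\to\mathbb{Z}/2r\mathbb{Z}$, $a\mapsto 1$, $c_1,c_2\mapsto r$, and $\Pi(E)\cong\ker\psi$.

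To finish Case $(1)$ I would identify $\ker\psi$ explicitly. Setting $t=c_1c_2$ and $u=a^rc_1$, one checks $u^2=a^{2r}$ and $utu^{-1}=t^{-1}$, and that every element of $\ker\psi$ has the form $u^{j}t^{\pm k}$; comparing with the presentation $\langle u,t\mid utu^{-1}=t^{-1}\rangle$ and making the change of variables $a'=u$, $b'=ut$ (which gives $a'^2=b'^2$ and is readily seen to be a bijective correspondence of relations) shows $\ker\psi\cong F\langle a,b\rangle/\langle a^2=b^2\rangle=K$. Combined with Cases $(2)$ and $(3)$, this proves that $\Pi(E)$ is isomorphic to $K$ or to $\mathbb{Z}\oplus\mathbb{Z}$. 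The main obstacle is thus the explicit monodromy calculation in Case $(1)$---in particular verifying that the second involution $c_2$ also acts by the shift $\sigma^r$---since everything else reduces to standard subgroup theory once $\Pi_m(B)$ has been computed.
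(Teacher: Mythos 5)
Your proposal is correct, and its overall architecture --- reduce to the three cases of Lemma \ref{B}, compute $\Pi_m(B)$ for $B=E/\langle\sigma\rangle$ via Proposition \ref{modified fundamental group of modified f-BG}, and identify $\Pi(E)$ with the stabilizer of $e$ under the monodromy action on the fiber $e^{\langle\sigma\rangle}$ --- is the same as the paper's. The difference lies in how the stabilizer is identified. In Cases $(2)$ and $(3)$ the paper pins the subgroup down explicitly: it computes the monodromy permutations and then runs the Reidemeister--Schreier method (Proposition \ref{free generators} together with Lemma \ref{generators of normal subgroup}) to get a presentation of the stabilizer, and in Case $(3)$ it excludes $\mathbb{Z}$ by invoking Theorem \ref{fundamental group of rep-finite f-BGA}. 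You instead observe that the stabilizer has finite index $o(\sigma)$ (the fiber has $o(\sigma)$ elements by Lemma \ref{free} and the action is transitive since $E$ is connected) and appeal to the classification of finite-index subgroups of $\mathbb{Z}\oplus\mathbb{Z}$ and of the Klein bottle group $K=F\langle a,b\rangle/\langle a^2=b^2\rangle$. This is shorter and, in Case $(3)$, avoids the detour through the representation-finite theorem; the price is reliance on the (standard but external) fact that finite covers of the Klein bottle are tori or Klein bottles, and the argument no longer determines which of the two groups actually occurs in Case $(2)$ --- information the paper's explicit computation supplies and reuses later (e.g.\ in the proof of Lemma \ref{determined-up-to-isomorphism-2}), though it is not needed for the proposition as stated. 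In Case $(1)$, where the shortcut genuinely fails because $\mathbb{Z}\times D_\infty$ has finite-index subgroups of several isomorphism types, your monodromy computation ($a\mapsto\sigma$, $c_1,c_2\mapsto\sigma^r$, with the value for $c_2$ justified exactly as in the paper via Lemma \ref{odd degree} and Lemma \ref{free}) coincides with the paper's; your identification of $\ker\psi$ via the generators $u=a^rc_1$, $t=c_1c_2$ is a correct and slightly slicker substitute for the paper's Reidemeister--Schreier calculation, and both yield $K$.
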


\begin{proof}
Suppose that the modified BG $B=E/\langle\sigma\rangle$ has $k$-edges, $l$ double half-edges, and $n$ vertices $v_1$, $\cdots$, $v_n$ of f-degree $d_1$, $\cdots$, $d_n$ respectively. Since $A_E$ is domestic, one of the conditions $(1)$, $(2)$, $(3)$ in Lemma \ref{B} holds.

Suppose that condition $(1)$ in Lemma \ref{B} holds, that is, $l=2$, $k-n+1=0$ and $d_i=1$ for $1\leq i\leq n$. Let $B=(B,B,\tau',d')$, and let $e^{\langle\sigma\rangle}$ and $h^{\langle\sigma\rangle}$ be the two double half-edges of $B$. Since $E$ is connected, so is $B$. By the proof of Proposition \ref{modified fundamental group of modified f-BG}, there is a group isomorphism
$$f:F\langle a,c_1,c_2\rangle/\langle a c_1=c_1 a, a c_2=c_2 a, c_{1}^{2}=c_{2}^{2}=1\rangle\xrightarrow{\sim}\Pi_{m}(B,e^{\langle\sigma\rangle})$$
such that $f(\overline{a})=[(e^{\langle\sigma\rangle}|g^{d(e)}|e^{\langle\sigma\rangle})]$, $f(\overline{c_1})=[(e^{\langle\sigma\rangle}|\tau'|e^{\langle\sigma\rangle})]$, $f(\overline{c_2})=[(w')^{-1}(h^{\langle\sigma\rangle}|\tau'|h^{\langle\sigma\rangle})w']$, where $w'$ is a  walk of $B$ from $e^{\langle\sigma\rangle}$ to $h^{\langle\sigma\rangle}$. We may assume that $w'$ lifts to a walk $w$ of $E$ from $e$ to $h$.

Let $r$ be the minimal positive integer with the property that $\sigma^{r}(e)=\tau(e)$. Then by Lemma \ref{odd degree} $(1)$ and $(2)$, $|e^{\langle\sigma\rangle}|=2r$ and the order of the Nakayama automorphism $\sigma$ of $E$ is $2r$. By Lemma \ref{free}, we also have $|h^{\langle\sigma\rangle}|=2r$. Then by Lemma \ref{odd degree} $(1)$, $r$ is also the minimal positive integer with the property that $\sigma^{r}(h)=\tau(h)$. The covering of Brauer $G$-sets $\phi:E\rightarrow B$ induces a $\Pi_{m}(B,e^{\langle\sigma\rangle})$-set structure on $\phi^{-1}(e^{\langle\sigma\rangle})=e^{\langle\sigma\rangle}$, and $\Pi(E,e)\cong\Pi_{m}(E,e)$ is isomorphic to the stabilizer subgroup of $e$ in $\Pi_{m}(B,e^{\langle\sigma\rangle})$ by Remark \ref{general-method-to-compute-the-fundamental-group}. Using the group isomorphism $f$ we may view $e^{\langle\sigma\rangle}$ as an $F\langle a,c_1,c_2\rangle/\langle a c_1=c_1 a$, $a c_2=c_2 a$, $c_{1}^{2}=c_{2}^{2}=1\rangle$-set. Since
\begin{multline*}[(w')^{-1}(h^{\langle\sigma\rangle}|\tau'|h^{\langle\sigma\rangle}) w']\cdot e=[(w')^{-1}(h^{\langle\sigma\rangle}|\tau'|h^{\langle\sigma\rangle})]\cdot h \\ =[(w')^{-1}]\cdot\tau(h)=[(w')^{-1}]\cdot\sigma^{r}(h)=\sigma^{r}([(w')^{-1}]\cdot h)=\sigma^{r}(e),\end{multline*}
we have $\overline{c_2}\cdot e=\sigma^{r}(e)$. Moreover, we have $\overline{a}\cdot e=\sigma(e)$ and $\overline{c_1}\cdot e=\sigma^{r}(e)$. For each $1\leq i\leq 2r$, identify the element $\sigma^{i}(e)$ of $e^{\langle\sigma\rangle}$ with the integer $i$, and let
$$\rho:F\langle a,c_1,c_2\rangle/\langle a c_1=c_1 a, a c_2=c_2 a, c_{1}^{2}=c_{2}^{2}=1\rangle\rightarrow S_{2r}$$
be the group homomorphism given by the action of $F\langle a,c_1,c_2\rangle/\langle a c_1=c_1 a$, $a c_2=c_2 a$, $c_{1}^{2}=c_{2}^{2}=1\rangle$ on $e^{\langle\sigma\rangle}$. Then $\rho(\overline{a})=(1$ $2\cdots 2r)$ and $\rho(\overline {c_1})=\rho(\overline{c_2})=(1$ $r+1)(2$ $r+2)\cdots (r$ $2r)$.

Let $\widetilde{\rho}:F\langle a,c_1,c_2\rangle\rightarrow S_{2r}$ be the group homomorphism induced by $\rho$, and let $H=\{x\in F\langle a,c_1,c_2\rangle\mid \widetilde{\rho}(x)(1)=1\}$ be a subgroup of $F\langle a,c_1,c_2\rangle$. Then $\Pi(E,e)$ is isomorphic to $H/N$, where $N$ is the normal subgroup of $F\langle a,c_1,c_2\rangle$ generated by $c_{1}^{2}$, $c_{2}^{2}$, $c_1 a c_{1}^{-1} a^{-1}$, $c_2 a c_{2}^{-1} a^{-1}$. Let $A=\{1,a,a^2,\cdots,a^{2r-1}\}$ be a subset of $F\langle a,c_1,c_2\rangle$. Then $A$ is a Schreier system in $F\langle a,c_1,c_2\rangle$ which contains exactly one element from each right coset of $H$. By Proposition \ref{free generators}, $H$ is a free group on the set $\{x_i,y_i,z\mid 0\leq i\leq 2r-1\}$, where
\begin{equation*}
x_i= \begin{cases}
a^{i}c_1 a^{-r-i}, & \text{if } 0\leq i\leq r-1; \\
a^{i}c_1 a^{r-i}, & \text{if } r\leq i\leq 2r-1;
\end{cases}
\end{equation*}
\begin{equation*}
y_i= \begin{cases}
a^{i}c_2 a^{-r-i}, & \text{if } 0\leq i\leq r-1; \\
a^{i}c_2 a^{r-i}, & \text{if } r\leq i\leq 2r-1;
\end{cases}
\end{equation*}
and $z=a^{2r}$. Since $N$ is the normal subgroup of $F\langle a,c_1,c_2\rangle$ generated by $c_{1}^{2}$, $c_{2}^{2}$, $c_1 a c_{1}^{-1} a^{-1}$, $c_2 a c_{2}^{-1} a^{-1}$, by Lemma \ref{generators of normal subgroup}, $N$ is the normal subgroup of $H$ generated by the set $\{a^{i} c_{j}^{2} a^{-i}$, $a^{i} c_j a c_{j}^{-1} a^{-i-1}\mid 0\leq i\leq 2r-1$, $j=1,2\}$. A calculation shows that
\begin{equation*}
a^i c_{1}^{2} a^{-i}= \begin{cases}
x_i x_{r+i}, & \text{if } 0\leq i\leq r-1; \\
x_i x_{i-r}, & \text{if } r\leq i\leq 2r-1;
\end{cases}
\end{equation*}
\begin{equation*}
a^i c_{2}^{2} a^{-i}= \begin{cases}
y_i y_{r+i}, & \text{if } 0\leq i\leq r-1; \\
y_i y_{i-r}, & \text{if } r\leq i\leq 2r-1;
\end{cases}
\end{equation*}
\begin{equation*}
a^{i} c_1 a c_{1}^{-1} a^{-i-1}= \begin{cases}
x_i x_{i+1}^{-1}, & \text{if } 0\leq i\leq r-2 \text{ or } r\leq i\leq 2r-2; \\
x_{r-1} z x_{r}^{-1}, & \text{if } i=r-1; \\
x_{2r-1} x_{0}^{-1} z^{-1}, & \text{if } i=2r-1;
\end{cases}
\end{equation*}
\begin{equation*}
a^{i} c_2 a c_{2}^{-1} a^{-i-1}= \begin{cases}
y_i y_{i+1}^{-1}, & \text{if } 0\leq i\leq r-2 \text{ or } r\leq i\leq 2r-2; \\
y_{r-1} z y_{r}^{-1}, & \text{if } i=r-1; \\
y_{2r-1} y_{0}^{-1} z^{-1}, & \text{if } i=2r-1.
\end{cases}
\end{equation*}
Therefore \begin{multline*}\Pi(E,e)\cong H/N=F\langle x_0,\cdots,x_{2r-1},y_0,\cdots,y_{2r-1},z\rangle/\langle x_0=\cdots=x_{r-1}=x_{r}^{-1}=\cdots \\ =x_{2r-1}^{-1}$,  $y_0=\cdots=y_{r-1}=y_{r}^{-1}=\cdots=y_{2r-1}^{-1}$, $z=x_{0}^{-2}=y_{0}^{-2}\rangle\cong F\langle x_0,y_0\rangle/\langle x_{0}^{2}=y_{0}^{2}\rangle\end{multline*}

Suppose that condition $(2)$ in Lemma \ref{B} holds, that is, $l=0$, $k-n+1=0$, $d_i=2$ for exactly two numbers $i=i_0$, $i_1$, and $d_i=1$ for $i\neq i_0$, $i_1$. Suppose that $e$, $h$ are two half-edges of $E$ with $e^{\langle\sigma\rangle}\in v_{i_0}$ and $h^{\langle\sigma\rangle}\in v_{i_1}$. By the proof of \cite[Proposition 6.9]{LL2}, there is a group isomorphism
$$f:F\langle a,b\rangle/\langle a^2=b^2\rangle \xrightarrow{\sim}\Pi(B,e^{\langle\sigma\rangle})$$
such that
$$f(\overline{a})=\overline{(e^{\langle\sigma\rangle}|g^{\frac{d(e)}{2}}|e^{\langle\sigma\rangle})} \text{ and } f(\overline{b})=\overline{(w')^{-1}(h^{\langle\sigma\rangle}|g^{\frac{d(h)}{2}}|h^{\langle\sigma\rangle})w'},$$
where $w'$ is a walk of $B$ from $e^{\langle\sigma\rangle}$ to $h^{\langle\sigma\rangle}$. We may assume that $w'$ lifts to a walk $w$ of $E$ from $e$ to $h$. Suppose that the $G$-orbit of $e$ contains $M$ half-edges, then $v_{i_0}$ contains $N=(M,d(e))$ half-edges. Since $d_{i_0}=2$, $d(e)=2N$. Since $1=(\frac{M}{N},\frac{d(e)}{N})=(\frac{M}{N},2)$, $\frac{M}{N}$ is odd. So $2N$ divides $M+N$. We have
$$\sigma^{\frac{M+N}{2N}}(e)=g^{\frac{d(e)(M+N)}{2N}}\cdot e=g^{M+N}\cdot e=g^{N}\cdot e.$$
Similarly, suppose that the $G$-orbit of $h$ contains $M'$ half-edges, then $v_{i_1}$ contains $N'=(M',d(h))$ half-edges. Moreover, $2N'$ divides $M'+N'$ and $\sigma^{\frac{M'+N'}{2N'}}(h)=g^{N'}\cdot h$. Since $\frac{M}{N}$ (resp. $\frac{M'}{N'}$) is the minimal positive integer $i$ (resp. $j$) such that $\sigma^{i}(e)=e$ (resp. $\sigma^{j}(h)=h$), by Lemma \ref{free}, $\frac{M}{N}=o(\sigma)=\frac{M'}{N'}$.

The covering of $f_{ms}$-BGs $\phi:E\rightarrow B$ induces a $\Pi(B,e^{\langle\sigma\rangle})$-set structure on $\phi^{-1}(e^{\langle\sigma\rangle})=e^{\langle\sigma\rangle}$, and $\Pi(E,e)$ is isomorphic to the stabilizer subgroup of $e$ in $\Pi(B,e^{\langle\sigma\rangle})$.
Denote by $o(\sigma)=r$, and identify the element $g^{iN}\cdot e$ of $e^{\langle\sigma\rangle}$ with the integer $i$ for any $1\leq i\leq r$. Then $e^{\langle\sigma\rangle}=\{1,2,\cdots,r\}$ becomes an $F\langle a,b\rangle/\langle a^2=b^2\rangle$-set via the isomorphism $f:F\langle a,b\rangle/\langle a^2=b^2\rangle \xrightarrow{\sim}\Pi(B,e^{\langle\sigma\rangle})$. Since
$$a\cdot (g^{iN}\cdot e)=\overline{(e^{\langle\sigma\rangle}|g^{\frac{d(e)}{2}}|e^{\langle\sigma\rangle})}\cdot (g^{iN}\cdot e)=g^{\frac{d(e)}{2}}g^{iN}\cdot e=g^{N}g^{iN}\cdot e=g^{(i+1)N}\cdot e,$$
the action of $a$ on $e^{\langle\sigma\rangle}=\{1,2,\cdots,r\}$ corresponds to the permutation $(12\cdots r)$. Suppose that the walk $w$ of $E$ from $e$ to $h$ is of the form $w=(h|\delta_{s}\cdots\delta_1|e)$ with $\delta_i\in\{g,g^{-1},\tau\}$. Let
\begin{equation*}
\delta_i^{-1}:= \begin{cases}
g^{-1}, & \text{if } \delta_i=g; \\
g, & \text{if } \delta_i=g^{-1}; \\
\tau, & \text{if } \delta_i=\tau.
\end{cases}
\end{equation*}
Since
\begin{multline*}b\cdot e=\overline{(w')^{-1}(h^{\langle\sigma\rangle}|g^{\frac{d(h)}{2}}|h^{\langle\sigma\rangle})w'}\cdot e
=\delta_{1}^{-1}\cdots\delta_{s}^{-1}g^{\frac{d(h)}{2}}\delta_{s}\cdots\delta_1 (e)=\delta_{1}^{-1}\cdots\delta_{s}^{-1}g^{N'}\delta_{s}\cdots\delta_1 (e) \\ =\delta_{1}^{-1}\cdots\delta_{s}^{-1}g^{N'}\cdot h=\delta_{1}^{-1}\cdots\delta_{s}^{-1}\sigma^{\frac{M'+N'}{2N'}}(h)
=\delta_{1}^{-1}\cdots\delta_{s}^{-1}\sigma^{\frac{M+N}{2N}}(h) \\ =\sigma^{\frac{M+N}{2N}}\delta_{1}^{-1}\cdots\delta_{s}^{-1}(h)
=\sigma^{\frac{M+N}{2N}}(e)=g^{N}\cdot e,\end{multline*}
and since the action of $F\langle a,b\rangle/\langle a^2=b^2\rangle$ on $e^{\langle\sigma\rangle}$ commutes with the action of $\langle\sigma\rangle$ on $e^{\langle\sigma\rangle}$, the action of $b$ on $e^{\langle\sigma\rangle}=\{1,2,\cdots,r\}$ also corresponds to the permutation $(12\cdots r)$.

The action of $F\langle a,b\rangle/\langle a^2=b^2\rangle$ on $e^{\langle\sigma\rangle}=\{1,2,\cdots,r\}$ defines a group homomorphism $\rho:F\langle a,b\rangle/\langle a^2=b^2\rangle\rightarrow S_{r}$, and let $\widetilde{\rho}:F\langle a,b\rangle\rightarrow S_r$ be the homomorphism induced by $\rho$. Let
$$H=\{x\in F\langle a,b\rangle\mid\widetilde{\rho}(x)(r)=r\}$$
be a subgroup of $F\langle a,b\rangle$. Then $\Pi(E,e)$ is isomorphic to $H/K$, where $K$ is the normal subgroup of $F\langle a,b\rangle$ generated by $a^2 b^{-2}$. Since $A=\{1,a,\cdots,a^{r-1}\}$ is a Schreier system in $F\langle a,b\rangle$ which consists exactly one element from each right coset of $H$, by Proposition \ref{free generators}, $H$ is a free group on the set $\{x_i\mid 0\leq i\leq r\}$, where
\begin{equation*}
x_i= \begin{cases}
a^i b a^{-i-1}, & \text{if } 0\leq i\leq r-2; \\
a^{r-1} b, & \text{if } i=r-1; \\
a^r, & \text{if } i=r.
\end{cases}
\end{equation*}
By Lemma \ref{generators of normal subgroup}, $K$ is the normal subgroup of $H$ generated by $\{a^{i+2}b^{-2}a^{-i}\mid 0\leq i\leq r-1\}$. A calculation shows that
\begin{equation*}
a^{i+2}b^{-2}a^{-i}= \begin{cases}
x_{i+1}^{-1}x_{i}^{-1}, & \text{if } 0\leq i\leq r-3; \\
x_r x_{r-1}^{-1} x_{r-2}^{-1}, & \text{if } i=r-2; \\
x_r x_{0}^{-1} x_{r-1}^{-1}, & \text{if } i=r-1.
\end{cases}
\end{equation*}
Since $r=\frac{M}{N}$ is odd, we have $x_0=x_{1}^{-1}=x_2=\cdots=x_{r-3}=x_{r-2}^{-1}$ and $x_r=x_{r-2}x_{r-1}=x_{r-1}x_0$ in $H/K$. Therefore $H/K\cong F\langle x_0,x_{r-1}\rangle/\langle x_{0}^{-1}x_{r-1}=x_{r-1}x_0\rangle$. Let $y=x_{r-1}$ and $z=x_{r-1}x_0$. Then the relation $x_{0}^{-1}x_{r-1}=x_{r-1}x_0$ is equivalent to the relation $y^2=z^2$, so $H/K\cong F\langle y,z\rangle/\langle y^2=z^2\rangle$.

Suppose that condition $(3)$ in Lemma \ref{B} holds, that is, $l=0$, $k-n+1=1$ and $d_i=1$ for $1\leq i\leq n$. By \cite[Proposition 6.9]{LL2}, we have $\Pi(B)\cong\mathbb{Z}\oplus\mathbb{Z}$. Since $\Pi(E)$ is isomorphic to a nonzero subgroup of $\Pi(B)$, either $\Pi(E)\cong\mathbb{Z}$ or $\Pi(E)\cong\mathbb{Z}\oplus\mathbb{Z}$. Since $A_E$ is not representation-finite, by Theorem \ref{fundamental group of rep-finite f-BGA}, we have $\Pi(E)\cong\mathbb{Z}\oplus\mathbb{Z}$.
\end{proof}

Let $E=(E,U,\tau,d)$ be a finite connected Brauer $G$-set, where the order of the Nakayama automorphism $\sigma$ of $E$ is $r$. Define an equivalence relation $\approx'$ on the set of walks of $E$ as follows: $w\approx' v$ if and only if $w\approx (e|g^{krd(e)}|e)v$ for some integer $k$, where $e$ is the terminal of $v$. Note that if $w_{1}\approx' w_{2}$, then by homotopy relations $(mh2)$ and $(mh3)$ in Definition \ref{homotopy of modified walks}, we have $uw_{1}\approx' uw_{2}$ and $w_{1}v\approx' w_{2}v$ whenever the compositions make sense. So we can define composition on the level of equivalence classes of walks under $\approx'$. Denote by $[[w]]$ the equivalence class of $w$ under the equivalence relation $\approx'$, and denote by $\Pi_{m}'(E,e)$ the group of equivalence classes of closed walks of $E$ at $e$ under $\approx'$, which is called the {\it reduced fundamental group} of $E$ at $e$. Since $E$ is connected, $\Pi_{m}'(E,e)$ is independent of the choice of $e$ up to isomorphism. Therefore we may simply write $\Pi_{m}'(E,e)$ as $\Pi_{m}'(E)$.

Denote by $\pi: \Pi_{m}(E,e)\rightarrow \Pi_{m}'(E,e)$ the natural group homomorphism. Using Proposition \ref{modified unique factorization}, it is straightforward  to show that there is an exact sequence of groups $$0\rightarrow\mathbb{Z}\xrightarrow{i}{}\Pi_{m}(E,e)\xrightarrow{\pi}{}\Pi_{m}'(E,e)\rightarrow 1,$$ where $i(1)=[(e|g^{rd(e)}|e)]$ and $\pi([w])=[[w]]$.

\begin{Lem}\label{injective-reduced-modified-fundamental-group}
A covering $f:E\rightarrow E'$ of finite connected Brauer $G$-sets induces an injective group homomorphism $f_{*}:\Pi_{m}'(E,e)\rightarrow \Pi_{m}'(E',f(e))$.
\end{Lem}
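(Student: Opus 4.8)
The plan is to leverage the short exact sequence $0\to\mathbb{Z}\xrightarrow{i}\Pi_m(E,e)\xrightarrow{\pi}\Pi_m'(E,e)\to 0$ recorded just before the statement, together with its analogue $0\to\mathbb{Z}\xrightarrow{i'}\Pi_m(E',f(e))\xrightarrow{\pi'}\Pi_m'(E',f(e))\to 0$ and the injective map $f_*:\Pi_m(E,e)\to\Pi_m(E',f(e))$ coming from Proposition \ref{modified homotopy lifting}. Write $r$ (resp. $r'$) for the order of the Nakayama automorphism $\sigma$ of $E$ (resp. $\sigma'$ of $E'$). Since $f$ is a morphism of $G$-sets with $d'(f(e))=d(e)$, one has $f\sigma=\sigma'f$; hence $\sigma'^{\,r}(f(e))=f(\sigma^{r}(e))=f(e)$, and because $\langle\sigma'\rangle$ acts freely on the connected Brauer $G$-set $E'$ by Lemma \ref{free}, we get $\sigma'^{\,r}=\mathrm{id}_{E'}$, that is, $r'\mid r$. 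I set $m=r/r'$.

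Next I would check that $f_*$ carries the canonical copy of $\mathbb{Z}$ into the canonical copy of $\mathbb{Z}$. Indeed $f_*(i(1))=[(f(e)|g^{r d(e)}|f(e))]=i'(m)$, since $r d(e)=m\cdot r'd'(f(e))$. Therefore $f_*(\ker\pi)=f_*(\operatorname{im}i)\subseteq\operatorname{im}i'=\ker\pi'$, so $f_*$ descends to a unique group homomorphism $f_*':\Pi_m'(E,e)\to\Pi_m'(E',f(e))$ characterised by $f_*'\pi=\pi'f_*$, explicitly $f_*'([[w]])=[[f(w)]]$. Its well-definedness and multiplicativity are then immediate from those of $f_*$, $\pi$ and $\pi'$, and need no separate computation.

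The real content is the injectivity of $f_*'$, which I would obtain by a homotopy-lifting argument. Suppose $[[w]]\in\ker f_*'$ for a closed walk $w$ at $e$. Then $f_*([w])=[f(w)]\in\operatorname{im}i'$, so $f(w)\approx(f(e)|g^{t r' d(e)}|f(e))$ for some integer $t$. Let $v$ be the unique lift of this pure power walk with source $e$; then $f(v)=(f(e)|g^{t r' d(e)}|f(e))\approx f(w)$ and $s(v)=s(w)=e$, so Proposition \ref{modified homotopy lifting} yields $w\approx v$. But $v=(\sigma^{tr'}(e)|g^{t r' d(e)}|e)$, and since $w$ is closed we must have $\sigma^{tr'}(e)=t(v)=t(w)=e$; freeness of $\langle\sigma\rangle$ on $E$ (Lemma \ref{free}) then forces $r\mid tr'$, hence $m\mid t$. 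Writing $t=mt'$ gives $t r' d(e)=t' r d(e)$, so $w\approx(e|g^{t' r d(e)}|e)$, which means precisely $[[w]]=1$ in $\Pi_m'(E,e)$.

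I expect the main obstacle to be arranging the divisibilities so that everything lines up: one must first secure $r'\mid r$, so that the ladder of exact sequences commutes and $f_*'$ exists at all, and then, in the injectivity step, convert the identity $\sigma^{tr'}(e)=e$ into the divisibility $m\mid t$. Both points rest on the freeness of the $\langle\sigma\rangle$- and $\langle\sigma'\rangle$-actions supplied by Lemma \ref{free}; granting that, the whole argument reduces to a diagram chase combined with the walk-lifting property of coverings from Proposition \ref{modified homotopy lifting}.
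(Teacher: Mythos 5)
Your proposal is correct and follows essentially the same route as the paper: both establish $r'\mid r$ via connectedness and the freeness of the $\langle\sigma\rangle$-action (Lemma \ref{free}), both deduce well-definedness from $f(w)\approx(f(e)|g^{krd(e)}|f(e))f(v)$, and both prove injectivity by lifting the pure power walk through Proposition \ref{modified homotopy lifting} and converting $\sigma^{tr'}(e)=e$ into $r\mid tr'$ by freeness. Your packaging of the well-definedness step as a commuting ladder of the two short exact sequences is only a presentational variant of the paper's direct verification on walks.
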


\begin{proof}
Define $f_{*}:\Pi_{m}'(E,e)\rightarrow \Pi_{m}'(E',f(e))$ as $f_{*}([[w]])=[[f(w)]]$. Assume that $E=(E,U,\tau,d)$ and $E'=(E',U',\tau',d')$. Denote by $\sigma$ (resp. $\sigma'$) the Nakayama automorphism of $E$ (resp. $E'$), and suppose that the order of $\sigma$ (resp. $\sigma'$) is $r$ (resp. $r'$). If $w$, $v$ are closed walks of $E$ at $e$ with $w\approx' v$, then $w\approx (e|g^{krd(e)}|e)v$ for some integer $k$. Therefore $f(w)\approx (f(e)|g^{krd(e)}|f(e))f(v)$, where $d(e)=d'(f(e))$. For any $y\in E'$, since $E'$ is connected, there exists $x\in E$ such that $f(x)=y$. Then $(\sigma')^{r}(y)=(\sigma')^{r}(f(x))=f(\sigma^{r}(x))=f(x)=y$. By Lemma \ref{free}, we have $(\sigma')^r=id_{E'}$. So the order of $\sigma'$ divides $r$. Therefore $f(w)\approx' f(v)$ and $f_{*}$ is well-defined. To show that $f_{*}$ is injective, suppose that $f_{*}([[w]])=1$, then $f(w)\approx (f(e)|g^{k'r'd'(f(e))}|f(e))$ for some integer $k'$. By Proposition \ref{modified homotopy lifting}, $w\approx (e|g^{k'r'd'(f(e))}|e)$. Since $d(e)=d'(f(e))$, we have $\sigma^{k'r'}(e)=g^{k'r'd(e)}\cdot e=g^{k'r'd'(f(e))}\cdot e=e$. By Lemma \ref{free}, $\sigma^{k'r'}=id_E$, then $r$ divides $k'r'$. Therefore $w\approx' (e||e)$ and $[[w]]=1$.
\end{proof}

\begin{Lem}\label{iso of reduced modified fundamental groups}
If $E=(E,E,\tau,d)$ is a finite connected $f_{ms}$-BG, then $\Pi_{m}'(E,e)$ is isomorphic to $\Pi_{m}'(E/\langle\sigma\rangle,e^{\langle\sigma\rangle})$.
\end{Lem}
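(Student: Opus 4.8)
The plan is to show that the natural projection $\phi\colon E\to B:=E/\langle\sigma\rangle$, which is a covering of Brauer $G$-sets by Lemma \ref{modified projection is a covering}, induces an \emph{isomorphism} $\phi_*\colon\Pi_m'(E,e)\to\Pi_m'(B,e^{\langle\sigma\rangle})$. Injectivity of $\phi_*$ is already provided by Lemma \ref{injective-reduced-modified-fundamental-group}, so the entire task reduces to proving surjectivity.

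The key structural fact I would record first is that the Nakayama automorphism $\sigma'$ of $B$ is the identity: for every half-edge $[e]$ of $B$ one has $\sigma'([e])=g^{d'([e])}\cdot[e]=[g^{d(e)}\cdot e]=[\sigma(e)]=[e]$, since $\sigma(e)$ lies in the same $\langle\sigma\rangle$-orbit as $e$. Hence $o(\sigma')=1$, and in the defining exact sequence $0\to\mathbb{Z}\xrightarrow{i'}\Pi_m(B,e^{\langle\sigma\rangle})\xrightarrow{\pi'}\Pi_m'(B,e^{\langle\sigma\rangle})\to 0$ the kernel is generated by $i'(1)=[(e^{\langle\sigma\rangle}|g^{d(e)}|e^{\langle\sigma\rangle})]$, using that $d'(e^{\langle\sigma\rangle})=d(e)$.

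Next I would analyze the covering action of $\Pi:=\Pi_m(B,e^{\langle\sigma\rangle})$ on the fibre $\phi^{-1}(e^{\langle\sigma\rangle})$, which by construction of the quotient is the $\langle\sigma\rangle$-orbit $\{e,\sigma(e),\dots,\sigma^{r-1}(e)\}$ of $e$, where $r=o(\sigma)$; these are $r$ distinct points because $\langle\sigma\rangle$ acts freely by Lemma \ref{free}. By Remark \ref{general-method-to-compute-the-fundamental-group} the stabilizer of $e$ in $\Pi$ is exactly $H:=\operatorname{im}(\phi_*\colon\Pi_m(E,e)\to\Pi)$. A direct lift computation shows that $i'(1)$ acts on this fibre as $\sigma$, sending $\sigma^{a}(e)\mapsto g^{d(e)}\cdot\sigma^{a}(e)=\sigma^{a+1}(e)$ (here one uses that all $\sigma^{a}(e)$ share the same $G$-orbit, so the degree is constant). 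Consequently the central subgroup $Z':=\operatorname{im}(i')=\langle i'(1)\rangle=\ker\pi'$ already acts \emph{transitively} on the fibre. Transitivity together with the orbit–stabilizer principle yields $\Pi=Z'H$: given $\gamma\in\Pi$ with $\gamma\cdot e=\sigma^{a}(e)$, the element $i'(a)^{-1}\gamma$ fixes $e$, hence lies in $H$, so $\gamma\in i'(a)H\subseteq Z'H$.

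Finally I would pass to the reduced groups. Using the commutativity $\pi'\circ\phi_*=\phi_*\circ\pi$ (both composites send a walk $w$ of $E$ to $[[\phi(w)]]$) together with the surjectivity of $\pi$, the image of $\phi_*\colon\Pi_m'(E,e)\to\Pi_m'(B,e^{\langle\sigma\rangle})$ equals $\pi'(H)$. Since $\Pi=Z'H$ and $\pi'(Z')=\{1\}$, we get $\pi'(H)=\pi'(Z'H)=\pi'(\Pi)=\Pi_m'(B,e^{\langle\sigma\rangle})$, so $\phi_*$ is surjective; combined with the injectivity from Lemma \ref{injective-reduced-modified-fundamental-group}, it is an isomorphism. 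The conceptual crux — and the step I expect to require the most care — is the transitivity of $Z'$ on the fibre, i.e. the observation that quotienting $E$ by its own Nakayama automorphism makes the residual $\sigma'$ trivial, so that the single central loop $i'(1)$ realizes the full rotation $\sigma$ of the fibre; everything else is routine bookkeeping with lifts of walks.
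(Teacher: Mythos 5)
Your proof is correct and takes essentially the same route as the paper: injectivity is quoted from Lemma \ref{injective-reduced-modified-fundamental-group}, and surjectivity reduces to the observation that a lifted closed walk can be corrected by a power of the loop $(e^{\langle\sigma\rangle}|g^{d(e)}|e^{\langle\sigma\rangle})$, which dies in the reduced group because the Nakayama automorphism of $E/\langle\sigma\rangle$ is trivial. The paper carries out this correction explicitly on walks (forming $v=(e|g^{-nd(e)}|t(w))w$), whereas you package the identical computation as the decomposition $\Pi=Z'H$ via orbit--stabilizer; the mathematical content is the same.
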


\begin{proof}
Let $f:E\rightarrow E/\langle\sigma\rangle$ be the natural projection. According to Lemma \ref{injective-reduced-modified-fundamental-group}, $f_{*}:\Pi'_{m}(E,e)\rightarrow\Pi'_{m}(E/\langle\sigma\rangle,e^{\langle\sigma\rangle})$ is injective. For any $[[w']]\in\Pi'_{m}(E/\langle\sigma\rangle,e^{\langle\sigma\rangle})$, lift $w'$ to a walk $w$ of $E$ with source $e$. Since the terminal of $w$ belongs to $e^{\langle\sigma\rangle}$, we may assume that $t(w)=\sigma^{n}(e)$ for some integer $n$. Let $v=(e|g^{-nd(e)}|t(w))w$ be a closed walk of $E$ at $e$. Then $f(v)=(e^{\langle\sigma\rangle}|g^{-nd(e)}|e^{\langle\sigma\rangle})w'$, where $d(e)$ is equal to the degree of $e^{\langle\sigma\rangle}$ in $E/\langle\sigma\rangle$. Since the order of the Nakayama automorphism of $E/\langle\sigma\rangle$ is $1$, we have $f(v)\approx' w'$. Then $f_{*}([[v]])=[[f(v)]]=[[w']]$ and $f_{*}:\Pi'_{m}(E,e)\rightarrow\Pi'_{m}(E/\langle\sigma\rangle,e^{\langle\sigma\rangle})$ is also surjective.
\end{proof}

For a group $\Pi$, denote by $\widetilde{\Pi}$ the abelianization of $\Pi$.

\begin{Prop}\label{rep type and reduced group}
For a finite connected $f_{ms}$-BG $E=(E,E,\tau,d)$, the following are equivalent:
\begin{itemize}
\item [$(a)$] $A_E$ is domestic;
\item [$(b)$] $\Pi'_{m}(E)\cong\mathbb{Z}$ or $\Pi'_{m}(E)\cong F\langle a,b\rangle/\langle a^2=b^2=1\rangle$;
\item [$(c)$] $\widetilde{\Pi'_{m}(E)}\cong\mathbb{Z}$ or $\widetilde{\Pi'_{m}(E)}\cong\mathbb{Z}/2\mathbb{Z}\oplus\mathbb{Z}/2\mathbb{Z}$.
\end{itemize}
\end{Prop}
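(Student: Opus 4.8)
The plan is to reduce the whole statement to the modified BG $B=E/\langle\sigma\rangle$ and to compute its reduced fundamental group explicitly. By Lemma \ref{iso of reduced modified fundamental groups} we have $\Pi'_{m}(E)\cong\Pi'_{m}(B)$, so it suffices to understand $\Pi'_{m}(B)$ and its abelianization. Since $B$ has integral f-degree, the relation $g^{d(-)}\cdot(-)$ fixes every half-edge, i.e. the Nakayama automorphism of $B$ is the identity; hence the defining short exact sequence reads $0\to\mathbb{Z}\xrightarrow{i}\Pi_{m}(B)\xrightarrow{\pi}\Pi'_{m}(B)\to 0$ with $i(1)=[(e^{\langle\sigma\rangle}|g^{d(e^{\langle\sigma\rangle})}|e^{\langle\sigma\rangle})]$. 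Writing $\Pi_{m}(B)$ via Proposition \ref{modified fundamental group of modified f-BG}, where $B$ has $n$ vertices of f-degrees $d_1,\dots,d_n$, $k$ edges, $l$ double half-edges and $r=k-n+1$, the generator $a_i^{d_i}$ is exactly the class $[(h_i|g^{d(h_i)}|h_i)]$ of the loop once around the vertex $v_i$ (tracing through the proof of that proposition and of Lemma \ref{a calculation of modified fundamental group}); as $g^{d(h_i)}\cdot h_i=h_i$, this class equals $i(1)$, and the relations $a_1^{d_1}=\cdots=a_n^{d_n}$ say precisely that $i(1)$ is central and generates $\mathrm{im}(i)$. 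Quotienting by it (the main computational step) therefore yields the free product
\[
\Pi'_{m}(B)\cong(\mathbb{Z}/d_1\mathbb{Z})\ast\cdots\ast(\mathbb{Z}/d_n\mathbb{Z})\ast\underbrace{\mathbb{Z}\ast\cdots\ast\mathbb{Z}}_{r}\ast\underbrace{(\mathbb{Z}/2\mathbb{Z})\ast\cdots\ast(\mathbb{Z}/2\mathbb{Z})}_{l},
\]
whose abelianization is $\widetilde{\Pi'_{m}(B)}\cong\bigoplus_{i=1}^{n}(\mathbb{Z}/d_i\mathbb{Z})\oplus\mathbb{Z}^{r}\oplus(\mathbb{Z}/2\mathbb{Z})^{l}$.

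For $(a)\Rightarrow(b)$ and $(b)\Rightarrow(c)$ I would run the trichotomy of Lemma \ref{B}. In Case $(1)$ (all $d_i=1$, $r=0$, $l=2$) and in Case $(2)$ (all $d_i=1$ except two equal to $2$, $r=0$, $l=0$) the trivial $\mathbb{Z}/1\mathbb{Z}$ factors drop out and the product collapses to $(\mathbb{Z}/2\mathbb{Z})\ast(\mathbb{Z}/2\mathbb{Z})=F\langle a,b\rangle/\langle a^2=b^2=1\rangle$, with abelianization $\mathbb{Z}/2\mathbb{Z}\oplus\mathbb{Z}/2\mathbb{Z}$. In Case $(3)$ (all $d_i=1$, $r=1$, $l=0$) the product is the single free factor $\mathbb{Z}$, with abelianization $\mathbb{Z}$. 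This gives $(a)\Rightarrow(b)$, and $(b)\Rightarrow(c)$ is immediate by passing to abelianizations.

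The substance lies in $(c)\Rightarrow(a)$, which I would prove by reading off the structural constraints that the shape of $\widetilde{\Pi'_{m}(B)}$ forces on $B$. If $\widetilde{\Pi'_{m}(B)}\cong\mathbb{Z}$, the free rank forces $r=1$ and vanishing of torsion forces all $d_i=1$ and $l=0$, which is Case $(3)$. If $\widetilde{\Pi'_{m}(B)}\cong\mathbb{Z}/2\mathbb{Z}\oplus\mathbb{Z}/2\mathbb{Z}$, then $r=0$ and the torsion $\bigoplus_i(\mathbb{Z}/d_i\mathbb{Z})\oplus(\mathbb{Z}/2\mathbb{Z})^{l}$ must be $\mathbb{Z}/2\mathbb{Z}\oplus\mathbb{Z}/2\mathbb{Z}$. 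The delicate point, and the main obstacle, is to rule out the a priori possibility $l=1$ with a single $d_i=2$ (rest $1$), which gives the same abelian group but does \emph{not} occur in Lemma \ref{B}. This is exactly where the parity input enters: a double half-edge exists in $B$ precisely when $\langle\sigma\rangle$ is not admissible, and in that case Lemma \ref{odd degree}$(4)$ forces every f-degree of $B$ to be odd, so $d_i=2$ cannot occur when $l\geq 1$. Hence $l\geq 1$ forces all $d_i=1$ and therefore $l=2$ (Case $(1)$), while $l=0$ forces exactly two $d_i=2$ with the rest $1$ (Case $(2)$), ruling out any $\mathbb{Z}/4\mathbb{Z}$ or $\mathbb{Z}/3\mathbb{Z}$ summand. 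In every case one of $(1)$, $(2)$, $(3)$ of Lemma \ref{B} holds, so $A_E$ is domestic, closing the cycle of implications.
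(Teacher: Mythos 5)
Your proposal is correct and follows essentially the same route as the paper's proof: reduce to $B=E/\langle\sigma\rangle$ via Lemma \ref{iso of reduced modified fundamental groups}, kill the central element $i(1)=\overline{a_1^{d_1}}$ in the presentation of Proposition \ref{modified fundamental group of modified f-BG} to get $\Pi'_m(B)$ as the free product you describe, run the trichotomy of Lemma \ref{B} for $(a)\Rightarrow(b)$, and for $(c)\Rightarrow(a)$ rule out the spurious case $l=1$ with one $d_i=2$ by the odd-f-degree constraint of Lemma \ref{odd degree}. The paper enumerates the same four cases and excludes case $(\romannumeral3)$ by exactly this parity argument.
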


\begin{proof}
Suppose that the modified BG $B=E/\langle\sigma\rangle$ has $k$-edges, $l$ double half-edges, and $n$ vertices $v_1$, $\cdots$, $v_n$ of f-degree $d_1$, $\cdots$, $d_n$ respectively. According to Lemma \ref{B}, $A_E$ is domestic if and only if $B$ satisfies one of the conditions $(1)$, $(2)$, $(3)$ in Lemma \ref{B}. We choose some $e^{\langle\sigma\rangle}\in B$.

''$(a)\Rightarrow(b)$'' Suppose that $B$ satisfies condition $(1)$ in Lemma \ref{B}, that is, $l=2$, $k-n+1=0$, $d_i=1$ for $1\leq i\leq n$. By the proof of Proposition \ref{modified fundamental group of modified f-BG}, there exists an isomorphism
$$F\langle a,c_1,c_2\rangle/\langle a c_1=c_1 a, a c_1=c_1 a, c_{1}^{2}=c_{2}^{2}=1\rangle\rightarrow\Pi_{m}(B,e^{\langle\sigma\rangle})$$
which maps $\overline{a}$ to $[(e^{\langle\sigma\rangle}|g^{d(e)}|e^{\langle\sigma\rangle})]$. By the exact sequence $$0\rightarrow\mathbb{Z}\xrightarrow{i}{}\Pi_{m}(B,e^{\langle\sigma\rangle})\xrightarrow{\pi}{}
\Pi'_{m}(B,e^{\langle\sigma\rangle})\rightarrow 0,$$
we see that $\Pi'_{m}(B,e^{\langle\sigma\rangle})$ is isomorphic to $F\langle c_1,c_2\rangle/\langle c_{1}^{2}=c_{2}^{2}=1\rangle$. By Lemma \ref{iso of reduced modified fundamental groups} we have $\Pi'_{m}(E,e)\cong F\langle c_1,c_2\rangle/\langle c_{1}^{2}=c_{2}^{2}=1\rangle$. Using the same method, it can be shown that $\Pi'_{m}(E,e)\cong F\langle a,b\rangle/\langle a^2=b^2=1\rangle$ if $B$ satisfies condition $(2)$ in Lemma \ref{B}, and $\Pi'_{m}(E,e)\cong\mathbb{Z}$ if $B$ satisfies condition $(3)$ in Lemma \ref{B}.

"$(b)\Rightarrow(c)$" By a direct calculation.

"$(c)\Rightarrow(a)$" Let $r=k-n+1$. By the proof of Proposition \ref{modified fundamental group of modified f-BG}, there exists an isomorphism \begin{multline*} f:F\langle a_1,\cdots,a_n,b_1,\cdots,b_r,c_1,\cdots,c_l\rangle/\langle a_{1}^{d_1}=\cdots=a_{n}^{d_n}, a_{1}^{d_1} b_i=b_i a_{1}^{d_1} (1\leq i\leq r), \\ a_{1}^{d_1} c_j=c_j a_{1}^{d_1}, c_{j}^{2}=1 (1\leq j\leq l)\rangle\rightarrow\Pi_{m}(B,e^{\langle\sigma\rangle})\end{multline*}
such that $f(\overline{a_{1}^{d_1}})=[(e^{\langle\sigma\rangle}|g^{d(e)}|e^{\langle\sigma\rangle})]$.  By the exact sequence $$0\rightarrow\mathbb{Z}\xrightarrow{i}{}\Pi_{m}(B,e^{\langle\sigma\rangle})\xrightarrow{\pi}{}
\Pi'_{m}(B,e^{\langle\sigma\rangle})\rightarrow 0,$$
we see that $\Pi'_{m}(B,e^{\langle\sigma\rangle})$ is isomorphic to $$F\langle a_1,\cdots,a_n,b_1,\cdots,b_r,c_1,\cdots,c_l\rangle/\langle a_{1}^{d_1}=\cdots=a_{n}^{d_n}=1, c_{j}^{2}=1 (1\leq j\leq l)\rangle.$$
Therefore the abelianization of $\Pi'_{m}(B,e^{\langle\sigma\rangle})$ is isomorphic to $$\mathbb{Z}^{r}\oplus\mathbb{Z}/d_1\mathbb{Z}\oplus\cdots\oplus\mathbb{Z}/d_n\mathbb{Z}\oplus(\mathbb{Z}/2\mathbb{Z})^{l}.$$
Since $\Pi'_{m}(B)$ is isomorphic to $\Pi'_{m}(E)$ by Lemma \ref{iso of reduced modified fundamental groups}, we see that the group $$\mathbb{Z}^{r}\oplus\mathbb{Z}/d_1\mathbb{Z}\oplus\cdots\oplus\mathbb{Z}/d_n\mathbb{Z}\oplus(\mathbb{Z}/2\mathbb{Z})^{l}$$
is isomorphic to $\mathbb{Z}$ or $\mathbb{Z}/2\mathbb{Z}\oplus\mathbb{Z}/2\mathbb{Z}$. Therefore there are only four possible cases: \\ $(\romannumeral1)$ $r=1$, $l=0$ and $d_1=\cdots=d_n=1$; \\ $(\romannumeral2)$ $r=0$, $l=0$, $d_i=2$ for exactly two numbers $i=i_0$, $i_1$, and $d_i=1$ for $i\neq i_0$, $i_1$; \\ $(\romannumeral3)$ $r=0$, $l=1$, $d_i=2$ for exactly one number $i=i_0$, and $d_i=1$ for $i\neq i_0$; \\ $(\romannumeral4)$ $r=0$, $l=2$, and $d_1=\cdots=d_n=1$.

According to Lemma \ref{odd degree}, case $(\romannumeral3)$ can not happen. Since cases $(\romannumeral1)$, $(\romannumeral2)$, $(\romannumeral4)$ correspond to conditions $(3)$, $(2)$, $(1)$ in Lemma \ref{B}, respectively, $A_E$ is domestic.
\end{proof}

\begin{Lem}\label{center 1}
The center of the group $F\langle a,b\rangle/\langle a^2=b^2\rangle$ is an infinite cyclic group generated by $\overline{a^2}$, where $\overline{a^2}$ is the image of $a^2\in F\langle a,b\rangle$ in $F\langle a,b\rangle/\langle a^2=b^2\rangle$.
\end{Lem}

\begin{proof}
It is straightforward to show that each element of $F\langle a,b\rangle/\langle a^2=b^2\rangle$ is of one of the following types: $(1)$ $\overline{a^{2m}(ab)^{n}a}$, $m\in\mathbb{Z}$, $n\in\mathbb{N}$; $(2)$ $\overline{a^{2m}(ba)^{n}b}$, $m\in\mathbb{Z}$, $n\in\mathbb{N}$; $(3)$ $\overline{a^{2m}(ab)^{n}}$, $m\in\mathbb{Z}$, $n\in\mathbb{N}$; $(4)$ $\overline{a^{2m}(ba)^{n}}$, $m\in\mathbb{Z}$, $n\in\mathbb{N}$. Let $E$ be the Brauer graph
$$\begin{tikzpicture}
\draw (0,0)--(2,0);
\fill (0,0) circle (0.5ex);
\fill (2,0) circle (0.5ex);
\node at(-0.4,0) {2};
\draw (-0.6,-0.2) rectangle (-0.2,0.2);
\node at(2.4,0) {2};
\draw (2.2,-0.2) rectangle (2.6,0.2);
\end{tikzpicture}$$
and let $e$ be a half-edge of $E$. By the proof of \cite[Lemma 6.8]{LL2}, there exists an isomorphism
$$f:F\langle a,b\rangle/\langle a^2=b^2\rangle\rightarrow\Pi(E,e)$$
such that $f(\overline{a})=\overline{(e|g|e)}$, $f(\overline{b})=\overline{(e|\tau g\tau|e)}$. For any $m\in\mathbb{Z}$ and $n\in\mathbb{N}$, if $\overline{a^{2m}(ab)^{n}a}$ belongs to the center of $F\langle a,b\rangle/\langle a^2=b^2\rangle$, then so does $\overline{(ab)^{n}a}$. Therefore
$$\overline{(e|(\tau g\tau g)^{n+1}|e)}=f(\overline{b(ab)^{n}a})=f(\overline{(ab)^{n}ab})=\overline{(e|(g\tau g\tau)^{n+1}|e)}.$$
Since $(e|(\tau g\tau g)^{n+1}|e)$ and $(e|(g\tau g\tau)^{n+1}|e)$ are homotopic special walks of $E$, by \cite[Proposition 3.46]{LL2} they are equal, a contradiction. Therefore any element of $F\langle a,b\rangle/\langle a^2=b^2\rangle$ of type $(1)$ does not belong to the center of $F\langle a,b\rangle/\langle a^2=b^2\rangle$. Using the same method, it can be shown that each element of type $(2)$ does not belong to the center of $F\langle a,b\rangle/\langle a^2=b^2\rangle$, and each element of type $(3)$ or type $(4)$ belongs to the center of $F\langle a,b\rangle/\langle a^2=b^2\rangle$ if and only if $n=0$. Therefore center of $F\langle a,b\rangle/\langle a^2=b^2\rangle$ is generated by $\overline{a^2}$. Since $f(\overline{a^2})=\overline{(e|g^2|e)}$, by \cite[Proposition 3.46]{LL2}, the order of $\overline{a^2}$ is infinite.
\end{proof}

\begin{Lem}\label{center 2}
For any positive integers $m$, $n\geq 2$, the center of $F\langle a,b\rangle/\langle a^m=b^n=1\rangle$ is trivial.
\end{Lem}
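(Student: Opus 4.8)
The plan is to recognize $G := F\langle a,b\rangle/\langle a^m=b^n=1\rangle$ as the free product $A * B$ of the cyclic groups $A=\langle\bar a\rangle\cong\mathbb{Z}/m\mathbb{Z}$ and $B=\langle\bar b\rangle\cong\mathbb{Z}/n\mathbb{Z}$, and then to argue directly with the normal form theorem for free products and its syllable-length function (see, e.g., \cite{M}). Recall that every $g\neq 1$ in $G$ has a unique reduced expression $g=s_1 s_2\cdots s_k$ with each syllable $s_i$ lying in $(A\setminus\{1\})\cup(B\setminus\{1\})$ and with consecutive syllables coming from different factors; I set $\ell(g)=k$ and $\ell(1)=0$. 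Since $m,n\geq 2$, both $A\setminus\{1\}$ and $B\setminus\{1\}$ are nonempty, a fact I will use repeatedly. The goal is to show that any $g$ in the center $Z(G)$ equals $1$.

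First I would dispose of the length-one elements. If $g\in A\setminus\{1\}$, then $g\bar b$ and $\bar b g$ are both reduced words of length $2$, but with different leading syllables (one in $A$, the other in $B$); uniqueness of the normal form forces $g\bar b\neq\bar b g$, so $g\notin Z(G)$. The symmetric argument rules out every $g\in B\setminus\{1\}$.

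Next I would treat $\ell(g)=k\geq 2$. Writing $g=s_1\cdots s_k$, I choose a nontrivial element $x$ of the factor \emph{not} containing $s_1$ (possible since both factors are nontrivial). Then $xg=x\,s_1\cdots s_k$ is reduced of length $k+1$. For $gx=s_1\cdots s_k\,x$ I compare the last syllable $s_k$ with $x$: if $s_k$ lies in the same factor as $s_1$ (hence a different factor from $x$), then $gx$ is reduced of length $k+1$ but has leading syllable $s_1$ rather than $x$, so $gx\neq xg$; if instead $s_k$ lies in the same factor as $x$, then the product $s_k x$ collapses inside that factor, giving $\ell(gx)\leq k<k+1=\ell(xg)$, so again $gx\neq xg$. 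Either way $g$ fails to be central. Combining the two steps shows $Z(G)=\{1\}$.

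I expect no serious obstacle; the only point needing care is the bookkeeping of syllables when forming $gx$, in particular the possibility that $s_k x=1$ triggers further cancellation — but this only lowers $\ell(gx)$, which strengthens the strict inequality $\ell(gx)<\ell(xg)$ and leaves the conclusion intact. As an alternative more in the spirit of the present paper, one could realize $G$ as the reduced fundamental group $\Pi_m'(E)$ of the two-vertex modified BG whose vertices have f-degrees $m$ and $n$ and are joined by a single edge, and then argue with special walks as in Lemma \ref{center 1}; however, the normal-form argument above is shorter and entirely elementary.
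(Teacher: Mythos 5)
Your proof is correct, and it takes a genuinely different (and more classical) route than the paper. You identify $F\langle a,b\rangle/\langle a^m=b^n=1\rangle$ with the free product $\mathbb{Z}/m\mathbb{Z}*\mathbb{Z}/n\mathbb{Z}$ and run the standard syllable-length argument from the normal form theorem for free products (available in the cited reference \cite{M}); your case analysis for $gx$ versus $xg$, including the possible collapse $s_kx=1$, is complete and the conclusion holds even in the borderline case $m=n=2$ (the infinite dihedral group). The paper instead realizes the group as the reduced fundamental group $\Pi_m'(E,e)$ of the two-vertex Brauer graph with f-degrees $m$ and $n$, and derives the uniqueness of the alternating normal form $\overline{a^{i_k}b^{j_k}\cdots a^{i_1}b^{j_1}a^{i_0}}$ from the unique factorization of walks (Proposition \ref{modified unique factorization}); it then asserts, without detail, that a central element in standard form must be trivial. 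In effect the paper re-proves the normal form theorem for this particular free product inside its own walk calculus, while you import it off the shelf; what the paper's route buys is internal consistency with the covering-theoretic framework used throughout (and a further illustration of Proposition \ref{modified unique factorization}), whereas your route is shorter, fully elementary, and actually makes explicit the final commutation step that the paper leaves to the reader.
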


\begin{proof}
Let $E$ be the Brauer graph
$$\begin{tikzpicture}
\draw (0,0)--(2,0);
\fill (0,0) circle (0.5ex);
\fill (2,0) circle (0.5ex);
\node at(-0.4,0) {$m$};
\draw (-0.6,-0.2) rectangle (-0.2,0.2);
\node at(2.4,0) {$n$};
\draw (2.2,-0.2) rectangle (2.6,0.2);
\end{tikzpicture}.$$
Suppose that $e$ is the half-edge of $E$ on the left of the diagram. By the proof of Lemma \ref{a calculation of modified fundamental groupoid}, there exists an isomorphism $$f:F\langle a,b\rangle/\langle a^m=b^n\rangle\rightarrow\Pi_{m}(E,e)$$
such that $f(\overline{a})=[(e|g|e)]$, $f(\overline{b})=[(e|\tau g\tau|e)]$. Therefore $f$ induces an isomorphism
$$\widetilde{f}:F\langle a,b\rangle/\langle a^m=b^n=1\rangle\rightarrow\Pi'_{m}(E,e).$$
It is obvious that every element of $F\langle a,b\rangle/\langle a^m=b^n=1\rangle$ can be expressed as the following form
$$\overline{a^{i_k}b^{j_k}a^{i_{k-1}}b^{j_{k-1}}\cdots a^{i_1}b^{j_1}a^{i_0}},$$
where $0\leq i_0$, $i_k<m$, $0<i_l<m$ for $1\leq l\leq k-1$, and $0<j_l<n$ for $1\leq l\leq k$. We need show that such an expression is unique: Suppose that $\overline{a^{i_k}b^{j_k}a^{i_{k-1}}b^{j_{k-1}}\cdots a^{i_1}b^{j_1}a^{i_0}}$ and $\overline{a^{p_r}b^{q_r}a^{p_{r-1}}b^{q_{r-1}}\cdots a^{p_1}b^{q_1}a^{p_0}}$ are two such expressions with
$$\overline{a^{i_k}b^{j_k}a^{i_{k-1}}b^{j_{k-1}}\cdots a^{i_1}b^{j_1}a^{i_0}}=\overline{a^{p_r}b^{q_r}a^{p_{r-1}}b^{q_{r-1}}\cdots a^{p_1}b^{q_1}a^{p_0}}.$$
Then
$$[[w]]=\widetilde{f}(\overline{a^{i_k}b^{j_k}a^{i_{k-1}}b^{j_{k-1}}\cdots a^{i_1}b^{j_1}a^{i_0}})=\widetilde{f}(\overline{a^{p_r}b^{q_r}a^{p_{r-1}}b^{q_{r-1}}\cdots a^{p_1}b^{q_1}a^{p_0}})=[[v]],$$
where
$$w=(e|g^{i_k}\tau g^{j_k}\tau g^{i_{k-1}}\tau g^{j_{k-1}}\tau\cdots g^{i_1}\tau g^{j_1}\tau g^{i_0}|e),$$
$$v=(e|g^{p_r}\tau g^{q_r}\tau g^{p_{r-1}}\tau g^{q_{r-1}}\tau\cdots g^{p_1}\tau g^{q_1}\tau g^{p_0}|e)$$ are two special walks of $E$. Since $w\approx' v$, we have $w\approx (e|g^{mN}|e)v$ for some integer $N$. By Proposition \ref{modified unique factorization}, $N=0$ and $w=v$. Therefore $k=r$, $i_l=p_l$ for $0\leq l\leq k$, and $j_l=q_l$ for $1\leq l\leq k$.

If $x$ belongs to the center of the group $F\langle a,b\rangle/\langle a^m=b^n=1\rangle$, write $x$ as the standard form $\overline{a^{i_k}b^{j_k}a^{i_{k-1}}b^{j_{k-1}}\cdots a^{i_1}b^{j_1}a^{i_0}}$ as above. Since $x$ commutes with $\overline{a}$ and $\overline{b}$, by the uniqueness expression property, we know that $x$ must be equal to $1$.  \end{proof}

The following result shows that the converse of Proposition \ref{rep type to fundamental group} also holds.

\begin{Prop}\label{fundamental group to rep type}
Let $E=(E,E,\tau,d)$ be a finite connected $f_{ms}$-BG. If $\Pi(E)\cong F\langle a,b\rangle/\langle a^2=b^2\rangle$ or $\Pi(E)\cong\mathbb{Z}\oplus\mathbb{Z}$, then $A_E$ is domestic.
\end{Prop}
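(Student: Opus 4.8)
The plan is to pin down the reduced fundamental group $\Pi'_{m}(E)$ and then invoke Proposition \ref{rep type and reduced group}, which says that $A_E$ is domestic exactly when $\Pi'_{m}(E)\cong\mathbb{Z}$ or $\Pi'_{m}(E)\cong F\langle a,b\rangle/\langle a^2=b^2=1\rangle$. The idea is to use two different descriptions of $\Pi'_{m}(E)$ and play them against each other. On one side, Lemma \ref{two definitions of fundamental group are equal} identifies $\Pi(E)$ with $\Pi_m(E)$, and the exact sequence $0\to\mathbb{Z}\xrightarrow{i}\Pi_m(E,e)\xrightarrow{\pi}\Pi'_m(E,e)\to 0$ (whose image $i(\mathbb{Z})$ is central, being the Nakayama loop $[(e|g^{rd(e)}|e)]$) exhibits $\Pi'_m(E)$ as the quotient of the prescribed group $\Pi(E)$ by a central infinite cyclic subgroup. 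On the other side, Lemma \ref{iso of reduced modified fundamental groups} gives $\Pi'_m(E)\cong\Pi'_m(B)$ with $B=E/\langle\sigma\rangle$, and the computation carried out in the proof of Proposition \ref{rep type and reduced group} (through Proposition \ref{modified fundamental group of modified f-BG}) shows that this group is a free product of cyclic groups, namely $(\mathbb{Z}/d_1)*\cdots*(\mathbb{Z}/d_n)*\mathbb{Z}^{*r}*(\mathbb{Z}/2)^{*l}$.

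For the case $\Pi(E)\cong F\langle a,b\rangle/\langle a^2=b^2\rangle$, I would first apply Lemma \ref{center 1}: the centre of this group is the infinite cyclic subgroup $\langle\overline{a^2}\rangle$. Since $i(\mathbb{Z})$ is a central infinite cyclic subgroup, it lies in $\langle\overline{a^2}\rangle$ with finite index, say $i(\mathbb{Z})=\langle\overline{a^{2k}}\rangle$ for some $k\geq 1$; hence $\Pi'_m(E)\cong F\langle a,b\rangle/\langle a^2=b^2,\,a^{2k}=1\rangle$. This group maps onto $F\langle a,b\rangle/\langle a^2=b^2=1\rangle$, which is non-abelian, so $\Pi'_m(E)$ is non-abelian; being at the same time a free product of cyclic groups, it cannot be cyclic, so it is a free product of at least two non-trivial cyclic groups, and such a group has trivial centre (of which Lemma \ref{center 2} records the instance that is explicitly needed). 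But $\overline{a^2}$ is central of order exactly $k$ in $\Pi'_m(E)$, which forces $k=1$. Thus $\Pi'_m(E)\cong F\langle a,b\rangle/\langle a^2=b^2=1\rangle$, and Proposition \ref{rep type and reduced group} gives that $A_E$ is domestic.

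For the case $\Pi(E)\cong\mathbb{Z}\oplus\mathbb{Z}$, the quotient $\Pi'_m(E)$ is abelian, being a quotient of an abelian group. An abelian free product of cyclic groups has at most one non-trivial factor, so $\Pi'_m(E)$ is cyclic; since it is the quotient of $\mathbb{Z}\oplus\mathbb{Z}$ by an infinite cyclic subgroup it is infinite, whence $\Pi'_m(E)\cong\mathbb{Z}$, and again Proposition \ref{rep type and reduced group} yields that $A_E$ is domestic. The main obstacle is the bookkeeping in the first case: one must reconcile the two presentations of $\Pi'_m(E)$ and extract $k=1$, and the key leverage is the contrast between the non-triviality of the centre forced by the central extension and its triviality forced by the free-product structure.
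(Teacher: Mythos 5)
Your proof is correct, and its skeleton matches the paper's: describe $\Pi'_m(E)$ once as the quotient of the given group $\Pi(E)\cong\Pi_m(E)$ by the central infinite cyclic subgroup $i(\mathbb{Z})$ (using Lemma \ref{center 1} to locate $i(\mathbb{Z})$ inside $\langle\overline{a^2}\rangle$ in the first case), describe it a second time as a free product of cyclic groups via Lemma \ref{iso of reduced modified fundamental groups} and Proposition \ref{modified fundamental group of modified f-BG}, and feed the outcome into Proposition \ref{rep type and reduced group}. Where you genuinely diverge is in how the two descriptions are reconciled. The paper compares abelianizations, which pins the data $(r,l,d_1,\dots,d_n)$ of $E/\langle\sigma\rangle$ down to a short list of configurations, and then eliminates the unwanted ones case by case, using Lemma \ref{center 2} for the configuration giving $\mathbb{Z}/2N\ast\mathbb{Z}/2$ and Lemma \ref{odd degree} for the configuration with a double half-edge (and a four-case analysis in the $\mathbb{Z}\oplus\mathbb{Z}$ situation). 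You bypass the abelianization bookkeeping entirely: the surjection onto the non-abelian group $F\langle a,b\rangle/\langle a^2=b^2=1\rangle$ forces the free product to have at least two non-trivial factors, triviality of the centre of such a free product kills the central element $\overline{a^2}$ of order $k$, and in the $\mathbb{Z}\oplus\mathbb{Z}$ case abelianness plus infiniteness of the quotient immediately gives $\mathbb{Z}$. This is shorter and dispenses with Lemma \ref{odd degree} altogether. One caveat: your parenthetical claim that Lemma \ref{center 2} ``records the instance that is explicitly needed'' is not accurate --- without the abelianization step you do not know which free product of cyclic groups $\Pi'_m(E)$ is (a priori it could have $\mathbb{Z}$-factors or more than two factors), so you are really invoking the general fact that a free product of at least two non-trivial groups has trivial centre. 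That fact is standard (it follows from the normal form theorem for free products), so this is a matter of citation rather than a mathematical gap; but as written, Lemma \ref{center 2} alone does not carry the step.
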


\begin{proof}
If $\Pi(E)\cong F\langle a,b\rangle/\langle a^2=b^2\rangle$, by Lemma \ref{two definitions of fundamental group are equal}, $\Pi_{m}(E)\cong F\langle a,b\rangle/\langle a^2=b^2\rangle$. Since there exists an exact sequence
$$0\rightarrow\mathbb{Z}\xrightarrow{i}{}\Pi_{m}(E,e)\xrightarrow{\pi}{}\Pi_{m}'(E,e)\rightarrow 0,$$
where $i(1)$ belongs to the center of $\Pi_{m}(E,e)$, by Lemma \ref{center 1} we see that $\Pi_{m}'(E)\cong F\langle a,b\rangle/\langle a^2=b^2$, $a^{2N}=1\rangle$ for some positive integer $N$.
Then the abelianization of $\Pi_{m}'(E)$ is isomorphic to $\mathbb{Z}/2\mathbb{Z}\oplus\mathbb{Z}/2N\mathbb{Z}$. Suppose that $E/\langle \sigma\rangle$ has $k$-edges, $l$ double half-edges, and $n$ vertices $v_1$, $\cdots$, $v_n$ of f-degree $d_1$, $\cdots$, $d_n$, respectively. By Lemma \ref{iso of reduced modified fundamental groups}, $\Pi_{m}'(E)\cong\Pi_{m}'(E/\langle\sigma\rangle)$, which is isomorphic to
$$F\langle a_1,\cdots,a_n,b_1,\cdots,b_r,c_1,\cdots,c_l\rangle/\langle a_{1}^{d_1}=\cdots=a_{n}^{d_n}=c_{1}^{2}=\cdots=c_{l}^{2}=1\rangle$$ by Proposition \ref{modified fundamental group of modified f-BG}, where $r=k-n+1$. The abelianization of
$$F\langle a_1,\cdots,a_n,b_1,\cdots,b_r,c_1,\cdots,c_l\rangle/\langle a_{1}^{d_1}=\cdots=a_{n}^{d_n}=c_{1}^{2}=\cdots=c_{l}^{2}=1\rangle$$
is $\mathbb{Z}^{r}\oplus\mathbb{Z}/d_1\mathbb{Z}\oplus\cdots\oplus\mathbb{Z}/d_n\mathbb{Z}\oplus(\mathbb{Z}/2\mathbb{Z})^{l}$, which is isomorphic to $\mathbb{Z}/2\mathbb{Z}\oplus\mathbb{Z}/2N\mathbb{Z}$. Suppose that $N>1$, then there are two possible cases: \\ $(1)$ $r=l=0$, $d_{i_0}=2N$, $d_{i_1}=2$ for some $1\leq i_0,i_1\leq n$, and $d_i=1$ for $i\neq i_0$, $i_1$; \\ $(2)$ $r=0$, $l=1$, $d_{i_0}=2N$ for some $1\leq i_0\leq n$, and $d_i=1$ for $i\neq i_0$. \\ If case $(1)$ occurs, then $$\Pi'_{m}(E)\cong\Pi_{m}'(E/\langle\sigma\rangle)\cong F\langle a_1,a_2\rangle/\langle a_{1}^{2N}=a_{2}^{2}=1\rangle.$$
So by Lemma \ref{center 2}, the center of $\Pi'_{m}(E)$ is trivial. But we also have $\Pi_{m}'(E)\cong F\langle a,b\rangle/\langle a^2=b^2$, $a^{2N}=1\rangle$, therefore the center of $\Pi'_{m}(E)$ contains an element $\overline{a^2}$, which is not equal to the identity element, a contradiction. If case $(2)$ occurs, since $E/\langle \sigma\rangle$ contains double half-edges, the automorphism group $\langle\sigma\rangle$ of $E$ is not admissible. By Lemma \ref{odd degree}, the f-degree of each vertex of $E/\langle \sigma\rangle$ is odd, a contradiction. Therefore $N=1$, and $\Pi_{m}'(E)\cong F\langle a,b\rangle/\langle a^2=b^2=1\rangle$. By Proposition \ref{rep type and reduced group}, $A_E$ is domestic.

If $\Pi(E)\cong\mathbb{Z}\oplus\mathbb{Z}$, by Lemma \ref{two definitions of fundamental group are equal}, $\Pi_{m}(E)$ is also isomorphic to $\mathbb{Z}\oplus\mathbb{Z}$. By the exact sequence
$$0\rightarrow\mathbb{Z}\rightarrow\Pi_{m}(E)\rightarrow\Pi_{m}'(E)\rightarrow 0,$$
we see that $\Pi_{m}'(E)\cong\mathbb{Z}\oplus\mathbb{Z}/N\mathbb{Z}$ for some positive integer $N$. Suppose that $E/\langle \sigma\rangle$ has $k$-edges, $l$ double half-edges, and $n$ vertices $v_1$, $\cdots$, $v_n$ of f-degree $d_1$, $\cdots$, $d_n$, respectively. By Lemma \ref{iso of reduced modified fundamental groups}, $\Pi_{m}'(E)\cong\Pi_{m}'(E/\langle\sigma\rangle)$, which is isomorphic to
$$F\langle a_1,\cdots,a_n,b_1,\cdots,b_r,c_1,\cdots,c_l\rangle/\langle a_{1}^{d_1}=\cdots=a_{n}^{d_n}=c_{1}^{2}=\cdots=c_{l}^{2}=1\rangle$$ by Proposition \ref{modified fundamental group of modified f-BG}, where $r=k-n+1$. The abelianization of
$$F\langle a_1,\cdots,a_n,b_1,\cdots,b_r,c_1,\cdots,c_l\rangle/\langle a_{1}^{d_1}=\cdots=a_{n}^{d_n}=c_{1}^{2}=\cdots=c_{l}^{2}=1\rangle$$
is $\mathbb{Z}^{r}\oplus\mathbb{Z}/d_1\mathbb{Z}\oplus\cdots\oplus\mathbb{Z}/d_n\mathbb{Z}\oplus(\mathbb{Z}/2\mathbb{Z})^{l}$, which is isomorphic to $\mathbb{Z}\oplus\mathbb{Z}/N\mathbb{Z}$. We have four possible cases: \\ $(1)$ $N=1$, $r=1$, $l=0$, $d_1=\cdots=d_n=1$; \\ $(2)$ $N=2$, $r=1$, $l=0$, $d_i=2$ for some $1\leq i\leq n$, and $d_j=1$ for $j\neq i$. \\ $(3)$ $N=2$, $r=1$, $l=1$, $d_1=\cdots=d_n=1$; \\ $(4)$ $N>2$, $r=1$, $l=0$, $d_i=N$ for some $1\leq i\leq n$, and $d_j=1$ for $j\neq i$. \\ Since $\Pi_{m}'(E)$ is isomorphic to $\mathbb{Z}\oplus\mathbb{Z}/N\mathbb{Z}$, it is abelian. However, $\Pi_{m}'(E)$ is also isomorphic to $$F\langle a_1,\cdots,a_n,b_1,\cdots,b_r,c_1,\cdots,c_l\rangle/\langle a_{1}^{d_1}=\cdots=a_{n}^{d_n}=c_{1}^{2}=\cdots=c_{l}^{2}=1\rangle,$$
which is isomorphic to $F\langle a,b\rangle/\langle a^2=1\rangle$, $F\langle b,c\rangle/\langle c^2=1\rangle$, and $F\langle a,b\rangle/\langle a^N=1\rangle$ for case $(2)$, $(3)$, and $(4)$, respectively. We see that $\Pi_{m}'(E)$ is non-abelian in any of these cases, a contradiction. Therefore only case $(1)$ can occurs. In this case, $\Pi_{m}'(E)\cong\mathbb{Z}$, and by Proposition \ref{rep type and reduced group}, $A_E$ is domestic.
\end{proof}

\begin{Thm}\label{Theorem-fundamental-group-of-domestic-fms-BGA}
For a finite connected $f_{ms}$-BG $E$, $A_E$ is domestic if and only if $\Pi(E)\cong F\langle a,b\rangle/\langle a^2=b^2\rangle$ or $\Pi(E)\cong\mathbb{Z}\oplus\mathbb{Z}$.
\end{Thm}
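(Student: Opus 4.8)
The plan is to observe that this biconditional has already been split into its two implications and each has been established separately, so the proof amounts to citing them. First I would note that the ``only if'' direction is precisely Proposition \ref{rep type to fundamental group}: assuming $A_E$ is domestic, that proposition produces the dichotomy $\Pi(E)\cong F\langle a,b\rangle/\langle a^2=b^2\rangle$ or $\Pi(E)\cong\mathbb{Z}\oplus\mathbb{Z}$, according to which of the three shapes of $B=E/\langle\sigma\rangle$ listed in Lemma \ref{B} occurs (case $(1)$ and case $(2)$ both yielding $F\langle a,b\rangle/\langle a^2=b^2\rangle$, and case $(3)$ yielding $\mathbb{Z}\oplus\mathbb{Z}$). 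Conversely, the ``if'' direction is exactly Proposition \ref{fundamental group to rep type}, which shows that each of the two group isomorphisms forces $A_E$ to be domestic. Thus the theorem follows immediately by conjoining these two propositions, and no new argument is required.

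For completeness I would briefly recall where the genuine content sits, since the theorem itself is a repackaging. The forward implication rests on the covering $\phi\colon E\to B=E/\langle\sigma\rangle$ together with the explicit computation of $\Pi_m(B,e^{\langle\sigma\rangle})$ from Proposition \ref{modified fundamental group of modified f-BG}; one then transfers via Remark \ref{general-method-to-compute-the-fundamental-group}, identifying $\Pi(E,e)\cong\Pi_m(E,e)$ (Lemma \ref{two definitions of fundamental group are equal}) with the stabilizer of $e$ under the induced action on the fibre $\phi^{-1}(e^{\langle\sigma\rangle})$, and extracts the presentation of that stabilizer through the Schreier-system machinery of Proposition \ref{free generators} and Lemma \ref{generators of normal subgroup}. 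The backward implication instead feeds the hypothesized $\Pi(E)$ into the reduced fundamental group $\Pi'_m(E)$ via the central extension $0\to\mathbb{Z}\to\Pi_m(E,e)\to\Pi'_m(E,e)\to 0$, pins down the central factor using the center computations of Lemmas \ref{center 1} and \ref{center 2}, matches $\Pi'_m(E)\cong\Pi'_m(E/\langle\sigma\rangle)$ (Lemma \ref{iso of reduced modified fundamental groups}) against the abelianization list, rules out the parity-forbidden case by Lemma \ref{odd degree}, and concludes by Proposition \ref{rep type and reduced group} that $A_E$ is domestic.

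The main obstacle, therefore, is not in assembling the theorem but lies entirely in these two prior propositions: the delicate part is carrying out the free-group/Schreier-coset calculations that compute the stabilizer subgroups explicitly and then simplifying the resulting presentations down to $F\langle a,b\rangle/\langle a^2=b^2\rangle$ or $\mathbb{Z}\oplus\mathbb{Z}$, all while keeping track of the admissibility of $\langle\sigma\rangle$ and the odd/even constraints coming from Lemma \ref{odd degree}. Once those are granted, the proof of the theorem is a single sentence combining Proposition \ref{rep type to fundamental group} and Proposition \ref{fundamental group to rep type}.
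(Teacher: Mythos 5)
Your proposal is correct and matches the paper exactly: the paper's proof of this theorem is the one-line citation of Proposition \ref{rep type to fundamental group} for the forward direction and Proposition \ref{fundamental group to rep type} for the converse. Your accompanying summary of where the real work lies in those two propositions is also accurate.
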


\begin{proof}
By Proposition \ref{rep type to fundamental group} and Proposition \ref{fundamental group to rep type}.
\end{proof}

\section{Representation-finite fractional Brauer graph algebras of type MS}

In this section we assume that $k$ is an algebraically closed field. We abbreviate indecomposable, basic, representation-finite self-injective algebra over $k$ (not isomorphic to the underlying field $k$) by RFS algebra. In this paper, we only concentrate on RFS algebras of class $A_n$. For the definition and classification of RFS algebras under Morita equivalence, we refer to the classical works in \cite{R,R2,BLR}.

In \cite{R}, based on a general description on AR-quivers of RFS algebras, Riedtmann constructed RFS algebras of class $A_n$ from their AR-quivers. By using the quivers with relations of RFS algebras of class $A_n$ given in \cite[Section 6.2]{R}, we show that these algebras coincide with representation-finite $f_{ms}$-BGAs (see Proposition \ref{RFS-class-An}). According to this result, we would like to determine the AR-quiver of an RFS algebra of class $A_n$ using its defining $f_{ms}$-BG $E$ (according to Proposition \ref{modified existence of morphism}, $E$ is determined by its quotient $E/\langle\sigma\rangle$ together with the image of the group homomorphism $\Pi_{m}(E)\rightarrow\Pi_{m}(E/\langle\sigma\rangle)$ up to isomorphism). First we need some knowledge on configurations of stable translation-quivers of tree class $A_n$.

\subsection{Review on configurations of stable translation-quivers of tree class $A_n$}
\

\begin{Def}\label{combinatorial-configuration} {\rm(\cite[Definition 2.3]{R})}
Let $\Gamma$ be a stable translation quiver and let $k(\Gamma)$ be the mesh category of $\Gamma$. A (combinatorial)
configuration $\mathcal{C}$ is a set of vertices of $\Gamma$ which
satisfies the following conditions:
\begin{enumerate}
\item For any $e, f\in \mathcal{C}$,
$\mathrm{Hom}_{k(\Gamma)}(e,f)= \left\{\begin{array}{ll} 0 & (e\neq f), \\
k & (e=f).\end{array}\right.$
\item For any $e\in\Gamma_0$, there exists some $f\in \mathcal{C}$ such that $\mathrm{Hom}_{k(\Gamma)}(e,f)\neq 0$.
\end{enumerate}
\end{Def}

Let $\Pi$ be an admissible group of automorphisms of $\mathbb{Z}A_n$, and let $\mathscr{C}$ be a $\Pi$-stable configuration of $\mathbb{Z}A_n$. By \cite[Theorem 5]{R}, there exists an RFS algebra $A_{\mathscr{C},\Pi}$ of class $A_n$ such that $\mathrm{ind}A_{\mathscr{C},\Pi}$ is isomorphic to the mesh category of the translation quiver $(\mathbb{Z}A_n)_{\mathscr{C}}/\Pi$, where $\mathrm{ind}A_{\mathscr{C},\Pi}$ denotes the category of a chosen set of representatives of non-isomorphic indecomposable finitely generated $A_{\mathscr{C},\Pi}$-modules. Together with \cite[Theorem 4.1]{R}, we have

\begin{Prop}\label{configurations and RFS algebras}{\rm(\cite{R})}
The map $\mathscr{C}\mapsto A_{\mathscr{C},\Pi}$ is a bijection between the isomorphism classes of $\Pi$-stable configurations of $\mathbb{Z}A_n$ and the isomorphism classes of RFS algebras of class $A_n$ with admissible group $\Pi$.
\end{Prop}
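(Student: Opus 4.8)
The plan is to assemble the two cited results of Riedtmann into the asserted bijection, treating well-definedness, injectivity, and surjectivity in turn, with the admissible group $\Pi$ held fixed throughout. First I would record that \cite[Theorem 5]{R} already supplies the assignment $\mathscr{C}\mapsto A_{\mathscr{C},\Pi}$ together with the identification $\mathrm{ind}A_{\mathscr{C},\Pi}\cong k\big((\mathbb{Z}A_n)_{\mathscr{C}}/\Pi\big)$, where $k(-)$ denotes the mesh category. To see that this descends to a well-defined map on isomorphism classes, I would note that if $\mathscr{C}$ and $\mathscr{C}'$ are isomorphic $\Pi$-stable configurations, that is, related by an automorphism of $\mathbb{Z}A_n$ that normalizes $\Pi$, then the induced translation quivers $(\mathbb{Z}A_n)_{\mathscr{C}}/\Pi$ and $(\mathbb{Z}A_n)_{\mathscr{C}'}/\Pi$ are isomorphic; hence their mesh categories are equivalent and the algebras $A_{\mathscr{C},\Pi}$ and $A_{\mathscr{C}',\Pi}$ are isomorphic.

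For injectivity, I would run this argument backwards, using the Auslander--Reiten quiver as an isomorphism invariant. If $A_{\mathscr{C},\Pi}\cong A_{\mathscr{C}',\Pi}$, then the equivalence of module categories carries the full AR-quiver of one onto that of the other; since these are exactly $(\mathbb{Z}A_n)_{\mathscr{C}}/\Pi$ and $(\mathbb{Z}A_n)_{\mathscr{C}'}/\Pi$, the two translation quivers are isomorphic. Lifting along the covering $\mathbb{Z}A_n\to\mathbb{Z}A_n/\Pi$ and tracking the inserted projective--injective vertices, the configurations $\mathscr{C}$ and $\mathscr{C}'$ are seen to differ by an automorphism of $\mathbb{Z}A_n$ normalizing $\Pi$, hence are isomorphic as $\Pi$-stable configurations.

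The surjectivity direction is where the genuine structural input enters, and I expect it to be the main obstacle. Here I would invoke \cite[Theorem 4.1]{R}: every RFS algebra $A$ of class $A_n$ has stable Auslander--Reiten quiver isomorphic to $\mathbb{Z}A_n/\Pi$ for an admissible group $\Pi$, and, crucially, is \emph{standard}, so that $\mathrm{ind}A$ is equivalent to the mesh category of its full AR-quiver. Reinserting the projective--injective vertices into the stable quiver and pulling the resulting data back along the covering to $\mathbb{Z}A_n$ produces a $\Pi$-stable combinatorial configuration $\mathscr{C}$ in the sense of Definition \ref{combinatorial-configuration}, for which the full AR-quiver of $A$ is $(\mathbb{Z}A_n)_{\mathscr{C}}/\Pi$. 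Standardness then forces $\mathrm{ind}A\cong k\big((\mathbb{Z}A_n)_{\mathscr{C}}/\Pi\big)\cong\mathrm{ind}A_{\mathscr{C},\Pi}$, whence $A\cong A_{\mathscr{C},\Pi}$ and the map is onto. The delicate point throughout is that everything rests on the standardness of class-$A_n$ RFS algebras (which guarantees the mesh category recovers the module category, so that no information beyond the configuration survives) and on the uniqueness of the universal covering $\mathbb{Z}A_n\to\mathbb{Z}A_n/\Pi$ up to the normalizer of $\Pi$; once these facts are granted from \cite{R}, the three verifications combine to yield the stated bijection.
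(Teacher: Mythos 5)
Your proposal is correct and follows essentially the same route as the paper, which derives this proposition precisely by combining \cite[Theorem 5]{R} (the construction $\mathscr{C}\mapsto A_{\mathscr{C},\Pi}$ with $\mathrm{ind}A_{\mathscr{C},\Pi}$ identified with the mesh category of $(\mathbb{Z}A_n)_{\mathscr{C}}/\Pi$) with \cite[Theorem 4.1]{R} (standardness of RFS algebras of class $A_n$), exactly the two inputs you use for well-definedness and surjectivity. The paper treats this as a quoted result of Riedtmann and gives no further argument, so your fleshing out of injectivity via lifting the AR-quiver along the covering $\mathbb{Z}A_n\to\mathbb{Z}A_n/\Pi$ is simply a more detailed account of the same proof.
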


\begin{Def}\label{Brauer relation} {\rm(\cite[Definition 2.6]{R})}
A Brauer relation of order $n$ is an equivalence relation on the set $\sqrt[n]{1}=\{e^{\frac{2m\pi}{n}i}\mid m\in\mathbb{Z}\}\subseteq\mathbb{C}$ such that the convex hulls of distinct equivalence classes are disjoint.
\end{Def}

If $\mathcal{B}$ is a Brauer relation of order $n$, we denote by $\beta_{\mathcal{B}}$ the permutation of $\sqrt[n]{1}$ assigning to each point $s$ its successor in the equivalence class of $s$ endowed with the anti-clockwise orientation, see \cite[Section 2.6]{R}.

\begin{Prop}\label{a one-to-one correspond} {\rm(\cite[Proposition 2.6]{R})}
Let $\mathcal{B}$ be a Brauer relation of order $n$ and denote by $\mathscr{C}_{\mathcal{B}}$ the set of vertices $(i,j)$ of $\mathbb{Z}A_n$ such that $e_{n}(i+j)=\beta_{\mathcal{B}}(e_{n}(i))$, where $e_{n}(m)=e^{\frac{2m\pi}{n}i}$. The map $\mathcal{B}\mapsto\mathscr{C}_{\mathcal{B}}$ is a bijection between the Brauer relations of order $n$ and the configurations of $\mathbb{Z}A_n$.
\end{Prop}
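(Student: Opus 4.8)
The plan is to set up a geometric dictionary between the vertices of $\mathbb{Z}A_n$ and the chords joining $n$-th roots of unity, under which the two defining conditions of a configuration become the non-crossing condition built into a Brauer relation. To each vertex $(i,j)$ of $\mathbb{Z}A_n$ I associate the (possibly degenerate) chord $c(i,j)$ joining $e_n(i)$ to $e_n(i+j)$ on the unit circle; since $e_n$ has period $n$ and $1\le j\le n$, the vertex $(i,n)$ gives the degenerate loop at $e_n(i)$. The defining equation $e_n(i+j)=\beta_{\mathcal{B}}(e_n(i))$ then says precisely that $c(i,j)$ is one of the \emph{successor chords} of $\beta_{\mathcal{B}}$, i.e.\ an edge joining a root of unity to its anticlockwise neighbour inside its block; these successor chords are exactly the boundary edges of the convex hulls of the equivalence classes of $\mathcal{B}$. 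First I would record that, for each fixed $i$, there is a unique $j\in\{1,\dots,n\}$ with $e_n(i+j)=\beta_{\mathcal{B}}(e_n(i))$, so $\mathscr{C}_{\mathcal{B}}$ meets every column exactly once; this already gives injectivity, because the assignment $i\mapsto j(i)$ recovers $\beta_{\mathcal{B}}$ (hence its cycle partition $\mathcal{B}$) from $\mathscr{C}_{\mathcal{B}}$.

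The heart of the argument is an explicit description of the mesh category. Using the standard coordinates on $\mathbb{Z}A_n$, I would pin down the ``hammock'' region on which $\mathrm{Hom}_{k(\mathbb{Z}A_n)}\big((i,j),(i',j')\big)=k$ (and which is $0$ elsewhere), and then translate the two configuration axioms into statements about the chords $c(i,j)$. The key lemma to prove is that, for distinct vertices $x,y$, both $\mathrm{Hom}_{k(\mathbb{Z}A_n)}(x,y)$ and $\mathrm{Hom}_{k(\mathbb{Z}A_n)}(y,x)$ vanish if and only if the chords $c(x)$ and $c(y)$ do not cross transversally; thus configuration axiom $(1)$ is equivalent to the chords of $\mathcal{C}$ being pairwise non-crossing. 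Similarly I would show that axiom $(2)$ --- every vertex admits a nonzero map to $\mathcal{C}$ --- is equivalent to the chords of $\mathcal{C}$ being \emph{maximal} non-crossing, i.e.\ every root of unity is an endpoint and the chords cut the disk into regions each of which is the convex hull of a single block. Combining the two, the chord set of any configuration is exactly the set of boundary edges of the convex hulls of some non-crossing partition of $\sqrt[n]{1}$.

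With this dictionary in place the proposition follows. For well-definedness I check that the successor chords of $\beta_{\mathcal{B}}$ are pairwise non-crossing (immediate from the disjointness of the convex hulls of distinct blocks, which can meet only in shared endpoints) and maximal (each block's hull is traced out completely), so $\mathscr{C}_{\mathcal{B}}$ satisfies $(1)$ and $(2)$. For surjectivity I start from an arbitrary configuration $\mathcal{C}$, read off its non-crossing maximal chord family, let $\mathcal{B}$ be the induced non-crossing partition of $\sqrt[n]{1}$ whose blocks are the vertex sets determined by these chords, and verify that the boundary edges are exactly the successor chords of $\beta_{\mathcal{B}}$, so that $\mathscr{C}_{\mathcal{B}}=\mathcal{C}$; together with the injectivity noted above this proves bijectivity. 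The main obstacle is the explicit combinatorial translation in the middle paragraph: making the hammock description of $\mathrm{Hom}_{k(\mathbb{Z}A_n)}$ precise and verifying, by a careful case analysis on the relative cyclic positions of the four endpoints on the circle, that Hom-orthogonality matches non-crossing and that the completeness axiom matches maximality.
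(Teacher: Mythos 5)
The paper gives no proof of this statement: it is quoted verbatim as \cite[Proposition 2.6]{R}, so the only thing to compare your sketch against is Riedtmann's original argument, which your outline is clearly modelled on (hammock description of $\mathrm{Hom}$ in the mesh category, translation into a non-crossing condition on chords between roots of unity). The overall strategy is the right one, and your observations that $\mathscr{C}_{\mathcal{B}}$ meets each column $\Delta_i$ exactly once and that $\beta_{\mathcal{B}}$ (hence $\mathcal{B}$) is recovered from $\mathscr{C}_{\mathcal{B}}$ are correct and give injectivity.

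However, your key lemma is false as stated, and this is a genuine gap rather than a deferred computation. You claim that for distinct vertices $x,y$ both $\mathrm{Hom}_{k(\mathbb{Z}A_n)}(x,y)$ and $\mathrm{Hom}_{k(\mathbb{Z}A_n)}(y,x)$ vanish if and only if the chords $c(x)$, $c(y)$ do not cross transversally. Take $x=(i,j)$ and $y=(i,j')$ with $j<j'$: the path up the $i$-th column is nonzero in the mesh category, so $\mathrm{Hom}(x,y)\neq 0$, yet $c(x)$ and $c(y)$ share the endpoint $e_n(i)$ and do not cross. The same happens for two vertices on a common anti-diagonal (chords sharing a target endpoint). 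Concretely, for $n=3$ the set $\{(i,2)\mid i\in\mathbb{Z}\}$ has as chords the three edges of the triangle on $\sqrt[3]{1}$ --- a perfectly non-crossing, ``maximal'' family --- but it is \emph{not} a configuration (e.g.\ $\mathrm{Hom}((0,2),(1,2))\neq 0$) and is not of the form $\mathscr{C}_{\mathcal{B}}$, because it orients the triangle clockwise while $\beta_{\mathcal{B}}$ is by definition the anticlockwise successor. So your dictionary, as written, would certify non-configurations, and the surjectivity argument collapses. The fix is to work with \emph{directed} chords from $e_n(i)$ to $e_n(i+j)$ and to phrase the orthogonality criterion in terms of non-interleaving of the corresponding directed arcs on the circle, which in particular forces distinct chords of a configuration to share an endpoint only when the target of one is the source of the other; only then does axiom $(1)$ together with axiom $(2)$ pin down exactly the anticlockwise successor edges of a partition with disjoint convex hulls. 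This is exactly the case analysis Riedtmann carries out, and it cannot be replaced by the undirected non-crossing condition.
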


For any integer $p$, let $\Delta_{p}=\{(p,i)\mid 1\leq i\leq n\}\subseteq (\mathbb{Z}A_n)_0$ be the ``going up diagonal'' and $\nabla_{p}=\{(p-i,i)\mid 1\leq i\leq n\}\subseteq (\mathbb{Z}A_n)_0$ be the ``going down diagonal'' of $\mathbb{Z}A_n$. If $\mathscr{C}$ is a configuration of $\mathbb{Z}A_n$, define two permutations $\alpha_{\mathscr{C}}$ and $\beta_{\mathscr{C}}$ of $\mathbb{Z}$ (see \cite[Section 3.4]{R}) as follows: $\alpha_{\mathscr{C}}(p)=n+x+1$, where $(x,y)$ is the unique point of $\mathscr{C}$ in $\nabla_{p}$, and $\beta_{\mathscr{C}}(p)=p+i$, where $(p,i)$ is the unique point of $\mathscr{C}$ in $\Delta_{p}$. It is straightforward to show that $\alpha_{\mathscr{C}}\beta_{\mathscr{C}}(p)=p+n+1$ for any $p\in\mathbb{Z}$.

Given any admissible group of automorphisms $\Pi$ of $\mathbb{Z}A_n$ and any $\Pi$-stable configuration $\mathscr{C}$ of $\mathbb{Z}A_n$, the RFS algebra $A_{\mathscr{C},\Pi}$ of class $A_n$ can be described as follows (see \cite[Section 6.2]{R}): Let $\alpha=\alpha_{\mathscr{C}}$ and $\beta=\beta_{\mathscr{C}}$ be the permutations of $\mathbb{Z}$ associated with $\mathscr{C}$. Let $c_r$ be the unique point of $\mathscr{C}$ on $\nabla_{r}$. Define an action of $\Pi$ on $\mathbb{Z}$ by setting $c_{\pi r}=\pi c_r$ for any $\pi\in\Pi$ and $r\in\mathbb{Z}$. Let $\widetilde{Q}=\widetilde{Q}_{\mathscr{C}}$ be the quiver with $\widetilde{Q}_{0}=\mathbb{Z}$ and $\widetilde{Q}_{1}=\{\alpha_r,\beta_r\mid r\in\mathbb{Z}\}$, where $\alpha_r:r\rightarrow\alpha(r)$ and $\beta_r:r\rightarrow\beta(r)$. For any $\pi\in\Pi$, it can be shown that either $\pi\alpha=\alpha \pi$ and $\pi\beta=\beta \pi$, or $\pi\alpha=\beta \pi$ and $\pi\beta=\alpha \pi$, depending on whether $\pi$ is a translation or a translation-reflection. Therefore $\pi$ induces an automorphism of $\widetilde{Q}$. Denote by $Q_{\mathscr{C},\Pi}$ the residue quiver $\widetilde{Q}/\Pi$, and denote by $\overline{\alpha_r}$, $\overline{\beta_r}$ the residue classes of $\alpha_r$, $\beta_r$ modulo $\Pi$ respectively.

\begin{Thm}\label{quiver and relation} {\rm(\cite[Theorem 6.2]{R})}
$A_{\mathscr{C},\Pi}$ is isomorphic to the algebra defined by the quiver $Q_{\mathscr{C},\Pi}$ and the relations $\overline{\beta}_{\alpha(r)}\overline{\alpha}_{r}=\overline{\alpha}_{\beta(r)}\overline{\beta}_{r}=0$ and $\overline{\alpha}_{\alpha^{a_r-1}(r)}\cdots\overline{\alpha}_{\alpha(r)}\overline{\alpha}_{r}
=\overline{\beta}_{\beta^{b_r-1}(r)}\cdots\overline{\beta}_{\beta(r)}\overline{\beta}_{r}$, where $r\in\mathbb{Z}$ and $a_r$, $b_r$ are defined by $\alpha^{a_r}(r)=r+n=\beta^{b_r}(r)$.
\end{Thm}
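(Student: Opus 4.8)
The plan is to recover this description from Riedtmann's standard-form theory by treating the simply connected cover first and then pushing quiver and relations down along the Galois covering with group $\Pi$. Recall from Proposition \ref{configurations and RFS algebras} that $A_{\mathscr{C},\Pi}$ is defined so that $\mathrm{ind}\,A_{\mathscr{C},\Pi}$ is the mesh category of $(\mathbb{Z}A_n)_{\mathscr{C}}/\Pi$; in particular its indecomposable projective-injective modules are exactly the objects lying over the configuration points, and its stable AR-quiver is $\mathbb{Z}A_n/\Pi$. I would realise $A_{\mathscr{C},\Pi}$ as $\widetilde{A}/\Pi$, where $\widetilde{A}$ is the locally bounded $k$-category whose indecomposables are the mesh category of the inserted translation quiver $(\mathbb{Z}A_n)_{\mathscr{C}}$ itself, so that the natural functor $\widetilde{A}\to A_{\mathscr{C},\Pi}$ is a Galois covering with group $\Pi$. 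It then suffices to describe $\widetilde{A}$ by the quiver $\widetilde{Q}$ and the lifted relations and to verify that these descend to the asserted data on $Q_{\mathscr{C},\Pi}=\widetilde{Q}/\Pi$.

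First I would pin down the indecomposable projectives of $\widetilde{A}$. The unique configuration point $c_r$ on the going-down diagonal $\nabla_r$ yields a projective-injective $P_r$ with simple top $S_r$ and simple socle $S_{r+n}$, the index shift being recorded by the definition $\alpha^{a_r}(r)=r+n=\beta^{b_r}(r)$ (compatibly with $\alpha_{\mathscr{C}}\beta_{\mathscr{C}}(r)=r+n+1$). The configuration cuts out, around each $c_r$, a diamond in $\mathbb{Z}A_n$ bounded by the mesh relations, and this diamond is precisely the Loewy shape of $P_r$: its radical modulo socle is the direct sum of two uniserial arms, one running along $\alpha$ through the vertices $\alpha(r),\dots,\alpha^{a_r-1}(r)$ and one running along $\beta$ through $\beta(r),\dots,\beta^{b_r-1}(r)$. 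Reading off the irreducible maps leaving $P_r$ produces exactly the two arrows $\alpha_r\colon r\to\alpha(r)$ and $\beta_r\colon r\to\beta(r)$, so the Gabriel quiver of $\widetilde{A}$ is $\widetilde{Q}$.

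The relations are forced by this projective structure. Since the two arms are uniserial and disjoint above the socle, no path may pass from one arm into the other, giving the zero relations $\beta_{\alpha(r)}\alpha_r=0=\alpha_{\beta(r)}\beta_r$; and the two maximal nonzero paths $\alpha_{\alpha^{a_r-1}(r)}\cdots\alpha_r$ and $\beta_{\beta^{b_r-1}(r)}\cdots\beta_r$ from $r$ to $r+n$ both land in the one-dimensional socle of $P_r$, hence coincide after rescaling the arrows by suitable units. To see that nothing further is needed I would run a basis count: in the algebra cut out by $\widetilde{Q}$ and these relations the nonzero paths starting at $r$ are the trivial path together with the $\alpha$-paths of lengths $1,\dots,a_r$ and the $\beta$-paths of lengths $1,\dots,b_r$, the two paths reaching the socle being identified, so the projective at $r$ has $k$-dimension $a_r+b_r$, matching $\dim_k\widetilde{A}e_r$ read from the diamond. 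Finally, pushing $\widetilde{Q}$ and the relations down along $\Pi$ --- using that a translation in $\Pi$ commutes with $\alpha,\beta$ while a translation-reflection interchanges them, so the full relation set is $\Pi$-stable --- yields $Q_{\mathscr{C},\Pi}$ together with the stated relations, and the covering functor induces the desired isomorphism $kQ_{\mathscr{C},\Pi}/I\cong A_{\mathscr{C},\Pi}$.

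The main obstacle is the middle step, namely the rigorous translation of the purely combinatorial configuration $\mathscr{C}$ (equivalently the Brauer relation of Proposition \ref{a one-to-one correspond}) into the precise radical filtration of each projective: one must prove that the arm lengths are exactly $a_r-1$ and $b_r-1$ and that the two arms meet only in the socle. This comes down to a careful computation of morphism spaces in the mesh category $k\big((\mathbb{Z}A_n)_{\mathscr{C}}\big)$, showing that the $\mathrm{Hom}$ from the top to each arm vertex is one-dimensional and that the resulting top-to-socle composite does not depend on the arm; once this is established the $\Pi$-equivariant pushdown and the final identification are comparatively formal.
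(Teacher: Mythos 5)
The paper offers no proof of this statement: it is quoted verbatim as Riedtmann's \cite[Theorem 6.2]{R}, so there is nothing internal to compare your argument against. Judged on its own terms, your outline follows the route Riedtmann herself takes --- realise $A_{\mathscr{C},\Pi}$ as the quotient $\widetilde{A}/\Pi$ of the locally bounded category attached to the inserted translation quiver $(\mathbb{Z}A_n)_{\mathscr{C}}$, read off the projectives from the configuration points, and push the quiver and relations down along the Galois covering using the fact that translations commute with $\alpha,\beta$ while translation-reflections swap them. The dimension count $\dim_k \widetilde{A}e_r = a_r+b_r$ and the observation that the relation set is $\Pi$-stable are both correct.

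The gap you flag at the end is, however, the entire substance of the theorem rather than a ``comparatively formal'' remainder. Showing that the object over $c_r$ is projective-injective with top $S_r$ and socle $S_{r+n}$, that $\mathrm{rad}P_r/\mathrm{soc}P_r$ splits into exactly two uniserial arms indexed by the $\alpha$- and $\beta$-orbits, and that the arms meet only in the socle, is precisely the computation of $\mathrm{Hom}$-spaces in the mesh category $k\bigl((\mathbb{Z}A_n)_{\mathscr{C}}\bigr)$ that occupies the technical heart of Riedtmann's Sections 4--6; conditions (1) and (2) of Definition \ref{combinatorial-configuration} enter essentially here and your sketch never uses them. You should also note the degenerate cases (e.g.\ $a_r=1$ or $b_r=1$, where one ``arm'' is a single arrow landing in the socle and the stated presentation is no longer admissible, and $n=1$), which your Loewy-diagram picture silently assumes away. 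As a blind reconstruction of a cited external theorem the strategy is sound, but as written it defers the load-bearing step.
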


According to \cite[Section 3.1]{Br-G}, the standard form $\overline{A}$ of a basic representation-finite algebra $A$ is defined to be the opposite algebra of the algebra $\oplus_{p,q}k(\Gamma_{A})(p,q)$, where $\Gamma_A$ is the AR-quiver of $A$ and $p,q$ range over all projective vertices of $\Gamma_A$. Note that each RFS algebra of class $A_n$ is isomorphic to its standard form. It was shown in \cite[Section 3.1]{Br-G} that the standard form $\overline{A}$ of $A$ is isomorphic to $kQ_A/I_A$, where $Q_A$ is the quiver of $A$ and $I_A$ is the ideal in $kQ_A$ generated by the non-stable paths and by the differences $v-w$ where $(v,w)$ range over all stable contours. For each RFS algebra $A$, using the quiver with relations $(Q_A,I_A)$ we can construct an $f_s$-BC $E$ with $\overline{A}\cong A_E$, see \cite[Proposition 7.13]{LL}. By the description of the quivers with relations of RFS algebras of class $A_n$ in Theorem \ref{quiver and relation}, we have

\begin{Prop} \label{RFS-class-An}
The class of representation-finite $f_{ms}$-BGAs coincides with the class of RFS algebras of class $A_n$.
\end{Prop}

\begin{proof} Let $A_E$ be a representation-finite $f_{ms}$-BGA. Since $A_E$ is self-injective special biserial, for every vertex $x$ of the stable AR-quiver of $A_E$, there are at most two arrows starting at $x$. Therefore $A_E$ cannot be an RFS algebra of class $D_n$ or class $E_n$.

On the other hand, let $A=kQ/I$ be the RFS algebra of class $A_n$ given by the admissible group of automorphisms $\Pi$ of $\mathbb{Z}A_n$ and the $\Pi$-stable configuration $\mathscr{C}$, where $Q=Q_{\mathscr{C},\Pi}$ and $I$ is the ideal in $kQ_{\mathscr{C},\Pi}$ generated by $\overline{\beta}_{\alpha(r)}\overline{\alpha}_{r}$, $\overline{\alpha}_{\beta(r)}\overline{\beta}_{r}$ and $\overline{\alpha}_{\alpha^{a_r-1}(r)}\cdots\overline{\alpha}_{\alpha(r)}\overline{\alpha}_{r}
-\overline{\beta}_{\beta^{b_r-1}(r)}\cdots\overline{\beta}_{\beta(r)}\overline{\beta}_{r}$, where $r\in\mathbb{Z}$ and $a_r$, $b_r$ are defined by $\alpha^{a_r}(r)=r+n=\beta^{b_r}(r)$. Denote by $\widetilde{A}$ the locally representation-finite category $k\widetilde{Q}/\widetilde{I}$, where $\widetilde{Q}=\widetilde{Q}_{\mathscr{C}}$ and $\widetilde{I}$ is the ideal in $k\widetilde{Q}$ generated by $\beta_{\alpha(r)}\alpha_{r}$, $\alpha_{\beta(r)}\beta_{r}$ and $\alpha_{\alpha^{a_r-1}(r)}\cdots\alpha_{\alpha(r)}\alpha_{r}
-\beta_{\beta^{b_r-1}(r)}\cdots\beta_{\beta(r)}\beta_{r}$, where $r\in\mathbb{Z}$ and $a_r$, $b_r$ are defined by $\alpha^{a_r}(r)=r+n=\beta^{b_r}(r)$. Since $(\widetilde{Q},\widetilde{I})$ is a universal cover of $(Q,I)$ and $\widetilde{Q}$ contains no oriented cycles, by \cite{MP} $\widetilde{A}$ is a universal cover of $A_{\mathscr{C},\Pi}$ in the sense of \cite[Section 2.1]{G}. Then a path of $Q$ is stable (that is, it lifts to a nonzero path in $\widetilde{Q}$, see \cite{Br-G}) if and only if it is a subpath of a path of the form $\overline{\alpha}_{\alpha^{a_r-1}(r)}\cdots\overline{\alpha}_{\alpha(r)}\overline{\alpha}_{r}$ or $\overline{\beta}_{\beta^{b_r-1}(r)}\cdots\overline{\beta}_{\beta(r)}\overline{\beta}_{r}$. By \cite[Lemma 7.7]{LL}, each pair $$(\overline{\alpha}_{\alpha^{a_r-1}(r)}\cdots\overline{\alpha}_{\alpha(r)}\overline{\alpha}_{r},
\overline{\beta}_{\beta^{b_r-1}(r)}\cdots\overline{\beta}_{\beta(r)}\overline{\beta}_{r})$$
is a stable contour of $A$. Therefore $I$ is contained in the ideal $I_A$ of $kQ$ given in \cite[Section 3.1]{Br-G}. Since $A=kQ/I$ and $\overline{A}=kQ/I_A$ are isomorphic, we have $I=I_A$.

Let $E=(E,P,L,d)$ be the associated $f_s$-BC of $\overline{A}$ (cf. remarks before \cite[Proposition 7.13]{LL}), where
$$E=\{e_v\mid v\mbox{ is a path of }Q\mbox{ which belongs to }\mathrm{soc}\overline{A}-\{0\}\}$$
and for every $e_v\in E$, $P(e_v)=\{e_w\mid s(v)=s(w)\}$ and $L(e_v)=\{e_w\mid \alpha_v=\alpha_w\}$ ($\alpha_v$ is the initial arrow of $v$). Since $(Q,I_A)=(Q,I)$ is special biserial, each polygon $P(e_v)$ of $E$ contains at most two elements and each $L(e_v)=\{e_v\}$. By adding a half-edge to every single half-edge of $E$, we obtain an $f_{ms}$-BG $E'$ such that the corresponding algebras of $E$ and $E'$ are isomorphic. Therefore $A\cong\overline{A}\cong A_{E'}$ is an $f_{ms}$-BGA.
\end{proof}

Next we will give a Brauer relation of order $n$ from an f-degree-free Brauer tree with $n$ edges. Let $B=(B,P,L,d)$ be such a Brauer tree and fix $e\in B$.
Note that we can view $B$ as a Brauer $G$-set with $\tau$ the involution on $B$.
Then for each half-edge $b$ of $B$, there exists a unique integer $0\leq i\leq 2n-1$ such that $b=(\tau g)^i(e)$. Denote by $\alpha_i$ the arrow $L((\tau g)^{i-1}(e))$ in $Q_B$, where $1\leq i\leq 2n$. Call the arrows $\alpha_{2j-1}$ ($1\leq j\leq n$) the {\it $\beta$-arrows} and the arrows $\alpha_{2j}$ ($1\leq j\leq n$) the {\it $\alpha$-arrows} of $Q_B$. Note that for each half-edge $h$ of $B$, $L(h)$ is a $\beta$-arrow if and only if $L(\tau(h))$ is an $\alpha$-arrow, and $L(h)$ is a $\beta$-arrow if and only if $L(g\cdot h)$ is a $\beta$-arrow. Moreover, call a path of $Q_B$ a {\it $\beta$-path} (resp. {\it $\alpha$-path}) if each arrow of this path is a $\beta$-arrow (resp. $\alpha$-arrow). Define an equivalence relation $\mathcal{B}$ on $\sqrt[n]{1}=\{e^{\frac{2m\pi}{n}i}\mid m\in\mathbb{Z}\}\subseteq\mathbb{C}$ as follows: $e^{\frac{2k\pi}{n}i}$ and $e^{\frac{2l\pi}{n}i}$ are equivalent if and only if the vertices $P((\tau g)^{2k}(e))$ and $P((\tau g)^{2l}(e))$ of $Q_B$ can be connected by a $\beta$-path (the elements in $\sqrt[n]{1}$ are in one-to-one correspondence with the vertices of $Q_B$, which is given by the map $e^{\frac{2m\pi}{n}i}\mapsto P((\tau g)^{2m}(e))$). Moreover, we denote by $\beta_{\mathcal{B}}$ the permutation of $\sqrt[n]{1}$ assigning to each point $s$ its successor in the equivalence class of $s$ endowed with the anti-clockwise orientation.

\begin{Prop}\label{to be a Brauer relation}
$\mathcal{B}$ is a Brauer relation of order $n$. Moreover, for each $e^{\frac{2k\pi}{n}i}\in\sqrt[n]{1}$, suppose the terminal of the $\beta$-arrow $\alpha_{2k+1}$ of $Q_B$ is $P((\tau g)^{2l}(e))$, then $\beta_{\mathcal{B}}(e^{\frac{2k\pi}{n}i})=e^{\frac{2l\pi}{n}i}$.
\end{Prop}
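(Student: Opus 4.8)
The plan is to reduce everything to a single structural observation: the equivalence classes of $\mathcal{B}$ are exactly the vertices of $B$ of one of the two colours produced by the $\alpha/\beta$ dichotomy, and both the non-crossing condition and the formula for $\beta_{\mathcal{B}}$ then fall out of the combinatorics of the boundary walk $m\mapsto(\tau g)^m(e)$. First I would record the elementary facts. By definition the $\beta$-arrows are precisely the $L(h)$ with $h$ of even index, i.e.\ $h=(\tau g)^{2i}(e)$, so under the given bijection $e^{\frac{2m\pi}{n}i}\mapsto P((\tau g)^{2m}(e))$ a root of unity is attached to an edge through its unique $\beta$-half-edge. Using the two facts noted just before the statement -- that $L(h)$ is a $\beta$-arrow iff $L(g\cdot h)$ is one, and that $L(h)$ is a $\beta$-arrow iff $L(\tau(h))$ is an $\alpha$-arrow -- I conclude that the arrow type is constant on each $\langle g\rangle$-orbit and that the two endpoints of every edge carry opposite types. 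This $2$-colours the vertices of $B$ into $\beta$-vertices and $\alpha$-vertices, with each edge having exactly one endpoint of each colour (so each edge does carry exactly one even index).

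Next I would show $\mathcal{B}$ is genuinely an equivalence relation by analysing the subquiver of $\beta$-arrows. At each vertex $P(h)$ of $Q_B$ there is exactly one outgoing $\beta$-arrow $L(h)$ (where $h$ is the $\beta$-half-edge of that edge) and exactly one incoming $\beta$-arrow $L(g^{-1}h)$, and both run around the $\beta$-vertex containing $h$. Hence the $\beta$-subquiver is a disjoint union of oriented cycles, one for each $\beta$-vertex $v$, of length $|v|$; a $\beta$-path is forced to run along one such cycle, so two edges are joined by a $\beta$-path iff they share a common $\beta$-vertex. Therefore $\mathcal{B}$ is an equivalence relation whose classes are indexed by the $\beta$-vertices of $B$, the class of $v$ being the set of roots attached to the edges incident to $v$; since each edge has a unique $\beta$-endpoint these classes do partition $\sqrt[n]{1}$.

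To finish the first assertion I must check that the convex hulls of distinct classes are disjoint, which is where the planarity of $B$ enters. By construction the index $m$ of the boundary walk is the anti-clockwise position of the root $e^{\frac{2m\pi}{n}i}$, so I would argue that, because $B$ is a tree, between two cyclically consecutive edges of a $\beta$-vertex the boundary walk traverses an entire subtree and returns; this produces a laminar (nested, never interleaving) family of index-intervals, which is exactly the non-crossing condition. For the formula for $\beta_{\mathcal{B}}$ I compute that the $\beta$-arrow $\alpha_{2k+1}=L((\tau g)^{2k}(e))$ runs to $P(g\cdot(\tau g)^{2k}(e))=P((\tau g)^{2k+1}(e))=P((\tau g)^{2l}(e))$, so $(\tau g)^{2l}(e)=g\cdot(\tau g)^{2k}(e)$ is the $g$-successor, around the common $\beta$-vertex, of the $\beta$-half-edge carrying $e^{\frac{2k\pi}{n}i}$. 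By the same laminarity, no other member of the class occurs at an index strictly between $2k$ and $2l$, so $e^{\frac{2l\pi}{n}i}$ is the first class element met anti-clockwise from $e^{\frac{2k\pi}{n}i}$, giving $\beta_{\mathcal{B}}(e^{\frac{2k\pi}{n}i})=e^{\frac{2l\pi}{n}i}$.

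The main obstacle is the planarity step shared by both parts: converting the informal picture ``the boundary walk lists the edges anti-clockwise and the edge-sets of the vertices are nested'' into a rigorous laminarity statement. I expect the bulk of the work to lie in setting up the precise dictionary between the index $m$ of $(\tau g)^m(e)$ and the circular position of the corresponding root and in proving the nesting -- for instance by inducting on $n$ via removal of a boundary edge, or by showing that any crossing would yield two distinct simple paths in $B$ between the two $\beta$-vertices involved, contradicting that $B$ is a tree. Once this is in place, both the disjointness of the convex hulls and the identification of the anti-clockwise successor are immediate.
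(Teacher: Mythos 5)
Your proposal is correct and follows essentially the same route as the paper: both arguments reduce to the fact that between two consecutive visits of the boundary walk $m\mapsto(\tau g)^{m}(e)$ to a given $\beta$-vertex, the walk enumerates exactly the half-edges of one subtree, which yields simultaneously the non-crossing property and the identification of $\beta_{\mathcal{B}}$ with the $g$-successor around that vertex. The laminarity step you defer is precisely where the paper's proof does its work, by computing that the terminal of $\alpha_{2k+1}$ is $P((\tau g)^{2(k+l)}(e))$ with $l$ the number of edges behind $P((\tau g)^{2k+1}(e))$ and then checking that the successor of any index strictly between $2k$ and $2(k+l)$ stays strictly between them.
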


\begin{proof}
For $e^{\frac{2k\pi}{n}i}, e^{\frac{2l\pi}{n}i}\in\sqrt[n]{1}$, suppose that $0\leq k,l <n$. Let
\begin{equation*}
l'= \begin{cases}
l, & \text{if } l> k; \\
l+n, & l\leq k,
\end{cases}
\end{equation*}
and define $(e^{\frac{2k\pi}{n}i}, e^{\frac{2l\pi}{n}i})=\{e^{\frac{2r\pi}{n}i}\in\sqrt[n]{1}\mid k<r<l'\}$.

For any $\beta$-arrow $\alpha_{2k+1}$ of $Q_B$, it can be shown that $g(\tau g)^{2k}(e)=(\tau g)^{2(k+l)}(e)$, where $l$ is the number of vertices of $B$ which can be connected to the vertex $G\cdot(\tau g)^{2k}(e)$ of $B$ via a path of $B$ that contains the edge $P((\tau g)^{2k+1}(e))$ (here we consider $B$ as a graph). That is, $l$ is the number of edges in the dotted circle in Figure $1$ below (Here the order of half-edges which are connected to a vertex in the diagram of $B$ is taken to be anticlockwise).
$$\begin{tikzpicture}
\fill (0,0) circle (0.5ex);
\draw (0,0)--(-1.5,-2.598);
\draw (0,0)--(6,0);
\fill (6,0) circle (0.5ex);
\draw[->] (-0.75,-1.299) arc (-120:0:1.5);
\node at(1.5,-1.5) {$\alpha_{2k+1}$};
\node at(-1.35,-0.75) {$(\tau g)^{2k}(e)$};
\node at(1.5,0.3) {$g(\tau g)^{2k}(e)$};
\node at(1.5,0.8) {$||$};
\node at(1.5,1.3) {$(\tau g)^{2(k+l)}(e)$};
\node at(4.8,0.3) {$(\tau g)^{2k+1}(e)$};
\draw[dotted] (0.4,0.1) arc (10:230:0.45);
\draw[dotted] (-1.6,-2.7712)--(-2.1,-3.6372);
\draw (6,0)--(7,1.732);
\draw (6,0)--(7,-1.732);
\draw[dotted] (6.55,-0.866) arc (-60:60:1);
\draw[dotted] (7.1,1.9052)--(7.6,2.7712);
\draw[dotted] (7.1,-1.9052)--(7.6,-2.7712);
\draw[dotted] (9,0) arc (0:360:4.5);
\end{tikzpicture}$$
$$\mbox{Figure 1: The diagram of $B$}$$
So the terminal of the $\beta$-arrow $\alpha_{2k+1}$ is $P((\tau g)^{2(k+l)}(e))$, and we have $\beta_{\mathcal{B}}(e^{\frac{2k\pi}{n}i})=e^{\frac{2(k+l)\pi}{n}i}$. To show that $\mathcal{B}$ is a Brauer relation of order $n$, it suffices to show that for any $e^{\frac{2r\pi}{n}i}\in (e^{\frac{2k\pi}{n}i}, e^{\frac{2(k+l)\pi}{n}i})$, $e^{\frac{2(r+s)\pi}{n}i}$ also belongs to $(e^{\frac{2k\pi}{n}i}, e^{\frac{2(k+l)\pi}{n}i})$, where $1\leq s\leq n$ is the integer such that $g(\tau g)^{2r}(e)=(\tau g)^{2(r+s)}(e)$.

Since $e^{\frac{2r\pi}{n}i}\in (e^{\frac{2k\pi}{n}i}, e^{\frac{2(k+l)\pi}{n}i})$, we may assume that $k<r<k+l$. So the half-edge $(\tau g)^{2r}(e)$ of $B$ belongs to the dotted circle in Figure $1$ (Note that the set of half-edges in the dotted circle in Figure $1$ is $\{(\tau g)^{2k+i}(e)\mid 1\leq i\leq 2l\}$). Since $(\tau g)^{2r}(e)\neq (\tau g)^{2(k+l)}(e)$, the half-edge $(\tau g)^{2(r+s)}(e)=g(\tau g)^{2r}(e)$ also belongs to the dotted circle in Figure $1$. So $(\tau g)^{2(r+s)}(e)\in\{(\tau g)^{2k+i}(e)\mid 1\leq i\leq 2l\}$. Suppose $(\tau g)^{2(r+s)}(e)=(\tau g)^{2k+i}(e)$ for some $1\leq i\leq 2l$, then $2n|(2(r+s)-(2k+i))$, so we have $i=2j$ for some $1\leq j\leq l$ and $e^{\frac{2(r+s)\pi}{n}i}=e^{\frac{2(k+j)\pi}{n}i}$. If $j=l$, then $g(\tau g)^{2r}(e)=(\tau g)^{2(r+s)}(e)=(\tau g)^{2(k+l)}(e)=g(\tau g)^{2k}(e)$ (the last identity follows from Figure $1$) and $(\tau g)^{2r}(e)=(\tau g)^{2k}(e)$. Therefore $(\tau g)^{2(r-k)}(e)=e$, and $2n|2(r-k)$, a contradiction. So $1\leq j<l$ and $e^{\frac{2(r+s)\pi}{n}i}=e^{\frac{2(k+j)\pi}{n}i}\in (e^{\frac{2k\pi}{n}i}, e^{\frac{2(k+l)\pi}{n}i})$.
\end{proof}

\subsection{AR-quivers of representation-finite $f_{ms}$-BGAs: the main statements.}
\

Let $E=(E,E,\tau,d)$ be a finite connected $f_{ms}$-BG with Nakayama automorphism $\sigma$ such that $\Lambda_E$ is representation-finite (equivalently, $A_E$ is representation-finite) and let $R_E$ be the reduced form of $E$ (see Section 2). According to Lemma \ref{B-finite-type}, $E/\langle\sigma\rangle$ belongs to one of the following cases: $(a)$ a Brauer tree; $(b)$ a modified BG of free f-degree with a unique double half-edge, which has $p+1$ vertices and $2p+1$ half-edges ($p\geq 0$). For case $(a)$, choose a half-edge $h$ of $E/\langle\sigma\rangle$ which belongs to the unique exceptional vertex of $E/\langle\sigma\rangle$; and for case $(b)$, choose $h$ to be the unique double half-edge of $E/\langle\sigma\rangle$. Then for case $(a)$ we have $\Pi_{m}(E/\langle\sigma\rangle,h)=\langle x\rangle\cong\mathbb{Z}$, where $x=[(h|g^{l}|h)]$ and $l$ is the cardinality of the $G$-orbit of $h$, and for case $(b)$ we have $\Pi_{m}(E/\langle\sigma\rangle,h)=\langle x,y\rangle\cong\mathbb{Z}\oplus\mathbb{Z}/2\mathbb{Z}$, where $x=[(h|g^{l}|h)]$ with $l$ the cardinality of the $G$-orbit of $h$ and $y=[(h|\tau|h)]$ (Here $\tau$ denotes the involution of the modified BG $E/\langle\sigma\rangle$).

For case $(a)$, suppose that the f-degree of the exceptional vertex of $E/\langle\sigma\rangle$ is $m$, and suppose that $B=B_{(E/\langle\sigma\rangle,h)}$ is a Brauer tree with free f-degree given in \cite[Example 3.40]{LL2}. Let $n$ be the number of edges of $B$. Then $E/\langle\sigma\rangle$ has $\frac{n}{m}$ edges. There exists a covering $p:B\rightarrow E/\langle\sigma\rangle$ such that the image of the fundamental group of $B$ in $\Pi(E/\langle\sigma\rangle,h)\cong\Pi_{m}(E/\langle\sigma\rangle,h)$ is $\langle x^m\rangle$. Choose a half-edge $e$ of $B$ with $p(e)=h$, then the pair $(B,e)$ defines a Brauer relation of $\mathcal{B}$ order $n$ (cf. the remarks before Proposition \ref{to be a Brauer relation}). Let $\mathscr{C}=\mathscr{C}_{\mathcal{B}}$ be the configuration of $\mathbb{Z}A_n$ corresponding to $\mathcal{B}$ (see Proposition \ref{a one-to-one correspond}). Suppose the image of the fundamental group of $E$ in $\Pi(E/\langle\sigma\rangle,h)\cong\Pi_{m}(E/\langle\sigma\rangle,h)$ via the homomorphism induced by the covering $E\rightarrow E/\langle\sigma\rangle$ is $\langle x^r\rangle$, then we have

\begin{Thm}\label{case (a): AR-quiver of A_E}
For case $(a)$, the configuration $\mathscr{C}$ of $\mathbb{Z}A_n$ is $\tau^{\frac{n}{m}}$-stable and the AR-quiver $\Gamma_{\Lambda_E}$ of $\Lambda_E$ is isomorphic to $(\mathbb{Z}A_n)_{\mathscr{C}}/\langle\tau^{\frac{nr}{m}}\rangle$, where $\tau$ denotes the automorphism of $\mathbb{Z}A_n$ induced from the translation of the translation quiver $\mathbb{Z}A_n$ and the positive integers $n$, $m$, $r$ are defined as above.
\end{Thm}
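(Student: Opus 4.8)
The plan is to identify $\Lambda_E$, up to isomorphism, with one of Riedtmann's algebras $A_{\mathscr{C},\Pi}$ and then read off its Auslander--Reiten quiver from Proposition \ref{configurations and RFS algebras}. Since $\Lambda_E$ is representation-finite and, being a standard special biserial self-injective algebra, is an RFS algebra of class $A_{n}$, the correspondence of Proposition \ref{configurations and RFS algebras} applies: there is a unique admissible group $\Pi'$ and a unique $\Pi'$-stable configuration $\mathscr{C}'$ of $\mathbb{Z}A_{n}$ with $\Gamma_{\Lambda_E}\cong(\mathbb{Z}A_{n})_{\mathscr{C}'}/\Pi'$. Thus everything reduces to proving $\mathscr{C}'=\mathscr{C}$ and $\Pi'=\langle\tau^{nr/m}\rangle$, which I will do by transporting the covering theory of Section 2 into Riedtmann's combinatorics.

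First I would set up the covering tower $B\xrightarrow{\,p\,}E/\langle\sigma\rangle\xleftarrow{\,q\,}E$, where $B=B_{(E/\langle\sigma\rangle,h)}$ is the Brauer tree of trivial f-degree with $n$ edges. Via the pair $(B,e)$, the configuration $\mathscr{C}=\mathscr{C}_{\mathcal{B}}$ of Proposition \ref{to be a Brauer relation} and Proposition \ref{a one-to-one correspond} is exactly the one governing the universal cover of $\Lambda_E$, whose Auslander--Reiten quiver is $(\mathbb{Z}A_{n})_{\mathscr{C}}$ (here I use that, by the proof of Proposition \ref{universal-cover-modified-fms-BG}, $q$ carries the special walks of $E$ bijectively to those of $E/\langle\sigma\rangle$, so $B_{(E,e)}\cong B$). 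Since $p$ is Galois with group $\Pi_{m}(E/\langle\sigma\rangle,h)/\langle x^{m}\rangle\cong\mathbb{Z}/m\mathbb{Z}$, its generating deck transformation is an automorphism of the Brauer tree $B$, hence induces an automorphism of $(\mathbb{Z}A_{n})_{\mathscr{C}}$ fixing $\mathscr{C}$. The crux is the computation that this deck transformation, namely the action of $x=[(h|g^{l}|h)]$, is exactly $\tau^{n/m}$ on $\mathbb{Z}A_{n}$: $x$ rotates the $n$ edges of $B$ around its exceptional vertex by $n/m$ positions, and Proposition \ref{to be a Brauer relation} (the explicit formula for $\beta_{\mathcal{B}}$ and for the terminal of each $\beta$-arrow of $Q_{B}$) shows that a rotation by one edge is realized by $\tau$. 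This yields part $(1)$: $\tau^{n/m}(\mathscr{C})=\mathscr{C}$.

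For part $(2)$ I would push this one step further down the tower. The covering $q$ is Galois with group $\langle\sigma\rangle$, and by Remark \ref{general-method-to-compute-the-fundamental-group} together with Lemma \ref{two definitions of fundamental group are equal} and Theorem \ref{fundamental group of rep-finite f-BGA} the image of $\Pi(E)\cong\mathbb{Z}$ in $\Pi_{m}(E/\langle\sigma\rangle,h)$ is $\langle x^{r}\rangle$. Interpreting $\Lambda_E$ as the quotient of its universal cover by this copy of $\mathbb{Z}$, the generator of $\Pi(E)$ acts on $(\mathbb{Z}A_{n})_{\mathscr{C}}$ through $x^{r}$, hence as $(\tau^{n/m})^{r}=\tau^{nr/m}$. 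Therefore $\Pi'=\langle\tau^{nr/m}\rangle$ and $\Gamma_{\Lambda_E}\cong(\mathbb{Z}A_{n})_{\mathscr{C}}/\langle\tau^{nr/m}\rangle$. To make the identification fully rigorous I would match the quiver and relations of $A_{\mathscr{C},\langle\tau^{nr/m}\rangle}$ supplied by Theorem \ref{quiver and relation} against the explicit quiver $Q_{E}$ and ideal $I_{E}$ of $\Lambda_E$, using that both algebras are standard so that coincidence of quivers-with-relations forces isomorphism of module categories.

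I expect the main obstacle to be the translation ``rotation of the Brauer tree $=$ power of $\tau$'', that is, proving cleanly that $x$ acts as $\tau^{n/m}$ and, simultaneously, that $\langle\tau^{nr/m}\rangle$ is an admissible group of automorphisms of $\mathbb{Z}A_{n}$. This is precisely where the combinatorics of Proposition \ref{to be a Brauer relation} (relating the $\beta$-arrows of $Q_{B}$, the successor permutation $\beta_{\mathcal{B}}$, and the diagonals of $\mathbb{Z}A_{n}$ through the maps $\alpha_{\mathscr{C}},\beta_{\mathscr{C}}$) must be combined with the bookkeeping of the covering functors of Section 2; once the translation number $n/m$ is pinned down, both admissibility and the $\tau^{n/m}$-stability of $\mathscr{C}$ follow formally.
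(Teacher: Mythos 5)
Your proposal is correct and follows essentially the same route as the paper: the paper's Step 1 is exactly your deck-transformation argument (the generator of $\mathrm{Aut}(p)$ acts on $B$ as $(\tau g)^{2n/m}$, shifting the edge-index by $n/m$ and hence stabilizing $\mathscr{C}$), and its Step 2 realizes your ``$x^r$ acts as $\tau^{nr/m}$'' claim by writing $E\cong\mathbb{Z}B/\langle\mu\rangle$ with $\mu=g^{lr}$ and computing that $\mu$ corresponds to the shift by $nr/m$ on Riedtmann's quiver $\widetilde{Q}$. The step you defer — matching quivers with relations — is where the paper spends most of its effort (the index function $f$ on $Q_{\mathbb{Z}B}$ and Lemmas \ref{values}, \ref{quiver isomorphism}, \ref{category isomorphism}), but you have correctly identified both the reduction and the key computations.
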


For case $(b)$, let $B=R_E=\widehat{E/\langle\sigma\rangle}$ and $e=h_1\in (E/\langle\sigma\rangle)_{1}$. Then $B$ is a Brauer tree with free f-degree, which has $n=2p+1$ edges. Let $\mathcal{B}$ be the Brauer relation given by the pair $(B,e)$, and $\mathscr{C}=\mathscr{C}_{\mathcal{B}}$ be the configuration of $\mathbb{Z}A_n$ corresponding to $\mathcal{B}$. According to Theorem \ref{fundamental group of rep-finite f-BGA}, the fundamental group of $E$ is isomorphic to $\mathbb{Z}$. Moreover, according to the proof of Theorem \ref{fundamental group of rep-finite f-BGA}, the image of the fundamental group of $E$ in $\Pi(E/\langle\sigma\rangle,h)\cong\Pi_{m}(E/\langle\sigma\rangle,h)=\langle x,y\rangle\cong\mathbb{Z}\oplus\mathbb{Z}/2\mathbb{Z}$ via the homomorphism induced by the covering $E\rightarrow E/\langle\sigma\rangle$ is generated by $x^r y$ for some $r\in \mathbb{Z}_{>0}$. We have

\begin{Thm}\label{case (b): AR-quiver of A_E}
For case $(b)$, denote by $\phi$ the involution of $\mathbb{Z}A_n$ given by $\phi(i,j)=(i+j-\frac{n+1}{2},n+1-j)$, then the configuration $\mathscr{C}$ defined above is $\phi$-stable, and the AR-quiver $\Gamma_{\Lambda_E}$ of $\Lambda_E$ is isomorphic to $(\mathbb{Z}A_n)_{\mathscr{C}}/\langle\tau^{nr}\phi\rangle$, where $\tau$ denotes the automorphism of $(\mathbb{Z}A_n)_{\mathscr{C}}$ induced from the translation of the translation quiver $\mathbb{Z}A_n$.
\end{Thm}

\subsection{Proofs of the main statements.}
\

Let $B=(B,P,L,d)$ be an f-degree-free Brauer tree which has $n$ edges and fix $e\in B$, $\mathcal{B}$ be the Brauer relation given by the pair $(B,e)$, and $\mathscr{C}=\mathscr{C}_{\mathcal{B}}$ be the configuration of $\mathbb{Z}A_n$ which corresponds to $\mathcal{B}$. Let $\widetilde{Q}=\widetilde{Q}_{\mathscr{C}}$ be the quiver given by the configuration $\mathscr{C}$ (see the remarks before Theorem \ref{quiver and relation}). Denote by $\tau$ the involution of $B$ as a Brauer $G$-set. For each edge $P(h)$ of $B$, there exists a unique number $i\in\{0,1,\cdots,n-1\}$ such that $P(h)=P((\tau g)^{2i}(e))$, and we define $f(P(h))=i$.

Let $\mathscr{X}$ be a subset of $B$ such that $\mathscr{X}$ meets each vertex of $B$ in exactly one half-edge, and such that for any $h\in\mathscr{X}$, $f(P(h))\leq f(P(g^k\cdot h))$ for every integer $k$. Define an $f_{ms}$-BG $\mathbb{Z}B=(\mathbb{Z}B,\widetilde{P},\widetilde{L},\widetilde{d})$ as follows: $\mathbb{Z}B=\{(h,k)\mid h\in B,k\in\mathbb{Z}\}$, where the $G$-set structure of $\mathbb{Z}B$ is given by
\begin{equation*}
g\cdot(h,k)=\begin{cases}
(g\cdot h,k), &\text{ if } g\cdot h\notin\mathscr{X}; \\
(g\cdot h,k+1), &\text{ if } g\cdot h\in\mathscr{X};
\end{cases}
\end{equation*}
$\widetilde{P}(h,k)=\{(h',k)\mid h'\in P(h)\}$, $\widetilde{L}(h,k)=\{(h,k)\}$, $\widetilde{d}(h,k)=d(h)$ for every $(h,k)\in\mathbb{Z}B$. It is easy to show that $\mathbb{Z}B$ is an universal cover of $B$.

Define $f:(Q_{\mathbb{Z}B})_0\rightarrow\mathbb{Z}$ as the function given by $f(\widetilde{P}(h,k))=f(P(h))+nk$ for each edge $\widetilde{P}(h,k)$ of $\mathbb{Z}B$. Note that for each half-edge $(h,k)$ of $\mathbb{Z}B$, we have $1\leq f(\widetilde{P}(g\cdot(h,k)))-f(\widetilde{P}(h,k))\leq n$. For each half-edge $(h,k)$ of $\mathbb{Z}B$, call the arrow $\widetilde{L}(h,k)$ of $Q_{\mathbb{Z}B}$ a {\it $\beta$-arrow} (resp. an {\it $\alpha$-arrow}) of $Q_{\mathbb{Z}B}$ if $L(h)$ is a $\beta$-arrow (resp. an $\alpha$-arrow) of $Q_{B}$ (cf. the remarks before Proposition \ref{to be a Brauer relation}).

\begin{Lem}\label{values}
Let $(h,k)\in\mathbb{Z}B$ with $\widetilde{L}(h,k)$ a $\beta$-arrow of $Q_{\mathbb{Z}B}$. Then $f(\widetilde{P}((\tau g)^{2}(h,k)))=f(\widetilde{P}(h,k))+n+1$, where $\tau$ denotes the involution of $\mathbb{Z}B$ as a Brauer $G$-set.
\end{Lem}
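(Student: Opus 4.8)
The plan is to bypass the fine combinatorics of the tree $B$ and instead derive the value $n+1$ from two soft inputs only: the elementary bound on a single $g$-step recorded just before the lemma, and a congruence modulo $n$ coming from the definition of $f$. Throughout I would abbreviate $\nu(x):=f(\widetilde{P}(x))$ for a half-edge $x$ of $\mathbb{Z}B$, and exploit that $\widetilde{P}(h,k)=\{(h',k)\mid h'\in P(h)\}$ depends only on the edge $P(h)$ and the integer $k$, so the involution $\tau$ of $\mathbb{Z}B$ fixes $\widetilde{P}$, hence fixes $\nu$.

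First I would set up a telescoping decomposition. Expanding $(\tau g)^2(h,k)=\tau g\tau g(h,k)$ as the successive images $x_0=(h,k)$, $x_1=gx_0$, $x_2=\tau x_1$, $x_3=gx_2$, $x_4=\tau x_3=(\tau g)^2(h,k)$, and using $\nu(\tau x)=\nu(x)$ to kill the two $\tau$-steps, the sum collapses to
\[
\nu\!\left((\tau g)^2(h,k)\right)-\nu(h,k)=\bigl(\nu(gx_2)-\nu(x_2)\bigr)+\bigl(\nu(gx_0)-\nu(x_0)\bigr).
\]
Each summand is a single-$g$ increment, so by the inequality noted immediately before the lemma each lies in $[1,n]$; hence the left-hand side lies in $[2,2n]$.

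Next I would compute the same quantity modulo $n$. Since $\nu=f(P(\cdot))+nk$ on $\mathbb{Z}B$ and the covering $\mathbb{Z}B\to B$ commutes with $g$ and $\tau$, one has $\nu(x)\equiv f(P(\text{$B$-component of }x))\pmod n$ for every $x$. Here the hypothesis enters decisively: $\widetilde{L}(h,k)$ being a $\beta$-arrow means $L(h)$ is a $\beta$-arrow, so the $B$-component of $h$ is an \emph{even} power $(\tau g)^{2i}(e)$, whence the $B$-component of $(\tau g)^2(h,k)$ is $(\tau g)^{2(i+1)}(e)$. From the definition of $f$ on the edges of $B$ together with $(\tau g)^{2n}(e)=e$ we obtain $f(P((\tau g)^{2(i+1)}(e)))=(i+1)\bmod n$ and $f(P((\tau g)^{2i}(e)))=i$, so $\nu\!\left((\tau g)^2(h,k)\right)-\nu(h,k)\equiv 1\pmod n$.

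Finally I would combine the two observations: the integer $\nu((\tau g)^2(h,k))-\nu(h,k)$ lies in $[2,2n]$ and is $\equiv 1\pmod n$, and the unique such integer is $n+1$, which is exactly the claim. I expect the only delicate points to be bookkeeping: confirming that the two $\tau$-steps genuinely drop out of the telescoping sum, and confirming that the $\beta$-arrow hypothesis is precisely what pins the residue to $1$ (an $\alpha$-arrow would yield a different residue, so the hypothesis cannot be dropped). The appeal of this route is that the congruence-plus-bound argument sidesteps the case distinction between whether the $B$-level index $i+1$ wraps past $n-1$; a more hands-on alternative that directly counts how many of the two $g$-steps land in $\mathscr{X}$ (and so raise the $\mathbb{Z}$-coordinate) also works, but is messier and is the part I would most want to avoid grinding through.
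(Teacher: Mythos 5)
Your proposal is correct and is essentially the paper's own argument: the paper likewise bounds the difference in $[2,2n]$ by splitting $(\tau g)^2$ into two $g$-steps (using that $\tau$ preserves $\widetilde{P}$, hence the value of $f$), pins the residue to $1\pmod n$ via the $\beta$-arrow hypothesis, and concludes that the only possibility is $n+1$. No gaps.
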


\begin{proof}
Note that $1\leq f(\widetilde{P}(g\cdot(h,k)))-f(\widetilde{P}(h,k))\leq n$ and $1\leq f(\widetilde{P}(g\tau g(h,k)))-f(\widetilde{P}(\tau g(h,k)))\leq n$. Since $\widetilde{P}(g\cdot(h,k))=\widetilde{P}(\tau g(h,k))$ and $\widetilde{P}(g\tau g(h,k))=\widetilde{P}((\tau g)^{2}(h,k))$, $2\leq f(\widetilde{P}((\tau g)^{2}(h,k)))-f(\widetilde{P}(h,k))\leq 2n$. Since $\widetilde{L}(h,k)$ a $\beta$-arrow of $Q_{\mathbb{Z}B}$, $L(h)$ a $\beta$-arrow of $Q_{B}$. Then $f(P((\tau g)^{2}(h)))-f(P(h))\equiv 1$ $($mod $n)$. Since $f(\widetilde{P}((\tau g)^{2}(h,k)))-f(\widetilde{P}(h,k))\equiv f(P((\tau g)^{2}(h)))-f(P(h))$ $($mod $n)$, we have $f(\widetilde{P}((\tau g)^{2}(h,k)))-f(\widetilde{P}(h,k))=n+1$.
\end{proof}

\begin{Lem}\label{quiver isomorphism}
There exists an isomorphism of quivers $Q_{\mathbb{Z}B}\rightarrow \widetilde{Q}$ which maps each vertex $v$ of $Q_{\mathbb{Z}B}$ to the vertex $f(v)$ of $\widetilde{Q}$.
\end{Lem}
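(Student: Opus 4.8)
The plan is to verify that $f$ is an isomorphism by checking it separately on vertices and on arrows, exploiting the uniform local shape of the two quivers: out of every vertex of $\widetilde{Q}$ there is exactly one $\alpha$-arrow $\alpha_r\colon r\to\alpha(r)$ and one $\beta$-arrow $\beta_r\colon r\to\beta(r)$, while out of an edge $\widetilde{P}(h,k)$ of $\mathbb{Z}B$ the only two arrows are $\widetilde{L}(h,k)$ and $\widetilde{L}(\tau(h),k)$, one of which is a $\beta$-arrow and the other an $\alpha$-arrow (since $L(x)$ and $L(\tau(x))$ always have opposite type). Thus, once $f$ is known to be a bijection on vertices, it suffices to show that $f$ carries the outgoing $\beta$-arrow at each vertex $v$ to $\beta_{f(v)}$ and the outgoing $\alpha$-arrow to $\alpha_{f(v)}$; this produces a bijection on arrows respecting sources and terminals, hence a quiver isomorphism. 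For the vertex bijection I would first note that the edges of $\mathbb{Z}B$ correspond to pairs (edge of $B$, integer $k$) via $(P(h),k)\mapsto\widetilde{P}(h,k)$, and that the $n$ edges of $B$ are exactly the $P((\tau g)^{2i}(e))$ for $0\le i\le n-1$, so $f(P(\cdot))$ is a bijection from the edges of $B$ onto $\{0,1,\dots,n-1\}$; then $f(\widetilde{P}(h,k))=f(P(h))+nk$ runs bijectively over $\mathbb{Z}$.

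The heart of the argument is the $\beta$-arrows. Let $\widetilde{L}(h,k)$ be a $\beta$-arrow, so $h=(\tau g)^{2m}(e)$ with $m=f(P(h))$, and set $p=f(\widetilde{P}(h,k))=m+nk$; its terminal is $\widetilde{P}(g\cdot(h,k))$. From the computation inside the proof of Proposition \ref{to be a Brauer relation} we get $g\cdot h=(\tau g)^{2(m+l)}(e)$ with $l\ge 1$ and $\beta_{\mathcal{B}}(e_n(m))=e_n(m+l)$; translating through Proposition \ref{a one-to-one correspond} and using that $\beta_{\mathscr{C}}$ is $n$-periodic, this reads $\beta_{\mathscr{C}}(p)=p+l$. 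Hence $f(P(g\cdot h))=(m+l)\bmod n$, and the only remaining point is to match the $\mathbb{Z}$-shift built into the $G$-action on $\mathbb{Z}B$: I must show that $g\cdot h\in\mathscr{X}$ (so that the second coordinate jumps from $k$ to $k+1$) precisely when $m+l\ge n$, since in the case $m+l<n$ one needs $g\cdot h\notin\mathscr{X}$ and in the case $m+l\ge n$ one needs $g\cdot h\in\mathscr{X}$ for the equality $f(\widetilde{P}(g\cdot(h,k)))=\beta_{\mathscr{C}}(p)$ to hold. This equivalence is where I expect the \textbf{main obstacle} to lie.

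To settle it I would analyse the half-edges $h^{(0)},h^{(1)}=g\cdot h^{(0)},\dots,h^{(d-1)}$ around the common vertex $v$ of $h$ and $g\cdot h$, where $h^{(0)}\in\mathscr{X}$ is the $\mathscr{X}$-representative. Writing $m^{(i)}=f(P(h^{(i)}))$ and using $g\cdot h^{(i)}=(\tau g)^{2(m^{(i)}+l^{(i)})}(e)$ with $l^{(i)}\ge 1$, the key combinatorial input is that $\sum_{i=0}^{d-1}l^{(i)}=n$, because the subtrees of the Brauer tree $B$ hanging beyond the $d$ edges incident to $v$ partition all $n$ edges. Consequently the lifted partial sums $m^{(0)}, m^{(0)}+l^{(0)},\dots,m^{(0)}+n$ increase by exactly $n$ over one full turn, so they cross a multiple of $n$ exactly once; since $h^{(0)}$ minimises $f\circ P$ on $v$, an internal crossing would produce a value strictly below $m^{(0)}$, a contradiction, so the unique crossing must occur on the last step $h^{(d-1)}\to h^{(0)}$. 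Therefore $m^{(0)}<m^{(1)}<\dots<m^{(d-1)}$ strictly, and $g\cdot h^{(i)}\in\mathscr{X}$ if and only if $i=d-1$, that is, if and only if $m^{(i)}+l^{(i)}\ge n$. This yields the required equivalence and finishes the $\beta$-arrow case.

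For the $\alpha$-arrows I would avoid repeating this analysis and instead bootstrap from Lemma \ref{values} together with $\alpha_{\mathscr{C}}\beta_{\mathscr{C}}(p)=p+n+1$. Every vertex of $Q_{\mathbb{Z}B}$ is the terminal of a unique $\beta$-arrow, say $\widetilde{P}(\tau g(h,k))$, whose $f$-value is $\beta_{\mathscr{C}}(p)$ by the previous step; the outgoing $\alpha$-arrow there ends at $\widetilde{P}((\tau g)^{2}(h,k))$, and Lemma \ref{values} gives $f(\widetilde{P}((\tau g)^{2}(h,k)))=p+n+1=\alpha_{\mathscr{C}}(\beta_{\mathscr{C}}(p))$. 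Thus $f$ sends this $\alpha$-arrow to $\alpha_{\beta_{\mathscr{C}}(p)}$, the outgoing $\alpha$-arrow at the vertex $f(\widetilde{P}(\tau g(h,k)))=\beta_{\mathscr{C}}(p)$, and as $v$ ranges over all vertices the $\alpha$-compatibility follows. Combined with the vertex bijection and the $\beta$-compatibility, this shows $f$ is an isomorphism of quivers. Everything apart from the monotonicity/partition argument of the third paragraph is routine bookkeeping with Propositions \ref{to be a Brauer relation} and \ref{a one-to-one correspond} and Lemma \ref{values}.
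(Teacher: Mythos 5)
Your proof is correct and follows essentially the same route as the paper: the same assignment of $\beta$- and $\alpha$-arrows, the same reduction of the $\beta$-case to the successor computation of Proposition \ref{to be a Brauer relation} taken modulo $n$, and the same bootstrap of the $\alpha$-case from Lemma \ref{values} via $\alpha_{\mathscr{C}}\beta_{\mathscr{C}}(p)=p+n+1$. The only difference is that the paper merely asserts the inequality $1\leq f(\widetilde{P}(g\cdot(h,k)))-f(\widetilde{P}(h,k))\leq n$ (equivalently, your criterion for when the second coordinate of $(h,k)$ jumps), whereas you supply the partition-and-monotonicity argument around each vertex that actually proves it; that argument is sound.
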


\begin{proof}
For each integer $r$, there exists a unique half-edge $(h,k)$ of $\mathbb{Z}B$ such that $\widetilde{L}(h,k)$ is a $\beta$-arrow of $Q_{\mathbb{Z}B}$ and $f(\widetilde{P}(h,k))=r$. Define $f(\widetilde{L}(h,k))=\beta_r$ and $f(\widetilde{L}(\tau(h),k))=\alpha_r$. We need to show that $f$ defines a quiver isomorphism $Q_{\mathbb{Z}B}\rightarrow \widetilde{Q}$.

For each arrow $\gamma$ of $Q_{\mathbb{Z}B}$, by the definition of $f$ we have $f(s(\gamma))=s(f(\gamma))$. To show that $f:Q_{\mathbb{Z}B}\rightarrow \widetilde{Q}$ is a morphism of quivers, it suffices to show that $f(t(\gamma))=t(f(\gamma))$.

Since $1\leq f(\widetilde{P}(g\cdot(h,k)))-f(\widetilde{P}(h,k))\leq n$ for each half-edge $(h,k)$ of $\mathbb{Z}B$, we have
\begin{equation}\label{equation 1}
f(\widetilde{P}(g\cdot(h,k)))-f(\widetilde{P}(h,k))= \begin{cases}
f(P(g\cdot h))-f(P(h)), &\text{ if } f(P(g\cdot h))>f(P(h)); \\
f(P(g\cdot h))-f(P(h))+n, &\text{ if } f(P(g\cdot h))\leq f(P(h)).
\end{cases}
\end{equation}

For each arrow $\gamma=\widetilde{L}(h,k)$ of $Q_{\mathbb{Z}B}$, if $f(\widetilde{L}(h,k))=\beta_r$, then $L(h)$ is a $\beta$-arrow of $Q_B$ and $f(P(h))+nk=f(\widetilde{P}(h,k))=r$. $f$ maps the terminal $\widetilde{P}(g\cdot(h,k))$ of $\widetilde{L}(h,k)$ to $f(\widetilde{P}(g\cdot(h,k)))$. By Equation (\ref{equation 1}) we have
\begin{equation*}
f(\widetilde{P}(g\cdot(h,k)))-r= \begin{cases}
f(P(g\cdot h))-f(P(h)), &\text{ if } f(P(g\cdot h))>f(P(h)); \\
f(P(g\cdot h))-f(P(h))+n, &\text{ if } f(P(g\cdot h))\leq f(P(h)).
\end{cases}
\end{equation*}
Since $L(h)$ is a $\beta$-arrow of $Q_B$, $e^{\frac{2\pi f(P(g\cdot h))}{n}i}$ is the successor of $e^{\frac{2\pi f(P(h))}{n}i}$ in the Brauer relation $\mathscr{B}$, so $f(P(g\cdot h))-f(P(h))\equiv \beta(f(P(h)))-f(P(h))=\beta(r)-r$ $($mod $n)$. Since $1\leq \beta(r)-r\leq n$,
\begin{equation*}
\beta(r)-r= \begin{cases}
f(P(g\cdot h))-f(P(h)), &\text{ if } f(P(g\cdot h))>f(P(h)); \\
f(P(g\cdot h))-f(P(h))+n, &\text{ if } f(P(g\cdot h))\leq f(P(h)).
\end{cases}
\end{equation*} So we have $f(\widetilde{P}(g\cdot(h,k)))=\beta(r)$, which is the terminal of $\beta_r$ in $\widetilde{Q}$.

If $f(\widetilde{L}(h,k))=\alpha_r$, then $\widetilde{L}(h,k)$ is an $\alpha$-arrow of $Q_{\mathbb{Z}B}$. Therefore $\widetilde{L}(\tau(h,k))$ and $\widetilde{L}(g^{-1}\tau(h,k))$ are $\beta$-arrows of $Q_{\mathbb{Z}B}$. According to Lemma \ref{values}, $f(\widetilde{P}(g\cdot(h,k)))-f(\widetilde{P}(g^{-1}\cdot(\tau(h),k)))=n+1$. Suppose that $f(\widetilde{P}(g^{-1}\cdot(\tau(h),k)))=t$, then $f(\widetilde{L}(g^{-1}(\tau(h),k)))=\beta_t$, and therefore $r=f(\widetilde{P}(h,k))=f(\widetilde{P}(\tau(h),k))=\beta(t)$, where the last identity follows from last paragraph. So $f(\widetilde{P}(g\cdot(h,k)))=n+1+t=\alpha\beta(t)=\alpha(r)$, which is the terminal of $\alpha_r$ in $\widetilde{Q}$.

By the arguments above, $f:Q_{\mathbb{Z}B}\rightarrow \widetilde{Q}$ is a morphism of quivers. Clearly $f$ is an isomorphism.
\end{proof}

Let $\widetilde{\Lambda}$ be the $k$-category $k\widetilde{Q}/\widetilde{I}$, where $\widetilde{I}$ is the ideal of $k\widetilde{Q}$ generated by the following relations: \\$(a)$ $\beta_{\alpha(r)}\alpha_{r}=\alpha_{\beta(r)}\beta_{r}=0$; \\ $(b)$ $\alpha_{\alpha^{a_r-1}(r)}\cdots\alpha_{\alpha(r)}\alpha_{r}
=\beta_{\beta^{b_r-1}(r)}\cdots\beta_{\beta(r)}\beta_{r}$, where $r\in\mathbb{Z}$ and $a_r$, $b_r$ are defined by $\alpha^{a_r}(r)=r+n=\beta^{b_r}(r)$. \\ If $\Pi$ is an admissible automorphism group of $\mathbb{Z}A_n$ which stabilizes $\mathscr{C}$, then each $\pi\in\Pi$ induces an automorphism of $\widetilde{Q}$ (see the remarks before Theorem \ref{quiver and relation}), which also induces a $k$-linear automorphism $\widetilde{\pi}$ of $\widetilde{\Lambda}$. Denote by $\widetilde{\Pi}$ the group of automorphisms of $\widetilde{\Lambda}$ formed by $\widetilde{\pi}$ with $\pi\in\Pi$. Since $\Pi$ is admissible, it acts freely on $\mathscr{C}$, therefore $\widetilde{\Pi}$ acts freely on $\widetilde{\Lambda}$. Let $\Lambda=\widetilde{\Lambda}/\widetilde{\Pi}$ be the quotient category (see \cite[Section 3]{G}) and $A=\bigoplus_{x,y\in\Lambda}\Lambda(x,y)$. By Theorem \ref{quiver and relation}, $A$ is isomorphic to $A_{\mathscr{C},\Pi}$.

\begin{Lem}\label{category isomorphism}
The quiver isomorphism $Q_{\mathbb{Z}B}\rightarrow \widetilde{Q}$ in Lemma \ref{quiver isomorphism} induces an isomorphism $\Lambda_{\mathbb{Z}B}\rightarrow\widetilde{\Lambda}$ of $k$-categories.
\end{Lem}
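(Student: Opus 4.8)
The plan is to exploit the fact that any isomorphism of quivers extends canonically to an isomorphism of the associated free path $k$-categories, so that the bijective quiver morphism $f\colon Q_{\mathbb{Z}B}\to\widetilde{Q}$ of Lemma \ref{quiver isomorphism} yields an isomorphism $\bar{f}\colon kQ_{\mathbb{Z}B}\to k\widetilde{Q}$ of $k$-categories. The whole statement then reduces to the single ideal identity $\bar{f}(I_{\mathbb{Z}B})=\widetilde{I}$, where $I_{\mathbb{Z}B}$ is the defining ideal of the $f_{ms}$-BCC $\Lambda_{\mathbb{Z}B}$ from Definition \ref{fms-BC algebra}. I would establish this by the two inclusions: $\bar{f}$ sends each generator of $I_{\mathbb{Z}B}$ into $\widetilde{I}$, and $\bar{f}^{-1}$ sends each generator of $\widetilde{I}$ into $I_{\mathbb{Z}B}$; since both are two-sided ideals, the two inclusions force equality and hence $\Lambda_{\mathbb{Z}B}=kQ_{\mathbb{Z}B}/I_{\mathbb{Z}B}\cong k\widetilde{Q}/\widetilde{I}=\widetilde{\Lambda}$.

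To run the comparison I would first record the dictionary coming from the proof of Lemma \ref{quiver isomorphism}: $f$ sends the $\beta$-arrow and the $\alpha$-arrow sitting at the quiver vertex $r$ to $\beta_r$ and $\alpha_r$, and $f(\widetilde{P}(g\cdot(h,k)))=\beta(r)$ whenever $\widetilde{L}(h,k)$ is a $\beta$-arrow with $f(\widetilde{P}(h,k))=r$ (with the analogous statement for $\alpha$-arrows). Inductively, $\bar{f}$ therefore carries the $\beta$-path $\widetilde{L}(g^{\widetilde{d}(x)-1}\cdot x)\cdots\widetilde{L}(x)$ running once around the vertex of a $\beta$-half-edge $x$ to $\beta_{\beta^{\widetilde{d}(x)-1}(r)}\cdots\beta_{\beta(r)}\beta_r$, and the $\alpha$-path around the vertex of $\tau(x)$ to $\alpha_{\alpha^{\widetilde{d}(\tau x)-1}(r)}\cdots\alpha_r$. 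The crucial numerical point is $\widetilde{d}(x)=b_r$ and $\widetilde{d}(\tau x)=a_r$: indeed $g^{\widetilde{d}(x)}\cdot x=\widetilde{\sigma}(x)$ is the deck shift $(h,k)\mapsto(h,k+1)$, so $f(\widetilde{P}(\widetilde{\sigma}(x)))=r+n$, giving $\beta^{\widetilde{d}(x)}(r)=r+n=\beta^{b_r}(r)$, and dually $\alpha^{\widetilde{d}(\tau x)}(r)=r+n=\alpha^{a_r}(r)$.

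With this dictionary the commutativity relations match directly. The type $(1)$ generator attached to an edge, $\widetilde{L}(g^{\widetilde{d}(x)-1}\cdot x)\cdots\widetilde{L}(x)-\widetilde{L}(g^{\widetilde{d}(\tau x)-1}\cdot\tau x)\cdots\widetilde{L}(\tau x)$, is sent to $\beta_{\beta^{b_r-1}(r)}\cdots\beta_r-\alpha_{\alpha^{a_r-1}(r)}\cdots\alpha_r$, which is precisely a generator of type $(b)$ of $\widetilde{I}$. A type $(2)$ generator $\widetilde{L}(x_2)\widetilde{L}(x_1)$ with $g\cdot x_1\neq x_2$ is a genuine path, so $x_2$ is a half-edge of the edge $\widetilde{P}(g\cdot x_1)$ distinct from $g\cdot x_1$, forcing $x_2=\tau(g\cdot x_1)$; hence $\widetilde{L}(x_2)$ is an arrow of the opposite kind to $\widetilde{L}(x_1)$, and its image is $\beta_{\alpha(s)}\alpha_s$ or $\alpha_{\beta(s)}\beta_s$, a generator of type $(a)$. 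Conversely, the generators $(a)$ and $(b)$ of $\widetilde{I}$ are the literal $\bar{f}$-images of generators of types $(2)$ and $(1)$, giving the reverse inclusion.

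The step I expect to be the main obstacle is the long-path vanishing relations of type $(3)$, namely $\widetilde{L}(g^{m-1}\cdot x)\cdots\widetilde{L}(x)$ for $m>\widetilde{d}(x)$, since these have no literal counterpart among the generators of $\widetilde{I}$ and I must instead verify that their images already lie in $\widetilde{I}$. It suffices to treat $m=\widetilde{d}(x)+1$, as longer paths factor through this one. For a $\beta$-half-edge $x$ the image is the length-$(b_r+1)$ product $\beta_{\beta^{b_r}(r)}\beta_{\beta^{b_r-1}(r)}\cdots\beta_r$; rewriting its terminal segment $\beta_{\beta^{b_r-1}(r)}\cdots\beta_r$ by the type $(b)$ relation as $\alpha_{\alpha^{a_r-1}(r)}\cdots\alpha_r$ and observing $\beta_{\beta^{b_r}(r)}=\beta_{r+n}=\beta_{\alpha(\alpha^{a_r-1}(r))}$, the leading two factors become $\beta_{\alpha(s)}\alpha_s$ with $s=\alpha^{a_r-1}(r)$, which vanishes by the type $(a)$ relation; the $\alpha$-case is symmetric. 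This verification closes the inclusion $\bar{f}(I_{\mathbb{Z}B})\subseteq\widetilde{I}$, and combined with the reverse inclusion yields $\bar{f}(I_{\mathbb{Z}B})=\widetilde{I}$ and the desired isomorphism of $k$-categories.
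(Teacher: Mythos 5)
Your proof is correct and follows essentially the same route as the paper: translate the generators of $I_{\mathbb{Z}B}$ through the quiver isomorphism of Lemma \ref{quiver isomorphism} and match them against the generators of $\widetilde{I}$. The only difference is one of care: the paper tacitly drops the length-$(d+1)$ vanishing relations from its generating set for $I_{\mathbb{Z}B}$ (they are consequences of the other two types of relations), whereas you verify explicitly that their images lie in $\widetilde{I}$, and you also spell out the reverse inclusion.
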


\begin{proof}
By definition $\Lambda_{\mathbb{Z}B}=kQ_{\mathbb{Z}B}/I_{\mathbb{Z}B}$, where $I_{\mathbb{Z}B}$ is generated by the following relations: \\ $(a')$ $\widetilde{L}(\tau g(h,k))\widetilde{L}(h,k)=0$; \\ $(b')$ $\widetilde{L}(g^{d(h)-1}\cdot(h,k))\cdots \widetilde{L}(g\cdot(h,k))\widetilde{L}(h,k)=\widetilde{L}(g^{d(\tau(h))-1}\cdot(\tau(h),k))\cdots \widetilde{L}(g\cdot(\tau(h),k))\widetilde{L}(\tau(h),k)$. \\ Since for each $(h,k)\in \mathbb{Z}B$, $\widetilde{L}(h,k)$ is an $\alpha$-arrow (resp. a $\beta$-arrow) of $Q_{\mathbb{Z}B}$ if and only if $\widetilde{L}(\tau g(h,k))$ is a $\beta$-arrow (resp. an $\alpha$-arrow) of $Q_{\mathbb{Z}B}$, the quiver isomorphism $Q_{\mathbb{Z}B}\rightarrow \widetilde{Q}$ maps the relations of type $(a')$ in $I_{\mathbb{Z}B}$ to the relations of type $(a)$ in $\widetilde{I}$. Since $f(\widetilde{P}(g^{d(h)}\cdot(h,k)))=f(\widetilde{P}(h,k+1))=f(\widetilde{P}(h,k))+n$, we see that the quiver isomorphism $Q_{\mathbb{Z}B}\rightarrow \widetilde{Q}$ maps the relations of type $(b')$ in $I_{\mathbb{Z}B}$ to the relations of type $(b)$ in $\widetilde{I}$.
\end{proof}

\begin{proof}[{\bf Proof of Theorem \ref{case (a): AR-quiver of A_E}}]
\

\medskip
{\it Step 1: To show that the configuration $\mathscr{C}$ in Theorem \ref{case (a): AR-quiver of A_E} is $\tau^{\frac{n}{m}}$-stable.}

Since the Brauer tree $E/\langle\sigma\rangle$ has $\frac{n}{m}$ edges, we have $(\tau g)^{\frac{2n}{m}}(h)=h$. Then $p((\tau g)^{\frac{2n}{m}}(e))=(\tau g)^{\frac{2n}{m}}(p(e))=(\tau g)^{\frac{2n}{m}}(h)=h=p(e)$. Since the fundamental group $\Pi(E/\langle\sigma\rangle,h)\cong\mathbb{Z}$ of $E/\langle\sigma\rangle$ is abelian, the covering $p:B\rightarrow E/\langle\sigma\rangle$ is regular. Therefore there exists an automorphism $\nu$ of $B$ such that $\nu(e)=(\tau g)^{\frac{2n}{m}}(e)$. Since each half-edge $b$ of $B$ is of the form $(\tau g)^{i}(e)$ for some integer $i$, we have $\nu(b)=(\tau g)^{\frac{2n}{m}}(b)$ for every $b\in B$. For any $i\in\mathbb{Z}$, let $j$ be the unique integer in $[1,n]$ such that $g(\tau g)^{2i}(e)=(\tau g)^{2(i+j)}(e)$. Then $(i,j)$ is the unique vertex of $\mathbb{Z}A_n$ in $\Delta_i\cap\mathscr{C}$. Since $g(\tau g)^{2(i+\frac{n}{m})}(e)=g(\tau g)^{2i}(\nu(e))=\nu(g(\tau g)^{2i}(e))=\nu((\tau g)^{2(i+j)}(e))=(\tau g)^{2(i+j+\frac{n}{m})}(e)$, $(i+\frac{n}{m},j)\in\mathscr{C}$, and $\mathscr{C}$ is $\tau^{\frac{n}{m}}$-stable.

\medskip
{\it Step 2: To show that $\Lambda_E$ is isomorphic to the $k$-category $\widetilde{\Lambda}/\widetilde{\Pi}$, where $\widetilde{\Pi}$ is induced by the group of automorphisms $\Pi$ of $\mathbb{Z}A_n$ generated by $\tau^{\frac{nr}{m}}$.}

Since the image of the fundamental group of $E$ in $\Pi(E/\langle\sigma\rangle,h)\cong\Pi_{m}(E/\langle\sigma\rangle,h)$ via the homomorphism induced by the covering $E\rightarrow E/\langle\sigma\rangle$ is $\langle x^r\rangle$, where $x=[(h|g^{l}|h)]$ and $l$ is the cardinality of the $G$-orbit of $h$, $E$ is isomorphic to $\mathbb{Z}B/\langle\mu\rangle$, where $\mu$ is the automorphisms of $\mathbb{Z}B$ such that $\mu(e,0)=g^{lr}(e,0)$. Let $u$ be the automorphism of $Q_{\mathbb{Z}B}$ induced by $\mu$. Then $\Lambda_{E}$ is isomorphic to $\Lambda_{\mathbb{Z}B}/\langle\widetilde{u}\rangle$, where $\widetilde{u}$ denotes the automorphism of $\Lambda_{\mathbb{Z}B}$ induced by $u$. Let $f:Q_{\mathbb{Z}B}\rightarrow\widetilde{Q}$ be the isomorphism of quivers in Lemma \ref{quiver isomorphism}. To show that $\Lambda_E$ is isomorphic to $\widetilde{\Lambda}/\widetilde{\Pi}$, it suffices to show the diagram
$$\xymatrix@R=2pc@C=2pc {
	Q_{\mathbb{Z}B}\ar[r]^{f}\ar[d]_{u} & \widetilde{Q}\ar[d]^{v} \\
    Q_{\mathbb{Z}B}\ar[r]^{f} & \widetilde{Q} \\
}$$
commutes, where $v$ is the automorphism of $\widetilde{Q}$ given by $v(i)=i+\frac{nr}{m}$ for any $i\in\widetilde{Q}_{0}$ and $v(\alpha_i)=\alpha_{i+\frac{nr}{m}}$, $v(\beta_i)=\beta_{i+\frac{nr}{m}}$ for any $i\in\mathbb{Z}$.

Let $r=am+b$ with $a,b\in\mathbb{N}$ and $0\leq b<m$. Then $g^{lr}\cdot(e,0)=g^{lb}g^{lma}\cdot(e,0)=g^{lb}\cdot(e,a)$. It can be shown that $f(P(g^{lb}\cdot e))=\frac{bn}{m}$, so $f(\widetilde{P}(g^{lr}\cdot(e,0)))=f(\widetilde{P}(g^{lb}\cdot(e,a)))=an+\frac{bn}{m}=n(a+\frac{b}{m})=\frac{nr}{m}$. For each vertex $\widetilde{P}(x,k)$ of $Q_{\mathbb{Z}B}$ with $\widetilde{L}(x,k)$ a $\beta$-arrow of $Q_{\mathbb{Z}B}$, assume that $f(\widetilde{P}(x,k))=i$. Denote by $\sigma$ the Nakayama automorphism of $\mathbb{Z}B$. Since $\widetilde{L}((\sigma^{-1}(\tau g)^2)^{i}(e,0))$ is a $\beta$-arrow of $Q_{\mathbb{Z}B}$ with $f(\widetilde{P}((\sigma^{-1}(\tau g)^2)^{i}(e,0)))=i$ by Lemma \ref{values}, we have $(\sigma^{-1}(\tau g)^2)^{i}(e,0)=(x,k)$. So $u(\widetilde{P}(x,k))=\widetilde{P}(\mu(x,k))=\widetilde{P}(\mu((\sigma^{-1}(\tau g)^2)^{i}(e,0)))=\widetilde{P}((\sigma^{-1}(\tau g)^2)^{i}\mu(e,0))=\widetilde{P}((\sigma^{-1}(\tau g)^2)^{i}g^{lr}(e,0))$. Since $\widetilde{L}(g^{lr}\cdot(e,0))$ is a $\beta$-arrow of $Q_{\mathbb{Z}B}$, by Lemma \ref{values}, $f(\widetilde{P}((\sigma^{-1}(\tau g)^2)^{i}g^{lr}(e,0)))=f(\widetilde{P}(g^{lr}\cdot(e,0)))+i=\frac{nr}{m}+i=v(i)$. Therefore $fu(\widetilde{P}(x,k))=vf(\widetilde{P}(x,k))$. Similarly we have $fu(\widetilde{L}(x,k))=vf(\widetilde{L}(x,k))$ and $fu(\widetilde{L}(\tau(x,k)))=vf(\widetilde{L}(\tau(x,k)))$, so the diagram above is commutative.

\medskip
Let $A=\bigoplus_{x,y\in\widetilde{\Lambda}/\widetilde{\Pi}}(\widetilde{\Lambda}/\widetilde{\Pi})(x,y)$. According to Theorem \ref{quiver and relation}, $A\cong A_{\mathscr{C},\Pi}$, where $\Pi$ is generated by the automorphism $\tau^{\frac{nr}{m}}$ of $\mathbb{Z}A_n$. By Step 2, $\Lambda_E$ is isomorphic to $\widetilde{\Lambda}/\widetilde{\Pi}$, so $\Gamma_{\Lambda_E}\cong\Gamma_{A}\cong(\mathbb{Z}A_n)_{\mathscr{C}}/\langle\tau^{\frac{nr}{m}}\rangle$.

\end{proof}

\begin{proof}[{\bf Proof of Theorem \ref{case (b): AR-quiver of A_E}}]
\

\medskip
{\it Step 1: To show that the configuration $\mathscr{C}$ in Theorem \ref{case (b): AR-quiver of A_E} is $\phi$-stable, where $\phi$ is the involution of $\mathbb{Z}A_n$ given by $\phi(i,j)=(i+j-\frac{n+1}{2},n+1-j)$}.

Recall that $n=2p+1$. We need to show that for any $(i,j)\in\mathscr{C}$, the vertex $(i+j-p-1,n+1-j)$ of $\mathbb{Z}A_n$ also belongs to $\mathscr{C}$.

Let $\iota$ be the automorphism of $B=\widehat{E/\langle\sigma\rangle}$ which maps $x_{i}\in(E/\langle\sigma\rangle)_{i}$ to $x_{3-i}\in(E/\langle\sigma\rangle)_{3-i}$ for each $x\in E/\langle\sigma\rangle$ and $i=1,2$. Denote by $\tau$ the involution of $B$ as a Brauer $G$-set. Since $\iota(e)=\tau(e)=(\tau g)^{n}(e)$ and since each half-edge $b$ of $B$ is of the form $(\tau g)^{i}(e)$ for some integer $i$, we have $\iota(b)=(\tau g)^{n}(b)$ for any $b\in B$. Since $(i,j)\in\mathscr{C}$, we have $g(\tau g)^{2i}(e)=(\tau g)^{2(i+j)}(e)$. Therefore
$$(\tau g)^{2(i+j)+n}(e)=\iota((\tau g)^{2(i+j)}(e))=\iota(g(\tau g)^{2i}(e))=g\cdot\iota((\tau g)^{2i}(e))=g(\tau g)^{2i+n}(e)$$
and
$$\tau(\tau g)^{2(i+j)+n}(e)=(\tau g)^{2i+n+1}(e)=(\tau g)^{2(i+p+1)}(e).$$
Since
$$\tau(\tau g)^{2(i+j)+n}(e)=g(\tau g)^{2(i+j)+n-1}(e)=g(\tau g)^{2(i+j+p)}(e),$$
$\beta_{\mathcal{B}}(e^{\frac{2(i+j+p)\pi}{n}i})=e^{\frac{2(i+p+1)\pi}{n}i}$, where $\beta_{\mathcal{B}}$ is the permutation of $\sqrt[n]{1}$ assigning to each point $s$ its successor in the equivalence class of $s$ endowed with the anti-clockwise orientation (see the remarks after Definition \ref{Brauer relation}). Since $(i+p+1)-(i+j+p)=1-j\equiv n+1-j$ $($mod $n)$ and $1\leq n+1-j\leq n$, we have $(i+j+p,n+1-j)\in\mathscr{C}$. Since $\mathscr{C}$ is $\tau^{n}$-stable, $(i+j-p-1,n+1-j)$ also belongs to $\mathscr{C}$.

\medskip
{\it Step 2: To show that the automorphism $\psi$ of $\mathbb{Z}B$ which maps $(e,0)$ to $(\tau(e),0)$ induces an automorphism $u$ of $Q_{\mathbb{Z}B}$ such that the diagram
$$\xymatrix@R=2pc@C=2pc {
	Q_{\mathbb{Z}B}\ar[r]^{f}\ar[d]_{u} & \widetilde{Q}\ar[d]^{v} \\
    Q_{\mathbb{Z}B}\ar[r]^{f} & \widetilde{Q} \\
}$$
$$\mbox{Figure 2: The main diagram in Step 2}$$
commutes, where $f$ is the isomorphism of quivers in Lemma \ref{quiver isomorphism} and $v$ is the automorphism of $\widetilde{Q}$ induced by the automorphism $\phi$ of $(\mathbb{Z}A_n)_{\mathscr{C}}$ in Theorem \ref{case (b): AR-quiver of A_E} (see the paragraph before Theorem \ref{quiver and relation}).}

For each $r\in\widetilde{Q}_{0}$, let $(i,j)$ be the unique vertex of $\mathbb{Z}A_n$ which belongs to $\nabla_r\cap\mathscr{C}$, and let $(x,k)\in\mathbb{Z}B$ such that $f(\widetilde{P}(x,k))=i$ and $\widetilde{L}(x,k)$ is a $\beta$-arrow of $Q_{\mathbb{Z}B}$. By the definition of $f$, $f(\widetilde{L}(x,k))=\beta_i$, so $f(\widetilde{P}(g\cdot(x,k)))=\beta(i)=r$. Denote by $\sigma$ the Nakayama automorphism of $\mathbb{Z}B$. By Lemma \ref{values}, $f(\widetilde{P}((\sigma^{-1}(\tau g)^{2})^{i}(e,0)))=i$. Since both $\widetilde{L}(x,k)$ and $\widetilde{L}((\sigma^{-1}(\tau g)^{2})^{i}(e,0))$ are $\beta$-arrows of $Q_{\mathbb{Z}B}$ and since $f(\widetilde{P}(x,k))=f(\widetilde{P}((\sigma^{-1}(\tau g)^{2})^{i}(e,0)))=i$, we have $(x,k)=(\sigma^{-1}(\tau g)^{2})^{i}(e,0)$. Then $g\cdot(x,k)=g(\sigma^{-1}(\tau g)^{2})^{i}(e,0)$ and $\psi(g(x,k))=g(\sigma^{-1}(\tau g)^{2})^{i}\psi(e,0)=g(\sigma^{-1}(\tau g)^{2})^{i}\tau(e,0)=\sigma^{-i}(g\tau)^{2i+1}(e,0)$. Therefore \begin{multline*}u(\widetilde{P}(g\cdot(x,k)))=\widetilde{P}(\psi(g\cdot(x,k)))=\widetilde{P}(\sigma^{-i}(g\tau)^{2i+1}(e,0)) \\ =\widetilde{P}(\tau\sigma^{-i}(g\tau)^{2i+1}(e,0))=\widetilde{P}(\sigma^{-i}(\tau g)^{2i}\tau g\tau(e,0)).\end{multline*}
Since $\tau g\tau(e,0)$ is a $\beta$-arrow of $Q_{\mathbb{Z}B}$, by Lemma \ref{values}, $f(\widetilde{P}(\sigma^{-i}(\tau g)^{2i}\tau g\tau(e,0)))=f(\widetilde{P}(\tau g\tau(e,0)))+i$. Since $\tau(e)=(\tau g)^{n}(e)$, $\tau g\tau(e)=(\tau g)^{n+1}(e)=(\tau g)^{2(p+1)}(e)$. So
$$f(\widetilde{P}(\tau g\tau(e,0)))\equiv f(P(\tau g\tau(e)))=p+1 \quad(\text{mod } n).$$
Since
$$f(\widetilde{P}(\tau g\tau(e,0)))=f(\widetilde{P}(\tau g\tau(e,0)))-f(\widetilde{P}((e,0)))=f(\widetilde{P}(g\tau(e,0)))-f(\widetilde{P}(\tau(e,0))),$$
$1\leq f(\widetilde{P}(\tau g\tau(e,0)))\leq n$. Therefore $f(\widetilde{P}(\tau g\tau(e,0)))=p+1$, and
$$fu(\widetilde{P}(g\cdot(x,k)))=f(\widetilde{P}(\sigma^{-i}(\tau g)^{2i}\tau g\tau(e,0)))=f(\widetilde{P}(\tau g\tau(e,0)))+i=p+1+i.$$
Since $\phi(i,j)=(i+j-p-1,n+1-j)$, $v(r)=(i+j-p-1)+(n+1-j)=i+p+1$. Then $fuf^{-1}(r)=i+p+1=v(r)$, and the above diagram is commutative on vertices. Since $u$ maps each $\alpha$-arrow (resp. $\beta$-arrow) of $Q_{\mathbb{Z}B}$ to a $\beta$-arrow (resp. an $\alpha$-arrow) of $Q_{\mathbb{Z}B}$ and $v$ maps each $\alpha$-arrow (resp. $\beta$-arrow) of $\widetilde{Q}$ to a $\beta$-arrow (resp. an $\alpha$-arrow) of $\widetilde{Q}$, we see that the above diagram is also commutative on arrows.

\medskip
{\it Step 3: To show that the AR-quiver $\Gamma_{\Lambda_E}$ of $\Lambda_E$ is isomorphic to $(\mathbb{Z}A_n)_{\mathscr{C}}/\langle\tau^{nr}\phi\rangle$, where $\phi$ is the automorphism of $(\mathbb{Z}A_n)_{\mathscr{C}}$ which induces an involution $(i,j)\mapsto(i+j-\frac{n+1}{2},n+1-j)$} of $\mathbb{Z}A_n$.

Let $\psi$ be the automorphism of $\mathbb{Z}B$ which maps $(e,0)$ to $(\tau(e),0)$. Since the image of the fundamental group of $E$ in $\Pi(E/\langle\sigma\rangle,h)\cong\Pi_{m}(E/\langle\sigma\rangle,h)$ is generated by $x^r y$, according to Proposition \ref{modified existence of morphism}, $E$ is isomorphic to $\mathbb{Z}B/\langle\sigma^{r}\psi\rangle$ (here $\sigma$ denotes the Nakayama automorphism of $\mathbb{Z}B$). So $\Lambda_E$ is isomorphic to $\Lambda_{\mathbb{Z}B}/\Pi$, where $\Pi$ is a group of automorphisms of $\Lambda_{\mathbb{Z}B}$ generated by the automorphism of $\Lambda_{\mathbb{Z}B}$ which is induced by the automorphism $wu$ of $Q_{\mathbb{Z}B}$, where $w$ (resp. $u$) is the automorphism of $Q_{\mathbb{Z}B}$ induced by the automorphism $\sigma^{r}$ (resp. $\psi$) of $\mathbb{Z}B$. Let $t$ (resp. $v$) be the automorphism of $\widetilde{Q}$ induced by the automorphism $\tau^{nr}$ (resp. $\phi$) of $(\mathbb{Z}A_n)_{\mathscr{C}}$. Then $t(i)=i+nr$ and $t(\alpha_i)=\alpha_{i+nr}$, $t(\beta_i)=\beta_{i+nr}$ for each $i\in\mathbb{Z}$. We have a commutative diagram
$$\xymatrix@R=2pc@C=2pc {
	Q_{\mathbb{Z}B}\ar[r]^{f}\ar[d]_{w} & \widetilde{Q}\ar[d]^{t} \\
    Q_{\mathbb{Z}B}\ar[r]^{f} & \widetilde{Q} \\
},$$
where $f$ is the isomorphism of quivers in Lemma \ref{quiver isomorphism}. Combining Figure $2$ in Step 2, we have a commutative diagram
$$\xymatrix@R=2pc@C=2pc {
	Q_{\mathbb{Z}B}\ar[r]^{f}\ar[d]_{wu} & \widetilde{Q}\ar[d]^{tv} \\
    Q_{\mathbb{Z}B}\ar[r]^{f} & \widetilde{Q} \\
}.$$
So the quiver isomorphism $f:Q_{\mathbb{Z}B}\rightarrow\widetilde{Q}$ induces an isomorphism $\Lambda_{\mathbb{Z}B}/\Pi\rightarrow\widetilde{\Lambda}/\widetilde{\Pi}$ of $k$-categories, where $\widetilde{\Pi}$ is generated by the automorphism of $\widetilde{\Lambda}$ induced by the automorphism $tv$ of $\widetilde{Q}$. Let $A=\bigoplus_{x,y\in\widetilde{\Lambda}/\widetilde{\Pi}}(\widetilde{\Lambda}/\widetilde{\Pi})(x,y)$. According to Theorem \ref{quiver and relation}, $A\cong A_{\mathscr{C},\langle\tau^{nr}\phi\rangle}$. Therefore the AR-quiver $\Gamma_{\Lambda_E}$ of $\Lambda_E\cong\Lambda_{\mathbb{Z}B}/\Pi$ is isomorphic to $(\mathbb{Z}A_n)_{\mathscr{C}}/\langle\tau^{nr}\phi\rangle$.
\end{proof}

\section{Domestic fractional Brauer graph algebras of type MS}

In this section we assume that $k$ is an algebraically closed field. For a finite dimensional self-injective $k$-algebra $A$, denote by $\prescript{}{s}{\Gamma}_{A}$ the stable AR-quiver of $A$.

\subsection{Exceptional tubes of representation-infinite  $f_{ms}$-BGAs}
\

Let $E=(E,P,L,d)$ be a finite connected $f_{ms}$-BG with $A_E$ representation-infinite. According to \cite[Section 6]{LL}, $A_E\cong kQ'_E/I'_E$ with $I'_E$ admissible, where $Q'_E$ is the subquiver of $Q_E$ given by $(Q'_E)_0=(Q_E)_0$ and $(Q'_E)_1=\{L(e)\mid e\in E $ with $d(e)>1\}$, and $I'_E$ is generated by the following three types of relations
\begin{itemize}
\item[$(fR1')$] $L(g^{d(e)-1}\cdot e)\cdots L(g\cdot e)L(e)-L(g^{d(h)-1}\cdot h)\cdots L(g\cdot h)L(h)$, where $e,h\in E$ and $d(e),d(h)>1$;
\item[$(fR2')$] $L(e_1)L(e_2)$, where $e_1,e_2\in E$, $d(e_1),d(e_2)>1$, and $e_1\neq g\cdot e_2$;
\item[$(fR3')$] $L(g^{d(e)}\cdot e)\cdots L(g\cdot e)L(e)$, where $e\in E$ and $d(e)>1$.
\end{itemize}

We will consider a module of $A_E$ as a representation of the quiver with relations $(Q'_E,I'_E)$. We call an $A_E$-module $M$ a string module if it can be seen as a string module for the quotient algebra $A_E/\mathrm{soc}(A_E)$ (for the definition of string and string modules, see \cite[Sections II.2 and II.3]{E}). Note that $A_E/\mathrm{soc}(A_E)$ is a string algebra, which is given by the quiver $Q'_E$ and the admissible ideal $I''_E$ of $kQ'_E$ generated by the following two types of relations
\begin{itemize}
\item[$(a)$] $L(g^{d(e)-1}\cdot e)\cdots L(g\cdot e)L(e)$, where $e\in E$ and $d(e)>1$;
\item[$(b)$] $L(e_1)L(e_2)$, where $e_1,e_2\in E$, $d(e),d(h)>1$, and $e_1\neq g\cdot e_2$.
\end{itemize}
We will use the results about the AR-sequences of a special biserial algebra described in \cite[Section II.6]{E} freely.

For every $e\in E$, denote by $M_e$ the uniserial $A_E$-module given by the direct string $L(g^{d(e)-2}\cdot e)\cdots L(g\cdot e)L(e)$ (define $M_e$ as the simple $A_E$-module at $P(e)$ if $d(e)=1$). Since $A_E$ representation-infinite, it is not a Nakayama algebra. Then it is straightforward to show that for each edge $P(e)=\{e,e'\}$ of $E$, either $d(e)>1$ or $d(e')>1$. Therefore $M_{e_1}$ is not isomorphic to $M_{e_2}$ for every $e_1\neq e_2$.

Recall that a connected component of $\prescript{}{s}{\Gamma}_{A_E}$ consisting of string modules of the form $\mathbb{Z}A_{\infty}/\langle\tau^n\rangle$ ($n\geq 1$) is called an {\it exceptional tube}. The following result should be compared with \cite[Lemma 4.4]{D} for BGAs.

\begin{Lem}\label{modules-at-the-mouth-of-exceptional-tubes}
Let $E=(E,P,L,d)$ be a finite connected $f_{ms}$-BG with $A_E$ representation-infinite. Then a string module $M$ of $A_E$ is at the mouth of an exceptional tube in the stable AR-quiver of $A_E$ if and only if $M\cong M_e$ for some $e\in E$.
\end{Lem}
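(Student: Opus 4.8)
The plan is to work entirely inside the string algebra $B=A_E/\soc(A_E)\cong kQ'_E/I''_E$ and to exploit the combinatorial description of irreducible maps and almost split sequences for string algebras via hooks and cohooks, as recorded in \cite[Section II.6]{E}; the argument is the $f_{ms}$-analogue of \cite[Lemma 4.4]{D}. First I would record the shape of a tube: in a component of the form $\mathbb{Z}A_{\infty}/\langle\tau^{n}\rangle$ the mesh starting at a vertex has an indecomposable middle term precisely when that vertex lies on the mouth, while at every interior vertex (and at every vertex of a $\mathbb{Z}A_{\infty}^{\infty}$ component) the middle term has two indecomposable summands. Since $M$ is assumed to be a string module and band modules occur only in homogeneous tubes, this reduces the statement to a purely combinatorial one: a string module $M(C)$ is at the mouth of an exceptional tube if and only if the source map out of $M(C)$ lands irreducibly in a single indecomposable string module, which by \cite[Section II.6]{E} happens exactly when one of the two end-operations producing the middle term of the almost split sequence degenerates, i.e. the string $C$ is exhausted on that end.

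For the direction $(\Leftarrow)$ I would use that, as already noted before the lemma, $A_E$ is not Nakayama, so each edge $P(e)=\{e,\tau(e)\}$ satisfies $\max\{d(e),d(\tau(e))\}>1$ and the $M_e$ are pairwise non-isomorphic uniserial modules. The module $M_e$ corresponds to the direct string $C_e=L(g^{d(e)-2}\cdot e)\cdots L(e)$, which is \emph{maximal} as a direct path: prolonging it by $L(g^{d(e)-1}\cdot e)$ produces the relation $(a)$, and the special biserial constraints at the source vertex $P(e)$ forbid prolonging $C_e$ by an inverse arrow at that end. This maximality makes exactly one of the two end-operations of \cite[Section II.6]{E} degenerate, so the middle term of the almost split sequence starting at $M_e$ is indecomposable. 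I would then identify $\tau^{-1}M_e$ explicitly by carrying out the prescribed hook/cohook operations and checking that it is again a maximal uniserial, i.e. $\tau^{-1}M_e\cong M_{e'}$ for an $e'$ determined combinatorially from $e$; since $E$ is finite, the $\tau$-orbit of $M_e$ is finite, so $M_e$ is periodic and hence lies at the mouth of a tube $\mathbb{Z}A_{\infty}/\langle\tau^{n}\rangle$, which is exceptional because $M_e$ is a string module.

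For the direction $(\Rightarrow)$, suppose $M=M(C)$ is a string module at the mouth of an exceptional tube, so the middle term of the almost split sequence starting at $M$ is indecomposable and one end-operation on $C$ degenerates. By the description in \cite[Section II.6]{E} an end-operation degenerates only when $C$ is exhausted on that side, which forces $C$ to be a single direct path (its inverse string yields an isomorphic module) that is maximal with respect to the relations $(a)$ and $(b)$; tracing the relations of $I''_E$ shows such a maximal direct path is exactly $C_e$ for some $e\in E$, whence $M\cong M_e$. The main obstacle I expect is the precise bookkeeping in this end-operation analysis: one must fix the sign functions of the string algebra $B$, verify case by case which hook and cohook prolongations of $C$ are blocked by the two families of relations $(a)$, $(b)$ and by the special biserial structure at each end vertex, and correctly compute $\tau^{-1}M_e$. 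The interplay between working over $B=A_E/\soc(A_E)$ and transporting the conclusion back to $A_E$-modules, where the socle has been factored out, is the place where the self-injective structure must be invoked with care.
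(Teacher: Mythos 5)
Your overall strategy is the same as the paper's: both arguments reduce the statement, via the Erdmann--Skowro\'nski description of components of self-injective special biserial algebras, to the criterion that a string module lies at the mouth of an exceptional tube if and only if the middle term of the relevant almost split sequence is indecomposable, and then analyse hooks and cohooks on the underlying string.

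There is, however, a genuine gap in your $(\Rightarrow)$ direction, and it is not just bookkeeping: the asserted dichotomy ``mouth forces $C$ to be a direct path that is \emph{maximal} with respect to the relations $(a)$, $(b)$, hence $C=C_e$'' is false as stated. If $P(e)=\{e,e'\}$ is an edge with $d(e)=1<d(e')$, then $M_e$ is by definition the simple module at $P(e)$, i.e.\ the module of the \emph{trivial} string at $P(e)$; this string is not a maximal direct path, since it prolongs to the direct string $L(e')$, and yet $M_e$ does lie at the mouth (its projective cover is uniserial, and the unique irreducible map into it comes from the string $L(e')^{-1}L(g^{d(h)-2}\cdot h)\cdots L(h)$ with $h=\sigma^{-1}(g\tau)^2(e)$). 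So ``maximal direct string'' is not the right invariant: what degenerates at the mouth is one specified end of the string, while the other end may still admit a prolongation. Concretely, your case analysis must also treat a non-maximal direct string $w=L(g^{r-1}\cdot e)\cdots L(e)$ with $r<d(e)-1$: adding a hook at the top end already produces one irreducible map into $M(w)$, and the absence of a second irreducible map forces $w$ to be trivial with uniserial projective cover, whence $M(w)\cong M_{e'}$ with $d(e')=1$ --- still of the form $M_x$, but not reached by your maximality argument. Symmetrically, your $(\Leftarrow)$ argument (``prolonging by $L(g^{d(e)-1}\cdot e)$ hits relation $(a)$, and no inverse arrow can be attached at the source'') is vacuous when $d(e)=1$, since then $L(e)$ is not even an arrow of $Q'_E$; that case requires the separate uniserial-projective-cover argument as well. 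Once these degenerate cases are added, your proof closes up and coincides with the one in the paper.
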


\begin{proof}
Denote by $C=A_E/\mathrm{soc}(A_E)$. Note that for each edge $P(e)=\{e,e'\}$ of $E$, either $d(e)>1$ or $d(e')>1$. By \cite[Theorem 2.1 and Theorem 2.2]{ES}, an indecomposable string module $M$ of $A_E$ is at the mouth of an exceptional tube in the stable AR-quiver of $A_E$ if and only if there exists precisely one irreducible morphism $N\rightarrow M$ in $\prescript{}{s}{\Gamma}_{A_E}$ for some indecomposable $A_E$-module $N$. According to the description of AR-sequences for string algebras in \cite[Section II.6]{E}, this happens if and only if $M$ is a uniserial projective module for $C$ or $M$ is of the form $Cu/C\beta$ for some arrow $\beta$ in $Q'_E$ with $u$ the idempotent of $C$ corresponding to $s(\beta)$. This is equivalent to the fact that $M\cong M_e$ for some $e\in E$.
\end{proof}

\begin{Lem}\label{DTr}
Let $E=(E,P,L,d)$ be a finite connected $f_{ms}$-BG with $A_E$ representation-infinite. Denote by $\tau$ the involution of $E$ as a Brauer $G$-set. Then for every $e\in E$, $\mathrm{DTr}(M_e)\cong M_{\sigma^{-1}(g\tau)^2(e)}$, where $\mathrm{DTr}$ denotes the AR-translation of $A_E$ and  $\sigma$ denotes the Nakayama automorphism of $E$.
\end{Lem}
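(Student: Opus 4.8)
The plan is to compute $\mathrm{DTr}(M_e)$ through the syzygy formula $\mathrm{DTr}\cong\nu\Omega^2$, which is available because $A_E$ is self-injective; here $\nu=D\mathrm{Hom}_{A_E}(-,A_E)$ is the Nakayama functor and $\Omega$ the syzygy functor. First I would describe the indecomposable projective $P_{P(e)}$ at the vertex $P(e)=\{e,\tau(e)\}$: from the relations $(fR1')$--$(fR3')$ it has simple top $S_{P(e)}$ and two uniserial arms, of lengths $d(e)$ and $d(\tau(e))$, read off from the direct strings $L(g^{i}\cdot e)\cdots L(e)$ and $L(g^{i}\cdot\tau(e))\cdots L(\tau(e))$, the two arms sharing the simple socle $S_{P(\sigma(e))}$ via the commutation relation $(fR1')$ (indeed the socle element $L(g^{d(e)-1}\cdot e)\cdots L(e)$ lives at the vertex $P(g^{d(e)}\cdot e)=P(\sigma(e))$). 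Since $M_e=P_{P(e)}/A_EL(\tau(e))$, the kernel of the projective cover $P_{P(e)}\twoheadrightarrow M_e$ is the submodule $A_EL(\tau(e))$; this is uniserial with top $S_{P(g\tau(e))}$ and $d(\tau(e))$ composition factors, and a direct inspection identifies it with $M_{g\tau(e)}$. Thus $\Omega(M_e)\cong M_{g\tau(e)}$, and I would stress that this holds uniformly, including the degenerate cases $d(e)=1$ or $d(\tau(e))=1$ in which $P_{P(e)}$ is itself uniserial (one checks these separately, but the conclusion is the same).

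Iterating gives $\Omega^2(M_e)\cong M_{(g\tau)^2(e)}$, so it remains to understand $\nu$ on the modules $M_f$. From the socle computation $\mathrm{soc}(P_{P(f)})=S_{P(\sigma(f))}$ of the first step, the Nakayama permutation on vertices is $P(f)\mapsto P(\sigma^{-1}(f))$, in agreement with the remark after Definition \ref{fms-BC algebra} that $\sigma$ induces the inverse of the Nakayama automorphism of $A_E$. Since $\nu$ is an exact self-equivalence carrying $P_{P(f)}$ to $P_{P(\sigma^{-1}(f))}$ and the $f$-arm to the $\sigma^{-1}(f)$-arm, I would conclude $\nu(M_f)\cong M_{\sigma^{-1}(f)}$, whence $\mathrm{DTr}(M_e)\cong\nu(M_{(g\tau)^2(e)})\cong M_{\sigma^{-1}(g\tau)^2(e)}$. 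Here one uses that $\sigma$ commutes with the $G$-action, $\sigma(g\cdot x)=g\cdot\sigma(x)$ by $(mf1)$, and with $\tau$ by $(mf2)$, so that the composite $\sigma^{-1}(g\tau)^2$ is unambiguous.

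An alternative route, more in the spirit of the preceding lemma, is to read $\mathrm{DTr}(M_e)$ off the almost split sequence: by Lemma \ref{modules-at-the-mouth-of-exceptional-tubes}, $M_e$ lies at the mouth of an exceptional tube, so $\mathrm{DTr}(M_e)\cong M_h$ for a unique $h\in E$, and when $P_{P(e)}$ is not uniserial the proof of that lemma exhibits the sequence $0\to M_h\to N\to M_e\to 0$ with $h$ determined by the condition that $g\cdot\tau(e)$ and $g^{d(h)-1}\cdot h$ form an edge, i.e. $g^{d(h)-1}\cdot h=\tau g\tau(e)$; solving this in the $G$-set, using $d(h)=d(g^{d(h)-1}\cdot h)$ together with $\sigma^{-1}g=g\sigma^{-1}$, recovers $h=\sigma^{-1}(g\tau)^2(e)$. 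I expect the main obstacle in either approach to be the bookkeeping of conventions, namely pinning down that $\nu$ corresponds to $\sigma^{-1}$ and not $\sigma$ (the socle computation $\mathrm{soc}(P_{P(e)})=S_{P(\sigma(e))}$ is precisely what settles the direction) and, in the almost split route, supplying the sequence in the degenerate uniserial cases $d(e)=1$ or $d(\tau(e))=1$ left implicit in Lemma \ref{modules-at-the-mouth-of-exceptional-tubes}. For this reason I would favour the syzygy computation, since the identity $\Omega(M_e)\cong M_{g\tau(e)}$ requires no case distinction.
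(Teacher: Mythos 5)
Your primary (syzygy) argument is correct but follows a genuinely different route from the paper. The paper proves the lemma by explicitly writing down the almost split sequence ending at $M_e$ in each of three cases ($d(e),d(\tau(e))>1$; $d(e)=1<d(\tau(e))$; $d(\tau(e))=1$), using the standard description of AR-sequences of special biserial algebras from \cite[Section II.6]{E}: in the first two cases the middle term is the string module obtained by glueing the string of $M_e$ to the string of $M_h$ with $h=\sigma^{-1}(g\tau)^2(e)$, and in the third case ($P_{P(e)}$ uniserial) the sequence is $0\to\mathrm{rad}(P)\to P\oplus(\mathrm{rad}(P)/\mathrm{soc}(P))\to M_e\to 0$ together with the identity $\sigma^{-1}(g\tau)^2(e)=g\cdot e$. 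This is exactly your ``alternative route,'' including the degenerate case you flagged as left implicit in Lemma \ref{modules-at-the-mouth-of-exceptional-tubes} (the paper does supply it here). Your main route via $\mathrm{DTr}\cong\nu\Omega^2$ is sound: $\Omega(M_e)\cong M_{g\tau(e)}$ holds uniformly once one notes that for $d(\tau(e))=1$ the element $L(\tau(e))\in A_E$ equals the socle generator $L(g^{d(e)-1}\cdot e)\cdots L(e)$, so $A_EL(\tau(e))=\mathrm{soc}(P_{P(e)})=S_{P(\sigma(e))}=M_{g\tau(e)}$; and the socle computation $\mathrm{soc}(P_{P(f)})=S_{P(\sigma(f))}$ correctly pins $\nu(M_f)\cong M_{\sigma^{-1}(f)}$, in agreement with the paper's convention that $\sigma$ induces the inverse Nakayama automorphism of $A_E$. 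What your approach buys is a uniform formula with no case split on the shape of the middle term and independence from the hook/cohook combinatorics; what the paper's approach buys is the explicit middle term $N$ of the AR-sequence, which it reuses elsewhere (e.g.\ in the proof of Lemma \ref{modules-at-the-mouth-of-exceptional-tubes} and in identifying modules at the mouths of tubes). Either proof is acceptable.
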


\begin{proof}
If $d(e),d(\tau(e))>1$, then there are two arrows $L(e), L(\tau(e))$ of $Q'_E$ starting at $P(e)$. There exists an AR-sequence $0\rightarrow M_{\sigma^{-1}(g\tau)^2(e)}\rightarrow N\rightarrow M_e\rightarrow 0$, where $N$ is the string module given by the string
$$L(g^{d(e)-2}\cdot e)\cdots L(g\cdot e)L(e)L(\tau(e))^{-1}L(g^{d(h)-2}\cdot h)\cdots L(g\cdot h)L(h)$$
with $h=\sigma^{-1}(g\tau)^2(e)$.

If $d(e)=1$ and $d(\tau(e))>1$, then $M_e$ is the simple $A_E$-module corresponding to the vertex $P(e)$ of $Q'_E$. There exists an AR-sequence $0\rightarrow M_{\sigma^{-1}(g\tau)^2(e)}\rightarrow N\rightarrow M_e\rightarrow 0$, where $N$ is the string module given by the string
$$L(\tau(e))^{-1}L(g^{d(h)-2}\cdot h)\cdots L(g\cdot h)L(h)$$
with $h=\sigma^{-1}(g\tau)^2(e)$.

If $d(\tau(e))=1$, then the projective cover $P$ of $M_e$ is uniserial and $M_e\cong P/\mathrm{soc}(P)$. So there exists an AR-sequence $0\rightarrow \mathrm{rad}(P)\rightarrow P\oplus(\mathrm{rad}(P)/\mathrm{soc}(P))\rightarrow M_e\rightarrow 0$, where rad$(P)\cong M_{g\cdot e}$. Since $d(\tau(e))=1$, $g\tau(e)=\sigma\tau(e)=\tau\sigma(e)$, and $\sigma^{-1}(g\tau)^2(e)=\sigma^{-1}(g\tau)(\tau\sigma)(e)=\sigma^{-1}g\sigma(e)=g\cdot e$. Then $\mathrm{DTr}(M_e)\cong \mathrm{rad}(P)\cong M_{\sigma^{-1}(g\tau)^2(e)}$.
\end{proof}

The following result should be compared with \cite[Theorem 4.5]{D} for BGAs.

\begin{Prop}\label{exceptional-tubes}
Let $E=(E,P,L,d)$ be a finite connected $f_{ms}$-BG with $A_E$ representation-infinite. Denote by $\tau$ the involution of $E$ as a Brauer $G$-set, $\sigma$ the Nakayama automorphism of $E$, and $\sigma^{-1}(g\tau)^2:E\rightarrow E$, $e\mapsto\sigma^{-1}(g\tau)^2(e)$ a permutation of $E$. Then
\begin{itemize}
\item[(1)] There is a bijection between the set of exceptional tubes in the stable AR-quiver $\prescript{}{s}{\Gamma}_{A_E}$ of $A_E$ and the set of $\langle\sigma^{-1}(g\tau)^2\rangle$-orbits of $E$.
\item[(2)] The rank of an exceptional tube of $\prescript{}{s}{\Gamma}_{A_E}$ is equal to the length of the associated $\langle\sigma^{-1}(g\tau)^2\rangle$-orbit of $E$.
\end{itemize}
\end{Prop}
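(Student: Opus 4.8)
The plan is to combine the two preceding lemmas with the standard structure theory of stable tubes. Recall that an exceptional tube is a component of $\prescript{}{s}{\Gamma}_{A_E}$ of the form $\mathbb{Z}A_\infty/\langle\tau^n\rangle$ for some $n\geq 1$; its mouth consists of precisely $n$ modules, and the AR-translation $\mathrm{DTr}$ permutes these cyclically, so that they form a single $\mathrm{DTr}$-orbit whose cardinality equals the rank $n$. Since $\mathrm{DTr}$ preserves connected components of $\prescript{}{s}{\Gamma}_{A_E}$ and permutes the mouth of each tube, two mouth modules lie in the same exceptional tube if and only if they lie in the same $\mathrm{DTr}$-orbit.

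First I would use Lemma \ref{modules-at-the-mouth-of-exceptional-tubes} together with the injectivity of $e\mapsto M_e$ (noted before that lemma) to identify, up to isomorphism, the set of all modules lying at mouths of exceptional tubes with $\{M_e\mid e\in E\}$, and hence with $E$ itself via $e\mapsto M_e$. Next, Lemma \ref{DTr} gives $\mathrm{DTr}(M_e)\cong M_{\sigma^{-1}(g\tau)^2(e)}$, so under this identification the action of $\mathrm{DTr}$ on mouth modules corresponds exactly to the permutation $\sigma^{-1}(g\tau)^2$ of $E$. Consequently the $\mathrm{DTr}$-orbits of mouth modules are in bijection with the $\langle\sigma^{-1}(g\tau)^2\rangle$-orbits of $E$, and by the previous paragraph each such orbit is precisely the set of mouth modules of a single exceptional tube. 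This establishes $(1)$.

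For $(2)$, fix $e\in E$ with $M_e$ at the mouth of an exceptional tube $T$ of rank $n$. By the tube structure the mouth of $T$ equals $\{M_e,\mathrm{DTr}(M_e),\dots,\mathrm{DTr}^{n-1}(M_e)\}$, a set of $n$ pairwise non-isomorphic modules, and $\mathrm{DTr}^n(M_e)\cong M_e$. Iterating Lemma \ref{DTr} gives $\mathrm{DTr}^i(M_e)\cong M_{(\sigma^{-1}(g\tau)^2)^i(e)}$, so by injectivity of $e\mapsto M_e$ the elements $e,\sigma^{-1}(g\tau)^2(e),\dots,(\sigma^{-1}(g\tau)^2)^{n-1}(e)$ are pairwise distinct while $(\sigma^{-1}(g\tau)^2)^n(e)=e$. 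Hence the $\langle\sigma^{-1}(g\tau)^2\rangle$-orbit of $e$ has exactly $n$ elements, i.e. its length equals the rank of $T$, which is $(2)$.

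The argument is essentially bookkeeping once the two lemmas are in hand, so there is no serious obstacle; the one point deserving care is the purely structural input on stable tubes — that the mouth is a single full $\mathrm{DTr}$-orbit of cardinality equal to the rank and that $\mathrm{DTr}$ respects components — which is what guarantees that an \emph{entire} $\langle\sigma^{-1}(g\tau)^2\rangle$-orbit, and not merely part of it, fills out the mouth of one tube. This is standard for self-injective special biserial algebras and can be invoked from \cite{ES} or \cite{E}.
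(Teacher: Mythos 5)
Your proposal is correct and follows exactly the paper's own argument: identify the mouth modules of exceptional tubes with $\{M_e\mid e\in E\}$ via Lemma \ref{modules-at-the-mouth-of-exceptional-tubes}, then use Lemma \ref{DTr} to match the $\mathrm{DTr}$-action with the permutation $\sigma^{-1}(g\tau)^2$. The paper states this in two sentences; you have merely spelled out the standard bookkeeping about tubes that it leaves implicit.
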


\begin{proof}
According to Lemma \ref{modules-at-the-mouth-of-exceptional-tubes}, there is a bijection between $E$ and the set of string modules of $A_E$ at the mouth of an exceptional tube of $\prescript{}{s}{\Gamma}_{A_E}$. Moreover, by Lemma \ref{DTr}, the action of the AR-translation DTr on this set of modules corresponds to the permutation $\sigma^{-1}(g\tau)^2$ on $E$.
\end{proof}

\subsection{Construction of domestic  $f_{ms}$-BGAs}
\

Let $E$ be a finite connected $f_{ms}$-BG with $A_E$ domestic. By Theorem \ref{rep type of f-BCA and its reduced form}, $A_{R_E}$ is also domestic, where
\begin{equation*}
R_E= \begin{cases}
E/\langle\sigma\rangle, & \text{if } \langle\sigma\rangle \text{ is admissible}; \\
\widehat{E/\langle\sigma\rangle}, & \text{otherwise}.
\end{cases}
\end{equation*}
Suppose that the modified BG $E/\langle \sigma\rangle$ has $k$-edges, $l$ double half-edges, and $n$ vertices $v_1$, $\cdots$, $v_n$ of f-degree $d_1$, $\cdots$, $d_n$, respectively. By Lemma \ref{B}, there are three possible cases:
\begin{itemize}
\item[$(1)$] $l=2$, $k-n+1=0$, $d_i=1$ for $1\leq i\leq n$;
\item[$(2)$] $l=0$, $k-n+1=0$, $d_i=2$ for exactly two numbers $i=i_0$, $i_1$, and $d_i=1$ for $i\neq i_0$, $i_1$;
\item[$(3)$] $l=0$, $k-n+1=1$, $d_i=1$ for $1\leq i\leq n$.
\end{itemize}

Note that in case $(1)$ $E/\langle \sigma\rangle$ is an f-degree-free modified BG whose diagram is a tree after deleting two half-edges, in case $(2)$ $E/\langle \sigma\rangle$ is a BG whose diagram is a tree with two vertices multiplicity $2$ and others multiplicity $1$, and in case $(3)$ $E/\langle \sigma\rangle$ is a multiplicity-free BG whose diagram contains a unique cycle.

\begin{Lem}\label{determined-up-to-isomorphism-1}
In case $(1)$, $E$ is determined by $E/\langle\sigma\rangle$ and the order of the Nakayama automorphism $\sigma$ of $E$ up to isomorphism.
\end{Lem}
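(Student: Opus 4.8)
The plan is to realize $E$ as the connected regular covering of the modified BG $B=E/\langle\sigma\rangle$ that is classified by an explicit homomorphism from $\Pi_m(B)$ onto the deck group $\langle\sigma\rangle$, and then to show that this homomorphism depends only on $B$ and the order of $\sigma$. First I would record the structural facts forced by Case $(1)$. Case $(1)$ occurs exactly when $\langle\sigma\rangle$ is not admissible, so Lemma \ref{odd degree} applies; together with Lemma \ref{free} and Lemma \ref{modified projection is a covering}, the natural projection $\phi:E\to B$ is a covering whose deck group $\langle\sigma\rangle$ acts freely and transitively on each fiber. Thus $\phi$ is a connected regular covering with cyclic deck group of order $2r$, where $2r=\mathrm{ord}(\sigma)$. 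By Proposition \ref{modified fundamental group of modified f-BG}, at a double half-edge basepoint $b_0=e^{\langle\sigma\rangle}$ one has $\Pi_m(B,b_0)\cong F\langle a,c_1,c_2\rangle/\langle ac_1=c_1a,\ ac_2=c_2a,\ c_1^2=c_2^2=1\rangle$, where $a$ is the Nakayama loop at the vertex of $b_0$ and $c_1,c_2$ are the involutions at the two double half-edges. These three generators are intrinsic to the modified BG $B$.

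Next I would analyze the monodromy-to-deck homomorphism $\psi:\Pi_m(B,b_0)\to\langle\sigma\rangle\cong\mathbb{Z}/2r$ of Remark \ref{general-method-to-compute-the-fundamental-group}, sending $[w]$ to the deck transformation carrying the fixed lift $e$ to the endpoint of the lift of $w$ starting at $e$. The crucial step is that $\psi$ is forced. Lifting the Nakayama loop gives $\psi(a)=\sigma$, i.e. $a\mapsto 1$, a generator, so $\psi$ is surjective. Lifting a $\tau'$-loop at a double half-edge: its lift runs from $e$ to $\tau(e)\in e^{\langle\sigma\rangle}$, so $\psi(c_j)\in\mathbb{Z}/2r$ satisfies $2\psi(c_j)=0$ (as $c_j^2=1$) and $\psi(c_j)\neq 0$ (since $\tau(e)\neq e$ because $E$ is an $f_{ms}$-BG, whence $\tau$ has no fixed point). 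The only such element is $\psi(c_j)=r$. Hence $\psi$, and therefore its kernel $H=\ker\psi=\phi_{*}(\Pi_m(E,e))$, is completely pinned down by $B$ and $2r$, with no further reference to $E$.

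Finally I would invoke the covering correspondence to conclude. Let $E,E'$ be two $f_{ms}$-BGs in Case $(1)$ together with an isomorphism $B=E/\langle\sigma\rangle\xrightarrow{\sim}E'/\langle\sigma'\rangle=B'$ and $\mathrm{ord}(\sigma)=\mathrm{ord}(\sigma')=2r$. This isomorphism carries the intrinsic generators $a,\{c_1,c_2\}$ to $a',\{c_1',c_2'\}$, and since $\psi$ and $\psi'$ are given by the same rule ($a\mapsto 1$, $c_j\mapsto r$), it carries $\psi$ to $\psi'$ and $H$ to $H'$; the ambiguity $c_1\leftrightarrow c_2$ is harmless because both map to $r$. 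Choosing lifts $e,e'$ of $b_0$, regularity yields $\phi_{*}(\Pi_m(E,e))=H=H'=\phi'_{*}(\Pi_m(E',e'))$. Applying Proposition \ref{modified existence of morphism} in both directions produces coverings $\alpha:E\to E'$ and $\beta:E'\to E$ over $B$ with $\alpha(e)=e'$, $\beta(e')=e$; by the uniqueness clause $\beta\alpha=\mathrm{id}_E$ and $\alpha\beta=\mathrm{id}_{E'}$, so $\alpha$ is an isomorphism of $f_{ms}$-BGs.

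I expect the main obstacle to lie in the second paragraph: rigorously verifying that the monodromy is forced, that is, that $\psi(c_j)=r$ rather than merely some order-dividing-$2$ element and that $\psi(a)$ is a genuine generator. This rests on the fine analysis of Lemma \ref{odd degree} (the equality $|e^{\langle\sigma\rangle}|=2r$ and the fact that the minimal $j$ with $\sigma^{j}(e)=\tau(e)$ is $r$) and on the fixed-point-freeness of $\tau$. Packaging these into the clean assertion that ``$\psi$ depends only on $B$ and $2r$'' and transporting it correctly along an abstract isomorphism of modified BGs is where the care is needed; the covering-theoretic conclusion in the last paragraph is then essentially formal.
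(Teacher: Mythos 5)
Your proposal is correct and follows essentially the same route as the paper: both identify $E\to E/\langle\sigma\rangle$ as a regular covering with deck group $\langle\sigma\rangle\cong\mathbb{Z}/2r$, pin down the monodromy on the generators $a,c_1,c_2$ of $\Pi_m(B)$ (the paper cites the explicit permutation representation $\rho(\overline a)=(1\,2\cdots 2r)$, $\rho(\overline{c_j})=(1\ r{+}1)\cdots(r\ 2r)$ from the proof of Proposition \ref{rep type to fundamental group}, which is exactly your $\psi(a)=1$, $\psi(c_j)=r$), and conclude that the image subgroups coincide so that Proposition \ref{modified existence of morphism} yields the isomorphism. Your shortcut for forcing $\psi(c_j)=r$ via the unique order-two element of $\mathbb{Z}/2r$ is a mild streamlining of the same computation, not a different argument.
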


\begin{proof}
Suppose that $E$ and $E'$ are two $f_{ms}$-BGs such that $E/\langle\sigma\rangle\cong E'/\langle\sigma\rangle$ is a modified BG as in case $(1)$ and the Nakayama automorphisms of $E$ and $E'$ have the same order. According to Lemma \ref{odd degree} (2), the order of the Nakayama automorphisms of $E$ is even, say $2r$.

By Proposition \ref{modified fundamental group of modified f-BG}, the fundamental group of $E/\langle\sigma\rangle$ is isomorphic to $$F\langle a,c_1,c_2\rangle/\langle a c_1=c_1 a, a c_2=c_2 a, c_{1}^{2}=c_{2}^{2}=1\rangle,$$ and by the proof of Proposition \ref{rep type to fundamental group}, the image of the fundamental group of $E$ in $F\langle a,c_1,c_2\rangle/\langle a c_1=c_1 a$, $a c_2=c_2 a$, $c_{1}^{2}=c_{2}^{2}=1\rangle$ is the subgroup of $F\langle a,c_1,c_2\rangle/\langle a c_1=c_1 a$, $a c_2=c_2 a$, $c_{1}^{2}=c_{2}^{2}=1\rangle$ formed by elements $x$ which satisfies $\rho(x)(1)=1$, where $\rho:F\langle a,c_1,c_2\rangle/\langle a c_1=c_1 a$, $a c_2=c_2 a$, $c_{1}^{2}=c_{2}^{2}=1\rangle\rightarrow S_{2r}$ is the group homomorphism given by $\rho(\overline{a})=(1$ $2\cdots 2r)$ and $\rho(\overline {c_1})=\rho(\overline{c_2})=(1$ $r+1)(2$ $r+2)\cdots (r$ $2r)$. Since the same things also hold for $E'$, the images of the fundamental groups of $E$ and $E'$ in $F\langle a,c_1,c_2\rangle/\langle a c_1=c_1 a$, $a c_2=c_2 a$, $c_{1}^{2}=c_{2}^{2}=1\rangle$ are equal. By Proposition \ref{modified existence of morphism}, $E$ and $E'$ are isomorphic.
\end{proof}

Now we construct the $f_{ms}$-BG $E$ with $E/\langle\sigma\rangle$ belonging to case $(1)$. Suppose that the order of the Nakayama automorphism of $E$ is $2r$, where $r$ is a positive integer. Let $B=E/\langle\sigma\rangle=(B,B,\tau,d)$ and fix some $b\in B$. Since the diagram obtained by deleting the two double half-edges of the diagram of $B$ is a tree, each element $c$ of $B$ can be expressed uniquely as the form $(g\tau)^{j_c}(b)$, where $0\leq j_c\leq 2n-1$. For every vertex $v$ of $B$, let $b_v$ be the half-edge in $v$ with $j_{b_v}$ smallest. Define an $f_{ms}$-BG $E_r=(E_r,E_r,\tau,d)$ as follows: $E_r=\{(c,j)\mid c\in B,j\in\{1,2,\cdots,2r\}=\mathbb{Z}/2r\mathbb{Z}\}$; for every $(c,j)\in B$, define
$$g\cdot (c,j)=\begin{cases}
(g\cdot c,j), &\text{ if } g\cdot c\neq b_v \text{ for any vertex } v \text{ of } B; \\
(g\cdot c,j+1), &\text{ if } g\cdot c= b_v \text{ for some vertex } v \text{ of } B,
\end{cases}$$
$$\tau(c,j)=\begin{cases}
(\tau(c),j), &\text{ if } c \text{ is not a double half-edge of } B; \\
(c,j+r), &\text{ if } c \text{ is a double half-edge of } B,
\end{cases}$$
and $d(c,j)=d(c)$.

\begin{Prop}\label{construction-of-E-in-case-(1)}
In case $(1)$, $E$ is isomorphic to $E_r$.
\end{Prop}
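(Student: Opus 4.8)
The plan is to deduce the isomorphism $E\cong E_r$ from Lemma \ref{determined-up-to-isomorphism-1}, which says that in case $(1)$ a finite connected $f_{ms}$-BG is determined up to isomorphism by its quotient modified BG together with the order of its Nakayama automorphism. By hypothesis $E/\langle\sigma\rangle\cong B$ is a modified BG as in case $(1)$ and the order of $\sigma$ is $2r$ (this order is necessarily even by Lemma \ref{odd degree}$(2)$, since $\langle\sigma\rangle$ is not admissible when $B$ has double half-edges). Thus it suffices to verify that $E_r$ is a finite connected $f_{ms}$-BG whose Nakayama automorphism, which I denote $\sigma_{E_r}$, has order $2r$ and satisfies $E_r/\langle\sigma_{E_r}\rangle\cong B$. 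Applying Lemma \ref{determined-up-to-isomorphism-1} to the pair $E$ and $E_r$ then forces $E\cong E_r$.

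First I would check that $E_r$ is an $f_{ms}$-BG. It is finite, with $|E_r|=2r\,|B|$. The map $\tau$ is a fixed-point-free involution: for $c$ not a double half-edge $\tau(c,j)=(\tau(c),j)\neq(c,j)$ because $\tau(c)\neq c$, while for a double half-edge $\tau(c,j)=(c,j+r)\neq(c,j)$ because $0<r<2r$, and $\tau^{2}=\mathrm{id}$ in both cases. Axiom $(mf1)$ holds since $d(c,j)=d(c)$ depends only on the $G$-orbit of $c$ in $B$, and $U=E_r$ is built into the construction. The remaining axiom $(mf2)$, that $\tau\sigma_{E_r}=\sigma_{E_r}\tau$, I defer to the computation of $\sigma_{E_r}$ below.

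The crux is the identity $\sigma_{E_r}(c,j)=(c,j+1)$. Here the triviality of the f-degree of $B$ is essential: let $v$ be the vertex of $B$ containing $c$, that is, the $\langle g\rangle$-orbit of $c$; since $d_i=1$, we have $d(c)=|v|$, so applying $g$ exactly $d(c)$ times traverses $v$ once and returns the first coordinate to $c$. The arrivals $g\cdot c,\dots,g^{|v|}\cdot c$ of this loop run over all of $v$, so the distinguished half-edge $b_v$ is met exactly once, and by the defining rule for the $G$-action the second coordinate is incremented by exactly $1$; hence $\sigma_{E_r}(c,j)=(c,j+1)$. Consequently $\sigma_{E_r}^{m}(c,j)=(c,j+m)$, which equals $(c,j)$ for all $(c,j)$ if and only if $2r\mid m$, so $\sigma_{E_r}$ has order exactly $2r$. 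The same formula resolves $(mf2)$ at once (for $c$ not a double half-edge both sides equal $(\tau(c),j+1)$, and for a double half-edge both equal $(c,j+r+1)$), and it shows that $\langle\sigma_{E_r}\rangle$ acts on each fibre $p^{-1}(c)=\{(c,j)\mid j\in\mathbb{Z}/2r\mathbb{Z}\}$ of the projection $p\colon E_r\to B$, $(c,j)\mapsto c$, as a single $2r$-cycle; together with the connectedness of $B$ this gives connectedness of $E_r$.

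Finally I would verify that $p$ is a covering of Brauer $G$-sets by checking conditions $(1')$, $(2)$, $(3)$ of Definition \ref{covering of modified f-BG}: these amount to $p(g\cdot(c,j))=g\cdot c$, to $p(\tau(c,j))=\tau(c)$ (using $\tau(c)=c$ on double half-edges), and to $d(p(c,j))=d(c)$, all immediate from the definitions. Since $p$ is $\langle\sigma_{E_r}\rangle$-invariant with fibres equal to single $\langle\sigma_{E_r}\rangle$-orbits, it factors through the natural projection of Lemma \ref{modified projection is a covering} and induces a bijection $E_r/\langle\sigma_{E_r}\rangle\xrightarrow{\sim}B$ which respects the $G$-action, $\tau$ and $d$, hence is an isomorphism of modified BGs. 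Combining this with the order computation and invoking Lemma \ref{determined-up-to-isomorphism-1} yields $E\cong E_r$. I expect the only genuinely delicate point to be the computation of $\sigma_{E_r}$ in the third paragraph, where the triviality of the f-degree is used to guarantee that one full $g$-loop around a vertex meets $b_v$ precisely once and so increments $j$ by exactly one; everything else is routine verification.
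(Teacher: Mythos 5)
Your proposal is correct and follows essentially the same route as the paper: compute that the Nakayama automorphism of $E_r$ is $(c,j)\mapsto(c,j+1)$ (using f-degree triviality of $B$ so that one $g$-loop around a vertex meets $b_v$ exactly once), deduce that it has order $2r$ and that the projection induces $E_r/\langle\sigma_{E_r}\rangle\cong B$, and conclude by Lemma \ref{determined-up-to-isomorphism-1}. You merely spell out the routine verifications (that $E_r$ is a connected $f_{ms}$-BG and that the projection is a covering) which the paper leaves implicit.
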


\begin{proof}
Since $B=E/\langle\sigma\rangle$ is f-degree-free, $d(c)=|G\cdot c|$ for each $c\in B$. For every $(c,j)\in E_r$, since $d(c,j)=d(c)$, it can be shown that the Nakayama automorphism $\sigma$ of $E_r$ is given by $\sigma(c,j)=(c,j+1)$. So the covering $E_r\rightarrow B$, $(c,j)\mapsto c$ induces an isomorphism $E_r/\langle\sigma\rangle\cong B$. Since the order of the Nakayama automorphism of $E_r$ is $2r$, by Lemma \ref{determined-up-to-isomorphism-1}, $E_r$ is isomorphic to $E$.
\end{proof}

\begin{Lem}\label{determined-up-to-isomorphism-2}
In case $(2)$, $E$ is determined by $E/\langle\sigma\rangle$ and the order of the Nakayama automorphism $\sigma$ of $E$ up to isomorphism.
\end{Lem}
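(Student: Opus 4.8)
The plan is to repeat, essentially verbatim, the strategy of the proof of Lemma \ref{determined-up-to-isomorphism-1}, replacing the case $(1)$ fundamental-group computation by the case $(2)$ computation already carried out inside Proposition \ref{rep type to fundamental group}. So I would start with two finite connected $f_{ms}$-BGs $E=(E,E,\tau,d)$ and $E'=(E',E',\tau',d')$ for which there is a fixed isomorphism $B:=E/\langle\sigma\rangle\cong E'/\langle\sigma\rangle$ of modified BGs lying in case $(2)$, and whose Nakayama automorphisms have the same (odd) order $r$. Through this fixed isomorphism I would choose, once and for all, half-edges $e^{\langle\sigma\rangle}\in v_{i_0}$ and $h^{\langle\sigma\rangle}\in v_{i_1}$ of $B$ at the two vertices of f-degree $2$, together with a walk $w'$ of $B$ joining them; these auxiliary data then serve simultaneously for $E$ and for $E'$.

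Next I would invoke the proof of \cite[Proposition 5.9]{LL2} to produce the explicit isomorphism $f\colon F\langle a,b\rangle/\langle a^2=b^2\rangle\xrightarrow{\sim}\Pi(B,e^{\langle\sigma\rangle})$ with $f(\overline a)=\overline{(e^{\langle\sigma\rangle}|g^{d(e)/2}|e^{\langle\sigma\rangle})}$ and $f(\overline b)=\overline{(w')^{-1}(h^{\langle\sigma\rangle}|g^{d(h)/2}|h^{\langle\sigma\rangle})w'}$. Then, following the case $(2)$ argument of Proposition \ref{rep type to fundamental group} together with Remark \ref{general-method-to-compute-the-fundamental-group}, the covering $\phi\colon E\to B$ of Brauer $G$-sets makes $\phi^{-1}(e^{\langle\sigma\rangle})=e^{\langle\sigma\rangle}$ into a $\Pi(B,e^{\langle\sigma\rangle})$-set on which, after identifying $g^{iN}\cdot e$ with $i\in\{1,\dots,r\}$, both $\overline a$ and $\overline b$ act by the cyclic permutation $(1\,2\cdots r)$, and $f_*(\Pi(E,e))$ is precisely the stabilizer of $e$ for this action. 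The decisive observation is that this stabilizer depends only on the permutation representation $\rho\colon F\langle a,b\rangle/\langle a^2=b^2\rangle\to S_r$ determined by $\rho(\overline a)=\rho(\overline b)=(1\,2\cdots r)$, hence only on $B$ and on the order $r=o(\sigma)$.

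Since $E'$ carries exactly the same data through the fixed isomorphism $E/\langle\sigma\rangle\cong E'/\langle\sigma\rangle$, the identical computation gives $f_*(\Pi(E',e))=f_*(\Pi(E,e))$ as subgroups of $\Pi(B,e^{\langle\sigma\rangle})$. I would then apply Proposition \ref{modified existence of morphism} in both directions: the equality of images yields coverings $E\to E'$ and $E'\to E$ of Brauer $G$-sets over $B$ compatible with the chosen basepoints, and the uniqueness clause of that proposition forces their composites to be the respective identities, so both are isomorphisms and $E\cong E'$.

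The one point of genuine content beyond the case $(1)$ template is the verification that $\overline b$ acts by the same cyclic permutation $(1\,2\cdots r)$ as $\overline a$; this is the main obstacle, but it is exactly the commutation-with-$\langle\sigma\rangle$ calculation already performed in Proposition \ref{rep type to fundamental group} (resting on $r=M/N=M'/N'$ via Lemma \ref{free}), so here it suffices to cite it rather than redo it. The only remaining bookkeeping is to fix the auxiliary choices $e^{\langle\sigma\rangle}$, $h^{\langle\sigma\rangle}$, $w'$ simultaneously for $E$ and $E'$, so that the two stabilizer subgroups genuinely live in one and the same group $\Pi(B,e^{\langle\sigma\rangle})$ and may be compared directly.
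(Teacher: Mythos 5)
Your proposal is correct and follows essentially the same route as the paper: reduce to comparing the images of the fundamental groups of $E$ and $E'$ in $\Pi(B,e^{\langle\sigma\rangle})\cong F\langle a,b\rangle/\langle a^2=b^2\rangle$, observe via the case $(2)$ computation in Proposition \ref{rep type to fundamental group} that both images are the stabilizer of $e$ under the permutation representation sending $\overline a$ and $\overline b$ to $(1\,2\cdots r)$, and conclude by the covering-comparison criterion (the paper cites \cite[Proposition 2.32]{LL2}, you cite its Brauer-$G$-set analogue Proposition \ref{modified existence of morphism}; either applies here).
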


\begin{proof}
The proof is similar to that of Lemma \ref{determined-up-to-isomorphism-1}. Suppose that $E$ and $E'$ are two $f_{ms}$-BGs such that $E/\langle\sigma\rangle\cong E'/\langle\sigma\rangle$ is an $f_{ms}$-BG as in case $(2)$ and the Nakayama automorphisms of $E$ and $E'$ have the same order. According to the proof of Proposition \ref{rep type to fundamental group}, the order of the Nakayama automorphisms of $E$ is odd, say $2r-1$.

By \cite[Proposition 6.9]{LL2}, the fundamental group of $E/\langle\sigma\rangle$ is isomorphic to $F\langle a,b\rangle/\langle a^2=b^2\rangle$, and by the proof of Proposition \ref{rep type to fundamental group}, the image of the fundamental group of $E$ in $F\langle a,b\rangle/\langle a^2=b^2\rangle$ is the subgroup of $F\langle a,b\rangle/\langle a^2=b^2\rangle$ formed by elements $x$ which satisfies $\rho(x)(1)=1$, where $\rho: F\langle a,b\rangle/\langle a^2=b^2\rangle\rightarrow S_{2r-1}$ is the group homomorphism given by $\rho(\overline{a})=\rho(\overline{b})=(1$ $2\cdots 2r-1)$. Since the same things also hold for $E'$, the images of the fundamental groups of $E$ and $E'$ in $F\langle a,b\rangle/\langle a^2=b^2\rangle$ are equal. By \cite[Proposition 3.29]{LL2}, $E$ and $E'$ are isomorphic.
\end{proof}

Now we construct the $f_{ms}$-BG $E$ with $E/\langle\sigma\rangle$ belonging to case $(2)$. This construction is similar to that in case $(1)$. Suppose that the order of the Nakayama automorphism of $E$ is $2r-1$, where $r$ is a positive integer. Let $B=E/\langle\sigma\rangle=(B,B,\tau,d)$ and fix some $b\in B$. Since the diagram of $B$ is a tree, each element $c$ of $B$ can be expressed uniquely as the form $(g\tau)^{j_c}(b)$, where $0\leq j_c\leq 2n-3$. For every vertex $v$ of $B$, let $b_v$ be the half-edge in $v$ such that $j_{b_v}$ is smallest. Define an $f_{ms}$-BG $E'_r=(E'_r,E'_r,\tau,d)$ as follows: $E'_r=\{(c,j)\mid c\in B,j\in\{1,2,\cdots,2r-1\}=\mathbb{Z}/(2r-1)\mathbb{Z}\}$; for every $(c,j)\in E'_r$, define
$$g\cdot (c,j)=\begin{cases}
(g\cdot c,j), &\text{ if } g\cdot c\neq b_v \text{ for any vertex } v \text{ of } B; \\
(g\cdot c,j+1), &\text{ if } g\cdot c= b_{v_i} \text{ for some vertex } v_i \text{ of } B \text{ with } i\neq i_0,i_1; \\
(g\cdot c,j+r), &\text{ if } g\cdot c= b_{v_i} \text{ for some vertex } v_i \text{ of } B \text{ with } i=i_0 \text{ or } i=i_1,
\end{cases}$$
$\tau(c,j)=(\tau(c),j)$,
and $d(c,j)=d(c)$.

\begin{Prop}\label{construction-of-E-in-case-(2)}
In case $(2)$, $E$ is isomorphic to $E'_r$.
\end{Prop}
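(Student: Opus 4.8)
The plan is to follow the strategy already used for case $(1)$ in Proposition \ref{construction-of-E-in-case-(1)}: compute the Nakayama automorphism of the explicitly constructed $f_{ms}$-BG $E'_r$, identify its quotient by $\langle\sigma\rangle$ with $B=E/\langle\sigma\rangle$, record that this Nakayama automorphism has order $2r-1$, and then invoke the rigidity statement Lemma \ref{determined-up-to-isomorphism-2}. Recall from the proof of Proposition \ref{rep type to fundamental group} that in case $(2)$ the order of the Nakayama automorphism of $E$ is odd, written as $2r-1$; this is exactly the value built into the definition of $E'_r$.

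First I would check that $E'_r=(E'_r,E'_r,\tau,d)$ is a connected $f_{ms}$-BG. The action of $g$ is a bijection of $E'_r$, so $E'_r$ is a $G$-set; the map $\tau(c,j)=(\tau(c),j)$ is a fixed-point-free involution because $\tau$ is one on $B$ (in case $(2)$ we have $l=0$, so $B$ has no double half-edges); and $d(c,j)=d(c)$ is constant on $G$-orbits since $d$ is constant on $G$-orbits of $B$. The compatibility $\tau\sigma=\sigma\tau$ will follow once $\sigma$ is computed, and connectedness follows as in case $(1)$ from the connectedness of $B$ together with the fact that the $g$-walk $\sigma=g^{d(c)}$ links $(c,j)$ to $(c,j+1)$. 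So it is convenient to carry out the computation of $\sigma$ next.

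The key step is to show that the Nakayama automorphism $\sigma$ of $E'_r$ is given by $\sigma(c,j)=(c,j+1)$. Fix $(c,j)$ and let $v=G\cdot c$ be its vertex; then $d(c,j)=d(c)=d_f(v)\,|v|$. As $g$ is applied repeatedly to $c$, the underlying element traverses the cyclic $G$-orbit $v$ of $B$, and over each full period of length $|v|$ the image lands on the distinguished half-edge $b_v$ exactly once. If $v$ has f-degree $1$ (i.e. $v\neq v_{i_0},v_{i_1}$), then $d(c)=|v|$, so there is one period and one jump of size $1$, giving $\sigma(c,j)=(c,j+1)$. If $v$ is one of the two special vertices $v_{i_0},v_{i_1}$ of f-degree $2$, then $d(c)=2|v|$, so there are two periods, each contributing a jump of size $r$ at $b_v$; the total shift is $2r$, and since $2r\equiv 1\pmod{2r-1}$ this again gives $\sigma(c,j)=(c,j+1)$ in $\mathbb{Z}/(2r-1)\mathbb{Z}$. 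In particular $\sigma$ has order $2r-1$, its orbits are exactly $\{(c,j)\mid j\in\mathbb{Z}/(2r-1)\mathbb{Z}\}$, and $\tau\sigma=\sigma\tau$ holds, confirming that $E'_r$ is an $f_{ms}$-BG. The projection $(c,j)\mapsto c$ is then a covering of Brauer $G$-sets inducing an isomorphism $E'_r/\langle\sigma\rangle\cong B\cong E/\langle\sigma\rangle$.

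Finally, since $E'_r/\langle\sigma\rangle\cong E/\langle\sigma\rangle$ is a modified BG as in case $(2)$ and the Nakayama automorphisms of $E'_r$ and $E$ both have order $2r-1$, Lemma \ref{determined-up-to-isomorphism-2} yields $E'_r\cong E$. I expect the only genuinely delicate point to be the computation of $\sigma$ at the two special vertices: one must argue carefully that each full traversal of the orbit hits $b_v$ exactly once and that the two resulting jumps of size $r$ combine to the single step $2r\equiv 1$ modulo $2r-1$ — this congruence being precisely what reconciles the odd order $2r-1$ with f-degree $2$, in parallel with the even order $2r$ and the relation $2r\equiv 0$ that appears in case $(1)$.
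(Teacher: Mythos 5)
Your proposal is correct and follows exactly the paper's strategy: compute that the Nakayama automorphism of $E'_r$ is $\sigma(c,j)=(c,j+1)$ (the paper states this directly from the formula $d(c,j)=|G\cdot c|$ or $2|G\cdot c|$, while you spell out the count of jumps at $b_v$ and the congruence $2r\equiv 1\pmod{2r-1}$), deduce $E'_r/\langle\sigma\rangle\cong B$ and that $\sigma$ has order $2r-1$, and conclude via Lemma \ref{determined-up-to-isomorphism-2}. The extra verification that $E'_r$ is a connected $f_{ms}$-BG is left implicit in the paper but is a harmless addition.
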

\begin{proof}
For every $(c,j)\in E'_r$, since
$$d(c,j)=\begin{cases}
| G\cdot c |, &\text{ if } c\notin v_{i_0}, v_{i_1}; \\
2 | G\cdot c |, &\text{ if } c\in v_{i_0} \text{ or } c\in v_{i_1},
\end{cases}$$
we have $\sigma(c,j)=(c,j+1)$, where $\sigma$ denotes the Nakayama automorphism of $E'_r$. Therefore $E'_r/\langle\sigma\rangle\cong B$. Since the order of the Nakayama automorphism of $E'_r$ is $2r-1$, by Lemma \ref{determined-up-to-isomorphism-2}, $E'_r$ is isomorphic to $E$.
\end{proof}

Let $B=E/\langle\sigma\rangle=(B,B,\tau,d)$ be the $f_{ms}$-BG in case $(3)$. Then the diagram of $B$ is a graph with a unique cycle. Suppose the length of this cycle is $m$, and there are $p$ edges outside this cycle and $q$ edges inside this cycle. Then we have $m+p+q=n$. Denote by $g\tau$ the permutation of $B$ mapping each $c\in B$ to $(g\tau)(c)$. It is straightforward to show that $B$ has exactly two $\langle g\tau\rangle$-orbits, one of length $m+2p$ containing every half-edge outside the unique cycle of $B$, and the other of length $m+2q$ containing every half-edge inside the unique cycle of $B$. We call a half-edge of $B$ {\it outer} (resp. {\it inner}) if it belongs to the first (resp. second) $\langle g\tau\rangle$-orbit.

Now we construct the $f_{ms}$-BG $E$ with $E/\langle\sigma\rangle$ belonging to case $(3)$. Fix an outer half-edge $b\in B=E/\langle\sigma\rangle$ belonging to the unique cycle of $B$. Then $\tau(b)$ is an inner half-edge of $B$. For every outer (resp. inner) half-edge $c$ of $B$, there exists a unique integer $0\leq j_c\leq m+2p-1$ (resp. $0\leq j'_c\leq m+2q-1$) such that $c=(g\tau)^{j_c}(b)$ (resp. $c=(g\tau)^{j'_c}(\tau(b))$). For every vertex $v$ of $B$, define a half-edge $b_v\in v$ as follows: if $v$ contains an outer half-edge, define $b_v$ as the outer half-edge in $v$ with $j_{b_v}$ smallest; if $v$ does not contain any outer half-edge, define $b_v$ as the inner half-edge in $v$ with $j'_{b_v}$ smallest.

Suppose that the order of the Nakayama automorphism $\sigma$ of $E$ is $r$. For every integer $1\leq l\leq r$, define an $f_{ms}$-BG $E_{rl}=(E_{rl},E_{rl},\tau,d)$ as follows: $E_{rl}=\{(c,j)\mid c\in B, j\in\{1,\cdots,r\}=\mathbb{Z}/r\mathbb{Z}\}$; for every $(c,j)\in E_{rl}$, define
$$g\cdot (c,j)=\begin{cases}
(g\cdot c,j), &\text{ if } g\cdot c\neq b_v \text{ for any vertex } v \text{ of } B; \\
(g\cdot c,j+1), &\text{ if } g\cdot c= b_v \text{ for some vertex } v \text{ of } B,
\end{cases}$$
$$\tau(c,j)=\begin{cases}
(\tau(c),j), &\text{ if } c\neq b \text{ and } c\neq\tau(b); \\
(\tau(c),j+l), &\text{ if } c=b; \\
(\tau(c),j-l), &\text{ if } c=\tau(b),
\end{cases}$$
and $d(c,j)=d(c)$.

\begin{Prop}\label{construction-of-E-in-case-(3)}
In case $(3)$, $E$ is isomorphic to some $E_{rl}$ with $1\leq l\leq r$.
\end{Prop}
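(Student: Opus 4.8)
The plan is to classify, by covering theory, all finite connected $f_{ms}$-BGs $E$ with $E/\langle\sigma\rangle\cong B$ and $o(\sigma)=r$, and then to match this classification against the family $E_{r1},\dots,E_{rr}$. Since in case $(3)$ the quotient $B=E/\langle\sigma\rangle$ is itself an $f_{ms}$-BG (it has trivial f-degree and no double half-edges), I may work throughout with coverings of $f_{ms}$-BGs and invoke \cite[Proposition 2.32]{LL2}, passing freely between $\Pi$ and $\Pi_m$ by Lemma \ref{two definitions of fundamental group are equal}. Fixing $b\in B$, Proposition \ref{modified fundamental group of modified f-BG} (equivalently \cite[Proposition 5.9]{LL2}) gives $\Pi(B,b)\cong\mathbb{Z}\oplus\mathbb{Z}=\langle x,y\rangle$, where $x=[(b|g^{d(b)}|b)]$ is the loop around the vertex containing $b$ and $y$ is the loop around the unique cycle of $B$.

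First I would record the classification principle. The projection $\phi\colon E\to B$ is a covering whose group of deck transformations contains $\langle\sigma\rangle$, which acts freely by Lemma \ref{free}; hence $\phi$ is regular of degree $r$ with $\Pi(B,b)/\phi_{*}\Pi(E,\tilde b)\cong\langle\sigma\rangle\cong\mathbb{Z}/r\mathbb{Z}$. Following Remark \ref{general-method-to-compute-the-fundamental-group}, the fiber $b^{\langle\sigma\rangle}$ is a $\Pi(B,b)$-set; since $\langle\sigma\rangle$ acts freely and transitively on it and commutes with the monodromy action, identifying $b^{\langle\sigma\rangle}$ with $\mathbb{Z}/r\mathbb{Z}$ via $\sigma$ turns the monodromy into a surjective homomorphism $\mu_{E}\colon\Pi(B,b)\to\mathbb{Z}/r\mathbb{Z}$ with $\phi_{*}\Pi(E,\tilde b)=\ker\mu_{E}$. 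Because $\sigma(\tilde b)=g^{d(b)}\cdot\tilde b$ is exactly the terminal of the lift of $x$ at $\tilde b$, the element $x$ maps to the generator $\sigma$, so $\mu_{E}(x)\in(\mathbb{Z}/r\mathbb{Z})^{\times}$. Counting homomorphisms $\mathbb{Z}\oplus\mathbb{Z}\to\mathbb{Z}/r\mathbb{Z}$ with $\mu(x)$ a unit and factoring out $\mathrm{Aut}(\mathbb{Z}/r\mathbb{Z})$ (which preserves the kernel) shows there are precisely $r$ possible subgroups $\ker\mu_{E}$; normalizing $\mu_{E}(x)=1$, they are parametrized by $\mu_{E}(y)\in\mathbb{Z}/r\mathbb{Z}$. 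By \cite[Proposition 2.32]{LL2}, $E$ is determined up to isomorphism by $\phi_{*}\Pi(E,\tilde b)$, so there are at most $r$ possibilities for $E$.

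It then remains to realize all $r$ of them by the $E_{rl}$. I would first check, exactly as in Propositions \ref{construction-of-E-in-case-(1)} and \ref{construction-of-E-in-case-(2)}, that each $E_{rl}$ is a well-defined $f_{ms}$-BG, that going once around any vertex passes through its distinguished base half-edge $b_v$ exactly once so that $\sigma_{E_{rl}}(c,j)=(c,j+1)$ has order $r$, and that $E_{rl}/\langle\sigma\rangle\cong B$. For the monodromy I would compute $\mu_{l}(x)=1$ (the vertex loop shifts the second coordinate by $+1$) and evaluate $\mu_{l}(y)$ by lifting the cycle loop: each $g$-step contributes a shift coming from the base half-edges it crosses, and each $\tau$-step along the cycle is untwisted except the crossing of the edge $\{b,\tau(b)\}$, which by construction contributes the twist $\pm l$. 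Hence $\mu_{l}(y)\equiv\pm l+c\pmod r$ with $c$ a constant depending only on $m,p,q$ and the chosen base half-edges, independent of $l$; as $l$ runs through $1,\dots,r$ this exhausts $\mathbb{Z}/r\mathbb{Z}$, so $\{\ker\mu_{l}\}$ is exactly the set of $r$ subgroups found above. Therefore $\phi_{*}\Pi(E,\tilde b)=\ker\mu_{l}$ for some $l$, and $E\cong E_{rl}$. The main obstacle is the bookkeeping in this last monodromy computation: one must verify that the twist $\pm l$ from the single edge $\{b,\tau(b)\}$ is the only $l$-dependent contribution to $\mu_{l}(y)$, i.e. that the aggregate base-point shifts along the cycle are genuinely independent of $l$. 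Everything else is a routine adaptation of the arguments already used in cases $(1)$ and $(2)$.
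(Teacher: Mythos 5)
Your proposal is correct and follows essentially the same route as the paper: both arguments reduce the classification to the image of $\Pi(E)$ in $\Pi(B,b)\cong\mathbb{Z}\oplus\mathbb{Z}$ (your monodromy homomorphism $\mu_E$ onto $\mathbb{Z}/r\mathbb{Z}$ is the dual description of the paper's subgroup $H$ from Lemma \ref{generators-of-H}), compute the corresponding invariant for each $E_{rl}$ (the paper finds $K_l=\langle(r,0),(-(l+m+p),1)\rangle$, matching your $\mu_l(y)=l+m+p$ with $c=m+p$), and conclude via \cite[Proposition 2.32]{LL2}. The bookkeeping you flag as the main obstacle is exactly the computation $(0,1)\cdot(b,j)=(g\tau)^{m+2p}(b,j)=(b,j+l+m+p)$ carried out in the paper's proof, where the only $l$-dependent contribution is indeed the single $\tau$-crossing at $b$ since $\tau(b)$ lies in the inner orbit.
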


We need some preparations before we prove Proposition \ref{construction-of-E-in-case-(3)}.

According to the proof of \cite[Proposition 6.9]{LL2}, we have an isomorphism $u:\Pi(B,b)\rightarrow\mathbb{Z}\oplus\mathbb{Z}$ with $u(\overline{(b|g^{d(b)}|b)})=(1,0)$ and $u(\overline{(b|(g\tau)^{m+2p}|b)})=(0,1)$. Let $H$ be the image of the composition map $\Pi(E,e)\rightarrow\Pi(B,b)\xrightarrow{u}\mathbb{Z}\oplus\mathbb{Z}$, where $e\in E$ is a preimage of $b$ in $E$ (the subgroup $H$ of $\mathbb{Z}\oplus\mathbb{Z}$ does not depend on the choice of $e$). Let $H_1=\{(a,0)\mid a\in\mathbb{Z}\}$, $H_2=\{(0,a)\mid a\in\mathbb{Z}\}$ be two subgroups of $\mathbb{Z}\oplus\mathbb{Z}$. Since $E$ is connected and since the order of the Nakayama automorphism $\sigma$ of $E$ is $r$, the closed walk $(b|g^{kd(b)}|b)$ of $B$ at $b$ lifts to a closed walk of $E$ at $e$ if and only if $r\mid k$. Therefore $H\cap H_1=rH_1$. Moreover, since the covering $E\rightarrow B=E/\langle\sigma\rangle$ is $r$-sheeted, $[\mathbb{Z}\oplus\mathbb{Z}:H]=r$.

\begin{Lem}\label{generators-of-H}
$H$ is a free abelian subgroup of $\mathbb{Z}\oplus\mathbb{Z}$ generated by $(r,0)$ and $(i,1)$, where $i$ is some integer with $0\leq i\leq r-1$.
\end{Lem}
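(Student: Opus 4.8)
The plan is to exploit the two facts about $H$ that are already available from the discussion preceding the lemma, namely that $H\cap H_1=rH_1$ and that $[\mathbb{Z}\oplus\mathbb{Z}:H]=r$, and then to recover the shape of $H$ from the elementary structure theory of subgroups of $\mathbb{Z}\oplus\mathbb{Z}$. Since $H$ has finite index in $\mathbb{Z}\oplus\mathbb{Z}$, it is automatically free abelian of rank $2$; the real task is to produce an explicit basis in Hermite normal form adapted to the projection onto the second coordinate.

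First I would consider the projection $\pi:\mathbb{Z}\oplus\mathbb{Z}\to\mathbb{Z}$, $(a,b)\mapsto b$, whose kernel is exactly $H_1$. Restricting $\pi$ to $H$ yields a short exact sequence $0\to H\cap H_1\to H\to\pi(H)\to 0$. By hypothesis $H\cap H_1=rH_1$, the infinite cyclic group generated by $(r,0)$, while $\pi(H)=s\mathbb{Z}$ for some positive integer $s$. Choosing any $(i,s)\in H$ with $\pi((i,s))=s$, a short argument (given $(a,b)\in H$, subtract the appropriate multiple of $(i,s)$ to land in $H\cap H_1=rH_1$) shows that $H$ is generated by $(r,0)$ and $(i,s)$. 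Next I would compute the index from this presentation by filtering $\mathbb{Z}\oplus\mathbb{Z}\supseteq\mathbb{Z}\oplus s\mathbb{Z}\supseteq H$: one has $[\mathbb{Z}\oplus\mathbb{Z}:\mathbb{Z}\oplus s\mathbb{Z}]=s$, and with respect to the basis $(1,0),(i,s)$ of $\mathbb{Z}\oplus s\mathbb{Z}$ the subgroup $H$ is spanned by $r(1,0)$ and $(i,s)$, so $[\mathbb{Z}\oplus s\mathbb{Z}:H]=r$; hence $[\mathbb{Z}\oplus\mathbb{Z}:H]=rs$. Comparing with the given value $r$ forces $s=1$, so there is an element $(i,1)\in H$. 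Finally, subtracting a suitable multiple of $(r,0)\in H$ replaces $i$ by its residue modulo $r$, so we may take $0\le i\le r-1$, and $H=\langle(r,0),(i,1)\rangle$ is free abelian of rank $2$ as claimed.

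The argument is essentially routine; the only step requiring genuine care is the index computation, where one must check that the two constraints $H\cap H_1=rH_1$ and $[\mathbb{Z}\oplus\mathbb{Z}:H]=r$ are jointly responsible for pinning down $s=1$, rather than leaving an $s$-fold ambiguity in the second coordinate. No input beyond these two facts is needed, both of which were established earlier (using that $E$ is connected and that the covering $E\to E/\langle\sigma\rangle$ is $r$-sheeted), so the proof reduces to carefully organizing the Hermite normal form computation.
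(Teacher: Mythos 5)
Your proposal is correct and is essentially the paper's argument in a slightly different dress: where you use the projection onto the second coordinate and the exact sequence $0\to H\cap H_1\to H\to\pi(H)\to 0$ to find $\pi(H)=s\mathbb{Z}$ and then force $s=1$ from $[\mathbb{Z}\oplus\mathbb{Z}:H]=rs=r$, the paper equivalently applies the second isomorphism theorem to get $[H_1+H:H]=r$ and concludes $H_1+H=\mathbb{Z}\oplus\mathbb{Z}$, which is the same statement as $s=1$. The remaining steps (producing $(i,1)\in H$, checking that $(r,0)$ and $(i,1)$ generate, and reducing $i$ modulo $r$) coincide.
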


\begin{proof}
We have $(H_1+H)/H\cong H_1/(H_1\cap H)\cong \mathbb{Z}/r\mathbb{Z}$, and $[H_1+H:H]=r$. Since $[\mathbb{Z}\oplus\mathbb{Z}:H]=r$, $H_1+H=\mathbb{Z}\oplus\mathbb{Z}$. Since $(0,1)\in H_1+H$, there exists some $h\in H$ such that $(0,1)\in H_1+h$. So $h=(i,1)$ for some $i\in\mathbb{Z}$. Let $H'$ be the subgroup of $H$ generated by $(r,0)$ and $(i,1)$. Since $[\mathbb{Z}\oplus\mathbb{Z}:H']=r$, $H'=H$. Then $H$ is generated by $(r,0)$ and $(i,1)$, and we may choose $i$ to be an integer such that $0\leq i\leq r-1$.
\end{proof}

\begin{proof}[{\bf Proof of Proposition \ref{construction-of-E-in-case-(3)}}]
Since the f-degree of $B$ is free, the Nakayama automorphism $\sigma$ of $E_{rl}$ is given by $\sigma(c,j)=(c,j+1)$. Then the covering $p:E_{rl}\rightarrow B$, $(c,j)\mapsto c$ induces an isomorphism $E_{rl}/\langle\sigma\rangle\cong B$. Let $u:\Pi(B,b)\rightarrow\mathbb{Z}\oplus\mathbb{Z}$ be the isomorphism with $u(\overline{(b|g^{d(b)}|b)})=(1,0)$ and $u(\overline{(b|(g\tau)^{m+2p}|b)})=(0,1)$. In order to calculate the image of the composition map $\Pi(E_{rl},(b,1))\xrightarrow{p_{*}}\Pi(B,b)\xrightarrow{u}\mathbb{Z}\oplus\mathbb{Z}$, we first consider the action of $\mathbb{Z}\oplus\mathbb{Z}$ on $p^{-1}(b)=\{(b,j)\mid j\in\{1,\cdots,r\}=\mathbb{Z}/r\mathbb{Z}\}$ via the isomorphism $u:\Pi(B,b)\rightarrow\mathbb{Z}\oplus\mathbb{Z}$.

We have $(1,0)\cdot (b,j)=g^{d(b)}\cdot (b,j)=\sigma(b,j)=(b,j+1)$. Since the number of outer half-edges of $B$ of the form $b_v$ with $v$ a vertex of $B$ is $m+p$, we have $(0,1)\cdot (b,j)=(g\tau)^{m+2p}(b,j)=(b,j+l+m+p)$. Let $K_l$ be the image of the composition map $\Pi(E_{rl},(b,1))\xrightarrow{p_{*}}\Pi(B,e)\xrightarrow{u}\mathbb{Z}\oplus\mathbb{Z}$. Then $K_l=\{x\in\mathbb{Z}\oplus\mathbb{Z}\mid x\cdot (b,1)=(b,1)\}$, which is the subgroup of $\mathbb{Z}\oplus\mathbb{Z}$ generated by $(r,0)$ and $(-(l+m+p),1)$. According to Lemma \ref{generators-of-H}, there exists some $1\leq l\leq r$ such that $K_l=H$. By \cite[Proposition 3.29]{LL2}, $E_{rl}$ is isomorphic to $E$.
\end{proof}

We remark that in Case (3) the $f_{ms}$-BGs $E_{r1},\cdots,E_{rr}$ that we have constructed may not be pairwise non-isomorphic, although the $f_{ms}$-BG $E$ is isomorphic to one of the $f_{ms}$-BGs $E_{rl}$'s. For example, if $B=E/\langle\sigma\rangle$ is a BG with free f-degree given by the diagram
$$\begin{tikzpicture}
\draw (0,0) circle (0.5);
\fill (0.5,0) circle (0.5ex);
\fill (-0.5,0) circle (0.5ex);
\node at(-0.9,0.5) {$b=c_1$};
\node at(1.1,0.5) {$\tau(b)=c_4$};
\node at(-0.5,-0.5) {$c_2$};
\node at(0.5,-0.5) {$c_3$};
\end{tikzpicture}.$$
Then there exists an isomorphism $f:E_{3,1}\rightarrow E_{3,3}$, which is given by $f(c_1,i)=(c_2,i-1)$, $f(c_2,i)=(c_1,i)$, $f(c_3,i)=(c_4,i)$, $f(c_4,i)=(c_3,i+1)$ for each $i=\{1,2,3\}=\mathbb{Z}/3\mathbb{Z}$.

\subsection{Stable AR-components of domestic  $f_{ms}$-BGAs}
\

In this subsection, we always denote by $\tau$ the involution of a Brauer $G$-set or the automorphism of a stable translation quiver induced by its translation according to its context, and we always denote by $\sigma$ the Nakayama automorphism of a Brauer $G$-set. We fix the enumeration on the vertices of $A_{\infty}^{\infty}$ as follows:

$$\xymatrix{A_{\infty}^{\infty}: & & \cdots \ar[r]& -2\ar[r] & -1\ar[r] & 0 \ar[r]&
               1 \ar[r] & 2\ar[r]  & \cdots
	}$$

To determine the stable AR-components for a domestic $f_{ms}$-BGA, our original ideal is to determine all its exceptional tubes, and using the result about the stable AR-components of domestic self-injective special biserial algebras in \cite[Theorem 2.1]{ES}. However, for a domestic self-injective special biserial algebra $A$, it is not clear whether the $m$ components of the form $\mathbb{Z}A_{\infty}/\langle\tau^{p}\rangle$ and the $m$ components of the form $\mathbb{Z}A_{\infty}/\langle\tau^{q}\rangle$ in \cite[Theorem 2.1(iii)]{ES} are just all the exceptional tubes of $A$ (when $A$ is symmetric this assertion is true, which was pointed out by Duffield). Therefore we need some additional information about the $\mathbb{Z}\widetilde{A}_{p,q}$ components of domestic $f_{ms}$-BGAs.

\begin{Lem}\label{ZApq-components}
If $E$, $E'$ are finite connected $f_{ms}$-BGs with $A_{E'}$ domestic, and suppose that there exists a regular covering $f:E\rightarrow E'$, then $A_E$ is domestic. Moreover, if the stable AR-quiver $\prescript{}{s}{\Gamma}_{A_{E'}}$ of $A_{E'}$ contains a component of the form $\mathbb{Z}\widetilde{A}_{p',q'}$, where $p'$, $q'$ are positive integers, then $\prescript{}{s}{\Gamma}_{A_{E}}$ contains a component of the form $\mathbb{Z}\widetilde{A}_{np',nq'}$ for some positive integer $n$.
\end{Lem}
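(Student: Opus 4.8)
The plan is to analyze how a regular (hence Galois) covering $f\colon E\to E'$ of $f_{ms}$-BGs interacts with bands, exceptional tubes, and the remaining non-exceptional stable components. First I would record the domesticity: since $f$ is regular, $E\cong E'/\Pi$ (as Brauer $G$-sets) need not literally hold, but $E'\cong E/\Gamma$ for the finite Galois group $\Gamma=\mathrm{Aut}(f)$ of the covering, so by Proposition \ref{the number of equivalence classes of bands} the number of equivalence classes of bands of $E$ is finite iff that of $E'$ is; combined with Lemma \ref{two-numbers-of-bands-are-equal} and \cite[Theorem 2.1]{ES} this gives that $A_E$ is domestic once $A_{E'}$ is. The point to handle with care is the trivial reduction: if some edge of $E$ (or $E'$) has both half-edges of degree $1$, one passes to the Nakayama-algebra case separately as in the proof of Theorem \ref{rep type of f-BCA and its reduced form}, but a domestic algebra is not a Nakayama algebra, so this case does not arise and each edge carries a half-edge of degree $>1$.

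Next I would locate the $\mathbb{Z}\widetilde{A}_{p',q'}$ component of $\prescript{}{s}{\Gamma}_{A_{E'}}$ concretely in terms of $E'$. By \cite[Theorem 2.1]{ES} a domestic self-injective special biserial algebra has finitely many components of this shape, and each is built from a band; so the given $\mathbb{Z}\widetilde{A}_{p',q'}$ corresponds, via Lemma \ref{two-numbers-of-bands-are-equal}, to an equivalence class $[l']$ of bands of $E'$, where $p'$ and $q'$ are read off from the two ``directed halves'' of the band $l'$ — more precisely from the numbers $\sum k_i$ and $\sum l_i$ of the defining data of $l'$ in Definition \ref{band}. The strategy is then to lift $l'$ along $f$: since $f$ is a covering of Brauer $G$-sets, every walk (and every line, in particular every band) of $E'$ lifts uniquely once a starting half-edge over $s(l')$ is fixed (see the remarks after Definition \ref{covering of modified f-BG}). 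Let $l$ be such a lift. Because $f$ is regular with Galois group $\Gamma$ of some order $N$, the minimal period of $l$ is a multiple of that of $l'$; writing $n$ for the ratio, the lifted band traverses the $\Gamma$-fibre and returns after exactly $n$ times the $E'$-period, so $l$ is a band of $E$ whose two directed halves have lengths $np'$ and $nq'$.

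The key computation is that this combinatorial multiplication by $n$ on the band data matches the passage $\mathbb{Z}\widetilde{A}_{p',q'}\rightsquigarrow\mathbb{Z}\widetilde{A}_{np',nq'}$ on the level of stable components. For this I would use Proposition \ref{exceptional-tubes} as a model: the ranks of the exceptional tubes are governed by the orbit lengths of $\sigma^{-1}(g\tau)^2$, and passing to the cover multiplies these orbit lengths by the covering degree along each fibre. The analogous statement for the non-exceptional band component is that the rank parameters $p',q'$ of a $\mathbb{Z}\widetilde{A}_{p',q'}$-tube (i.e.\ the two $\tau$-periods of the mouth modules along the two rays) are exactly the two directed-half lengths of the associated band, and lifting multiplies both by the common factor $n=$ (the length of the $\Gamma$-orbit through a chosen half-edge of the band). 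I would make this precise by transporting the band $l$ to a closed walk $w=\phi(l)$ in $A_E$ via Lemma \ref{two-numbers-of-bands-are-equal}, identifying the $\mathbb{Z}\widetilde{A}$-component with the band module family $M(l,\lambda)$ over the string algebra $A_E/\mathrm{soc}(A_E)$, and checking that $f$ sends this family onto the family for $l'$ with fibres of size $n$.

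The main obstacle I anticipate is the final matching step: showing that the single integer $n$ (the fibre-traversal count of the lifted band) simultaneously governs \emph{both} indices, giving $\mathbb{Z}\widetilde{A}_{np',nq'}$ rather than two independent scalings $\mathbb{Z}\widetilde{A}_{n_1p',n_2q'}$. The resolution should be that $p'$ and $q'$ arise from the two maximal directed substrings of the \emph{same} band $l'$, so both are scaled by the same period ratio $n$; since $f$ is a morphism of $G$-sets commuting with $g$ and $\tau$, it preserves the alternating $g$-run/$\tau$ structure of Definition \ref{band}, and the period of the lift is $n$ times the period of $l'$ uniformly. I would verify this by noting $\phi$ (from Lemma \ref{two-numbers-of-bands-are-equal}) is compatible with $f$, so $f$ induces a covering of the string-module band families, under which a $\mathbb{Z}\widetilde{A}_{p',q'}$ tube of $A_{E'}$ pulls back to a $\mathbb{Z}\widetilde{A}_{np',nq'}$ tube of $A_E$ with $n$ the ramification-free fibre degree over that band, completing the argument.
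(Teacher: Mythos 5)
Your first paragraph (domesticity of $A_E$) is correct and takes a genuinely more elementary route than the paper: you count band classes via Proposition \ref{the number of equivalence classes of bands} applied to $E'\cong E/\mathrm{Aut}(f)$, transfer to the algebras by Lemma \ref{two-numbers-of-bands-are-equal}, and invoke \cite[Theorem 2.1]{ES}; this is essentially the argument of Theorem \ref{rep type of f-BCA and its reduced form} and works (the degree condition on edges of $E$ follows from the one on $E'$ because coverings preserve degrees). The paper instead obtains domesticity of $A_E$ only at the very end, as a consequence of having produced a $\mathbb{Z}\widetilde{A}_{np',nq'}$ component.

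The second half of your proposal has a genuine gap. You identify the $\mathbb{Z}\widetilde{A}_{p',q'}$ component of $\prescript{}{s}{\Gamma}_{A_{E'}}$ with ``the band module family $M(l,\lambda)$''; this is false. For a domestic self-injective special biserial algebra the one-parameter families of band modules sit in the infinitely many homogeneous tubes $\mathbb{Z}A_{\infty}/\langle\tau\rangle$, whereas the finitely many $\mathbb{Z}\widetilde{A}_{p',q'}$ components consist of string modules with infinite $\tau$-orbits (the paper's own proof uses exactly this fact to conclude that $\Gamma'$ lies in the image of the push-down). Consequently, even granting your (correct) combinatorial observation that the lift of a band multiplies both directed halves by the same period ratio $n$, you would still need a dictionary translating the two half-lengths of a band of $E$ into the parameters of a $\mathbb{Z}\widetilde{A}_{a,b}$ component of $\prescript{}{s}{\Gamma}_{A_E}$. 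Such a dictionary is available for Brauer graph algebras via \cite[Corollary 4.7]{D}, but it is not established anywhere in this paper for general $f_{ms}$-BGAs, and proving it would amount to a substantial independent argument. The paper circumvents this entirely: it passes to the universal cover $\widetilde{E}$, uses \cite[Proposition 4.15]{LL2} and Gabriel's theorem \cite[Theorem 3.6]{G} to realize the string-module components of $\prescript{}{s}{\Gamma}_{A_E}$ and $\prescript{}{s}{\Gamma}_{A_{E'}}$ as quotients of $\prescript{}{s}{\Gamma}_{\Lambda_{\widetilde{E}}}$ by $\Pi(E)$ and $\Pi(E')$ (torsion-freeness, from Proposition \ref{rep type to fundamental group}, guarantees the actions are free), and then classifies the finite Galois covers of $\mathbb{Z}\widetilde{A}_{p',q'}\cong\mathbb{Z}A_{\infty}^{\infty}/\langle\phi\rangle$ as $\mathbb{Z}A_{\infty}^{\infty}/\langle\phi^{n}\rangle\cong\mathbb{Z}\widetilde{A}_{np',nq'}$. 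You would need either to adopt that covering-theoretic mechanism or to supply the missing band-to-component correspondence before your argument closes.
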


\begin{proof}
Let $p:\widetilde{E}\rightarrow E$ be a universal cover of $E$. Since $\widetilde{E}$ is simply connected, $p$ is regular. According to \cite[Proposition 5.15]{LL2}, $p$ induces a Galois covering $F:\Lambda_{\widetilde{E}}\rightarrow\Lambda_E$ with group $\Pi\cong\Pi(E)$. Since $A_{E'}$ is domestic, according to Proposition \ref{rep type to fundamental group}, $\Pi(E')$ is torsion-free. Since $\Pi\cong\Pi(E)$ is isomorphic to a subgroup of $\Pi(E')$, $\Pi$ is also torsion-free, so it acts freely on ind$\Lambda_{\widetilde{E}}$. According to \cite[Theorem 3.6]{G}, $F_{\lambda}$ induces an isomorphism of the quotient $\Gamma_{\Lambda_{\widetilde{E}}}/\Pi$ onto the union of some components of $\Gamma_{\Lambda_E}$, where $F_{\lambda}:\mathrm{mod}\Lambda_{\widetilde{E}}\rightarrow\mathrm{mod}\Lambda_{E}$ is the push-down functor associated with $F$. Since $F_{\lambda}$ maps projectives to projectives, it induces an isomorphism of the quotient $\prescript{}{s}{\Gamma}_{\Lambda_{\widetilde{E}}}/\Pi$ onto the union of some components of $\prescript{}{s}{\Gamma}_{\Lambda_E}$. Since $fp:\widetilde{E}\rightarrow E'$ is a universal cover of $E'$, it induces a Galois covering $F':\Lambda_{\widetilde{E}}\rightarrow\Lambda_{E'}$ with group $\Pi'\cong\Pi(E')$. Similarly, the push-down functor $F'_{\lambda}:\mathrm{mod}\Lambda_{\widetilde{E}}\rightarrow\mathrm{mod}\Lambda_{E'}$ induces an isomorphism of the quotient $\prescript{}{s}{\Gamma}_{\Lambda_{\widetilde{E}}}/\Pi'$ onto the union of some components of $\prescript{}{s}{\Gamma}_{\Lambda_{E'}}$. 

Denote by $(\prescript{}{s}{\Gamma}_{\Lambda_{E'}})^{0}$ (resp. $(\prescript{}{s}{\Gamma}_{\Lambda_{E}})^{0}$) the image of $\prescript{}{s}{\Gamma}_{\Lambda_{\widetilde{E}}}/\Pi'\hookrightarrow\prescript{}{s}{\Gamma}_{\Lambda_{E'}}$ (resp. $\prescript{}{s}{\Gamma}_{\Lambda_{\widetilde{E}}}/\Pi\hookrightarrow\prescript{}{s}{\Gamma}_{\Lambda_{E}}$). Since $\Pi\cong\Pi(E)$ is a normal subgroup of $\Pi'\cong\Pi(E')$, there is a Galois covering $\prescript{}{s}{\Gamma}_{\Lambda_{\widetilde{E}}}/\Pi\rightarrow\prescript{}{s}{\Gamma}_{\Lambda_{\widetilde{E}}}/\Pi'$ with group $\Pi'/\Pi$, which induces a Galois covering $\psi:(\prescript{}{s}{\Gamma}_{\Lambda_{E}})^{0}\rightarrow(\prescript{}{s}{\Gamma}_{\Lambda_{E'}})^{0}$ with group $\Pi'/\Pi$. Note that each string in $\Lambda_{E'}$ induces a walk of $E'$, which lifts to a walk of $\widetilde{E}$ and induces a string in $\Lambda_{\widetilde{E}}$. So for each string module $M'$ of $\Lambda_{E'}$, there exists some string module $\widetilde{M}$ over $\Lambda_{\widetilde{E}}$ such that $F'_{\lambda}\widetilde{M}\cong M'$. Therefore $(\prescript{}{s}{\Gamma}_{\Lambda_{E'}})^{0}$ contains each component of $\prescript{}{s}{\Gamma}_{\Lambda_{E'}}$ consisting of string modules. Similarly, $(\prescript{}{s}{\Gamma}_{\Lambda_{E}})^{0}$ contains each component of $\prescript{}{s}{\Gamma}_{\Lambda_{E}}$ consisting of string modules.

Since $A_{E'}$ is domestic, by \cite[Theorem 2.1]{ES}, $\prescript{}{s}{\Gamma}_{\Lambda_{E'}}$ contains a component $\Gamma'$ of the form $\mathbb{Z}\widetilde{A}_{p',q'}$. Then $\Gamma'$ consists of string modules, so $\Gamma'$ is contained in $(\prescript{}{s}{\Gamma}_{\Lambda_{E'}})^{0}$. Choose a vertex $x$ of $(\prescript{}{s}{\Gamma}_{\Lambda_{E}})^{0}$ such that the image $x'=\psi(x)$ of $x$ in $(\prescript{}{s}{\Gamma}_{\Lambda_{E'}})^{0}$ belongs to $\Gamma'$, and let $\Gamma$ be the component of $(\prescript{}{s}{\Gamma}_{\Lambda_{E}})^{0}$ containing $x$. Then there is a covering $\psi':\Gamma\rightarrow\Gamma'$ of translation quivers which is induced from $\psi$. Since the fundamental group of $\Gamma'$ is isomorphic to $\mathbb{Z}$, which is abelian, the covering $\psi':\Gamma\rightarrow\Gamma'$ is Galois. Since $\psi:\prescript{}{s}{\Gamma}_{\Lambda_{\widetilde{E}}}/\Pi\rightarrow\prescript{}{s}{\Gamma}_{\Lambda_{\widetilde{E}}}/\Pi'$ is a Galois covering with group $\Pi'/\Pi$, where $\Pi'/\Pi\cong\Pi(E')/f_{*}(\Pi(E))\cong\mathrm{Aut}(f)$ is finite, $\psi^{-1}(x')$ is finite. Then $(\psi')^{-1}(x')$ is finite, and the group of the Galois covering $\psi'$ is finite. It is straightforward to show that $\mathbb{Z}A_{\infty}^{\infty}$ is a universal cover of $\mathbb{Z}\widetilde{A}_{p',q'}$, and $\mathbb{Z}\widetilde{A}_{p',q'}\cong\mathbb{Z}A_{\infty}^{\infty}/\langle\phi\rangle$, where $\phi$ is the automorphism of $\mathbb{Z}A_{\infty}^{\infty}$ given by $(i,j)\mapsto(i-q',j+p'+q')$. Since $\Gamma$ is connected and since there exists a covering $\psi':\Gamma\rightarrow\Gamma'$, $\Gamma$ is isomorphic to $\mathbb{Z}A_{\infty}^{\infty}/H$ for some subgroup $H$ of $\langle\phi\rangle$. Since the group of the Galois covering $\psi':\Gamma\rightarrow\Gamma'$ is finite, $H$ is of the form $\langle\phi^n\rangle$ for some positive integer $n$, and $\Gamma\cong\mathbb{Z}A_{\infty}^{\infty}/\langle\phi^n\rangle\cong\mathbb{Z}\widetilde{A}_{np',nq'}$. Since $\prescript{}{s}{\Gamma}_{A_{E}}$ contains a component $\Gamma$ of the form $\mathbb{Z}\widetilde{A}_{p,q}$, according to \cite[Theorem 2.1]{ES}, $A_E$ is domestic.
\end{proof}

Let $E$ be a finite connected $f_{ms}$-BG such that $E/\langle\sigma\rangle$ is a Brauer $G$-set in case $(1)$ of Lemma \ref{B}, that is, $B=E/\langle\sigma\rangle$ is an f-degree-free modified BG with $n$ vertices, $n-1$ edges and $2$ double half-edges. Denote by $\sigma^{-1}(g\tau)^2$ the permutation of $E$ mapping each $e\in E$ to $\sigma^{-1}(g\tau)^2(e)$. Suppose that the order of the Nakayama automorphism $\sigma$ of $E$ is $2r$.

\begin{Lem}\label{length-of-DTr-orbit-case-1}
For every $e\in E$, the length of the $\langle\sigma^{-1}(g\tau)^2\rangle$-orbit of $e$ is $n$.
\end{Lem}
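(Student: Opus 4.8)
The plan is to reduce everything to the explicit model $E\cong E_r$ furnished by Proposition \ref{construction-of-E-in-case-(1)}, and to replace the statement by the cleaner identity $(g\tau)^{2n}=\sigma^{n}$ on $E$, from which the claim drops out immediately. First I would record that $\sigma$ commutes with both $g$ and $\tau$: indeed $\sigma(g\cdot e)=g^{d(g\cdot e)}g\cdot e=g^{d(e)}g\cdot e=g\sigma(e)$ since $d$ is constant on $G$-orbits, while $\sigma\tau=\tau\sigma$ is exactly axiom $(mf2)$. Hence $\sigma$ commutes with $(g\tau)^{2}$, so that $(\sigma^{-1}(g\tau)^{2})^{n}=\sigma^{-n}(g\tau)^{2n}$, and it suffices to prove $(g\tau)^{2n}=\sigma^{n}$.

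To establish this identity I would work in the model $E_r=\{(c,j)\mid c\in B,\ j\in\mathbb{Z}/2r\mathbb{Z}\}$ with $B=E/\langle\sigma\rangle$, using the explicit actions of $g$, $\tau$ and $\sigma$ recalled just before Proposition \ref{construction-of-E-in-case-(1)}. Writing $g\cdot(c,j)=(g\cdot c,\ j+\epsilon_g(c))$ and $\tau(c,j)=(\tau(c),\ j+\epsilon_\tau(c))$, where $\epsilon_g(c)\in\{0,1\}$ detects whether $g\cdot c$ is the distinguished half-edge $b_v$ of its vertex and $\epsilon_\tau(c)\in\{0,r\}$ detects whether $c$ is a double half-edge, one computes $g\tau(c,j)=(g\tau(c),\ j+\eta(c))$ with $\eta(c)=\epsilon_\tau(c)+\epsilon_g(\tau(c))$, and therefore $(g\tau)^{2n}(c,j)=(c,\ j+\sum_{i=0}^{2n-1}\eta((g\tau)^{i}c))$, since $(g\tau)^{2n}$ fixes every half-edge of $B$. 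The crucial point is that, by the uniqueness-of-expression statement recalled before Proposition \ref{construction-of-E-in-case-(1)} (equivalently, $|B|=2k+l=2n$ and $g\tau$ is a single $2n$-cycle on $B$), the orbit $\{(g\tau)^{i}c\mid 0\le i\le 2n-1\}$ is all of $B$; hence the exponent $\sum_{i=0}^{2n-1}\eta((g\tau)^{i}c)=\sum_{c'\in B}\eta(c')$ is independent of the base point $c$.

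It then remains to evaluate this global sum. Since $l=2$ there are exactly two double half-edges, contributing $\sum_{c'\in B}\epsilon_\tau(c')=2r\equiv 0\pmod{2r}$; and since each of the $n$ vertices carries exactly one distinguished half-edge $b_v$, reindexing by the bijection $\tau$ gives $\sum_{c'\in B}\epsilon_g(\tau(c'))=\sum_{c''\in B}\epsilon_g(c'')=n$, the number of half-edges $c''$ with $g\cdot c''$ equal to some $b_v$. Thus $\sum_{c'\in B}\eta(c')\equiv n\pmod{2r}$, so $(g\tau)^{2n}(c,j)=(c,j+n)=\sigma^{n}(c,j)$. Consequently $(\sigma^{-1}(g\tau)^{2})^{n}=\sigma^{-n}\sigma^{n}=\mathrm{id}$, so the length of every $\langle\sigma^{-1}(g\tau)^2\rangle$-orbit divides $n$.

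For the reverse divisibility I would project along the covering $p:E\to B$, which commutes with $g$ and $\tau$ and satisfies $\sigma_B=\mathrm{id}$ because $B$ has trivial f-degree; thus $p$ intertwines $\sigma^{-1}(g\tau)^2$ on $E$ with $(g\tau)^2$ on $B$. As $g\tau$ is a single $2n$-cycle on $B$, the permutation $(g\tau)^2$ has orbits of length exactly $n$, so the $\langle\sigma^{-1}(g\tau)^2\rangle$-orbit of any $e$ surjects onto an orbit of length $n$, forcing $n$ to divide its length. Combined with the previous paragraph, every orbit has length exactly $n$. The main obstacle is the twist bookkeeping of the middle paragraphs: seeing that transitivity of $\langle g\tau\rangle$ on $B$ makes the total twist independent of the base point, and correctly balancing the $2r$ contributed by the two double half-edges against the $n$ contributed by the distinguished half-edges so that the residue mod $2r$ comes out to precisely $n$.
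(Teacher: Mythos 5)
Your proof is correct and follows essentially the same route as the paper's: reduce to the explicit model $E_r$, obtain $n\mid N$ by projecting along the covering $E_r\to B$ (where $\sigma_B=\mathrm{id}$ and $g\tau$ is a $2n$-cycle), and obtain $N\mid n$ by tracking the second-coordinate shifts over one full pass through $B$, with the two double half-edges contributing $2r\equiv 0$ and the $n$ distinguished half-edges $b_v$ contributing $n$. Your only cosmetic differences are packaging the computation as the global identity $(g\tau)^{2n}=\sigma^{n}$ and reindexing the $\epsilon_g$-count via the bijection $\tau$; the substance is identical.
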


\begin{proof}
According to Proposition \ref{construction-of-E-in-case-(1)}, $E$ is isomorphic to $E_r$ (the definition of $E_r$ is given before Proposition \ref{construction-of-E-in-case-(1)}). For every $(c,j)\in E_r$, let $N$ be the minimal positive integer such that $(\sigma^{-1}(g\tau)^2)^N(c,j)=(c,j)$. Note that the Nakayama automorphism $\sigma$ of $B$ is identity. So $n$ is the minimal positive integer such that $(\sigma^{-1}(g\tau)^2)^n(c)=c$. Since there exists a covering of Brauer $G$-sets $p:E_r\rightarrow B$ which maps each $(c',j')\in E_r$ to $c'\in B$, we have
$$(\sigma^{-1}(g\tau)^2)^N(c)=(\sigma^{-1}(g\tau)^2)^N(p(c,j))=p((\sigma^{-1}(g\tau)^2)^N(c,j))=p(c,j)=c.$$
Therefore $n\mid N$.

Since $\{(g\tau)(c),(g\tau)^2(c),\cdots,(g\tau)^{2n}(c)\}=B$, there are exactly $n$ numbers $i\in\{1,2,\cdots, 2n\}$ such that $(g\tau)^i(c)$ is of the form $b_v$ for some vertex $v$ of $B$. Moreover, since $B$ contains exactly two double half-edges and since $\{c,(g\tau)(c),\cdots,(g\tau)^{2n-1}(c)\}=B$, there are exactly two numbers $i\in\{0,1,\cdots, 2n-1\}$ such that $(g\tau)^i(c)$ is a double half-edge of $B$. So we have $(g\tau)^{2n}(c,j)=((g\tau)^{2n}(c),j+n+2r)=(c,j+n)$. Therefore $(\sigma^{-1}(g\tau)^2)^n(c,j)=\sigma^{-n}(g\tau)^{2n}(c,j)=\sigma^{-n}(c,j+n)=(c,j)$ and $N\mid n$. Then we have $N=n$.
\end{proof}

\begin{Prop} \label{stable-AR-component-case-(1)}
Let $E$ be a finite connected $f_{ms}$-BG such that $E/\langle\sigma\rangle$ is a Brauer $G$-set in case $(1)$ of Lemma \ref{B}. Suppose that the order of the Nakayama automorphism of $E$ is $2r$. Then $\prescript{}{s}{\Gamma}_{A_E}$ is a disjoint union of $4r$ components of the form $\mathbb{Z}A_{\infty}/\langle\tau^{n}\rangle$, $2r$ components of the form $\mathbb{Z}\widetilde{A}_{n,n}$, and infinitely many
components of the form $\mathbb{Z}A_{\infty}/\langle\tau\rangle$.
\end{Prop}

\begin{proof}
Since $E$ has $4nr$ half-edges, by Lemma \ref{length-of-DTr-orbit-case-1}, $E$ contains $4r$ $\langle\sigma^{-1}(g\tau)^2\rangle$-orbits, each of length $n$. By Proposition \ref{exceptional-tubes}, $\prescript{}{s}{\Gamma}_{A_E}$ contains $4r$ exceptional tubes, where the rank of each tube is $n$.

Denote by $f:E\rightarrow B=E/\langle\sigma\rangle$ and $\pi:R_E\rightarrow B$ the natural projections, and let $p:\widetilde{E}\rightarrow E$ be a universal cover of $E$. Fix $\widetilde{e}\in\widetilde{E}$ and let $e=p(\widetilde{e})\in E$, $b=fp(\widetilde{e})\in B$. Choose $b'\in R_E$ such that $\pi(b')=b$, then there exists a unique covering $q:\widetilde{E}\rightarrow R_E$ such that $fp=\pi q$ and $q(\widetilde{e})=b'$. By an analogue of \cite[Proposition 3.32]{LL2}, there exist isomorphisms
$$\mu_1:\mathrm{Aut}(p)\xrightarrow{\sim}\Pi_m(E,e)$$
$$\mu_2:\mathrm{Aut}(q)\xrightarrow{\sim}\Pi_m(R_E,b')$$
$$\mu_3:\mathrm{Aut}(fp)\xrightarrow{\sim}\Pi_m(B,b)$$
such that the diagrams
$$\xymatrix@C=0.2pc{
		 \mathrm{Aut}(p)\ar[d]_{\mu_1}^{\wr} & \leq & \mathrm{Aut}(fp)\ar[d]_{\mu_3}^{\wr} \\
		\Pi_m(E,e)\ar[rr]^{f_{*}} & & \Pi_m(B,b)
	}$$
and
$$\xymatrix@C=0.2pc{
		 \mathrm{Aut}(q)\ar[d]_{\mu_2}^{\wr} & \leq & \mathrm{Aut}(fp)\ar[d]_{\mu_3}^{\wr} \\
		\Pi_m(R_E,b')\ar[rr]^{\pi_{*}} & & \Pi_m(B,b)
	}$$
commute. Since $f$ and $\pi$ are regular coverings, $f_{*}(\Pi_m(E,e))$ and $\pi_{*}(\Pi_m(R_E,b'))$ are normal subgroups of $\Pi_m(B,b)$. Therefore Aut$(p)$ and Aut$(q)$ are normal subgroups of Aut$(fp)$. Denote by $H=\mathrm{Aut}(p)\cap\mathrm{Aut}(q)$, which is both a normal subgroup of Aut$(p)$ and a normal subgroup of Aut$(q)$. Denote by $E'=\widetilde{E}/H$, then the coverings $p:\widetilde{E}\rightarrow E$ and $q:\widetilde{E}\rightarrow R_E$ induce coverings $p':E'\rightarrow E$ and $q':E'\rightarrow R_E$. By \cite[Theorem 3.37]{LL2}, Aut$(p)$ acts admissibly on $\widetilde{E}$, so $H$ also acts admissibly on $\widetilde{E}$ and $E'=\widetilde{E}/H$ is an $f_{ms}$-BG. Since $H$ is both a normal subgroup of Aut$(p)$ and a normal subgroup of Aut$(q)$, the coverings $p':E'\rightarrow E$ and $q':E'\rightarrow R_E$ are regular. Moreover, \begin{multline*}\mathrm{Aut}(p')\cong\mathrm{Aut}(p)/H\cong f_{*}(\Pi_m(E,e))/(f_{*}(\Pi_m(E,e))\cap \pi_{*}(\Pi_m(R_E,b'))) \\ \cong f_{*}(\Pi_m(E,e))\pi_{*}(\Pi_m(R_E,b'))/\pi_{*}(\Pi_m(R_E,b')) \\ \leq\Pi_m(B,b)/\pi_{*}(\Pi_m(R_E,b'))\cong\mathrm{Aut}(\pi).\end{multline*}
Since $R_E$ is a finite $f_{ms}$-BG, Aut$(\pi)$ is finite. So Aut$(p')$ is also finite. Since $E\cong E'/\mathrm{Aut}(p')$ and since $E$ is finite, $E'$ is also finite. Since $\widetilde{E}$ is connected and since $E'=\widetilde{E}/H$, $E'$ is connected. Since $p':E'\rightarrow E$ is a regular covering of finite connected $f_{ms}$-BGs and since $A_E$ is domestic, by Lemma \ref{ZApq-components}, $A_{E'}$ is also domestic.

Since $R_E$ is an f-degree-free BG with $2n$ edges whose underlying graph has a unique cycle of even length such that the number of edges inside this cycle and the number of edges outside this cycle are equal, according to \cite[Corollary 4.7]{D}, the $\mathbb{Z}\widetilde{A}_{a,b}$ components of the stable AR-quiver of $A_{R_E}$ are given by $a=b=n$. Since there are regular coverings $p':E'\rightarrow E$ and $E'\rightarrow R_E$, where $E'$, $E$, $R_E$ are finite connected $f_{ms}$-BGs with $A_{E'}$, $A_E$, $A_{R_E}$ domestic, according to Lemma \ref{ZApq-components}, the $\mathbb{Z}\widetilde{A}_{a,b}$ components of $\prescript{}{s}{\Gamma}_{A_E}$ satisfy $a=b$.

According to \cite[Theorem 2.1]{ES}, there are positive integers $m$, $p$, $q$ such that $\prescript{}{s}{\Gamma}_{A_E}$ is a disjoint union of $m$ components of the form $\mathbb{Z}\widetilde{A}_{p,q}$, $m$ components of the form $\mathbb{Z}A_{\infty}/\langle\tau^p\rangle$, $m$ components of the form $\mathbb{Z}A_{\infty}/\langle\tau^q\rangle$, and infinitely many components of the form $\mathbb{Z}A_{\infty}/\langle\tau\rangle$. When $n>1$, since $\prescript{}{s}{\Gamma}_{A_E}$ contains $4r$ exceptional tubes, where the rank of each tube is $n$, there are two possible cases:
\begin{itemize}
\item[$(i)$] $m=2r$, $p=q=n$;
\item[$(ii)$] $m=4r$, $p=n$, $q=1$.
\end{itemize}
Since $p=q$, case $(ii)$ can not occur. Therefore $\prescript{}{s}{\Gamma}_{A_E}$ is a disjoint union of $4r$ components of the form $\mathbb{Z}A_{\infty}/\langle\tau^{n}\rangle$, $2r$ components of the form $\mathbb{Z}\widetilde{A}_{n,n}$, and infinitely many components of the form $\mathbb{Z}A_{\infty}/\langle\tau\rangle$. When $n=1$, it is straightforward to show that $\prescript{}{s}{\Gamma}_{A_E}$ is a disjoint union of $2r$ components of the form $\mathbb{Z}\widetilde{A}_{1,1}$ and infinitely many components of the form $\mathbb{Z}A_{\infty}/\langle\tau\rangle$.
\end{proof}

Let $E$ be a finite connected $f_{ms}$-BG such that $E/\langle\sigma\rangle$ is a Brauer $G$-set in case $(2)$ of Lemma \ref{B}, that is, $B=E/\langle\sigma\rangle$ is a Brauer graph whose underlying diagram is a tree with $n$ vertices $v_1$, $\cdots$, $v_n$ of f-degree $d_1$, $\cdots$, $d_n$, respectively, such that $d_i=2$ for exactly two numbers $i=i_0$, $i_1$ and $d_i=1$ for $i\neq i_0$, $i_1$. Denote by $\sigma^{-1}(g\tau)^2$ the permutation of $E$ mapping each $e\in E$ to $\sigma^{-1}(g\tau)^2(e)$. Suppose that the order of the Nakayama automorphism $\sigma$ of $E$ is $2r-1$.

\begin{Lem}\label{length-of-DTr-orbit-case-2}
For every $e\in E$, the length of the $\langle\sigma^{-1}(g\tau)^2\rangle$-orbit of $e$ is $n-1$.
\end{Lem}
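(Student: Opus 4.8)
The plan is to follow closely the proof of Lemma \ref{length-of-DTr-orbit-case-1}, replacing the explicit model $E_r$ by the model $E'_r$ for case $(2)$ furnished by Proposition \ref{construction-of-E-in-case-(2)}. By that proposition I may assume $E=E'_r$ and write a typical half-edge as $(c,j)$ with $c\in B$ and $j\in\mathbb{Z}/(2r-1)\mathbb{Z}$; recall from the proof of Proposition \ref{construction-of-E-in-case-(2)} that the Nakayama automorphism of $E'_r$ is $\sigma(c,j)=(c,j+1)$, and that $\sigma$ commutes with $g$ and $\tau$. Letting $N$ be the least positive integer with $(\sigma^{-1}(g\tau)^2)^N(c,j)=(c,j)$, I would prove $N=n-1$ by establishing the two divisibilities $(n-1)\mid N$ and $N\mid(n-1)$ separately.

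For $(n-1)\mid N$ I would push the identity down along the covering $p\colon E'_r\to B$, $(c,j)\mapsto c$, which commutes with $g$, $\tau$ and $\sigma$. Since $B=E/\langle\sigma\rangle$ is a modified Brauer graph its f-degree is integral, so $|v|\mid d(e)$ for every half-edge $e$ of a vertex $v$; hence the Nakayama automorphism of $B$ is the identity and $\sigma^{-1}(g\tau)^2$ acts on $B$ merely as $(g\tau)^2$. The construction of $E'_r$ preceding Proposition \ref{construction-of-E-in-case-(2)} records that $g\tau$ is a single cycle of length $2n-2$ on $B$ (each $c$ is uniquely $(g\tau)^{j_c}(b)$ with $0\le j_c\le 2n-3$), so its square splits into orbits of length $(2n-2)/\gcd(2,2n-2)=n-1$. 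Applying $p$ to $(\sigma^{-1}(g\tau)^2)^N(c,j)=(c,j)$ gives $(g\tau)^{2N}(c)=c$, whence $(n-1)\mid N$.

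For $N\mid(n-1)$ I would compute the effect of $(g\tau)^{2(n-1)}$ on $(c,j)$ directly. As $\tau(c,j)=(\tau(c),j)$ leaves the second coordinate fixed, all increments come from the $g$-steps, and applying $(g\tau)$ a total of $2n-2$ times applies $g$ once with image $(g\tau)^i(c)$ for each $i=1,\dots,2n-2$, i.e.\ once at every element of $B$. By the definition of the $G$-action on $E'_r$ an increment occurs exactly when the image is some $b_v$: the $n-2$ ordinary vertices each contribute $+1$ and the two f-degree $2$ vertices $v_{i_0},v_{i_1}$ each contribute $+r$, so the total shift over one full cycle is $s=(n-2)+2r$ and $(g\tau)^{2(n-1)}(c,j)=(c,\,j+(n-2)+2r)$. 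Using that $\sigma$ commutes with $g,\tau$ I then obtain
$$(\sigma^{-1}(g\tau)^2)^{n-1}(c,j)=\sigma^{-(n-1)}(g\tau)^{2(n-1)}(c,j)=(c,\,j+(n-2)+2r-(n-1))=(c,\,j+2r-1),$$
and $j+2r-1\equiv j$ in $\mathbb{Z}/(2r-1)\mathbb{Z}$; hence $(\sigma^{-1}(g\tau)^2)^{n-1}$ fixes $(c,j)$, giving $N\mid(n-1)$. Combining the two divisibilities yields $N=n-1$.

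The genuinely delicate point, and the step I would verify most carefully, is the increment count in the third paragraph: one must confirm that over a full $g\tau$-cycle each vertex's distinguished half-edge $b_v$ is hit exactly once as a $g$-image, so that the two f-degree $2$ vertices contribute precisely $2r$ and the remaining $n-2$ vertices contribute $n-2$. Granting this, the decisive arithmetic is that the net second-coordinate shift equals $(n-2)+2r-(n-1)=2r-1$, which vanishes in $\mathbb{Z}/(2r-1)\mathbb{Z}$; this parallels the vanishing of the analogous net shift $2r$ modulo $2r$ in Lemma \ref{length-of-DTr-orbit-case-1}, the difference being that here no double half-edges occur and the exceptional contribution comes instead from the two f-degree $2$ vertices. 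Everything else is a routine transcription of the earlier argument.
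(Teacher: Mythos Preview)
Your proposal is correct and follows essentially the same approach as the paper's proof: identify $E$ with $E'_r$ via Proposition \ref{construction-of-E-in-case-(2)}, push down along the covering $p\colon E'_r\to B$ to get $(n-1)\mid N$, and then compute $(g\tau)^{2(n-1)}(c,j)=(c,j+(n-2)+2r)$ by counting one hit on each $b_v$ (with the two exceptional vertices contributing $+r$ each), so that $(\sigma^{-1}(g\tau)^2)^{n-1}(c,j)=(c,j+2r-1)=(c,j)$. The paper states the increment count more tersely (``for each vertex $v$ of $B$, there is exactly one number $i\in\{1,\dots,2n-2\}$ such that $(g\tau)^i(c)=b_v$'') but the content is identical to what you wrote.
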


\begin{proof}
According to Proposition \ref{construction-of-E-in-case-(2)}, $E$ is isomorphic to $E'_r$ (the definition of $E'_r$ is given before Proposition \ref{construction-of-E-in-case-(2)}). For every $(c,j)\in E'_r$, let $N$ be the minimal positive integer such that $(\sigma^{-1}(g\tau)^2)^N(c,j)=(c,j)$. Note that the Nakayama automorphism $\sigma$ of $B$ is identity. So $n-1$ is the minimal positive integer such that $(\sigma^{-1}(g\tau)^2)^{n-1}(c)=c$. Since there exists a covering of Brauer $G$-sets $p:E'_r\rightarrow B$ which maps each $(c',j')\in E'_r$ to $c'\in B$, we have
$$(\sigma^{-1}(g\tau)^2)^N(c)=(\sigma^{-1}(g\tau)^2)^N(p(c,j))=p((\sigma^{-1}(g\tau)^2)^N(c,j))=p(c,j)=c.$$
Therefore $(n-1)\mid N$.

Since $\{(g\tau)(c),(g\tau)^2(c),\cdots,(g\tau)^{2n-2}(c)\}=B$, for each vertex $v$ of $B$, there is exactly one number $i\in\{1,2,\cdots, 2n-2\}$ such that $(g\tau)^i(c)=b_v$. So we have $(g\tau)^{2n-2}(c,j)=((g\tau)^{2n-2}(c),j+n-2+2r)=(c,j+n-2+2r)$. Since $\sigma(c',j')=(c,j'+1)$ for every $(c',j')\in E'_r$, $(\sigma^{-1}(g\tau)^2)^{n-1}(c,j)=\sigma^{-n+1}(g\tau)^{2n-2}(c,j)=\sigma^{-n+1}(c,j+n-2+2r)=(c,j+2r-1)=(c,j)$ and $N\mid (n-1)$. Then we have $N=n-1$.
\end{proof}

\begin{Prop} \label{stable-AR-component-case-(2)}
Let $E$ be a finite connected $f_{ms}$-BG such that $E/\langle\sigma\rangle$ is a Brauer $G$-set in case $(2)$ of Lemma \ref{B} and suppose that the order of the Nakayama automorphism of $E$ is $2r-1$. Then $\prescript{}{s}{\Gamma}_{A_E}$ is a disjoint union of $4r-2$ components of the form $\mathbb{Z}A_{\infty}/\langle\tau^{n-1}\rangle$, $2r-1$ components of the form $\mathbb{Z}\widetilde{A}_{n-1,n-1}$, and infinitely many
components of the form $\mathbb{Z}A_{\infty}/\langle\tau\rangle$.
\end{Prop}

\begin{proof}
Since $E$ has $(4r-2)(n-1)$ half-edges, by Lemma \ref{length-of-DTr-orbit-case-2}, $E$ contains $4r-2$ $\langle\sigma^{-1}(g\tau)^2\rangle$-orbits, each of length $n-1$. By Proposition \ref{exceptional-tubes}, $\prescript{}{s}{\Gamma}_{A_E}$ contains $4r-2$ exceptional tubes, where the rank of each tube is $n-1$. Since $B=E/\langle\sigma\rangle$ is a Brauer graph whose underlying diagram is a tree with $n$ vertices and since $A_B$ is domestic, according to \cite[Theorem 4.6]{D}, the $\mathbb{Z}\widetilde{A}_{a,b}$ component of the stable AR-quiver of $A_{B}$ is given by $a=b=n-1$. Then according to Lemma \ref{ZApq-components}, every $\mathbb{Z}\widetilde{A}_{a,b}$ component of $\prescript{}{s}{\Gamma}_{A_E}$ satisfies $a=b$.

According to \cite[Theorem 2.1]{ES}, there are positive integers $m$, $p$, $q$ such that $\prescript{}{s}{\Gamma}_{A_E}$ is a disjoint union of $m$ components of the form $\mathbb{Z}\widetilde{A}_{p,q}$, $m$ components of the form $\mathbb{Z}A_{\infty}/\langle\tau^p\rangle$, $m$ components of the form $\mathbb{Z}A_{\infty}/\langle\tau^q\rangle$, and infinitely many components of the form $\mathbb{Z}A_{\infty}/\langle\tau\rangle$. When $n>2$, since $\prescript{}{s}{\Gamma}_{A_E}$ contains $4r-2$ exceptional tubes, where the rank of each tube is $n-1$, there are two possible cases:
\begin{itemize}
\item[$(i)$] $m=2r-1$, $p=q=n-1$;
\item[$(ii)$] $m=4r-2$, $p=n-1$, $q=1$.
\end{itemize}
Since $p=q$, case $(ii)$ can not occur. Therefore $\prescript{}{s}{\Gamma}_{A_E}$ is a disjoint union of $4r-2$ components of the form $\mathbb{Z}A_{\infty}/\langle\tau^{n-1}\rangle$, $2r-1$ components of the form $\mathbb{Z}\widetilde{A}_{n-1,n-1}$, and infinitely many components of the form $\mathbb{Z}A_{\infty}/\langle\tau\rangle$. When $n=2$, it is straightforward to show that $\prescript{}{s}{\Gamma}_{A_E}$ is a disjoint union of $2r-1$ components of the form $\mathbb{Z}\widetilde{A}_{1,1}$ and infinitely many components of the form $\mathbb{Z}A_{\infty}/\langle\tau\rangle$.
\end{proof}

Let $E$ be a finite connected $f_{ms}$-BG such that $E/\langle\sigma\rangle$ is a Brauer $G$-set in case $(3)$ of Lemma \ref{B}, that is, $B$ is a Brauer graph with free f-degree whose diagram contains a unique cycle. Suppose that the length of this cycle is $m$, and suppose that there are $p$ edges outside this cycle and $q$ edges inside this cycle, where $n=m+p+q$. Fix an outer half-edge $b$ of $B$ which belongs to the unique cycle of $B$, and suppose that the order of the Nakayama automorphism of $E$ is $r$. According to Proposition \ref{construction-of-E-in-case-(3)}, $E$ is isomorphic to some $E_{rl}$ with $1\leq l\leq r$, where the definition of $E_{rl}$'s are given before Proposition \ref{construction-of-E-in-case-(3)}.

Denote by $\sigma^{-1}(g\tau)^2$ the permutation of $E$ mapping each $e\in E$ to $\sigma^{-1}(g\tau)^2(e)$.

\begin{Lem}\label{length-of-DTr-orbit-case-3}
Under the assumptions above, when $m$ is odd, $E\cong E_{rl}$ contains $(r,m+2l)$ $\langle \sigma^{-1}(g\tau)^2\rangle$-orbits of length $\frac{r(m+2p)}{(r,m+2l)}$ and $(r,m+2l)$ $\langle \sigma^{-1}(g\tau)^2\rangle$-orbits of length $\frac{r(m+2q)}{(r,m+2l)}$, and when $m$ is even, $E\cong E_{rl}$ contains $(2r,m+2l)$ $\langle \sigma^{-1}(g\tau)^2\rangle$-orbits of length $\frac{r(m+2p)}{(2r,m+2l)}$ and $(2r,m+2l)$ $\langle \sigma^{-1}(g\tau)^2\rangle$-orbits of length $\frac{r(m+2q)}{(2r,m+2l)}$, where $(a,b)$ denotes the greatest common divisor of $a$ and $b$.
\end{Lem}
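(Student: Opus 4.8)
The plan is to work with the explicit model $E\cong E_{rl}$ furnished by Proposition \ref{construction-of-E-in-case-(3)}, whose half-edges are the pairs $(c,j)$ with $c\in B$ and $j\in\mathbb{Z}/r\mathbb{Z}$, and to reduce everything to a shift count in the second coordinate. Write $\phi=\sigma^{-1}(g\tau)^2$. Since $B$ has trivial f-degree the Nakayama automorphism of $E_{rl}$ is $\sigma(c,j)=(c,j+1)$, and since $\sigma$ is an automorphism of the Brauer $G$-set it commutes with both $g$ and $\tau$; hence $\phi^{N}=\sigma^{-N}(g\tau)^{2N}$ for all $N$. The projection $p\colon E_{rl}\to B$, $(c,j)\mapsto c$, is a covering which intertwines $\phi$ with $(g\tau)^2$ on $B$ (because $\sigma$ projects to the identity on $B$). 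Consequently every $\phi$-orbit maps onto a single $(g\tau)^2$-orbit of $B$, and if that base orbit has length $L$ and $\phi^{L}(c,j)=(c,j+\Delta)$ for some fixed $\Delta\in\mathbb{Z}/r\mathbb{Z}$ (independent of $j$ and of the chosen base point, as $\Delta$ is a sum over the whole orbit), then the $\phi$-orbit of $(c,j)$ has length $L\cdot r/(r,\Delta)$. So the whole problem comes down to computing $\Delta$ for the outer and inner orbits in each parity of $m$.

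To compute $\Delta$ I would first evaluate the $j$-shift accumulated by a single pass of $g\tau$ around a full $\langle g\tau\rangle$-orbit. One step $(g\tau)(c,j)=g(\tau(c,j))$ changes $j$ by $\epsilon_\tau(c)+\epsilon_g(\tau(c))$, where $\epsilon_\tau$ equals $+l$ at $b$, $-l$ at $\tau(b)$ and $0$ elsewhere, and $\epsilon_g(x)=1$ exactly when $g\cdot x=b_v$ for some vertex $v$. Summing over the outer orbit (length $m+2p$) the $\tau$-part contributes only $+l$, since $b$ lies in the outer orbit while $\tau(b)$ is inner; and the $g$-part contributes the number of half-edges $b_v$ lying in the outer orbit, i.e. the number of vertices that contain an outer half-edge. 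Identifying the $\langle g\tau\rangle$-orbits with the face-boundary walks of $B$ regarded as a ribbon graph, the outer orbit is the boundary walk of the unbounded face, which visits every strictly outer vertex and exactly one half-edge at each cycle vertex, but no strictly inner vertex; this gives $m+p$ such $b_v$, so the total outer shift is $S_{\mathrm{out}}=(m+p)+l$. The same reasoning on the inner orbit (length $m+2q$) yields $S_{\mathrm{in}}=q-l$.

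Finally I would feed these into $\phi^{N}=\sigma^{-N}(g\tau)^{2N}$ and split by parity. When $m$ is odd, $(g\tau)^2$ preserves each $\langle g\tau\rangle$-orbit, so $L=m+2p$ (resp.\ $m+2q$); then $\phi^{m+2p}(c,j)=\sigma^{-(m+2p)}(c,j+2S_{\mathrm{out}})=(c,j+(m+2l))$ and $\phi^{m+2q}(c,j)=(c,j-(m+2l))$, giving orbits of length $r(m+2p)/(r,m+2l)$ and $r(m+2q)/(r,m+2l)$, each occurring $(r,m+2l)$ times upon dividing the total counts $(m+2p)r$ and $(m+2q)r$ of outer and inner half-edges. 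When $m$ is even each $\langle g\tau\rangle$-orbit splits into two $(g\tau)^2$-orbits of half length, and a half pass gives $\phi^{(m+2p)/2}(c,j)=\sigma^{-(m+2p)/2}(c,j+S_{\mathrm{out}})=(c,j+(\tfrac{m}{2}+l))$; the orbit length is $\tfrac{m+2p}{2}\cdot r/(r,\tfrac{m}{2}+l)$, which equals $r(m+2p)/(2r,m+2l)$ via the identity $2(r,\tfrac{m}{2}+l)=(2r,m+2l)$ valid for even $m$, and there are $(2r,m+2l)$ such orbits; the inner case is symmetric.

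The hard part will be the combinatorial bookkeeping of the second paragraph: proving that exactly the $q$ strictly inner vertices contribute an inner $b_v$ while all $m+p$ remaining vertices contribute an outer one. This is the only place where the planar (ribbon-graph) structure of $B$, the description of the two $\langle g\tau\rangle$-orbits as face-boundary walks, and the convention fixing $b$ outer and $\tau(b)$ inner must be used in an essential way; everything after that is the elementary computation of the order of $j\mapsto j+\Delta$ in $\mathbb{Z}/r\mathbb{Z}$ together with the gcd identity that handles the even case.
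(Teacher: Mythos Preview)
Your proposal is correct and follows essentially the same approach as the paper: both reduce to computing the $j$-shift after one full $(g\tau)$-pass around each base orbit, using that exactly $m+p$ of the $b_v$'s are outer (which is immediate from the definition of $b_v$ given before Proposition~\ref{construction-of-E-in-case-(3)}, so the ribbon-graph language is not strictly needed), and then extract the orbit length from the order of the resulting shift in $\mathbb{Z}/r\mathbb{Z}$. Your presentation is slightly more streamlined in two places---you make explicit the identity $2(r,\tfrac{m}{2}+l)=(2r,m+2l)$ that the paper uses tacitly, and you count orbits by dividing total half-edges by orbit length rather than via the paper's ``same orbit iff $(r,m+2l)\mid j_1-j_2$'' criterion---but the substance is identical.
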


\begin{proof}
Denote by $f:E_{rl}\rightarrow B$ the covering of Brauer $G$-sets which maps each $(c,j)\in E_{rl}$ to $c\in B$. Note that $B$ is a disjoint union of two $\langle g\tau\rangle$-orbits $b^{\langle g\tau\rangle}$ and $\tau(b)^{\langle g\tau\rangle}$, where the length of $b^{\langle g\tau\rangle}$ is $m+2p$ and the length of $\tau(b)^{\langle g\tau\rangle}$ is $m+2q$ (here $g\tau$ denotes the permutation of $B$ mapping each $c\in B$ to $g\tau(c)$).

When $m$ is odd, since both $m+2p$ and $m+2q$ are odd, the $\langle g\tau\rangle$-orbits $b^{\langle g\tau\rangle}$ and $\tau(b)^{\langle g\tau\rangle}$ of $B$ are also $\langle \sigma^{-1}(g\tau)^2\rangle$-orbits of $B$ (note that the Nakayama automorphism $\sigma$ of $B$ is identity). Since the image of each $\langle \sigma^{-1}(g\tau)^2\rangle$-orbit of $E_{rl}$ under $f$ is a $\langle \sigma^{-1}(g\tau)^2\rangle$-orbit of $B$, each $\langle \sigma^{-1}(g\tau)^2\rangle$-orbit of $E_{rl}$ is of the form $(b,j)^{\langle \sigma^{-1}(g\tau)^2\rangle}$ or of the form $(\tau(b),j)^{\langle \sigma^{-1}(g\tau)^2\rangle}$, where $j\in\{1,\cdots,r\}=\mathbb{Z}/r\mathbb{Z}$.

Let $N$ be the minimal positive integer such that $(\sigma^{-1}(g\tau)^2)^N(b,j)=(b,j)$. Since $m+2p$ is the minimal positive integer such that $(\sigma^{-1}(g\tau)^2)^{m+2p}(b)=b$, we have
$$(\sigma^{-1}(g\tau)^2)^N(b)=(\sigma^{-1}(g\tau)^2)^N(f(b,j))=f((\sigma^{-1}(g\tau)^2)^N(b,j))=f(b,j)=b.$$
Therefore $(m+2p)\mid N$. Note that $b^{\langle g\tau\rangle}=\{b,g\tau(b),\cdots,(g\tau)^{m+2p-1}(b)\}$ contains $m+p$ half-edges of the form $b_v$, and $b\in b^{\langle g\tau\rangle}$, $\tau(b)\notin b^{\langle g\tau\rangle}$. Therefore
$$(g\tau)^{m+2p}(b,j)=((g\tau)^{m+2p}(b),j+m+p+l)=(b,j+m+p+l),$$ and
$$(\sigma^{-1}(g\tau)^2)^{m+2p}(b,j)=\sigma^{-(m+2p)}(g\tau)^{2(m+2p)}(b,j)=(b,j+2(m+p+l)-(m+2p))=(b,j+m+2l).$$
Since $N'=\frac{r}{(r,m+2l)}$ is the minimal positive integer such that $((\sigma^{-1}(g\tau)^2)^{m+2p})^{N'}(b,j)=(b,j)$, $N=(m+2p)N'=\frac{r(m+2p)}{(r,m+2l)}$. Moreover, two half-edges $(b,j_1)$, $(b,j_2)$ of $E_{rl}$ belong to the same $\langle \sigma^{-1}(g\tau)^2\rangle$-orbit if and only if $(r,m+2l)$ divides $j_1-j_2$. Therefore there are $(r,m+2l)$ $\langle \sigma^{-1}(g\tau)^2\rangle$-orbits of $E_{rl}$ of the form $(b,j)^{\langle \sigma^{-1}(g\tau)^2\rangle}$, each of length $\frac{r(m+2p)}{(r,m+2l)}$.

Let $M$ be the minimal positive integer such that $(\sigma^{-1}(g\tau)^2)^M(\tau(b),j)=(\tau(b),j)$. Since $m+2q$ is the minimal positive integer such that $(\sigma^{-1}(g\tau)^2)^{m+2q}(\tau(b))=\tau(b)$, $m+2q$ divides $M$. Since $\tau(b)^{\langle g\tau\rangle}=\{\tau(b),g\tau(\tau(b)),\cdots,(g\tau)^{m+2q-1}(\tau(b))\}$ contains $q$ half-edges of the form $b_v$, and since $b\notin \tau(b)^{\langle g\tau\rangle}$, $\tau(b)\in \tau(b)^{\langle g\tau\rangle}$, we have $$(g\tau)^{m+2q}(\tau(b),j)=((g\tau)^{m+2q}(\tau(b)),j+q-l)=(\tau(b),j+q-l).$$
So
\begin{multline*}(\sigma^{-1}(g\tau)^2)^{m+2q}(\tau(b),j)=\sigma^{-(m+2q)}(g\tau)^{2(m+2q)}(\tau(b),j) \\ =(\tau(b),j+2(q-l)-(m+2q))=(\tau(b),j-(m+2l)).\end{multline*}
Then $M'=\frac{r}{(r,m+2l)}$ is the minimal positive integer such that $((\sigma^{-1}(g\tau)^2)^{m+2q})^{M'}(\tau(b),j)=(\tau(b),j)$, and $M=(m+2q)M'=\frac{r(m+2q)}{(r,m+2l)}$. Moreover, two half-edges $(\tau(b),j_1)$, $(\tau(b),j_2)$ of $E_{rl}$ belong to the same $\langle \sigma^{-1}(g\tau)^2\rangle$-orbit if and only if $(r,m+2l)$ divides $j_1-j_2$. Therefore there are $(r,m+2l)$ $\langle \sigma^{-1}(g\tau)^2\rangle$-orbits of $E_{rl}$ of the form $(\tau(b),j)^{\langle \sigma^{-1}(g\tau)^2\rangle}$, each of length $\frac{r(m+2q)}{(r,m+2l)}$.

When $m$ is even, since both $m+2p$ and $m+2q$ are even, the $\langle g\tau\rangle$-orbits $b^{\langle g\tau\rangle}$ of $B$ splits into two $\langle \sigma^{-1}(g\tau)^2\rangle$-orbits $b^{\langle \sigma^{-1}(g\tau)^2\rangle}$ and $g\tau(b)^{\langle \sigma^{-1}(g\tau)^2\rangle}$, each of length $\frac{m}{2}+p$, and the $\langle g\tau\rangle$-orbits $\tau(b)^{\langle g\tau\rangle}$ of $B$ splits into two $\langle \sigma^{-1}(g\tau)^2\rangle$-orbits $\tau(b)^{\langle \sigma^{-1}(g\tau)^2\rangle}$ and $(g\cdot b)^{\langle \sigma^{-1}(g\tau)^2\rangle}$, each of length $\frac{m}{2}+q$ (note that the Nakayama automorphism $\sigma$ of $B$ is identity). Since the image of each $\langle \sigma^{-1}(g\tau)^2\rangle$-orbit of $E_{rl}$ under $f$ is a $\langle \sigma^{-1}(g\tau)^2\rangle$-orbit of $B$, each $\langle \sigma^{-1}(g\tau)^2\rangle$-orbit of $E_{rl}$ is equal to one of the following $\langle \sigma^{-1}(g\tau)^2\rangle$-orbits of $E_{rl}$: $(b,j)^{\langle \sigma^{-1}(g\tau)^2\rangle}$, $(g\tau(b),j)^{\langle \sigma^{-1}(g\tau)^2\rangle}$, $(\tau(b),j)^{\langle \sigma^{-1}(g\tau)^2\rangle}$, $(g\cdot b,j)^{\langle \sigma^{-1}(g\tau)^2\rangle}$, where $j\in\{1,\cdots,r\}=\mathbb{Z}/r\mathbb{Z}$.

Let $N$ be the minimal positive integer such that $(\sigma^{-1}(g\tau)^2)^N(b,j)=(b,j)$. Since $\frac{m}{2}+p$ is the minimal positive integer such that $(\sigma^{-1}(g\tau)^2)^{\frac{m}{2}+p}(b)=b$, we have
$$(\sigma^{-1}(g\tau)^2)^N(b)=(\sigma^{-1}(g\tau)^2)^N(f(b,j))=f((\sigma^{-1}(g\tau)^2)^N(b,j))=f(b,j)=b.$$
Therefore $(\frac{m}{2}+p)\mid N$. Similar to the case where $m$ is odd, we have
$(g\tau)^{m+2p}(b,j)=(b,j+m+p+l)$. Then
$$(\sigma^{-1}(g\tau)^2)^{\frac{m}{2}+p}(b,j)=\sigma^{-(\frac{m}{2}+p)}(g\tau)^{m+2p}(b,j)=(b,j+m+p+l-(\frac{m}{2}+p))=(b,j+\frac{m}{2}+l).$$
Since $N'=\frac{r}{(r,\frac{m}{2}+l)}$ is the minimal positive integer such that $((\sigma^{-1}(g\tau)^2)^{\frac{m}{2}+p})^{N'}(b,j)=(b,j)$, $$N=(\frac{m}{2}+p)N'=\frac{r(m+2p)}{(2r,m+2l)}.$$
Moreover, two half-edges $(b,j_1)$, $(b,j_2)$ of $E_{rl}$ belong to the same $\langle \sigma^{-1}(g\tau)^2\rangle$-orbit if and only if $(r,\frac{m}{2}+l)$ divides $j_1-j_2$. Therefore there are $(r,\frac{m}{2}+l)$ $\langle \sigma^{-1}(g\tau)^2\rangle$-orbits of $E_{rl}$ of the form $(b,j)^{\langle \sigma^{-1}(g\tau)^2\rangle}$, each of length $\frac{r(m+2p)}{(2r,m+2l)}$. Similarly it can be shown that there are $(r,\frac{m}{2}+l)$ $\langle \sigma^{-1}(g\tau)^2\rangle$-orbits of $E_{rl}$ of the form $(g\tau(b),j)^{\langle \sigma^{-1}(g\tau)^2\rangle}$, each of length $\frac{r(m+2p)}{(2r,m+2l)}$.

Let $M$ be the minimal positive integer such that $(\sigma^{-1}(g\tau)^2)^M(\tau(b),j)=(\tau(b),j)$. Since $\frac{m}{2}+q$ is the minimal positive integer such that $(\sigma^{-1}(g\tau)^2)^{\frac{m}{2}+q}(\tau(b))=\tau(b)$, we have $(\frac{m}{2}+q)\mid M$. Similar to the case where $m$ is odd, we have
$(g\tau)^{m+2q}(\tau(b),j)=(\tau(b),j+q-l)$. Then
$$(\sigma^{-1}(g\tau)^2)^{\frac{m}{2}+q}(\tau(b),j)=\sigma^{-(\frac{m}{2}+q)}(g\tau)^{m+2q}(\tau(b),j)=(\tau(b),j+q-l-(\frac{m}{2}+q))=(\tau(b),j-(\frac{m}{2}+l)).$$
Since $M'=\frac{r}{(r,\frac{m}{2}+l)}$ is the minimal positive integer such that $((\sigma^{-1}(g\tau)^2)^{\frac{m}{2}+q})^{M'}(\tau(b),j)=(\tau(b),j)$, $$M=(\frac{m}{2}+q)M'=\frac{r(m+2q)}{(2r,m+2l)}.$$
Moreover, two half-edges $(\tau(b),j_1)$, $(\tau(b),j_2)$ of $E_{rl}$ belong to the same $\langle \sigma^{-1}(g\tau)^2\rangle$-orbit if and only if $(r,\frac{m}{2}+l)$ divides $j_1-j_2$. Therefore there are $(r,\frac{m}{2}+l)$ $\langle \sigma^{-1}(g\tau)^2\rangle$-orbits of $E_{rl}$ of the form $(\tau(b),j)^{\langle \sigma^{-1}(g\tau)^2\rangle}$, each of length $\frac{r(m+2q)}{(2r,m+2l)}$. Similarly it can be shown that there are $(r,\frac{m}{2}+l)$ $\langle \sigma^{-1}(g\tau)^2\rangle$-orbits of $E_{rl}$ of the form $(g\cdot b,j)^{\langle \sigma^{-1}(g\tau)^2\rangle}$, each of length $\frac{r(m+2q)}{(2r,m+2l)}$.
\end{proof}

\begin{Prop} \label{stable-AR-component-case-(3)}
Let $E\cong E_{rl}$ be a finite connected $f_{ms}$-BG such that $E/\langle\sigma\rangle$ is a Brauer $G$-set in case $(3)$ of Lemma \ref{B}, where the length of the unique cycle of $B=E/\langle\sigma\rangle$ is $m$ and the number of edges of $B$ outside (resp. inside) this cycle is $p$ (resp. $q$). If $m$ is odd, then $\prescript{}{s}{\Gamma}_{A_E}$ is a disjoint union of $(r,m+2l)$ components of the form $\mathbb{Z}A_{\infty}/\langle\tau^{\frac{r(m+2p)}{(r,m+2l)}}\rangle$, $(r,m+2l)$ components of the form $\mathbb{Z}A_{\infty}/\langle\tau^{\frac{r(m+2q)}{(r,m+2l)}}\rangle$, $(r,m+2l)$ components of the form $\mathbb{Z}\widetilde{A}_{\frac{r(m+2p)}{(r,m+2l)},\frac{r(m+2q)}{(r,m+2l)}}$, and infinitely many
components of the form $\mathbb{Z}A_{\infty}/\langle\tau\rangle$. If $m$ is even, then $\prescript{}{s}{\Gamma}_{A_E}$ is a disjoint union of $(2r,m+2l)$ components of the form $\mathbb{Z}A_{\infty}/\langle\tau^{\frac{r(m+2p)}{(2r,m+2l)}}\rangle$, $(2r,m+2l)$ components of the form $\mathbb{Z}A_{\infty}/\langle\tau^{\frac{r(m+2q)}{(2r,m+2l)}}\rangle$, $(2r,m+2l)$ components of the form $\mathbb{Z}\widetilde{A}_{\frac{r(m+2p)}{(2r,m+2l)},\frac{r(m+2q)}{(2r,m+2l)}}$, and infinitely many
components of the form $\mathbb{Z}A_{\infty}/\langle\tau\rangle$.
\end{Prop}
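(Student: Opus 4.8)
The plan is to follow the template established in the proofs of Propositions \ref{stable-AR-component-case-(1)} and \ref{stable-AR-component-case-(2)}: read off the exceptional tubes from the combinatorics of $E\cong E_{rl}$, determine the shape of the $\mathbb{Z}\widetilde{A}$-components by a covering argument, and then pin down the remaining parameters in \cite[Theorem 2.1]{ES}. First I would apply Lemma \ref{length-of-DTr-orbit-case-3}. By Proposition \ref{exceptional-tubes} the exceptional tubes of $\prescript{}{s}{\Gamma}_{A_E}$ are in rank-preserving bijection with the $\langle\sigma^{-1}(g\tau)^2\rangle$-orbits of $E$, so the lemma immediately yields, writing $d=(r,m+2l)$ when $m$ is odd and $d=(2r,m+2l)$ when $m$ is even, exactly $d$ exceptional tubes of rank $P:=\tfrac{r(m+2p)}{d}$ and $d$ exceptional tubes of rank $Q:=\tfrac{r(m+2q)}{d}$. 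The parity split in the statement is thus inherited directly from the parity split in Lemma \ref{length-of-DTr-orbit-case-3}, and the two cases can be treated uniformly once $d$, $P$, $Q$ are fixed.

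Next I would determine the shape of the $\mathbb{Z}\widetilde{A}$-components. The quotient map $E\to B=E/\langle\sigma\rangle$ is a regular covering of $f_{ms}$-BGs (the group $\langle\sigma\rangle$ acts freely by Lemma \ref{free} and is its own Galois group), and $B$ is a finite connected Brauer graph of trivial f-degree whose underlying graph has a unique cycle, with $A_B$ domestic. Duffield's description of the AR-components of Brauer graph algebras in \cite{D} identifies the Euclidean components of $A_B$: they are of the form $\mathbb{Z}\widetilde{A}_{m+2p,\,m+2q}$, the two subscripts being the perimeters $m+2p$ and $m+2q$ of the two regions cut out by the unique cycle (this is the analogue, for a one-cycle graph, of the results \cite[Corollary 4.7]{D} and \cite[Theorem 4.6]{D} cited in the previous two cases). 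Feeding this into Lemma \ref{ZApq-components}, every $\mathbb{Z}\widetilde{A}_{a,b}$-component of $\prescript{}{s}{\Gamma}_{A_E}$ must satisfy $(a,b)=(n(m+2p),n(m+2q))$ for some positive integer $n$; in particular the ratio of its two subscripts is forced to be $(m+2p):(m+2q)$.

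With both pieces of data in hand I would invoke \cite[Theorem 2.1]{ES}, which presents $\prescript{}{s}{\Gamma}_{A_E}$ as a disjoint union of $m_0$ copies of $\mathbb{Z}\widetilde{A}_{p_0,q_0}$, $m_0$ copies of $\mathbb{Z}A_{\infty}/\langle\tau^{p_0}\rangle$, $m_0$ copies of $\mathbb{Z}A_{\infty}/\langle\tau^{q_0}\rangle$, and infinitely many copies of $\mathbb{Z}A_{\infty}/\langle\tau\rangle$, for some positive integers $m_0,p_0,q_0$. The crucial linking fact is that in \cite[Theorem 2.1]{ES} the subscripts of the Euclidean component coincide with the ranks $p_0,q_0$ of the two families of exceptional tubes; hence the ratio constraint from Step 2 forces $\{p_0,q_0\}=\{P,Q\}$, and comparing the tube counts from Step 1 with the $2m_0$ exceptional tubes predicted by \cite[Theorem 2.1]{ES} forces $m_0=d$. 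When $p\neq q$ the ranks $P\neq Q$ are distinct and this matching is immediate; when $p=q$ one has $P=Q$ and the ratio constraint $a:b=(m+2p):(m+2q)$ degenerates to $a=b$, which is precisely what rules out the spurious alternative $q_0=1$, exactly as in Propositions \ref{stable-AR-component-case-(1)} and \ref{stable-AR-component-case-(2)}. This yields the asserted $d$ components $\mathbb{Z}\widetilde{A}_{P,Q}$, $d$ tubes $\mathbb{Z}A_{\infty}/\langle\tau^{P}\rangle$, $d$ tubes $\mathbb{Z}A_{\infty}/\langle\tau^{Q}\rangle$, and infinitely many homogeneous tubes.

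The main obstacle I anticipate is Step 2: extracting from \cite{D} the precise subscripts $m+2p$ and $m+2q$ of the Euclidean components of the one-cycle Brauer graph algebra $A_B$, and checking that the correspondence is the correct one for both parities of $m$ (recall $A_B$ is $1$-domestic for odd $m$ and $2$-domestic for even $m$, so the two parities must be reconciled with the single uniform formula produced by Lemma \ref{length-of-DTr-orbit-case-3}). A secondary technical point is verifying that $E\to B$ is genuinely a regular covering so that Lemma \ref{ZApq-components} applies, together with keeping track of the edge cases where $P=1$ or $Q=1$, in which the corresponding finitely many string tubes $\mathbb{Z}A_{\infty}/\langle\tau\rangle$ have the same shape as the homogeneous tubes but are nonetheless recorded separately by the formula. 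Once the Euclidean ratio is secured, the remainder is the same bookkeeping already carried out in the previous two cases.
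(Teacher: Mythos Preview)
Your proposal is correct and follows essentially the same route as the paper: Lemma \ref{length-of-DTr-orbit-case-3} plus Proposition \ref{exceptional-tubes} for the tubes, \cite[Corollary 4.7]{D} plus Lemma \ref{ZApq-components} for the Euclidean ratio, and then \cite[Theorem 2.1]{ES} with a case analysis. One small sharpening: your dichotomy ``$p\neq q$ immediate / $p=q$ needs the ratio'' is not quite right, since when $p\neq q$ but $Q=1$ the matching is \emph{not} immediate (only $d$ exceptional tubes are visible, so $m_0=d/2$ with $p_0=q_0=P$ is a priori possible) and you again need the ratio $p_0:q_0=(m+2p):(m+2q)\neq 1:1$ to exclude it---the paper accordingly splits into four cases ($P,Q>1$ distinct; $P=Q>1$; exactly one of $P,Q$ equal to $1$; $P=Q=1$) rather than two.
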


\begin{proof}
We may assume that $m$ is odd, since when $m$ is even the proof is similar. According to Lemma \ref{length-of-DTr-orbit-case-3}, $E\cong E_{rl}$ contains $(r,m+2l)$ $\langle \sigma^{-1}(g\tau)^2\rangle$-orbits of length $\frac{r(m+2p)}{(r,m+2l)}$ and $(r,m+2l)$ $\langle \sigma^{-1}(g\tau)^2\rangle$-orbits of length $\frac{r(m+2q)}{(r,m+2l)}$. By Proposition \ref{exceptional-tubes}, $\prescript{}{s}{\Gamma}_{A_E}$ contains $(r,m+2l)$ exceptional tubes of the form $\mathbb{Z}A_{\infty}/\langle\tau^{\frac{r(m+2p)}{(r,m+2l)}}\rangle$ and $(r,m+2l)$ exceptional tubes of the form $\mathbb{Z}A_{\infty}/\langle\tau^{\frac{r(m+2q)}{(r,m+2l)}}\rangle$. Since $B=E/\langle\sigma\rangle$ is a Brauer graph with a unique cycle of length $m$ and the number of edges of $B$ outside (resp. inside) this cycle is $p$ (resp. $q$), according to \cite[Corollary 4.7]{D}, the $\mathbb{Z}\widetilde{A}_{a,b}$ component of the stable AR-quiver of $A_B$ is given by $a=m+2p$, $b=m+2q$.

According to \cite[Theorem 2.1]{ES}, there are positive integers $M$, $c$, $d$ such that $\prescript{}{s}{\Gamma}_{A_E}$ is a disjoint union of $M$ components of the form $\mathbb{Z}\widetilde{A}_{c,d}$, $M$ components of the form $\mathbb{Z}A_{\infty}/\langle\tau^c\rangle$, $M$ components of the form $\mathbb{Z}A_{\infty}/\langle\tau^d\rangle$, and infinitely many components of the form $\mathbb{Z}A_{\infty}/\langle\tau\rangle$. We need to show that $M=(r,m+2l)$, $c=\frac{r(m+2p)}{(r,m+2l)}$, $d=\frac{r(m+2q)}{(r,m+2l)}$. If $\frac{r(m+2p)}{(r,m+2l)}>1$, $\frac{r(m+2q)}{(r,m+2l)}>1$, and $\frac{r(m+2p)}{(r,m+2l)}\neq\frac{r(m+2q)}{(r,m+2l)}$, then clearly we have $M=(r,m+2l)$, $c=\frac{r(m+2p)}{(r,m+2l)}$, $d=\frac{r(m+2q)}{(r,m+2l)}$. If $\frac{r(m+2p)}{(r,m+2l)}=\frac{r(m+2q)}{(r,m+2l)}>1$, then there are two possible cases:
\begin{itemize}
\item[$(i)$] $M=(r,m+2l)$, $c=d=\frac{r(m+2p)}{(r,m+2l)}$;
\item[$(ii)$] $M=2(r,m+2l)$, $c=\frac{r(m+2p)}{(r,m+2l)}$, $d=1$.
\end{itemize}
Since $m+2p=m+2q$ and since the $\mathbb{Z}\widetilde{A}_{a,b}$ component of the stable AR-quiver of $A_B$ is given by $a=m+2p$, $b=m+2q$, by Lemma \ref{ZApq-components} we have $c=d$, so case $(ii)$ can not occur. If one (but not both) of the numbers $\frac{r(m+2p)}{(r,m+2l)}$, $\frac{r(m+2q)}{(r,m+2l)}$ is equal to $1$, then we may assume that $\frac{r(m+2p)}{(r,m+2l)}>1$ and $\frac{r(m+2q)}{(r,m+2l)}=1$. There are two possible cases:
\begin{itemize}
\item[$(i)$] $M=(r,m+2l)$, $c=\frac{r(m+2p)}{(r,m+2l)}$, $d=1$;
\item[$(ii)$] $M=\frac{(r,m+2l)}{2}$, $c=d=\frac{r(m+2p)}{(r,m+2l)}$.
\end{itemize}
Since $m+2p\neq m+2q$ and since the $\mathbb{Z}\widetilde{A}_{a,b}$ component of the stable AR-quiver of $A_B$ is given by $a=m+2p$, $b=m+2q$, by Lemma \ref{ZApq-components} we have $c\neq d$, so case $(ii)$ can not occur. If $\frac{r(m+2p)}{(r,m+2l)}=\frac{r(m+2q)}{(r,m+2l)}=1$, then we have $m=1$, $p=q=0$, and $r|(1+2l)$. It is straightforward to show that $M=r$ and $c=d=1$.
\end{proof}

\end{document}